\let\OLDthebibliography\thebibliography
\renewcommand\thebibliography[1]{
  \OLDthebibliography{#1}
  \setlength{\parskip}{0pt}
  \setlength{\itemsep}{0pt plus 0.3ex}
}
\titleformat{\section} {\normalfont\scshape \large \centering}{ \thesection}{1em}{}
\definecolor{morado}{rgb}{0.5,0,0.5}
\newcommand{\cupdot}{\mathbin{\mathaccent\cdot\cup}}
\newcommand{\once}{11}
\newcommand{\II}{I_e}
\newcommand{\shape}{\text{shape}}
\newcommand{\F}{ { \mathbb F}}
\newcommand{\Z}{\mathbb{Z}}
\newcommand{\B}{\mathbb{B}_n}
\newcommand{\Basis}{\mathcal{C}_n}
\newcommand{\res}{ \textrm{res} }
\newcommand{\End}{{\rm End}}
\newcommand\bt{\mathbf{t}}
\newcommand{\s}{\mathfrak{s}}
\newcommand{\T}{  \mathfrak{t}}
\newcommand{\bb}{  \mathfrak{b}}
\newcommand\bi{\boldsymbol{i}}
\newcommand\bj{\boldsymbol{j}}
\newcommand{\OnePar}{{ \rm Par}^1_{n}}
\newcommand{\OneParSing}{{ \rm Par}^1_{\overline{n}}}
\newcommand{\Si}{\mathfrak{S}}
\newcommand{\std}{{\rm Std}}
\newcommand{\tab}{{\rm Tab}}
\newcommand\blambda{{\boldsymbol\lambda}}
\newcommand\bnu{{\boldsymbol\nu}}
\newcommand\bmu{{\boldsymbol\mu}}
\newcommand{\YL}{{\mathcal Y}^\blambda}
\newcommand{\bT}{\pmb{\mathfrak{t}}}
\newcommand{\trunc}{ {\B} (\blambda ) }
\newcommand{\truncPrime}{ {\mathbb B}_n^{\prime} (\blambda ) }
\newcommand{\sign}{-}
\newcommand{\UU}{\mathbb{U}}
\newcommand{\VV}{\mathbb{V}}
\newcommand{\YY}{\mathbb{Y}}
\newcommand{\LL}{\mathbb{L}}
\newcommand{\Exp}{ {\rm \bf exp} }
\newcommand{\botts}[2]{ \mbox{Hom}_{{\mathcal D}} ( \underline{#1}, \underline{#2} ) }
\newcommand{\ese}{  {\color{red} s}}
\newcommand{\te}{ {\color{blue} t}}
\newtheorem{theorem}{Theorem}[section]
\newtheorem{lemma}[theorem]{Lemma}
\newtheorem{definition}[theorem]{Definition}
\newtheorem{corollary}[theorem]{Corollary}
\newtheorem{remark}[theorem]{Remark}
\newtheorem{algorithm}[theorem]{Algorithm}
\newenvironment{dem}{\noindent \textit{Proof:} }{\quad \hfill $\square$}
\numberwithin{equation}{section}
\begin{document}
\Yvcentermath1
\sidecaptionvpos{figure}{lc}


\title{The nil-blob algebra: An incarnation of type $\tilde{A}_1$ Soergel calculus and of the truncated blob algebra.}

\author{Diego Lobos{\thanks{Supported in part by CONICYT-PCHA/Doctorado Nacional/2016-21160722}}, 
  \, David Plaza{\thanks{Supported in part by FONDECYT-Iniciaci\'on grant 11160154 and FONDECYT grant 1200341}} \, and  Steen Ryom-Hansen{\thanks{Supported in part by FONDECYT grant 1171379  }}}





\maketitle
\begin{abstract}
  We introduce a type $B$ analogue of the nil Temperley-Lieb algebra in terms of generators and relations,
  that we call the
  (extended) nil-blob algebra. We show that this algebra is isomorphic to the endomorphism
  algebra of a Bott-Samelson bimodule in type $\tilde{A}_1$. We also prove that it is isomorphic
  to an idempotent truncation of the classical blob algebra. 
\end{abstract}

\section{Introduction}

\titleformat{\subsection}[runin]
{\scshape\bfseries}
{ \S \thesubsection .} {1ex}{}[.\quad] \subsection{Motivation}

The study of diagram algebras and categories is currently one of the most active and vibrant areas of representation theory. Among these diagrammatically defined objects of study there are two that play a prominent role: KLR algebras (categories) and Elias and Williamson's diagrammatic Soergel category. Despite being defined with different
motivations, in recent years we have seen close connections emerging between these two worlds. For example, Riche and Williamson showed in \cite{RicheWill}
that the diagrammatic Soergel category acts on the category of tilting modules for $ GL_n$, via 
an action of the KLR-category. Similar, but not equivalent, ideas were exploited by Elias and Losev in \cite{elias-losev}. In that paper the authors work in the opposite direction; they equip a Soergel-type category with a KLR action.   

\medskip
In the same vein, Libedinsky and the second author introduced in \cite{LiPl}
the \emph{Categorical Blob v/s Soergel conjecture} (B(v/s)S-conjecture for short) which posits an equivalence between full subcategories, one for each element in the affine Weyl group of type $\tilde{A}_n$, of the diagrammatic
Soergel category in type $\tilde{A}_n$ and a certain category obtained from a quotient of cyclotomic KLR-algebras: 
the generalized blob algebras of Martin and Woodcock \cite{MW}. Although similar, this conjecture has a
fundamental  difference with the two aforementioned works: there is no action involved in it! Roughly speaking,
we think of Riche and Williamson's and Elias and Losev's works as a kind of 'categorical Schur-Weyl duality'.
Just like classical Schur-Weyl duality allows us to pass representation theoretical information between the symmetric group and the general linear group, these works allow us to transfer representation theoretical information between the KLR world and the Soergel world. On the other hand, the B(v/s)S-conjecture tells us that the two worlds are
really the 'same'. We stress that both categories involved in this conjecture are skeletal and therefore the equivalence is indeed an isomorphism of categories.    

\medskip
Five months after this paper was finished we learnt from Bowman, Cox and Hazi that they have
obtained a proof of the B(v/s)S-conjecture, see \cite{BCH}. As a matter of fact, their results are more general
and the conjecture follows as a particular case. Despite being more general, their work keeps the original spirit of the B(v/s)S-conjecture: the two worlds are the same.

\medskip 
A common feature of the works mentioned so far is the fact that they imply that the relevant decomposition numbers in the counterpart of the Soergel-like category are controlled by the  $p$-Kazhdan-Lusztig basis. For this and other reasons, understanding the $p$-Kazhdan-Lusztig basis has become one of the most important problems in representation theory. For example, in type $ \tilde{A}_n$ a solution
to this problem would give a solution to the longstanding problem of finding the decomposition numbers
for the symmetric groups in characteristic $p$. Unfortunately, we are far from a full understanding of the $p$-Kazhdan-Lusztig basis. This poor understanding of the $p$-Kazhdan-Lusztig basis is due in part to a lack of a good understanding of the multiplicative structure of the diagrammatic Soergel category and its KLR counterparts. It is with the intention of unraveling these multiplicative structures that this article comes into existence. 

\subsection{Algebras}
In this paper we investigate three (more precisely five) different, although well-known, diagram algebras.  

\medskip
The first algebra of our paper is a variation 
of the blob algebra $\mathbb{B}_n$. The blob algebra was introduced by Martin and Saleur in
\cite{Mat-Sal} via motivations in statistical mechanics. It is a generalization of the Temperley-Lieb algebra
and in fact its diagram basis consists of certain marked Temperley-Lieb diagrams. 
{\color{black}{The first diagram algebra of our paper has the same diagram basis as $\mathbb{B}_n$, but
 we endow it  
with a different multiplication rule.}}

\medskip
For our second diagram algebra we choose $ (W,S) $ of type $ \tilde{A}_1$ and consider a
diagrammatically defined \textbf{subalgebra} of the endomorphism
algebra $ {\rm End}_{\cal D}(\underline{w}) $, where $ \underline{w} $ is any reduced  expression over $ S$ and  $\cal D$ denotes the diagrammatic Elias and Williamson's category.


\medskip

Our third diagram algebra comes from the  KLR world. The second and the third author showed in \cite{PlazaRyom} that a {\color{black}{quotient}}
of the KLR algebra is isomorphic
to the blob algebra $\mathbb{B}_n$, but our third diagram algebra is a slightly different variation of this algebra,
given by idempotent truncation with respect to a \textbf{singular} weight in the associated alcove
geometry.


\medskip

In our paper we provide a presentation for each of the three algebras in terms of generators and relations. The three presentations turn out to be identical. Indeed, our first main theorem is the following.

\begin{theorem} \label{teo intro}
The three aforementioned algebras have a presentation with generators 
$\UU_0,\UU_1, \ldots , \UU_{n-1} $
  subject to the relations 
	\begin{align}
\label{eq oneintro}	\UU_i^2	& =  \sign 2 \UU_i,  &   &  \mbox{if }  1\leq i < n; \\
\label{eq twointro}	\UU_i\UU_j\UU_i & =\UU_i,    &  & \mbox{if } |i-j|=1 \mbox{ and } i,j >0;  \\
\label{eq threeintro}	\UU_i\UU_j& = \UU_j\UU_i,    &  &   \mbox{if } |i-j|>1; \\
\label{eq fourintro}	\UU_1\UU_0\UU_1& =  0,  &  &   \\
\label{eq fiveintro}			\UU_0^2& = 0.   &  &  
	\end{align}	
\end{theorem}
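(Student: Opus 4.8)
The plan is to introduce the abstract $\F$-algebra $A_n$ with generators $\UU_0,\UU_1,\dots,\UU_{n-1}$ and defining relations \eqref{eq oneintro}--\eqref{eq fiveintro}, and to prove that each of the three (five) algebras in the statement is isomorphic to $A_n$ via a map sending $\UU_i$ to an explicitly chosen element. For a fixed target, the argument splits into two independent tasks: first, check that the chosen elements generate the target and satisfy \eqref{eq oneintro}--\eqref{eq fiveintro}, which produces a surjection $A_n\twoheadrightarrow(\text{target})$; second, match dimensions so that the surjection is forced to be an isomorphism. Since a surjective image of $A_n$ has dimension at most $\dim_\F A_n$, the decisive inputs are an \emph{upper} bound for $\dim_\F A_n$ and a \emph{lower} bound (i.e.\ a linear independence statement) for each target.

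First I would work inside $A_n$ alone and prove $\dim_\F A_n \le \binom{2n}{n}$. Using only \eqref{eq oneintro}--\eqref{eq fiveintro} one rewrites an arbitrary word in the $\UU_i$ into a normal form indexed by the diagram basis of the blob algebra $\mathbb B_n$: the interior relations \eqref{eq oneintro}--\eqref{eq threeintro} are handled exactly as in the Temperley--Lieb case, reducing to words of the usual monomial shape, while \eqref{eq fourintro} and \eqref{eq fiveintro} govern how the unique occurrence of $\UU_0$ can be pushed to a canonical slot and show that certain configurations vanish. I expect this normal-form reduction to be the main obstacle: the non-idempotency in \eqref{eq oneintro} and the two vanishing relations \eqref{eq fourintro}, \eqref{eq fiveintro} interact, so one has to be careful that the resulting spanning set has exactly $\binom{2n}{n}$ elements and no fewer collapses are available. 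Next, for the nil-blob algebra, whose underlying space is by construction the span of the $\binom{2n}{n}$ marked Temperley--Lieb diagrams, I send $\UU_i\mapsto$ the $i$-th cup--cap diagram for $i\ge 1$ and $\UU_0\mapsto$ the blobbed cup--cap on the first strand; the relations are then a short finite list of diagram computations in the new multiplication, and generation is standard. Combined with the bound above this yields the first isomorphism and, in particular, $\dim_\F A_n=\binom{2n}{n}$.

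For the Soergel subalgebra of $\End_{\mathcal D}(\underline w)$ in type $\tilde A_1$ I would send each $\UU_i$ to the endomorphism of $BS(\underline w)$ built, in the usual light-leaves fashion, from a dot and a trivalent vertex at the relevant colour; one checks \eqref{eq oneintro}--\eqref{eq fiveintro} using the Elias--Williamson relations (barbell/needle relations give \eqref{eq oneintro} and \eqref{eq fiveintro}, the one-colour and two-colour $\tilde A_1$ relations give \eqref{eq twointro}--\eqref{eq threeintro}, and the relevant Jones--Wenzl-type identity in rank two gives \eqref{eq fourintro}); the verification should be done once and for all $n$ by a cabling/induction argument rather than case by case. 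Injectivity of the resulting surjection $A_n\twoheadrightarrow(\text{Soergel subalgebra})$ follows from exhibiting $\binom{2n}{n}$ linearly independent images, e.g.\ by evaluating on a faithful (cell/standard) module or by invoking linear independence of the relevant light-leaves diagrams. Finally, for the singular KLR truncation I would start from the isomorphism of \cite{PlazaRyom} between a cyclotomic KLR quotient and $\mathbb B_n$, realize the third algebra as $e\,\mathbb B_n\, e$ for the idempotent $e$ attached to the chosen singular weight, identify the generators as the natural $e\UU_i e$-type elements, and check \eqref{eq oneintro}--\eqref{eq fiveintro}; here the point is that singularity of the weight is exactly what degenerates the relations involving $\UU_0$ to the vanishing forms \eqref{eq fourintro}, \eqref{eq fiveintro} rather than their regular-weight analogues. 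The matching lower bound comes from counting the rank of $e\,\mathbb B_n\, e$ using the cellular basis of $\mathbb B_n$, or, more directly, by comparing with the nil-blob algebra already treated. In each case the only genuinely delicate point beyond bookkeeping is the normal-form count for $A_n$ and the uniform diagrammatic verification of the relations; the three dimension matches then close the argument.
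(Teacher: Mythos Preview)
Your overall architecture matches the paper's exactly: bound $\dim_\F A_n$ above by $\binom{2n}{n}$ via normal monomials, and for each target produce a surjection by exhibiting generators satisfying the relations and then match dimensions. The paper carries this out in Theorem~\ref{blob diagram realization}, Theorem~\ref{mainTheoremSection3} with Corollary~\ref{corollary short presentation}, and Theorem~\ref{theoremsing}. You also correctly flag the normal-form count as the delicate point on the abstract side.

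There is, however, a concrete gap in your treatment of the idempotent truncation $\trunc = e(\bi^\blambda)\,\B\,e(\bi^\blambda)$. The generators are \emph{not} ``the natural $e\,\UU_i\,e$-type elements'': they are the diamond elements $U_i^\blambda = \psi_{U_i}\,e(\bi^\blambda)$, each a large product of KLR $\psi$-generators indexed by a full path interval, together with $\YL_1 = y_k\,e(\bi^\blambda)$ playing the role of $\UU_0$. The paper stresses that these are highly non-trivial expressions, and the proof that they generate (Theorem~\ref{bigtheorem}) is the technical heart of Section~5, requiring a decomposition of reduced expressions for $d(\T)$ into regions $H_i, U_i', U_i$ and systematic KLR computations. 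You also underestimate the generation step in the other two cases: for $\mathbb{NB}_n^{diag}$ the paper gives an explicit algorithm (contact intervals, filling areas between walks) to reach every marked diagram, and for $A_w$ one must show that all double leaves with empty zone~C are reached by combinations of the $U_i$ and the JM-elements $L_i$, which occupies most of Theorem~\ref{mainTheoremSection3}. Two minor corrections: in $\mathbb{NB}_n^{diag}$ the image of $\UU_0$ is the identity with a blob on the first strand, not a blobbed cup--cap; and on the Soergel side the relations $\UU_0^2=0$ and $\UU_1\UU_0\UU_1=0$ follow from the cyclotomic relation~\eqref{rel muere a la izquierda}, not from a Jones--Wenzl identity.
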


As far as we know, the abstract algebra
defined by the common presentation of the three algebras has not appeared before in the literature; it 
is the nil-blob algebra $\mathbb{NB}_n$ of the title of the paper. 

\medskip 
We also provide a 'regular' version of Theorem \ref{teo intro}. On the one hand, consider the \textbf{full} endomorphism algebra $\End_{\cal D} (\underline{w})$. On the other hand, consider the truncated blob algebra 
with respect to a \textbf{regular} weight in the associated alcove
geometry. Our second main theorem is the following.

\begin{theorem} \label{teo intro two}
	These two algebras have a presentation by generators $\UU_0,\UU_1, \ldots , \UU_{n-1} $ and $\mathbb{J}_n $
        subject to the relations \ref{eq oneintro}--\ref{eq fiveintro} and  $\mathbb{J}_n^2=0 $, together with
        relations saying that $\mathbb{J}_n$ is central. 
\end{theorem}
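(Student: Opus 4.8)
The plan is to deduce this from Theorem~\ref{teo intro}. Begin purely abstractly: let $\mathbb{NB}_n^{+}$ be the algebra presented by $\UU_0,\dots,\UU_{n-1},\mathbb{J}_n$ subject to \ref{eq oneintro}--\ref{eq fiveintro}, $\mathbb{J}_n^2=0$ and the centrality of $\mathbb{J}_n$. Using only these last two families of relations, every word in the generators can be rewritten as $a+b\,\mathbb{J}_n$ with $a,b$ in the subalgebra generated by the $\UU_i$; that subalgebra is a quotient of $\mathbb{NB}_n$, since it satisfies \ref{eq oneintro}--\ref{eq fiveintro}, and $\mathbb{NB}_n$ is finite dimensional by Theorem~\ref{teo intro}. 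Hence $\dim_\F\mathbb{NB}_n^{+}\le 2\dim_\F\mathbb{NB}_n<\infty$. On the other hand $\UU_i\mapsto \UU_i\otimes 1$, $\mathbb{J}_n\mapsto 1\otimes x$ defines an algebra surjection $\mathbb{NB}_n^{+}\twoheadrightarrow \mathbb{NB}_n\otimes_\F\F[x]/(x^2)$, whose target has dimension $2\dim_\F\mathbb{NB}_n$; comparing dimensions gives $\mathbb{NB}_n^{+}\cong\mathbb{NB}_n\otimes_\F\F[x]/(x^2)$. So the theorem reduces to showing that both $\End_{\mathcal D}(\underline w)$ and $\trunc$, with $\blambda$ a regular weight, are isomorphic to $\mathbb{NB}_n\otimes_\F\F[x]/(x^2)$.

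Next I would realize $\mathbb{J}_n$ inside each of the two algebras. On the Soergel side the natural candidate is a barbell, i.e.\ a degree-two polynomial generator of the realization, inserted in (say) the leftmost region of the identity diagram of $\underline w$: then $\mathbb{J}_n^2=0$ because the relevant polynomial is $2$-nilpotent in the realization used here, and $\mathbb{J}_n\UU_i=\UU_i\mathbb{J}_n$ is checked by sliding the barbell across each elementary cup--cap appearing in $\UU_i$ using polynomial forcing and the dihedral relations of $\mathcal D$, all correction terms dying in the nil specialization. On the KLR/blob side I would take $\mathbb{J}_n$ to be the image, in the regular idempotent truncation, of (a shift of) the top Jucys--Murphy element of the cyclotomic KLR presentation of $\mathbb{B}_n$ from \cite{PlazaRyom}, where $\mathbb{J}_n^2=0$ and centrality become KLR relation manipulations. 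One then checks that these two elements $\mathbb{J}_n$ correspond under the isomorphism of Theorem~\ref{teo intro}, so the two presentations obtained coincide. Together with Theorem~\ref{teo intro}, these choices produce algebra surjections $\mathbb{NB}_n^{+}\twoheadrightarrow\End_{\mathcal D}(\underline w)$ and $\mathbb{NB}_n^{+}\twoheadrightarrow\trunc$.

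It remains to see that each surjection is injective, for which it is enough to check $\dim_\F\End_{\mathcal D}(\underline w)=\dim_\F\trunc=2\dim_\F\mathbb{NB}_n$. On the Soergel side this is a count of double leaves --- equivalently of the graded rank of the relevant Bott--Samelson bimodule specialized to the realization at hand: the double leaves that factor through $\UU$-type maps contribute $\dim_\F\mathbb{NB}_n$, and the remaining ones pair up with these after multiplication by the barbell. On the blob side it follows from the cellular basis of $\mathbb{B}_n$ and the combinatorics of the chosen idempotent, the factor $2$ reflecting that a regular weight lies in two adjacent alcoves whereas the singular weight of Theorem~\ref{teo intro} lies on a wall. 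A surjection from an algebra of dimension $\le 2\dim_\F\mathbb{NB}_n$ onto one of dimension exactly $2\dim_\F\mathbb{NB}_n$ is an isomorphism, which would finish the proof.

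The hard part will be the centrality of $\mathbb{J}_n$ on the Soergel side: the barbell must be chosen carefully --- a generic polynomial placement does not commute with cups and caps --- and then one must run the push-through computation while tracking that every term produced by polynomial forcing vanishes in the nil realization. The second delicate point is the dimension computation for $\End_{\mathcal D}(\underline w)$, where the double-leaves/graded-rank formula has to be specialized to the relevant realization and one must argue that no additional collapse takes place; the blob-side count should be comparatively routine given \cite{PlazaRyom} and the known cellular structure.
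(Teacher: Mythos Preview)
Your abstract reduction $\mathbb{NB}_n^{+}\cong\mathbb{NB}_n\otimes_\F\F[x]/(x^2)$ is correct and useful, and your overall strategy (surjection plus dimension count) is the same as the paper's. However, your concrete choice of $\mathbb{J}_n$ on the Soergel side fails: a barbell in the \emph{leftmost} region of the identity diagram is zero by the cyclotomic relation \eqref{rel muere a la izquierda}, which kills any polynomial with no constant term in that region. Likewise, the claim that ``$\mathbb{J}_n^2=0$ because the relevant polynomial is $2$-nilpotent in the realization'' is not right: $R=\F[\alpha_\ese]$ is an honest polynomial ring, and nilpotency only comes from the cyclotomic quotient, which applies in the leftmost region and hence would force your element to be $0$ rather than square-zero. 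So you need a different candidate, and identifying it is genuinely the content of the result.

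The paper takes $J_n:=L_1+\cdots+L_n$, the sum of the JM-elements (barbells placed successively in each region). Neither $J_n^2=0$ nor centrality is automatic: the former uses the quadratic relations \eqref{JorgeDavid} among the $L_i$, and centrality is checked via the local identity $[U_i,L_i+L_{i+1}+L_{i+2}]=0$, proved by a specific polynomial-forcing computation. The dimension $\dim\tilde A_w=2\binom{2n}{n}$ is obtained by exhibiting an explicit bijection between double leaves with empty and with nonempty ``zone~C'', not by a generic graded-rank argument. On the blob side the extra generator is $\YL_{K+1}=y_i e(\bi^\blambda)$ for $i$ in the last (non-full) path interval; its centrality and nilpotency are again nontrivial KLR computations. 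In short, your framework is sound, but the specific $\mathbb{J}_n$ you propose is zero, and the correct element and the verifications that accompany it are the substance of the theorem rather than routine checks.
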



\medskip
We would like to point out 
that our results are by no means consequences of general principles.
Indeed, in general a presentation for an associative algebra $ \mathcal A$ does not automatically induce 
a presentation for an (idempotent truncated) subalgebra of $ \mathcal A $,
and in fact our generators for the idempotent truncation of $\mathbb{B}_n$
are highly non-trivial expressions in the KLR-generators for $\mathbb{B}_n$. Similarly, 
a presentation for a category $ \mathcal C $ does not automatically induce a 
presentation for $\mbox{End}_{\mathcal{C}}(M)$, where $ M $ is an object of $ \mathcal C$, 
and in fact our generators for 
$\mbox{End}_{\mathcal{D}}(\underline{w})$ are non-trivial expressions in  
Elias and Williamson's generators for $ \cal D $. By similar reasons, our results do not follow from the work of Bowman, Cox and Hazi \cite{BCH}, since we cannot obtain in any direct way  a presentation for the relevant algebras from their isomorphism's theorem.

\medskip
For type  $\tilde{A}_1$ we consider Theorem \ref{teo intro} and Theorem \ref{teo intro two} as a
satisfactory answer to the question raised at the end of the previous section: understanding the multiplicative structure of the diagrammatic Soergel category and its KLR counterpart.  In this setting, we feel that similar presentations for the analogous  algebras in type $\tilde{A}_n$  would give us some hope of being able to calculate the  $p$-Kazhdan-Lusztig basis. We are not claiming, of course, that this will be enough to compute the $p$-Kazhdan-Lusztig basis, 
but it would be a significant step towards this goal.  

\medskip
We conclude this section by  highlighting the simplicity of the relations of the nil-blob algebra which 
should be contrasted with the much more complicated
relations in the definitions of the Soergel and KLR diagrams. In short,
once the correct point of view is found, 
complicated diagrams manipulations become easier ones.
The above give us some reasons to be optimistic with respect to a possible generalization of our results for type $\tilde{A}_n$. We expect to consider this problem elsewhere in the future.

\subsection{Structure of the paper}
Let us briefly indicate the layout of the paper.
Throughout the paper we fix a ground field
$\mathbb{F}$ with $\mbox{char}(\mathbb{F})\neq 2$.
In the following section 2 we introduce the
main object of our paper, namely the nil-blob algebra $\mathbb{NB}_n$.
 We also introduce the extended nil-blob algebra $\widetilde{\mathbb{NB}}_n$
by adding an extra generator $\mathbb{J}_n$ which is central in $\widetilde{\mathbb{NB}}_n$.
We next go on to prove that $\mathbb{NB}_n$ is a diagram algebra where the diagram
basis is the same as the one used for the original blob algebra, but where the multiplication rule is modified.
The candidates for the diagrammatic counterparts of the generators $ \UU_{i}$'s are
the obvious ones, but the fact 
that these diagrams generate the diagram algebra is not so obvious. We establish it 
in Theorem \ref{blob diagram realization}. From this Theorem we obtain
the dimensions of $\mathbb{NB}_n$ and $\widetilde{\mathbb{NB}}_n$ and we also deduce from it that 
$\mathbb{NB}_n$ is a cellular algebra in the sense of Graham and Lehrer. Finally, we indicate that
this cellular structure is endowed with a family of JM-elements, in the sense of Mathas.

\medskip
Section 3 of our paper is devoted to the diagrammatic Soergel category $ \mathcal D $.
We begin the section by recalling the relevant notations and definitions concerning $ \mathcal D $.
This part of the section is valid for general Coxeter systems $ (W,S) $, but we soon
focus on type $\tilde{A}_1$, with $ S = \{ {\color{red} s}, {\color{blue} t} \} $.
The objects of $ \mathcal D $ are expression over $ S $. We fix the expression
$ \underline{w}:= \underbrace{{\color{red} s} {\color{blue} t}
{\color{red} s}   \ldots }_{n  \scriptsize \mbox{-times}}$ and consider throughout the section
the corresponding endomorphism algebra $ \tilde{A}_{w}:={\rm End}_{\mathcal D}(\underline{w})$.
This is the second diagram algebra of our paper. We find diagrammatic counterparts of the $ \UU_{i}$'s and
$\mathbb{J}_n$ and
obtain from this homomorphisms from
$\mathbb{NB}_n$ and $\widetilde{\mathbb{NB}}_n$ to $ \tilde{A}_{w}$.
The diagram basis for $ \tilde{A}_{w}$ is 
Elias and Williamson's diagrammatic version of Libedinsky's light leaves
and it is a cellular basis for $ \tilde{A}_{w}$. 
For general $(W,S)$ the combinatorics of this basis is quite complicated, 
but in type $\tilde{A}_1$ it is much easier, in particular there is a non-recursive
description of it, due to Libedinsky. Using this we obtain in Theorem {\ref{mainTheoremSection3}}
and Corollary \ref{corollary short presentation} the main results of 
this section, stating that there is a diagrammatically defined subalgebra $ {A}_{w}$ of
$ \tilde{A}_{w}$ and that the above homomorphisms induce isomorphisms $ \mathbb{NB}_n \cong {A}_{w} $
and $\widetilde{\mathbb{NB}}_n \cong \tilde{A}_{w}$. Similarly to the situation in section 2, 
the most difficult part of these results is the fact that the diagrammatic counterparts of the 
$ \UU_{i}$'s and $\mathbb{J}_n$ generate the algebras in question. The proof of this generation result relies on
long calculations with Soergel calculus, and is quite different from the proof of the generation result of the
previous section 2.

\medskip
In the rest of the paper, that is in sections 4, 5 and 6, we consider the idempotent
truncation of the blob algebra. This is technically the most difficult part of our paper, but in fact we first discovered
our results in this setting. 

\medskip
In section 4 we fix the notation and give the necessary background for the KLR-approach to
the representation of the blob algebra. In particular we recall the graded cellular basis for
$\mathbb{B}_n$, introduced in \cite{PlazaRyom}, the relevant alcove geometry, which is of type $\tilde{A}_1$,
and the
idempotent truncated subalgebra $ \trunc $ of $\mathbb{B}_n$.
This is the diagram algebra that is studied in the rest of the paper. 
We use the alcove geometry to distinguish between
the regular and the singular cases for $\trunc$. We also recall the indexation of the
cellular basis in terms of paths in this geometry. Finally, in
Algorithm \ref{algorithm paths} and Theorem \ref{Algorithm}
we explain how to obtain
reduced expressions for the group elements associated with these paths, in the symmetric group $ \Si_n $.
We remark that Algorithm \ref{algorithm paths}
has certain flexibility built in, which is of importance for the following sections.

\medskip
In section 5 we consider $ \trunc $ in the singular case. The main result is our Theorem 
\ref{theoremsing}, establishing an isomorphism $ \trunc \cong \mathbb{NB}_n$.
The idea behind this isomorphism Theorem is essentially the same as the idea behind the previous two
isomorphism Theorems, but once again the technical details are very different.
The diagrammatic counterparts of the generators $ \UU_1, \ldots, \UU_{n-1} $ are
here the 'diamond' diagrams found recently by Libedinsky and the second author in \cite{LiPl}, whereas
the diagrammatic counterpart of $  \UU_0$ is given directly by the KLR-type presentation. 
Once again, the most difficult part of the isomorphism Theorem is the fact that
these elements actually generate the whole diagram algebra.
We obtain this fact by showing that the graded cellular basis elements for $\trunc $ can all be written in terms of
them. This involves calculations with the KLR-relations.


\medskip
Finally, in section 6 we consider the regular case which is slightly more
complicated than the singular case. Our main result is here Theorem
\ref{main theorem regular}, establishing the isomorphism $ \trunc \cong 
\widetilde{\mathbb{NB}}_n $. The proof involves more calculations with the KLR-relations, in the same spirit
as the ones in section 5.

\section{The nil-blob algebra}
Throughout the paper we fix a field $\mathbb{F}$ with $\mbox{char}(\mathbb{F})\neq 2$.
All our algebras are associative and unital $\mathbb{F}$-algebras.

\medskip
In this section we introduce and study the basic properties of the nil-blob algebra.
Let us first recall the definition of the classical blob algebra $\mathbb{B}_n$. It was introduced by Martin and Saleur in
\cite{Mat-Sal}. We fix $ q \in \mathbb{F}^{\times} $ and define for any $ k \in \Z$
the usual Gaussian integer
\begin{equation}\label{Gaussian}
[k] := q^{ k-1} +  q^{ k-3} +  \ldots  +q^{- k+3} +  q^{ -k+1}.
\end{equation}

\begin{definition}\label{definition original blob}
Let $ m \in \Z$ with $[m]\neq 0$. 
  The blob algebra $\mathbb{B}_n(m) = \mathbb{B}_n$ is the algebra generated by $\VV_0,\VV_1, \ldots , \VV_{n-1} $
  subject to the relations 
	\begin{align}
\label{eq one}	\VV_i^2	& =  \sign [2] \VV_i,  &   &  \mbox{if }  1\leq i < n; \\
\label{eq two}	\VV_i\VV_j\VV_i & =\VV_i,   &  & \mbox{if } |i-j|=1 \mbox{ and } i,j >0;  \\
\label{eq three}	\VV_i\VV_j& = \VV_j\VV_i ,    &  &   \mbox{if } |i-j|>1 ;\\
\label{eq four}	\VV_1\VV_0\VV_1& =  [m-1] \VV_1,   &  &   \\
\label{eq five}			\VV_0^2& = -[m] \VV_0.   &  &  
	\end{align}
\end{definition}

An important feature of $\mathbb{B}_n$ is the fact that it is a diagram algebra. 
The diagram basis consists of blobbed (marked) Temperley-Lieb diagrams on $ n $ points where
only arcs exposed to the left side of the diagram may be marked and at most once.
The multiplication  
{\color{black}{$ D_1 D_2 $ of two diagrams $ D_1 $ and $ D_2 $ 
is given by concatenation of them, with $ D_1 $ on top of $ D_2$.}} This concatenation process
may give rise to internal 
marked or unmarked loops, as well as arcs with more than one mark.  The internal unmarked loops are removed from a diagram
by multiplying it by $  \sign [ 2]$, whereas the internal marked loops are removed
from a diagram by multiplying it by $ -[m-1]/[m]$. Finally, any diagram with $r>1$ marks on an arc is set equal to the same diagram with the $(r-1)$ extra marks removed.
These marked Temperley-Lieb diagrams are called blob diagrams.
In Figure \ref{fig:an example with $ n=20$} we give an example with $ n=20$.
{\color{black}{The color red in Figure \ref{fig:an example with $ n=20$} is only used to indicate those arcs that are not exposed to the left side of the diagram and therefore
cannot be marked. For any of the black arcs the blob is optional.}}


\begin{figure}[h]
\raisebox{-.5\height}{\includegraphics{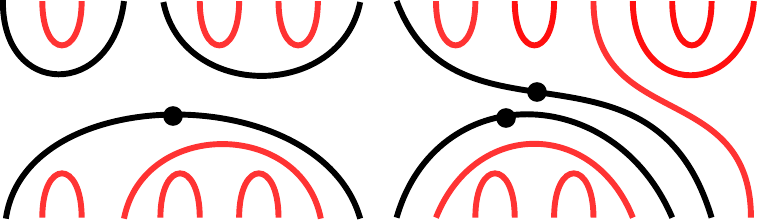}}
\centering
\caption{Blob diagram example with $n=20$.}
\label{fig:an example with $ n=20$}
\end{figure}

\medskip
Motivated in part by $\mathbb{B}_n$ we now define the nil-blob algebra $\mathbb{NB}_n$ and
its extended version $\widetilde{\mathbb{NB}}_n$. They are the main objects of
study of this paper.
\begin{definition} \label{definition blob}
  The nil-blob algebra $\mathbb{NB}_n$ is the algebra on the generators $\UU_0,\UU_1, \ldots , \UU_{n-1} $
  subject to the relations 
	\begin{align}
\label{eq one}	\UU_i^2	& =  \sign 2 \UU_i,  &   &  \mbox{if }  1\leq i < n; \\
\label{eq two}	\UU_i\UU_j\UU_i & =\UU_i,    &  & \mbox{if } |i-j|=1 \mbox{ and } i,j >0;  \\
\label{eq three}	\UU_i\UU_j& = \UU_j\UU_i,    &  &   \mbox{if } |i-j|>1; \\
\label{eq four}	\UU_1\UU_0\UU_1& =  0,  &  &   \\
\label{eq five}			\UU_0^2& = 0.   &  &  
	\end{align}
The extended nil-blob algebra $\widetilde{\mathbb{NB}}_n$ is the algebra obtained from $\mathbb{NB}_n$
by adding an extra generator $\mathbb{J}_n$ which is central and satisfies $ \mathbb{J}_n^2=0 $. 
\end{definition}

\begin{remark} \rm
  Note that the sign in \ref{eq one} is unimportant. Indeed, replacing
  $ \UU_i $ with $- \UU_i $ we get a presentation as in Definition \ref{definition blob} but with the sign in 
\ref{eq one} positive.
\end{remark}

It is known from \cite{PlazaRyom} that $\mathbb{B}_n$ is a $ \Z$-graded algebra.
This is also the case for $\mathbb{NB}_n$ and $\widetilde{\mathbb{NB}}_n$ but is actually much easier to prove.
\begin{lemma}\label{referenceLemma}
  The rules $ {\rm deg}(\UU_i) = 0 $ for $ i > 0 $ and $ {\rm deg}(\UU_0) ={\rm deg}(\mathbb{J}_n) = 2 $ 
  define (positive) $ \Z$-gradings on $\mathbb{NB}_n$ and $\widetilde{\mathbb{NB}}_n$.
\end{lemma}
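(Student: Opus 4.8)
The plan is to invoke the standard mechanism by which a presentation gives rise to a graded algebra: if the generators of a free associative algebra are assigned degrees and every defining relation is homogeneous for the resulting grading, then the relation ideal is a homogeneous ideal and the quotient inherits the grading. Thus the entire proof reduces to inspecting the relations of Definition \ref{definition blob} one at a time.

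First I would turn the free $\F$-algebra on the symbols $\UU_0,\UU_1,\ldots,\UU_{n-1}$ --- together with $\mathbb{J}_n$ in the extended case --- into a $\Z$-graded algebra by setting $\deg(\UU_i)=0$ for $i>0$ and $\deg(\UU_0)=\deg(\mathbb{J}_n)=2$ and extending the grading multiplicatively, so that a monomial acquires degree equal to twice the total number of occurrences of $\UU_0$ and $\mathbb{J}_n$ in it. Then I would check that each defining relation equates two elements lying in a single homogeneous component: relations \ref{eq one} and \ref{eq two}, together with the instances of \ref{eq three} in which both indices are positive, involve only degree-zero monomials; the remaining instances of \ref{eq three}, as well as the relations expressing that $\mathbb{J}_n$ commutes with each $\UU_i$, equate monomials whose common degree is the sum of the degrees of the two generators appearing in them; and relations \ref{eq four}, \ref{eq five} and $\mathbb{J}_n^2=0$ each have a single homogeneous monomial on the left --- of degrees $2$, $4$ and $4$ respectively --- and $0$ on the right. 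The last point is the only one that deserves a word: those relations count as homogeneous precisely because $0$ lies in every homogeneous component, and this is exactly where the nil-blob case is easier than the classical one, since the relations $\VV_1\VV_0\VV_1=[m-1]\VV_1$ and $\VV_0^2=-[m]\VV_0$ of $\mathbb{B}_n$ fail to be homogeneous for the analogous assignment of degrees. Since all defining relations are homogeneous, the ideal they generate is a homogeneous ideal, and therefore $\mathbb{NB}_n$ and $\widetilde{\mathbb{NB}}_n$ inherit $\Z$-gradings in which the images of the generators have the stated degrees.

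Finally I would note that these gradings are positive: each of $\mathbb{NB}_n$ and $\widetilde{\mathbb{NB}}_n$ is spanned over $\F$ by $1$ together with the products of the generators, and every such product has degree $2m$ for some $m\in\No$, so the components of negative degree vanish. There is no real obstacle in this argument; its only content is the routine homogeneity check of the preceding paragraph.
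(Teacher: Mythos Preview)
Your proof is correct and follows exactly the same approach as the paper's own proof, which simply states that one checks easily that the relations are homogeneous with respect to $\deg$. You have merely spelled out that check in full detail and added the observation about positivity, which the paper leaves implicit.
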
  
\begin{proof}
One checks easily that the relations are homogeneous with respect to ${\rm deg} $.
\end{proof}  

Our first goal is to show that $\mathbb{NB}_n$ 
is a diagram algebra with the same diagram basis as for $\mathbb{B}_n$,
but with a slightly different multiplication rule.
Indeed, in $\mathbb{NB}_n$ 
an internal unmarked loop is removed from a diagram
by multiplying it with $  \sign 2 $, whereas diagrams in $\mathbb{NB}_n$ with a marked loop are set to zero.
Moreover, in $\mathbb{NB}_n$ diagrams with a multiple marked arc are also set equal to zero.
{\color{black}{This defines an associative multiplication}} with 
identity element given as 


\begin{equation} \! \!\! \!\! \! \! \!
  1 = \raisebox{-.5\height}{\includegraphics{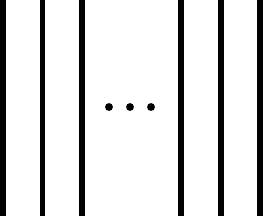}}
\end{equation}




That $\mathbb{NB}_n$ has this diagram realization follows from the results presented in
the Appendix of \cite{blob positive}, 
but for the reader's convenience we here present a different more self-contained proof of this fact,
avoiding the theory of projection algebras.
Let us denote by $\mathbb{NB}_n^{diag}$ the diagram algebra indicated above, with basis given by blob diagrams
and multiplication rule as explained in the previous paragraph. We then prove the following Theorem:
\begin{theorem}\label{blob diagram realization}
There is an isomorphism between $\mathbb{NB}_n$ and $\mathbb{NB}_n^{diag}$ induced by 


\begin{equation}{\label{isomorphism}} \! \!\! \!\! \! \! \!
  \UU_0 \mapsto \quad \raisebox{-.5\height}{\includegraphics{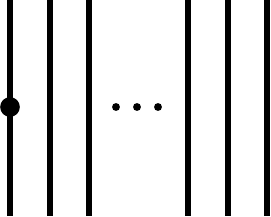}} \,,  
   \, \,\,\, \,
   \UU_i \mapsto \quad \raisebox{-.5\height}{\includegraphics{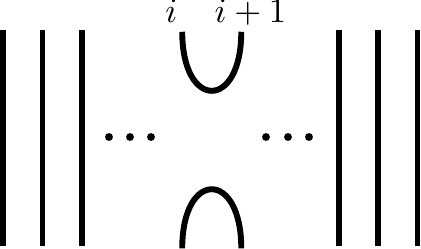}} 
\end{equation}

\noindent
In particular,
$\mathbb{NB}_n$ has the same dimension as $\mathbb{B}_n$, in other words
\medskip 
\begin{equation}
  \dim_{\mathbb{F}}(\mathbb{NB}_n) = \binom{2n}{n}.
\end{equation}	
\end{theorem}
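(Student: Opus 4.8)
The plan is to build the isomorphism in three stages: first produce a well-defined algebra homomorphism $\varphi\colon \mathbb{NB}_n \to \mathbb{NB}_n^{diag}$ by checking that the diagrams assigned to $\UU_0,\UU_1,\ldots,\UU_{n-1}$ in \eqref{isomorphism} satisfy the defining relations \eqref{eq one}--\eqref{eq five}; second show that $\varphi$ is surjective; and third show it is injective by a dimension count. The first step is routine Temperley--Lieb diagram calculus: the $\UU_i$ for $i>0$ are the usual TL cup-cap generators shifted by one strand, so $\UU_i^2$ produces an unmarked internal loop and hence equals $-2\UU_i$ in $\mathbb{NB}_n^{diag}$, while $\UU_i\UU_j\UU_i=\UU_i$ for $|i-j|=1$ and commutativity for $|i-j|>1$ are the standard TL relations with no loops created. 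For the generator $\UU_0$ one checks that $\UU_0^2$ creates a doubly-marked arc (two blobs on the leftmost strand), which is set to zero, and that $\UU_1\UU_0\UU_1$ produces a diagram with a marked internal loop, also zero. I would also verify that the displayed identity diagram is genuinely a two-sided unit and that the stated reduction rules (unmarked loop $\mapsto -2$, marked loop $\mapsto 0$, multiple marks $\mapsto 0$) give a well-defined associative multiplication — though, as the excerpt notes, this associativity can be cited from the Appendix of \cite{blob positive}, so the self-contained check is optional.

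For surjectivity, I would argue that every blob diagram on $n$ points is a product of the images of the $\UU_i$. The classical fact is that the ordinary (unmarked) Temperley--Lieb diagrams on $n$ strands are generated by the cup-cap generators; adding the blob, any marked arc can be produced by first creating the corresponding unmarked cup-cap configuration and then applying $\UU_0$ (composed with suitable $\UU_i$'s to transport the mark to the desired left-exposed arc) — this is exactly the combinatorial content underlying the original blob algebra's diagram presentation, and it transfers verbatim since the only difference between $\mathbb{B}_n$ and $\mathbb{NB}_n^{diag}$ is the \emph{value} assigned to closed loops and repeated marks, not which diagrams are generated. So $\varphi$ is surjective.

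Injectivity is then the crux, and I expect it to be the main obstacle. The clean route is: $\dim_{\mathbb F}\mathbb{NB}_n^{diag}=\binom{2n}{n}$ because the blob diagram basis is the same finite set as for $\mathbb{B}_n$, whose dimension is classically $\binom{2n}{n}$; so it suffices to prove $\dim_{\mathbb F}\mathbb{NB}_n \le \binom{2n}{n}$, i.e. that the $\binom{2n}{n}$ diagrams span $\mathbb{NB}_n$ \emph{as presented}. Concretely I would fix, for each blob diagram $D$, a chosen word $w_D$ in the $\UU_i$ mapping to $D$ under $\varphi$, and then show by induction (on the length of an arbitrary word in the generators, using the relations \eqref{eq one}--\eqref{eq five} to perform reductions) that every monomial in the $\UU_i$ is an $\mathbb F$-scalar multiple of some $w_D$. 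This is a strada-type normal-form / rewriting argument: one pushes a word toward a canonical form, and whenever a relation forces a loop-type simplification the relation \eqref{eq one} introduces the scalar $-2$ and relations \eqref{eq four}, \eqref{eq five} kill the monomial outright, mirroring the diagram reductions. The delicate point is confluence — showing the normal form is independent of the order of reductions — but here one gets it for free: $\varphi$ sends distinct normal-form words to distinct basis diagrams, so the normal forms are automatically linearly independent in the image, forcing $\varphi$ to be injective once the spanning set has size $\le\binom{2n}{n}$. Thus $\varphi$ is an isomorphism, and comparing dimensions gives $\dim_{\mathbb F}(\mathbb{NB}_n)=\binom{2n}{n}$ as claimed.
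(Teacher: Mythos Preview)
Your three-stage plan (check relations $\Rightarrow$ homomorphism, surjectivity, dimension bound $\Rightarrow$ injectivity) matches the paper's exactly, and your dimension-bound step is the same idea: the paper exhibits an explicit spanning set $\mathcal{NM}_n$ of ``normal monomials'' $\UU_{IJ}$ (indexed by certain pairs of increasing sequences, in bijection with the positive fully commutative elements of the type $B_n$ Coxeter group) and cites $|\mathcal{NM}_n|=\binom{2n}{n}$, rather than working with abstract chosen lifts $w_D$, but the content is identical.

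There is, however, a gap in your surjectivity argument. You assert that generation of blob diagrams by the $\UU_i$ ``transfers verbatim'' from $\mathbb{B}_n$ because only the loop and repeated-mark \emph{values} change. This is not automatic: in $\mathbb{NB}_n^{diag}$ any concatenation that produces a marked internal loop or a doubly-marked arc is \emph{zero}, not a nonzero scalar multiple of a simpler diagram. So a word in the $\UU_i$'s that hits a given blob diagram $D$ in $\mathbb{B}_n$ may, if its intermediate simplifications pass through such a configuration, collapse to zero in $\mathbb{NB}_n^{diag}$ and fail to produce $D$ there. The paper takes this point seriously and devotes most of the proof to it: it builds an explicit algorithm (two-column standard tableaux, their associated walks, and the ``contact intervals'' where the walk touches the $0$-line) that writes each blob diagram as a product of $\UU_i$'s which never creates a marked loop or repeated mark. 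In particular it exhibits the expression
\[
(\UU_1\UU_3\cdots\UU_{2i+1})\,\UU_0\,(\UU_2\UU_4\cdots\UU_{2i+2})(\UU_1\UU_3\cdots\UU_{2i+1})
\]
as the device for placing a single blob on a prescribed left-exposed arc, applied one contact interval at a time. Your phrase ``composed with suitable $\UU_i$'s to transport the mark'' points in the right direction, but the substance of the proof is precisely constructing such a transport that avoids the nilpotent relations.
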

\begin{proof}
 One easily checks that the diagrams in {\ref{isomorphism}} satisfy the
   relations for the $ \UU_i$'s in Definition \ref{definition blob} and so at least
   {\ref{isomorphism}} induces an algebra homomorphism $ \varphi: \mathbb{NB}_n \rightarrow
 \mathbb{NB}_n^{diag} $.   

 Although it is not possible to determine the dimension of $ \mathbb{NB}_n $ directly, we can still get an
 upper bound for it using normal forms as follows. For $ 0 \le  j \le i \le n-1 $ we define
 \begin{equation} \UU_{ i j} := \UU_i \UU_{i-1} \cdots   \UU_{j+1} \UU_{j}  \in  \mathbb{NB}_n.
 \end{equation}   
 We consider ordered pairs  $ (I,J) $ formed by sequences of  numbers in $ \{ 0,1,2,\ldots, n-1 \} $
 of the same length $ k $ such that 
 $ I = (i_1, i_2 , \ldots, i_k ) $ is strictly increasing, such that $ J = (j_1, j_2 , \ldots, j_k ) $ 
 is strictly increasing too, except that there may be repetitions of $ 0$, and such that $ j_s \le i_s $ for
 all $1\leq  s \leq k $. For such pairs we define 
 \begin{equation}
\UU_{ I J} := \UU_{ i_1  j_1} \UU_{ i_2  j_2} \cdots \UU_{ i_k j_k}.
 \end{equation}
 A monomial of this form is called  \emph{normal}. We denote by $\mathcal{NM}_{n}$ the set formed by all normal monomials in $\mathbb{NB}_n$ together with $1$. For
 {\color{black}{{$n=2$}}}
 we have
 \begin{equation}
\mathcal{NM}_{1} =\{ 1, \UU_{ 0},  \UU_{ 1}, \UU_{ 1} \UU_{ 0}, \UU_{ 0} \UU_{ 1}, \UU_{ 0} \UU_{ 1} \UU_{ 0} \},  
 \end{equation}
 whereas for 
 {\color{black}{{$n=3$}}}
 \begin{equation}
 \begin{array}{l}
 \mathcal{NM}_{2} =\{  1, \UU_{ 0}, \UU_{ 1} \UU_{ 0} , \UU_{ 1} ,  \UU_{ 2}  \UU_{ 1} \UU_{ 0} , \UU_{ 2} \UU_{ 1} , \UU_{ 2} ,
   \UU_{ 0} \UU_{ 1}  \UU_{ 0} , \UU_{ 0} \UU_{ 1},
   \UU_{ 0} \UU_{ 2} \UU_{ 1} \UU_{ 0},  \UU_{ 0} \UU_{ 2} \UU_{ 1}, \UU_{ 0} \UU_{ 2} , 
   \UU_{ 1} \UU_{ 0} \UU_{ 2} \UU_{ 1} \UU_{ 0},  \\ \qquad \qquad   \UU_{ 1} \UU_{ 0} \UU_{ 2} \UU_{ 1} , 
   \UU_{ 1} \UU_{ 0} \UU_{ 2}, \UU_{ 1} \UU_{ 2} , 
      \UU_{ 0} \UU_{ 1} \UU_{ 0} \UU_{ 2} \UU_{ 1} \UU_{ 0},
   \UU_{ 0} \UU_{ 1} \UU_{ 0} \UU_{ 2} \UU_{ 1},
         \UU_{ 0} \UU_{ 1} \UU_{ 0} \UU_{ 2},  \UU_{ 0} \UU_{ 1}  \UU_{ 2} \}.
 \end{array}
 \end{equation} 
In general, using the relations given in Definition \ref{definition blob} one easily checks that
$\mathcal{NM}_{n}$ spans $ \mathbb{NB}_n $.
Indeed, we have that {\color{black}{{$ \{ \UU_0, \UU_1, \ldots, \UU_{n-1} \} \subseteq \mathcal{NM}_{n}$}}}
and that
any product of the form $ \UU_i \UU_{ I J}$
can be written as a linear combination of elements of  $\mathcal{NM}_{n}$.
On the other hand, the set    $\mathcal{NM}_{n}$ is in bijection with the set of positive fully commutative
 elements of the Coxeter group of type $ B_n $. In particular, the cardinality of  $\mathcal{NM}_{n}$ is known to be 
$ \binom{2n}{n} $, see for example \cite{AlHar-Gon-Pl}. Hence we deduce that 
\begin{equation}
\dim  \mathbb{NB}_n  \le \dim  \mathbb{NB}_n^{diag}  
\end{equation}  
since $ \dim \mathbb{NB}_n^{diag}  =  \dim \mathbb{B}_n = \binom{2n}{n}$.
Thus, in order to show the Theorem we must check that $ \varphi $ is surjective, or equivalently that
the diagrams in \ref{isomorphism} generate $ \mathbb{NB}_n^{diag}$.

\medskip
Let us first focus on the `Temperley-Lieb part' of $ \mathbb{NB}_n^{diag}$, that is
the subalgebra of $ \mathbb{NB}_n^{diag}$ consisting of the linear combinations of 
Temperley-Lieb diagrams, the unmarked diagrams from $ \mathbb{NB}_n^{diag}$.
There is a concrete algorithm for obtaining any
Temperley-Lieb diagram as a product of the $ \varphi(\UU_i ) $'s, where $i > 0$,
and so these diagrams generate the subalgebra. Although it is well known, we 
still explain how it works since we need a small variation of it. 

\medskip
In the following, whenever $  \UU  \in \mathbb{NB}_n $ we shall often write
$  \UU \in  \mathbb{NB}_n^{diag}$ for $ \varphi(\UU ) $. This should not cause confusion.

\medskip

Let $ D $ be a Temperley-Lieb diagram on $ n  $ points with $ l $ through lines and let
$ k = (n-l)/2$. We 
associate with $ D $ two standard tableaux $ top(D) $ and $ bot(D) $ of shape $ \lambda = (1^{l+k}, 1^k) $
as follows. For $ top(D) $ we go through the upper points of $ D $, placing $ 1 $ in position $ (1,1) $ of 
$ top(D) $, then $ 2 $ in position $ (1,2) $ if $ 2$ is the right end point of a horizontal arc, otherwise in
position $ (2,1)$, and so on recursively. Thus, having placed
$ 1, 2 \ldots, i-1 $ in $ top(D) $ we place 
$ i $ in the first vacant position of the second column if 
$ i $ is the right end point of a horizontal arc, otherwise in the first vacant position of the first column.
The standard tableau $  bot(D) $ is constructed the same way, using the bottom points of $ D$.
{\color{black}{An example of
    this process is illustrated in the Figures \ref{fig:blobD} and \ref{fig:tableauxex1}.}}

\begin{figure}[h]
  \centering
   $D:=$ \, \, \, \,  \raisebox{-.5\height}{\includegraphics{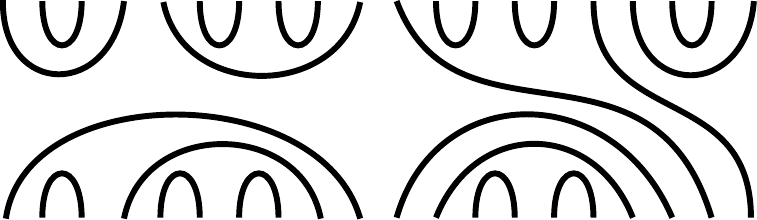}}
  \caption{A Temperley-Lieb diagram. }
\label{fig:blobD}
\end{figure}
\noindent

\begin{figure}[h]
  \centering
 $ top(D)$\, =\, \gyoung(;1;3,;2;4,;5;7,;6;9,;8;<10>,;\once;<13>,;<12>;<15>,;<14>;<19>,;<16>;<20>,;<17>,;<18>)\, 
\, \, \, \, \, \, \, \, \, \, \, \,
    $bot(D)$\, =\, \gyoung(;1;3,;2;6,;4;8,;5;9,;7;<10>,;\once;<14>,;<12>;<16>,;<13>;<17>,;<15>;<18>,;<19>,;<20>)
    \caption{The tableaux $   top(D) $ and $   bot(D)$ associated with the diagram $D$ in Figure \ref{fig:blobD}.}
\label{fig:tableauxex1}
\end{figure}

\medskip
It is well known, and easy to see, that the map $ D \mapsto (top(D), bot(D)) $ is a bijection between
Temperley-Lieb diagrams and pairs of two column standard tableaux of the same shape.

\medskip
For $ \bT $ any Young tableau and $ 1 \le k \le n $  we define $ \bT \! \mid_k$ as the restriction of $\bT$
 to the set $ \{1,2, \ldots, k \}$. 
We may then consider a two-column standard tableaux $ \bT $  
as a sequence of pairs $ (i, {\rm{diff}}( \bT \! \mid_i) ) $ for $ i=0,1,2 \ldots, n $, 
where $ {\rm{diff}}( \bT \! \mid_i) $ is the difference between the lengths of the first and the second column of the
underlying shape of $  \bT \! \mid_i $ (here $ i=0$ corresponds to the pair $ (0,0)$).
We then plot these pairs in a coordinate system, using matrix convention for
the coordinates.

This may be viewed as a walk in this coordinate system, where at level $ i $ we step once to the left
if $ i+1 $ is in the second column of $ \bT $ and otherwise once to the right. In Figure
\ref{fig:two walks}
we have indicated the 
corresponding walks for $ top(D) $ and $ bot(D) $ where $ D $ is as above in Figure \ref{fig:blobD}. 

\medskip

A Temperley-Lieb diagram $ D $ is given uniquely by $ (top(D), bot(D)) $ and so we  
introduce{\color{black}{ {\it half-diagrams} $ T(D) $ and $ B(D) $ corresponding to $ top(D) $ and 
$  bot(D) $.}}
For example the half-diagrams $ T(D) $ and $ B(D) $ for
{\color{black}{$ D $ in Figure \ref{fig:blobD} are given below in Figure \ref{fig:half diagramsA}}}

\begin{figure}[h]
  \centering
 \! \! \! \! \! \! \! 
$   top(D)  =  \, \, \,\, $\raisebox{-.5\height}{\includegraphics[scale=0.6]{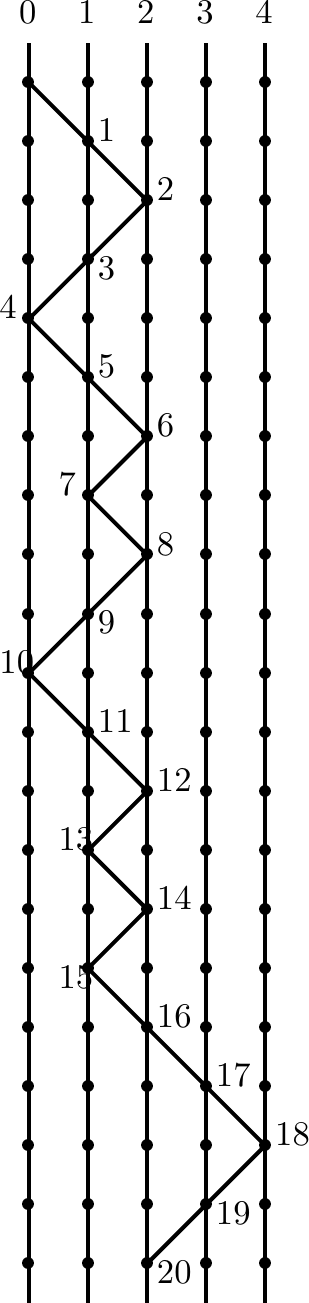}}
 \, \, \, \! \! \! \! 
$  bot(D)  = \, \, $ \raisebox{-.5\height}{\includegraphics[scale=0.6]{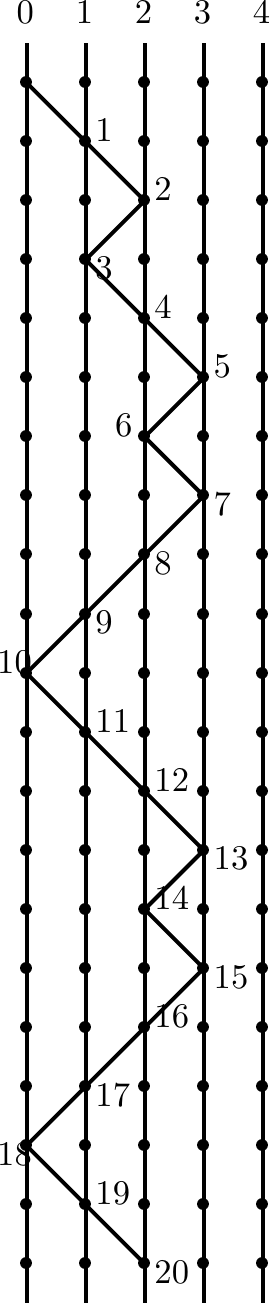}}
  \caption{The two walks associated with $D$ of Figure \ref{fig:blobD}. }
\label{fig:two walks}
\end{figure}


\vspace{4cm}


\begin{figure}[h]
  \centering
  $T(D)=\, \,  $\raisebox{-.5\height}{\includegraphics[scale=0.7]{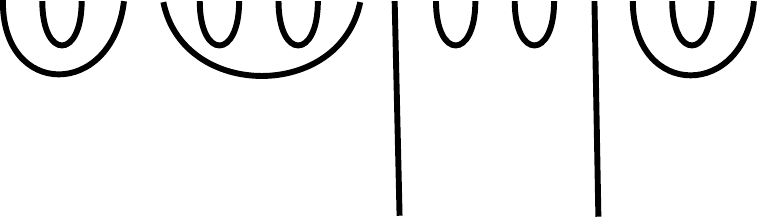}}  \, \,\, \, \, \, \, \,\, \,
  $ B(D) = \, \, \, \,  $\raisebox{-.5\height}{\includegraphics[scale=0.7]{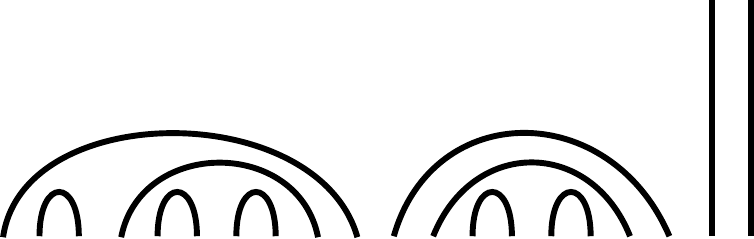}}
  \caption{The diagrams $ T(D) $ and $ B(D) $ for $D$ as in Figure \ref{fig:blobD}.}
\label{fig:half diagramsA}
\end{figure}
\noindent

\vspace{0.5cm}
Recall that for any two column partition $ \lambda $ there is unique maximal $ \lambda$-tableau $ \bT^{\lambda} $
under the dominance order. It is constructed as the row reading of $ \lambda$. For example, for 
$ \lambda = (1^{11},1^9 ) $ we give in {\color{black}{Figure \ref{fig:tableauxex} the tableau
$ \bT^{\lambda} $ and its corresponding bottom half-diagram.}}


\begin{figure}[h]
  \centering
$  \bT^{\lambda}\, =\,\, $\gyoung(;1;2,;3;4,;5;6,;7;8,;9;<10>,;\once;<12>,;<13>;<14>,;<15>;<16>,;<17>;<18>,;<19>,;<20>) \, , \, \, \, 
$     B^{\lambda} = $ \, \, \, \,       \raisebox{-.5\height}{\includegraphics[scale=1]{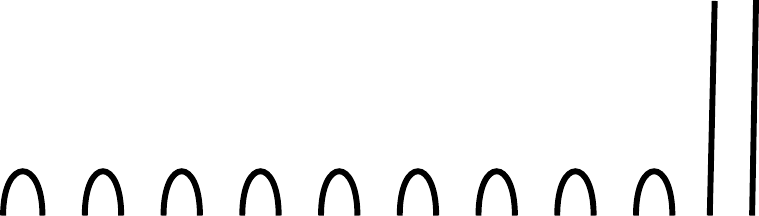}}
  \caption{The tableau $ \bt^{\lambda}$ and its associated bottom half-diagram for $ \lambda = (1^{11}, 1^9)$.}
\label{fig:tableauxex}
\end{figure}

\medskip
{\color{black}{For the same $ \lambda $}}, the walk corresponding to $ \bT^{\lambda}$ is 
indicated {\color{black}{three times in Figure \ref{fig:walkisalindicated}
where in the middle and on the right 
we have colored it red}} and have combined it with
the walks for $ top(D) $ and $bot(D) $ coming from Figure \ref{fig:two walks}. 

\begin{figure}[h]
  \centering
  \raisebox{-.5\height}{\includegraphics[scale=0.5]{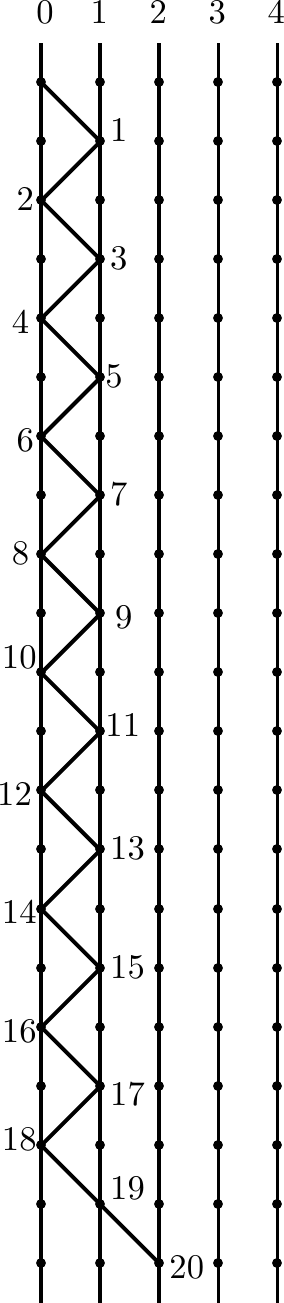}}
\, \, \, \, \, \,\, \, \, \,\, \,\, \, \, \,
\raisebox{-.5\height}{\includegraphics[scale=0.5]{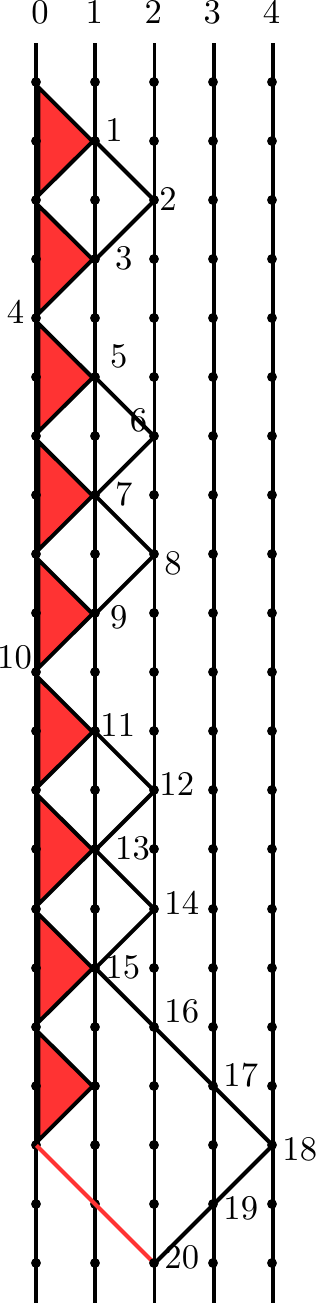}}
\, \, \, \, \, \,\, \, \, \, \,\, \, \, \,
\raisebox{-.5\height}{\includegraphics[scale=0.5]{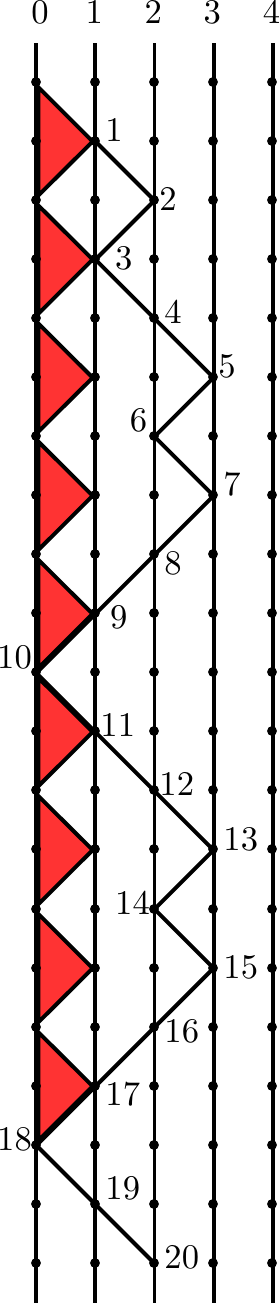}}
\caption{For $ \lambda = (1^{11}, 1^9) $ we give on the left the walk for $ \bt^{\lambda}$ and
  include in the middle and
  on the right the walks for $ top(D) $ and $ bot(D) $, coming from Figure \ref{fig:two walks}.}
\label{fig:walkisalindicated}
\end{figure}

\medskip
The algorithm for generating the Temperley-Lieb diagrams consists now in filling in
the area between the walks for $ \bT^{\lambda}$ and $ bot(D) $ (resp. $top(D) $) one column at the time,
and then multiplying with the corresponding $ \UU_i $'s. 
For example, using Figure \ref{fig:twocolorfill},
\begin{figure}[h]
  \centering
    \raisebox{-.5\height}{\includegraphics[scale=0.5]{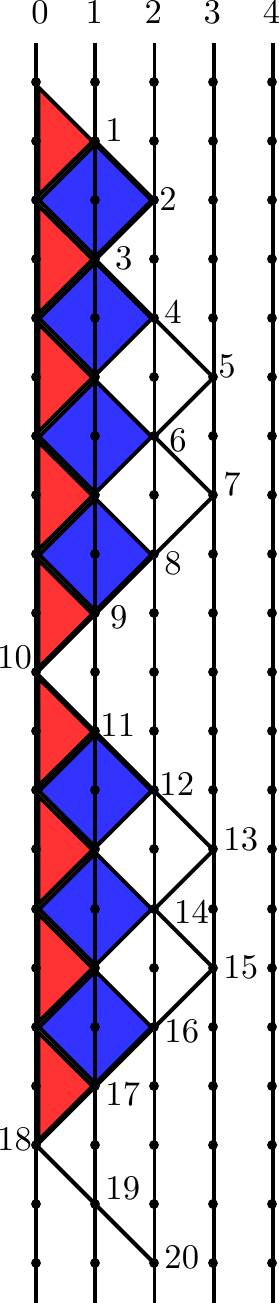}}
\, \, \, \, \, \,\, \, \, \, \, \, \, \, \,\, \, \,
\raisebox{-.5\height}{\includegraphics[scale=0.5]{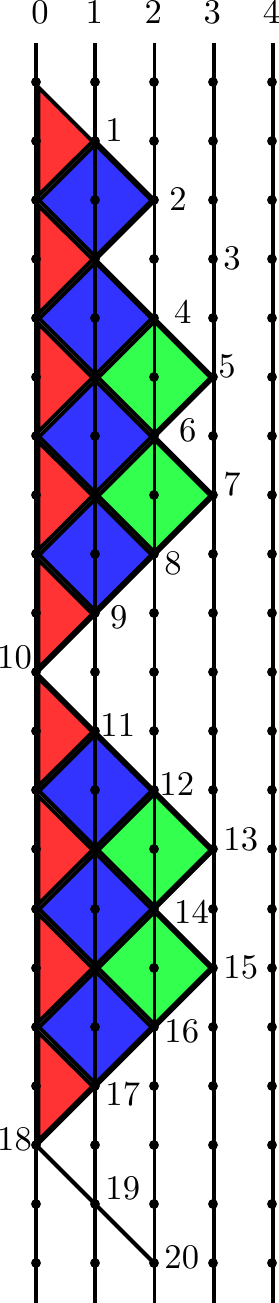}}
  \caption{The algorithm illustrated on $ bot(D) $ as in Figure \ref{fig:walkisalindicated}. }
\label{fig:twocolorfill}
\end{figure}
we find that to obtain $ bot(D) $
from the walk for $ \bT^{\lambda} $ we should first multiply by $ \UU_2 \UU_4 \UU_6 \UU_8 \UU_{12} \UU_{14} \UU_{16}$
corresponding to the blue area, and then with $ \UU_5 \UU_7 \UU_{13} \UU_{15}$, corresponding to the green area, 
that is we have that 
\begin{equation}
B(D) =    B^{\lambda}  ( \UU_2 \UU_4 \UU_6 \UU_8 \UU_{12} \UU_{14} \UU_{16}) (\UU_5 \UU_7 \UU_{13} \UU_{15}) 
\end{equation}
where $ B(D) $ is the half-diagram in Figure \ref{fig:half diagramsA} and $ B^{\lambda} $ is
the diagram defined in Figure {\ref{fig:tableauxex}}.
Similarly, we have that
\begin{equation}
  T(D)
  =  \UU_{18} ( \UU_{17} \UU_{19} )    ( \UU_2 \UU_6 \UU_8 \UU_{12} \UU_{14} \UU_{16} \UU_{18}      ) T^{\lambda}    
\end{equation}
where $ T(D) $ is the half-diagram in Figure \ref{fig:half diagramsA} and $ T^{\lambda} $ is
the reflection through a horizontal axis of $ B^{\lambda} $.
Since $T^{\lambda} B^{\lambda}   = \UU_{1} \UU_{3} \UU_{5} \UU_{7} \UU_{9} \UU_{11} \UU_{13} \UU_{15} \UU_{17} $ 
we get now $ D $ as a product of $ \UU_{i}$'s: 
\begin{equation}
  D = T(D) B(D) =
  \UU_{18} ( \UU_{17} \UU_{19} )    ( \UU_2 \UU_6 \UU_8 \UU_{12} \UU_{14} \UU_{16} \UU_{18}      )
T^{\lambda} B^{\lambda}
   ( \UU_2 \UU_4 \UU_6 \UU_8 \UU_{12} \UU_{14} \UU_{16}) (\UU_5 \UU_7 \UU_{13} \UU_{15}) .
\end{equation}

Summing up, we have shown that any unmarked blob diagram can be obtained as a product of the generators $\UU_i$'s, for $i>0$.

\medskip

We now explain how to obtain the marks on the arcs. In the case of 
$ B $ as before there are three arcs that may carry a mark, namely the black arcs below

\begin{equation}\label{half diagrams}
\raisebox{-.5\height}{\includegraphics[scale=0.8]{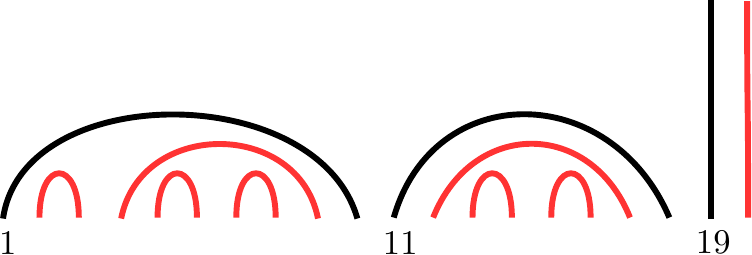}}
\end{equation}


A main general observation for what follows is  that these arcs are in correspondence with
the `contacts' between the associated walk and the vertical $0$-line. To be precise
for $ i =0,1,\ldots, n-1 $ we have that $ (i, 0) $ belongs to the walk for $ B $ if and only if
$ i+1 $ is the leftmost point of an arc that may be marked.
For instance, using the walk in Figure \ref{fig:twocolorfill} for the above $ B $ we see that these points are
$1, 11 $  and $ 19$, as one indeed observes in \ref{half diagrams}.

These contacts points induce a partition of the indices $ 1 \le i \le n$
in {\color{black}{subsets that we call {\it contact intervals}}.}
Thus in the example given in Figure \ref{fig:twocolorfill}, the first contact interval consists of the indices 
$ 1 \le i \le 10$, the second of $ 11 \le i \le 18$ and the third of $ 19$ and $20$. We stress that the smallest number in each contact interval is odd. 
On the other hand, under the above process of filling in the areas, the $ \UU_i $'s, where $ i $ corresponds to
the rightmost index of some contact interval, are not needed. But from this we deduce that
the indices corresponding to distinct contact intervals give
rise to commuting $ \UU_i$'s and hence we can in fact fill in one contact interval at the time. We choose to do so
going through the contact interval of each walk from bottom to the top.

\medskip
Our second observation is that any diagram of the form

\begin{equation}\label{oursecondobservation}
\raisebox{-.5\height}{\includegraphics[scale=0.8]{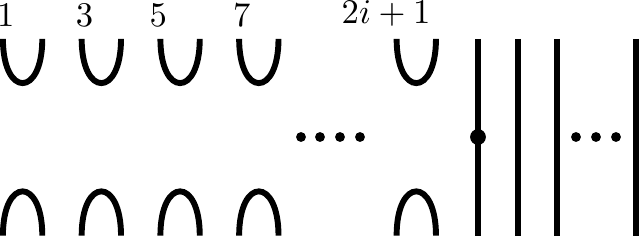}}
\end{equation}


\medskip
\noindent
can be generated by the $ \UU_i$'s since indeed it is equal to
\begin{equation}\label{sinceindeedblabla}
(\UU_1 \UU_3 \UU_5 \cdots \UU_{2i+1}) \UU_0 (\UU_2 \UU_4 \UU_6 \cdots \UU_{2i+2})( \UU_1 \UU_3 \UU_5 \cdots \UU_{2i+1}).
\end{equation}
In Figure \ref{fig:examplei=2andn=9} we give the case $ i = 2 $ and $ n = 9$.

\begin{figure}[h]
  \centering
\raisebox{-.5\height}{\includegraphics[scale=0.7]{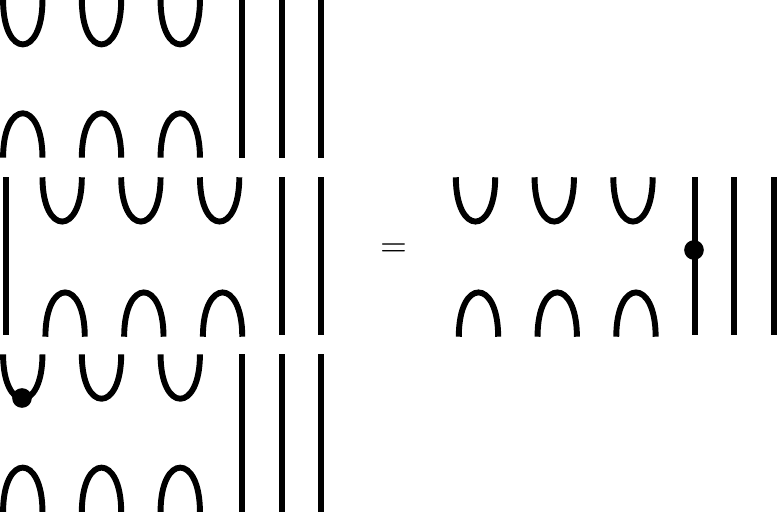}}   
\, \, \, \, \, \, \, 
\caption{Ilustration of equation \ref{sinceindeedblabla} for $ n=9$ and $i=2 $.  }
\label{fig:examplei=2andn=9}
\end{figure}

\medskip 

The algorithm for obtaining any marked diagram now consists in filling in the contact intervals,
from bottom to top, and
multiplying by a diagram of the form given in \ref{oursecondobservation}, for each contact
interval that requires a mark.
Let us illustrate a few step of it on the blob diagram given in Figure \ref{fig:an example with $ n=20$}.
Its bottom and top halves are given in Figure \ref{fig:half diagramsA}. Both of them have three contact
intervals.
The third contact interval is $ \{ 11,12,\ldots, 20\} $ for the top diagram and, as we have already seen, $ \{ 19,20 \} $
for the bottom diagram. Multiplying with the corresponding $ \UU_i$'s on $ T^{\lambda}B^{\lambda}  $ we get
the diagram

\begin{equation}
   \raisebox{-.5\height}{\includegraphics[scale=0.9]{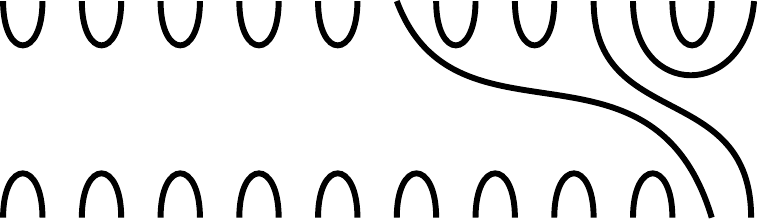}}   
\end{equation}

\medskip

Suppose now that we want to produce the blob diagram from Figure \ref{fig:an example with $ n=20$}.
Then we need a mark on the first through line and thus we multiply below with a diagram of the form 
\ref{oursecondobservation} with $ i = 8$ which gives us

\begin{equation}
      \raisebox{-.5\height}{\includegraphics[scale=0.9]{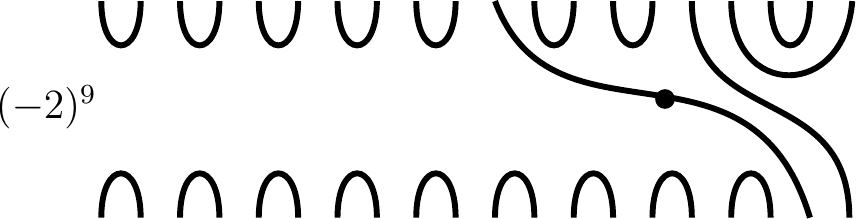}}   
\end{equation}

\noindent
settling the third contact interval, at least up to a unit in $ \F$. The algorithm now goes on with the second contact interval, etc.
The Theorem is proved.
\end{proof}

In view of the Theorem \ref{blob diagram realization} we shall write $  \mathbb{NB}_n   = \mathbb{NB}_n^{diag} $. Similarly
we shall in general write $ \UU  $ for $ \varphi(\UU) $. 

\medskip
The next  corollary is an immediate consequence of Theorem \ref{blob diagram realization}.

\begin{corollary}\label{cor dim}
The set $\mathcal{NM}_n$ is a basis for $ \mathbb{NB}_n $.  Similarly, the set 
\begin{equation}
	\widetilde{\mathcal{NM}}_n :=  \{ X\mathbb{J}_n^i \, |\,  X \in \mathcal{NM}_n, \, i\in \{0,1\} \}
\end{equation}	
is a basis for $\widetilde{\mathbb{NB}}_n$. Consequently,  $\dim  \widetilde{\mathbb{NB}}_n = 2 \binom{2n}{n}$.\\ We refer to the set $\mathcal{NM}_n$ (resp. $\widetilde{\mathcal{NM}}_n $)  as the normal basis of  $ \mathbb{NB}_n $ (resp. $\widetilde{\mathbb{NB}}_n$). 
 \end{corollary}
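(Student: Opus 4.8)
The plan is to deduce everything directly from Theorem~\ref{blob diagram realization} together with the definition of $\widetilde{\mathbb{NB}}_n$ as $\mathbb{NB}_n$ with an adjoined central square-zero generator $\mathbb{J}_n$. First I would observe that Theorem~\ref{blob diagram realization} already gives $\dim_{\mathbb{F}}(\mathbb{NB}_n) = \binom{2n}{n}$, and that in the course of its proof we showed that $\mathcal{NM}_n$ spans $\mathbb{NB}_n$ and has cardinality $\binom{2n}{n}$ (being in bijection with the positive fully commutative elements of the Coxeter group of type $B_n$); a spanning set whose cardinality equals the dimension is automatically a basis. Hence $\mathcal{NM}_n$ is a basis for $\mathbb{NB}_n$.

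Next I would handle $\widetilde{\mathbb{NB}}_n$. Since $\mathbb{J}_n$ is central with $\mathbb{J}_n^2 = 0$, every element of $\widetilde{\mathbb{NB}}_n$ is an $\mathbb{F}$-linear combination of monomials in $\UU_0, \ldots, \UU_{n-1}$ and $\mathbb{J}_n$, and using centrality we may move all occurrences of $\mathbb{J}_n$ to the right and, using $\mathbb{J}_n^2 = 0$, assume at most one such occurrence. Thus $\widetilde{\mathbb{NB}}_n$ is spanned by $\{ X \mathbb{J}_n^i \mid X \in \mathcal{NM}_n,\ i \in \{0,1\}\} = \widetilde{\mathcal{NM}}_n$, a set of cardinality at most $2\binom{2n}{n}$. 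For linear independence I would exhibit a concrete realization: $\widetilde{\mathbb{NB}}_n$ surjects onto $\mathbb{NB}_n$ (kill $\mathbb{J}_n$), and one also has the quotient map $\widetilde{\mathbb{NB}}_n \to \mathbb{NB}_n$ followed by the identity, so it suffices to show $\widetilde{\mathbb{NB}}_n$ admits a module or representation detecting the $\mathbb{J}_n$-part. Concretely, form the algebra $\mathbb{NB}_n \otimes_{\mathbb{F}} \mathbb{F}[\epsilon]/(\epsilon^2)$ and check that $\UU_i \mapsto \UU_i \otimes 1$, $\mathbb{J}_n \mapsto 1 \otimes \epsilon$ respects the defining relations of $\widetilde{\mathbb{NB}}_n$; this yields a homomorphism $\widetilde{\mathbb{NB}}_n \to \mathbb{NB}_n \otimes \mathbb{F}[\epsilon]/(\epsilon^2)$ sending $\widetilde{\mathcal{NM}}_n$ to the set $\{X \otimes 1, X \otimes \epsilon \mid X \in \mathcal{NM}_n\}$, which is $\mathbb{F}$-linearly independent because $\mathcal{NM}_n$ is a basis of $\mathbb{NB}_n$. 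Therefore $\widetilde{\mathcal{NM}}_n$ is linearly independent, hence a basis, and $\dim_{\mathbb{F}} \widetilde{\mathbb{NB}}_n = 2\binom{2n}{n}$.

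There is essentially no hard obstacle here; the only point requiring a little care is making sure the spanning argument for $\widetilde{\mathbb{NB}}_n$ is airtight, i.e.\ that centrality of $\mathbb{J}_n$ genuinely lets one reduce an arbitrary word to the form $X \mathbb{J}_n^i$ with $X$ a word in the $\UU_j$'s only — this is immediate since $\mathbb{J}_n$ commutes past every generator. The auxiliary algebra $\mathbb{NB}_n \otimes \mathbb{F}[\epsilon]/(\epsilon^2)$ (equivalently, one may phrase this as: the ideal $\mathbb{J}_n \widetilde{\mathbb{NB}}_n$ is, as an $\mathbb{F}$-vector space, a homomorphic image of $\mathbb{NB}_n$ and simultaneously receives $\mathbb{NB}_n$, forcing the dimension count) is just a clean device to certify that no collapse occurs in the $\mathbb{J}_n$-part. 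With the two bases in hand, the dimension formulas follow by counting.
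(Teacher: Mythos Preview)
Your proof is correct and is a faithful expansion of what the paper intends: the paper gives no proof, simply declaring the corollary an immediate consequence of Theorem~\ref{blob diagram realization}. Your observation that $\widetilde{\mathbb{NB}}_n \to \mathbb{NB}_n \otimes_{\mathbb F}\mathbb{F}[\epsilon]/(\epsilon^2)$ via $\UU_i\mapsto \UU_i\otimes 1$, $\mathbb{J}_n\mapsto 1\otimes\epsilon$ is exactly the clean way to certify that no collapse occurs in the $\mathbb{J}_n$-part, and indeed this map is an isomorphism since the presentation of $\widetilde{\mathbb{NB}}_n$ is literally that of $\mathbb{NB}_n\otimes\mathbb{F}[\epsilon]/(\epsilon^2)$.
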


\begin{corollary}\label{cellularfirst}
  $ \mathbb{NB}_n $ is a cellular algebra {\color{black}{{in the sense of Graham and Lehrer, see \cite{GL}}},
with the same cellular datum as for $\mathbb{B}_n$, see for example \cite{GL1,PlazaRyom} for this cellular structure}.
\end{corollary}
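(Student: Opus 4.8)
The plan is to transport the cellular structure from $\mathbb{B}_n$ to $\mathbb{NB}_n$ via the diagram realization established in Theorem \ref{blob diagram realization}. Recall that a cellular datum for an $\F$-algebra $A$ consists of a poset $(\Lambda, \geq)$, finite sets $M(\lambda)$ for each $\lambda \in \Lambda$, and a basis $\{ C^\lambda_{\bs\bt} \mid \lambda \in \Lambda, \, \bs, \bt \in M(\lambda)\}$ together with an anti-involution $\ast$, subject to Graham and Lehrer's axioms (C1)--(C3). For $\mathbb{B}_n$ the poset $\Lambda$ is the set of two-column partitions of $n$ (equivalently, the set of possible numbers of through lines together with a blob/no-blob decoration), $M(\lambda)$ is the set of half-diagrams of shape $\lambda$, the basis elements $C^\lambda_{\bs\bt}$ are the blob diagrams obtained by gluing a half-diagram $\bs$ on top of a half-diagram $\bt$, and $\ast$ is the flip through a horizontal axis. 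This is the cellular structure recorded in \cite{GL1,PlazaRyom}.

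First I would observe that $\mathbb{NB}_n = \mathbb{NB}_n^{diag}$ has, by Theorem \ref{blob diagram realization}, exactly the same underlying $\F$-vector space and the same distinguished basis of blob diagrams as $\mathbb{B}_n$; only the multiplication rule differs, and it differs only in the scalars produced when closed loops or multiply-marked arcs are created (these scalars become $-2$, $0$, $0$ in place of $-[2]$, $-[m-1]/[m]$, "remove the mark"). In particular the anti-involution $\ast$ (horizontal flip of diagrams) is still an algebra anti-automorphism of $\mathbb{NB}_n$, since flipping a concatenation reverses the order of the two factors and the loop-removal scalars are insensitive to the flip; and $\ast$ fixes each $C^\lambda_{\bs\bt}$ up to swapping $\bs \leftrightarrow \bt$, which is axiom (C2). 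Axiom (C1) (the $C^\lambda_{\bs\bt}$ form a basis) is immediate from Theorem \ref{blob diagram realization}. The only axiom needing genuine checking is (C3): for $a \in \mathbb{NB}_n$,
\begin{equation}
a\, C^\lambda_{\bs\bt} \equiv \sum_{\bs' \in M(\lambda)} r_a(\bs',\bs)\, C^\lambda_{\bs'\bt} \pmod{\mathbb{NB}_n^{>\lambda}},
\end{equation}
where $\mathbb{NB}_n^{>\lambda}$ is spanned by basis diagrams indexed by strictly larger partitions (fewer through lines) and the scalars $r_a(\bs',\bs)$ are independent of $\bt$.

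The key point is that (C3) for $\mathbb{NB}_n$ follows formally from (C3) for $\mathbb{B}_n$. Concretely, it suffices to check (C3) for $a = \UU_i$ a generator. When we concatenate $\UU_i$ (as a diagram) on top of $C^\lambda_{\bs\bt}$, the resulting diagram is the same Temperley–Lieb/blob picture as in $\mathbb{B}_n$, and the set of basis diagrams appearing — together with which partition $\mu$ indexes each — depends only on the diagrammatic combinatorics, not on the deformation parameters. The number of through lines can only stay the same or decrease, so all terms lie in $\mathbb{NB}_n^{\geq \lambda}$; a term indexed by $\mu = \lambda$ arises precisely when no cap/cup reduction lowers the through-line count, in which case the bottom half-diagram is unchanged (still $\bt$) and the top half-diagram $\bs'$ and the accompanying scalar depend only on $\UU_i$ and $\bs$. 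The scalar is one of $1$, $-2$, or $0$ according to whether a non-reducing loop/multi-mark is created, and in every case it is independent of $\bt$. Hence (C3) holds with the same combinatorial data as for $\mathbb{B}_n$, only the structure constants being specialized. I would therefore conclude that the cellular datum $(\Lambda, M, C, \ast)$ of $\mathbb{B}_n$ serves verbatim as a cellular datum for $\mathbb{NB}_n$.

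The main obstacle — though a mild one — is bookkeeping the scalars carefully enough to be certain that the coefficient of a same-$\lambda$ term never secretly depends on $\bt$: a priori one might worry that a loop gets closed "between" the $\bs$-part and the $\bt$-part. The resolution is the standard one: gluing $\UU_i$ only affects strands near rows $i, i+1$ at the top of $C^\lambda_{\bs\bt}$, so any newly created loop is formed entirely within the top region and its scalar is a function of $(\UU_i, \bs)$ alone; anything that interacts with the $\bt$-part necessarily travels through a through line and hence strictly lowers the through-line count, landing in $\mathbb{NB}_n^{>\lambda}$. This is exactly the same local analysis used to prove cellularity of $\mathbb{B}_n$ itself, so no new difficulty arises. (Alternatively, one may invoke the Appendix of \cite{blob positive} on projection algebras, which gives cellularity of $\mathbb{NB}_n$ directly; but the argument above via Theorem \ref{blob diagram realization} is self-contained.)
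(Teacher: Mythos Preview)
Your proof is correct and follows essentially the same approach as the paper: both use the diagram realization from Theorem \ref{blob diagram realization} to observe that the blob-diagram basis and the horizontal-flip anti-involution are identical for $\mathbb{NB}_n$ and $\mathbb{B}_n$, and that the cellular axiom (C3) depends only on the diagrammatic combinatorics of concatenation (which half-diagrams appear and that the bottom half is unchanged modulo lower cells), not on the specific scalars produced by loop or multi-mark removal. The paper is slightly more explicit than you are about the poset, encoding it as integers $k \in \{n, n-2, \ldots, -n\}$ with a sign recording whether the leftmost through arc is marked and with the order $k \prec l$ iff $|k| < |l|$ or ($|k| = |l|$ and $k < l$); your phrase ``number of through lines together with a blob/no-blob decoration'' amounts to the same thing, but you may wish to note that multiplication by $\UU_0$ can move a diagram from the cell $k$ to the strictly smaller cell $-k$ without changing the through-line count, so ``through lines can only stay the same or decrease'' is not quite the whole story for the order relation.
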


\begin{proof}
In view of the diagrammatic description, given right after Lemma \ref{referenceLemma}, 
  of the multiplication in $ \mathbb{NB}_n $, 
this is essentially the same as the proof of cellularity of $\mathbb{B}_n$. 
  To be more precise, with a blob diagram $ D$, we first associate the number of through arcs of 
$D$, that is the number of arcs going
  from the top to the bottom of the diagram.
  The blob diagram in Figure \ref{fig:an example with $ n=20$} 
  has for example two through arcs.
  We would next also like to associate with $ D $ a top and a bottom blob half-diagram,
  $ T(D) $ and $ B(T)$ which should give rise to
  a bijection $ D \mapsto (T(D), B(D))$, as in the case of Temperley-Lieb diagrams. But
here we encounter the problem
  that the leftmost through arc may be blobbed, which makes it unclear whether the corresponding blob should
  belong to $ T(D) $ or to $ B(T)$.

  We resolve this problem as follows. We first consider only those blob diagrams whose leftmost through
  arc either does not exist or is unmarked. For each such diagram $ D $ there is no problem in
  defining $ T(D)$ and $  B(T)$ and we consider 
  the corresponding 
  map $\varphi_1: D \mapsto (T(D), B(D))$. 
For $ k $ a non-negative integer, we 
  define correspondingly $ \tab(k) := \{ T(D) \mid D \mbox{ has $k$ through arcs} \}$.
  We next consider the blob diagrams $ D $ that have through arcs such that the leftmost one of these is
  marked.
  For these $ D $ we first remove the mark on the through arc and next apply $\varphi_1$.
  This gives a map 
  $\varphi_2: D \mapsto (T(D), B(D))$ and for $ k $ a negative integer we define correspondingly
$ \tab(k) := \{ T(D) \mid D \mbox{ has $-k$ through arcs} \}$. With this notation we now have the following 
  description of our basis for $ \mathbb{NB}_n $. 
 \begin{equation}
\mathcal{NM}_n =\{ D \iota(D^{\prime}) \mid D \in \tab(k), D^{\prime} \in \tab(k), k\in \{n, n-2, \ldots, -n \}\}
 \end{equation}
 where $ \iota$ is the reflection along a horizontal axis.

 \medskip We define an order relation on $ \mathbb Z $ via
 $ k \prec l $ if $ | k | < |l | $ of if $ | k | = |l | $ and $ k < l $.
 Suppose now that $ D \iota(D^{\prime}) \in \mathcal{NM}_n $.
 Then if follows from the diagrammatic description of 
 the multiplication in $ \mathbb{NB}_n $
 that $ \UU_i D \iota(D^{\prime}) $ and $ D \iota(D^{\prime}) \UU_i$ are
 linear combinations of $\{ D \iota(D^{\prime}) \mid D, D^{\prime} \in \tab(l) $ where $ l \preceq k \}$.
 Moreover, it also follows from that description that 
if the expansion of $ \UU_i D \iota(D^{\prime}) $ has a nonzero term of the form 
$ D_1 \iota(D_1^{\prime}) $ where $ D_1, D_1^{\prime} \in  \tab(k) $ then
the expansion of $ \UU_i D \iota(D^{\prime}) $ in fact only involves terms of the form 
$ D_2 \iota(D^{\prime}) $; in other words with the same bottom half-diagram as $ D \iota(D^{\prime})$,
and similarly for $  D \iota(D^{\prime}) \UU_i$.
But these two statements amount to $\mathcal{NM}_n $ being a cellular basis for $\mathbb{NB}_n $, 
see Definition 1.1 of
\cite{GL}.
\end{proof}

In the next section we shall show that there is another cellular
structure on $ \mathbb{NB}_n $, given by Soergel calculus, 
That cellular structure is endowed with
a family of JM-elements, in the sense of \cite{Mat-So}:

\begin{definition} 
We define the JM-elements $ \LL_1, \LL_2, \ldots , \LL_{n} $ of $ \mathbb{NB}_n $ via
$ \LL_1 = \UU_0 $ and recursively
\begin{equation}
\LL_{i+1} = \UU_{i}  \LL_{i}  + \LL_{i} \UU_{i }  -2 \UU_{i} \sum_{j=1}^{i-1} \LL_{j }, \, \,  i \geq 1.
\end{equation}
\end{definition}

\begin{lemma}\label{JMB}
{\color{black}{The $\LL_{i}$'s have the following properties. }}
  \begin{description}
	\item[a)] $\color{black}{ \LL_{i} \LL_{j} = \LL_{j} \LL_{i}}$ for all $ i, j $. 
       	\item[b)] $\LL_1^2=0$ and that $ \LL_{i}^2 = -2 \LL_{i} \sum_{j=1}^{i-1} \LL_{j}  $ for all $ 1<i\leq n $. 
  \end{description}
  \end{lemma}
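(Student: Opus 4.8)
The plan is to prove both statements by induction on $i$ (and, for (a), on $\max(i,j)$), using the recursive definition of the $\LL_i$'s together with the defining relations \ref{eq one}--\ref{eq five} of $\mathbb{NB}_n$. The natural bookkeeping device is the partial sum $\Sigma_i := \sum_{j=1}^{i} \LL_j$, so that the recursion reads $\LL_{i+1} = \UU_i \LL_i + \LL_i \UU_i - 2\UU_i \Sigma_{i-1}$, and $\LL_{i+1} = \UU_i \Sigma_i + \Sigma_i \UU_i - 2\UU_i \Sigma_{i-1} - \UU_i\Sigma_{i-1} - \Sigma_{i-1}\UU_i$ after substituting $\LL_i = \Sigma_i - \Sigma_{i-1}$; it will be convenient to carry along auxiliary claims about how $\UU_i$ and $\LL_j$ interact, namely that $\UU_i$ commutes with $\LL_j$ for $j < i$ and for $j > i+1$ (these follow from the far-commutativity relation \ref{eq three} once one knows, inductively, that $\LL_j$ lies in the subalgebra generated by $\UU_0,\dots,\UU_{j-1}$).

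\textbf{Part (b), the key computation.} I would establish (b) first, since (a) will lean on it. The base case $\LL_1^2 = \UU_0^2 = 0$ is relation \ref{eq five}. For the inductive step, expand
\begin{align*}
\LL_{i+1}^2 &= \bigl(\UU_i \LL_i + \LL_i \UU_i - 2\UU_i \Sigma_{i-1}\bigr)^2 .
\end{align*}
Multiplying out gives nine terms. One uses $\UU_i^2 = -2\UU_i$ (relation \ref{eq one}), the inductive hypothesis $\LL_i^2 = -2\LL_i\Sigma_{i-1}$, the commutation $\UU_i \Sigma_{i-1} = \Sigma_{i-1}\UU_i$ (from \ref{eq three}, as noted above), and the commutativity $\LL_i \Sigma_{i-1} = \Sigma_{i-1}\LL_i$ (part of (a), applied only for indices $\le i$, which is available in the induction). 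After collecting terms, one should find that the expression equals $-2\LL_{i+1}\Sigma_i = -2\LL_{i+1}\sum_{j=1}^{i}\LL_j$, which is exactly the claim; I expect the cancellations to be somewhat delicate — specifically, the cross terms involving $\UU_i\LL_i\UU_i$ and $\LL_i\UU_i\LL_i$ are the ones to watch, and rewriting everything in terms of $\Sigma_{i-1}$, $\Sigma_i$ and $\LL_i$ before simplifying is the cleanest route. This is the main obstacle: it is a finite but fiddly identity, and one must be careful that the statement of (b) to be proven for index $i+1$ is used consistently (it is $\LL_{i+1}^2 = -2\LL_{i+1}\Sigma_i$, with $\Sigma_i$ including $\LL_i$, not $\Sigma_{i-1}$).

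\textbf{Part (a), commutativity.} For (a) I would show by induction on $n$ that $\LL_1, \dots, \LL_n$ pairwise commute; equivalently, it suffices to show $\LL_{i+1}$ commutes with $\LL_j$ for all $j \le i$, assuming $\LL_1,\dots,\LL_i$ pairwise commute. Write $\LL_{i+1} = \UU_i \LL_i + \LL_i \UU_i - 2\UU_i\Sigma_{i-1}$. For $j \le i-1$: here $\UU_i$ commutes with $\LL_j$ (far-commutativity, since $\LL_j$ is a word in $\UU_0,\dots,\UU_{j-1}$ with $j-1 < i-1$), and $\LL_i$, $\Sigma_{i-1}$ commute with $\LL_j$ by the inductive hypothesis; hence every summand of $\LL_{i+1}$ commutes with $\LL_j$, and the hard remaining case is $j = i$. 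For $\LL_{i+1}\LL_i$ versus $\LL_i\LL_{i+1}$, one again expands both products using the recursion, substitutes $\LL_i^2 = -2\LL_i\Sigma_{i-1}$ from (b), uses $\UU_i\Sigma_{i-1} = \Sigma_{i-1}\UU_i$, and checks the two expressions agree; alternatively, and perhaps more cleanly, one can observe that $\LL_{i+1}$ is symmetric under the "transpose" anti-automorphism fixing each $\UU_i$ (so $\LL_{i+1}^{\,*} = \LL_{i+1}$), reducing $\LL_{i+1}\LL_i = \LL_i\LL_{i+1}$ to showing $\LL_{i+1}\LL_i$ is itself $*$-invariant. Either way the computation is of the same flavour as (b) and no new difficulty arises once (b) is in hand; I would present (b) first precisely so that its conclusion is available here.
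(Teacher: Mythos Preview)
Your approach is substantially different from the paper's. The paper does not attempt a direct inductive verification inside $\mathbb{NB}_n$ at all. Instead it postpones the proof to Remark~\ref{Jucys-Murphy-proof}, after the isomorphism $\mathbb{NB}_{n-1}\cong A_w$ of Corollary~\ref{corollary short presentation} has been established: via Lemma~\ref{lemma_writing_jucys_murphy_elements} the $\LL_i$ are identified with the Soergel-diagrammatic elements $L_i$ (a single dot on the $i$th strand), for which (a) and (b) are already known---(a) because the $L_i$ are JM-elements for the light-leaves cellular structure \cite{steen}, and (b) from \cite[Lemma~3.4]{Esp-Pl}, as quoted in \eqref{JorgeDavid}. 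The gain is that in Soergel calculus the $L_i$ are extremely simple objects and their commutation and squares are short diagrammatic facts, whereas in $\mathbb{NB}_n$ the $\LL_i$ are increasingly complicated polynomials in the $\UU_j$.

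Your direct route is reasonable in outline, but the central step is not carried out, and your sketch understates what is needed. Expanding $\LL_{i+1}^2$ using only the ingredients you list ($\UU_i^2=-2\UU_i$, $\LL_i^2=-2\LL_i\Sigma_{i-1}$, and commutativity of $\Sigma_{i-1}$ with both $\UU_i$ and $\LL_i$) leaves residual terms of the shape $\UU_i\LL_i\UU_i\LL_i$ and $\LL_i\UU_i\LL_i\UU_i$ that do \emph{not} cancel formally: one must actually compute $\UU_i\LL_i\UU_i$. Unrolling the recursion for $\LL_i$ one step and invoking the Temperley--Lieb braid relation $\UU_i\UU_{i-1}\UU_i=\UU_i$ yields the auxiliary identity $\UU_i\LL_i\UU_i = 2(\LL_{i-1}-\Sigma_{i-2})\UU_i$, and this (together with a similar simplification of $\LL_i\UU_i\LL_i$) is what is needed to close the induction. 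So at least one further lemma, proved simultaneously with (a) and (b), has to enter your scheme; without it the ``collect terms and they match'' step is a genuine gap. The hard case $[\LL_{i+1},\LL_i]=0$ in (a) has exactly the same issue. A self-contained inductive proof along your lines can be made to work, but it requires more machinery than you have allowed for; the paper's detour through Soergel calculus sidesteps the entire computation.
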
  
\begin{proof}
{\color{black}{We give the proof in Remark \ref{Jucys-Murphy-proof}.}}
\end{proof}  

\noindent
In Figure \ref{fig:firstJMelements} we give the JM-elements for $ n = 3$.

\begin{figure}[h]
 \raisebox{-.5\height}{\includegraphics[scale=0.9]{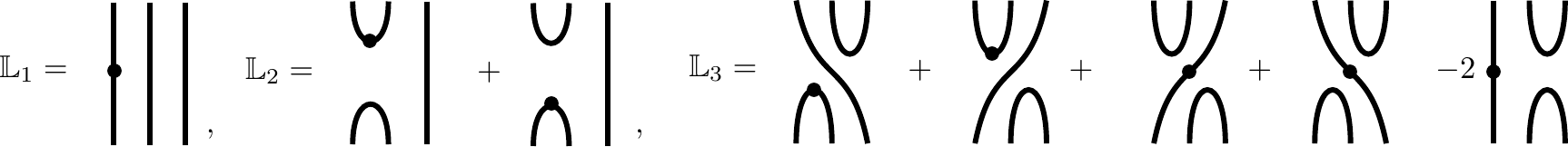}}   
\caption{The JM-elements $ \LL_i $ for $n=3$.}
\label{fig:firstJMelements}
\end{figure}

It follows from the results of section 5 of our paper that there is yet another cellular structure on 
$ \mathbb{NB}_n $, coming from the blob algebra. That cellular structure is also endowed with
a family of JM-elements: 

\begin{definition} 
We define the JM-elements $ \YY_1, \YY_2, \ldots , \YY_{n} $ of $ \mathbb{NB}_n $ via
$ \YY_1 = \UU_0 $ and recursively
\begin{equation}
\YY_{i+1} = \color{black}{(\UU_{i} +1) \YY_{i}}(\UU_{i} +1), \, \,  i \geq 1.
\end{equation}
\end{definition}

\noindent
In Figure \ref{fig:JMye}, we give these JM-elements for $ n = 3$.


\begin{lemma}\label{JMA}
{\color{black}{The $\YY_{i}$'s have the following properties. }}
  \begin{description}
	\item[a)]  $\color{black}{\YY_{i} \YY_{j} = \YY_{j} \YY_{i}}$ for all $ i, j $. 
        \item[b)]  $ \YY_{i}^2 = 0 $ for all $ i $.
  \end{description}          
\end{lemma}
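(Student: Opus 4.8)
The plan is to reduce both statements to the single auxiliary identity $\UU_i\YY_i\UU_i = 0$ for $1\le i\le n-1$, and to prove that identity by induction on $i$. Part (b) needs nothing more than the defining relations: from $\UU_i^2=-2\UU_i$ we get $(\UU_i+1)^2=1$, so with $g_i:=\UU_i+1$ the recursion reads $\YY_{i+1}=g_i\YY_i g_i$ with $g_i^2=1$; hence $\YY_{i+1}^2=g_i\YY_i^2 g_i$, and since $\YY_1^2=\UU_0^2=0$ an immediate induction gives $\YY_i^2=0$ for all $i$.

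For the auxiliary identity, the base case $i=1$ is exactly $\UU_1\UU_0\UU_1=0$. For $i\ge 2$ one first observes (by an easy induction on the recursion) that $\YY_{i-1}$ lies in the subalgebra generated by $\UU_0,\dots,\UU_{i-2}$ and therefore commutes with $\UU_i$. Substituting $\YY_i=(\UU_{i-1}+1)\YY_{i-1}(\UU_{i-1}+1)$ and expanding
\[
\UU_i\YY_i\UU_i = (\UU_i\UU_{i-1}+\UU_i)\,\YY_{i-1}\,(\UU_{i-1}\UU_i+\UU_i),
\]
the four resulting terms are handled using $\UU_i\UU_{i-1}\UU_i=\UU_i$ (valid since $i-1,i>0$), $\UU_i^2=-2\UU_i$, and the commutation of $\YY_{i-1}$ with $\UU_i$: three of them equal $\UU_i\YY_{i-1}$, $\UU_i\YY_{i-1}$ and $-2\UU_i\YY_{i-1}$ and so cancel, while the remaining one is $\UU_i(\UU_{i-1}\YY_{i-1}\UU_{i-1})\UU_i=0$ by the inductive hypothesis. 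This short computation is the only genuinely load-bearing step, and I expect it to be the main obstacle of the whole argument.

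With the auxiliary identity in hand, the recursion collapses to $\YY_{i+1}=\UU_i\YY_i+\YY_i\UU_i+\YY_i$. Using $\YY_i^2=0$ from part (b), both $\YY_i\YY_{i+1}$ and $\YY_{i+1}\YY_i$ then reduce to $\YY_i\UU_i\YY_i$, so consecutive Jucys--Murphy elements commute (indeed $\YY_i\YY_{i+1}=\YY_i\UU_i\YY_i$). For arbitrary indices it suffices, by symmetry, to treat $i<j$, which I would do by fixing $i$ and inducting on $j$: since $\YY_i$ lies in the subalgebra generated by $\UU_0,\dots,\UU_{i-1}$, it commutes with every $\UU_k$ with $k>i$, in particular with $\UU_{j-1}$; together with the inductive hypothesis that $\YY_i$ commutes with $\YY_{j-1}$, this gives that $\YY_i$ commutes with $\YY_j=(\UU_{j-1}+1)\YY_{j-1}(\UU_{j-1}+1)$, the base case $j=i+1$ being the consecutive case already settled. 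This completes part (a).
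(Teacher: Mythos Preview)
Your proof is correct. For part (b) you and the paper do the same thing: observe $(\UU_i+1)^2=1$ and induct from $\YY_1^2=\UU_0^2=0$. For part (a), however, the approaches diverge. The paper's proof (Remark~\ref{remarkconcerningJM}) is not intrinsic to $\mathbb{NB}_n$: it invokes the isomorphism $\mathbb{NB}_K\cong\trunc$ of Theorem~\ref{theoremsing}, under which $\YY_i$ is sent to $\YL_i=y_k e(\bi^\blambda)$ for $k$ in the $i$'th path interval; commutativity is then immediate because the KLR generators $y_k$ commute. Your argument instead stays entirely inside the presentation of $\mathbb{NB}_n$, proving the auxiliary identity $\UU_i\YY_i\UU_i=0$ by induction and deducing commutativity from that. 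This is genuinely more elementary and self-contained --- it does not depend on the substantial KLR calculations of Section~5 --- whereas the paper's route, though heavier, explains \emph{why} the $\YY_i$'s deserve the name JM-elements: under the isomorphism they literally are the nilpotent $y$-generators of the KLR picture.
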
  
\begin{proof}
{\color{black}{We give the proof in Remark \ref{remarkconcerningJM}.}}
\end{proof}

\begin{figure}[H]
\begin{tabular}{l} 
  \raisebox{-.5\height}{\includegraphics[scale=0.9]{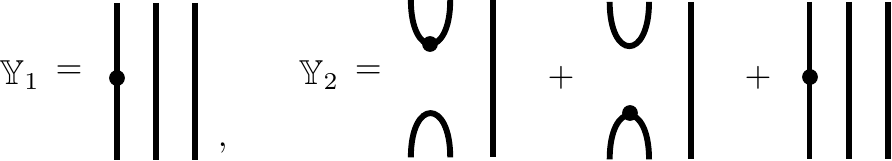}} \\ \\
\raisebox{-.5\height}{\includegraphics[scale=0.9]{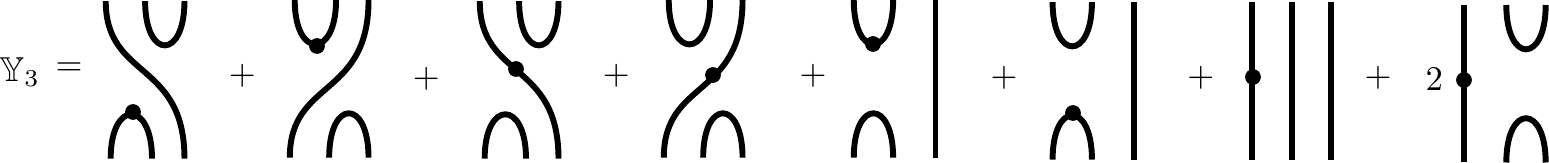}} 
\end{tabular}
\caption{The JM-elements $ \YY_i $ for $n=3$.}
\label{fig:JMye}
\end{figure}

As mentioned above, 
the $ \YY_{i}$'s are {\color{black}{(nilpotent)}} JM-elements 
{\color{black}{for $ \mathbb{NB}_n $ 
with respect to the cellular structure on $ \mathbb{NB}_n $ given in Corollary \ref{cellularfirst}}}.
Calculations for small $ n $ seem to indicate that
the various cellular structures on $ \mathbb{NB}_n $ are in fact equal. Of course, this does not
contradict the fact that the families of JM-elements are different, since there is no uniqueness 
statement for JM-elements.

\section{Soergel calculus for $\tilde{A}_1$.}

In this section, we start out by briefly recalling the diagrammatic Soergel category $ \mathcal D $ associated
with the affine Weyl group $ W $ of type $\tilde{A}_1$. This category $ \mathcal D  $ was introduced in \cite{EW},
in the complete generality of any Coxeter system $ (W, S) $.
The objects of $ \mathcal D $
are expressions $ \underline{w}$ over $ S $ and hence for any such $ \underline{w}$ we can
introduce an algebra
$  \tilde{A}_w : =\mbox{End}_{\mathcal D} (\underline{w}) $.
In the main result of this section we show that
$   \tilde{A}_w  $ and a natural subalgebra
$   {A}_w  \subset \tilde{A}_w $ of it are
isomorphic to the nil-blob algebras $\widetilde{\mathbb{NB}}_n$ and ${\mathbb{NB}}_n$
from the previous section.

\medskip
Let $ S  := \{ {\color{red} s}, {\color{blue} t} \} $ and let $ W $ be the Coxeter group on
$ S $ defined by 
\begin{equation} \label{presentation W}
W := \langle  {\color{red} s}, {\color{blue} t} \, |\,   {\color{red} s}^2 ={\color{blue} t}^2=e \rangle .
\end{equation}
Thus $ W $ is the infinite dihedral group or the affine Weyl group of type  $\tilde{A}_1$.
Given a non-negative integer $n$, we let
\begin{equation}\label{redexp}
  n_{ {\color{red} s}}:=  \underbrace{{\color{red} s} {\color{blue} t}
    {\color{red} s} \ldots }_{n  \scriptsize \mbox{-times}}  \qquad n_{ {\color{blue} t}}:=  \underbrace{ {\color{blue} t} {\color{red} s} {\color{blue} t}  \ldots }_{ n  \scriptsize \mbox{-times}}
 \end{equation}
with the conventions that $0_{ {\color{red} s}}:=0_{ {\color{blue} t}} :=e$.
It is easy to see from \ref{presentation W} that $  n_{ {\color{red} s}} $ and
$  n_{ {\color{blue} s}} $ are reduced expressions and that each 
element in $W$ is of the form $   n_{ {\color{red} s}} $ or $   n_{ {\color{blue} t}} $
{\color{black}{for a unique choice of $ n $ and $ {\color{red} s} $ or $ {\color{blue} t} $.
Note that 
the elements of $ W $ are \emph{rigid}, that is they have a unique reduced expression.}}

\medskip

The construction of  $\mathcal{D}$ depends on the choice of 
a \emph{realization} $\mathfrak{h}$ of $(W,S)$,
which by definition is a representation $ \mathfrak{h} $ of $ W $, with associated  
{\it roots} and {\it coroots}, see \cite[Section 3.1]{EW} for the precise definition.

In this paper, our $ \mathfrak{h} $ will be the  \emph{geometric representation}
of $W$ defined over $\mathbb{F}$, see  \cite[Section 5.3]{H1}. The coroots are the basis of 
$ \mathfrak{h} $, that is $\mathfrak{h}= \mathbb{F}\alpha^\vee_\ese \oplus \mathbb{F } \alpha^\vee_\te $ and
in terms of this basis the representation $\mathfrak{h} $ of $ W $ is given by  
\begin{equation}  \label{matrix realization}
\ese \rightarrow
\left(
\begin{array}{cc}
\! \! \! \! -1 & 2 \\
0  & 1
\end{array}
\right),
\qquad
\te \rightarrow
\left(
\begin{array}{cc}
1 & 0 \\
2 & \! \! \! \! -1
\end{array}
\right).
\end{equation}
The roots $\alpha_\ese , \alpha_\te \in \mathfrak{h}^\ast $ are now given by 
\begin{equation}  \label{equations realization}
\alpha_{\ese } ( \alpha^\vee_\ese )= 2,   \, \, \, \,  \alpha_\te (\alpha^\vee_\ese )= -2,     \, \, \, \,
\alpha_\ese (\alpha^\vee_\te ) = -2,  \, \, \, \,      \alpha_\te (\alpha^\vee_\te )= 2
\end{equation}
and so the Cartan matrix is
\begin{equation}
\left(
\begin{array}{cc}
2 & \! \! \! \!-2 \\
\! \! \! \!-2  & 2
\end{array}
\right).
\end{equation}
Note that we have 
\begin{equation} \label{roots sum A}
\alpha_{\ese} = -\alpha_\te .
\end{equation}

Let $R := S(\mathfrak{h}^\ast) = \oplus_{i\geq 0} S^i(\mathfrak{h}^\ast)$
be the symmetric algebra of $\mathfrak{h}^\ast$, or in view of \ref{roots sum A}
\begin{equation} \label{roots sum}
R = \F[\alpha_{\ese}] = \F[\alpha_{\te}].
\end{equation}
In other words, this is a just the usual one variable polynomial algebra. We consider it a $ \Z$-graded algebra 
by setting the degree of $ \alpha_{\ese} $ equal to 2.
Since $W $ acts on $\mathfrak{h} $
it also acts on $\mathfrak{h}^\ast$ and this action extends in a 
canonical way to $R$. 
We now introduce 
the \emph{Demazure operators} $\partial_\ese , \partial_\te :R \rightarrow R(-2)$ via 
\begin{equation}
\partial_\ese (f) = \frac{f-\ese f}{\alpha_\ese},  \qquad \qquad \partial_\te (f) = \frac{f-\te f}{\alpha_\te}.
\end{equation}
We have that
\begin{equation}
\ese \alpha_{\ese} = \alpha_{\te},  \, \, \,\te \alpha_{\te} = \alpha_{\ese}  \, \, \,
\end{equation}
and so we get 
\begin{equation}
  \partial_\ese ( \alpha_{\ese})= \partial_\te ( \alpha_{\te})= 2, \, \, \,
    \partial_\ese ( \alpha_{\te})= \partial_\te ( \alpha_{\ese})= -2 .\, \, \, 
\end{equation}

\medskip
We now come to the diagrammatic ingredients of $ \cal D$.
\begin{definition} 
  A Soergel graph for $ (W,S) $ is a finite and decorated graph 
  embedded in the planar strip $\mathbb{R}\times [0,1]$.
  The arcs of a Soergel graph are colored by $\ese $ and $\te $.
  The vertices of a Soergel graph are of two types as indicated below, univalent vertices (dots) and
trivalent vertices where all three incident arcs are of the same color. 
 \begin{equation}\label{Vertices}
\raisebox{-.5\height}{\includegraphics[scale=0.8]{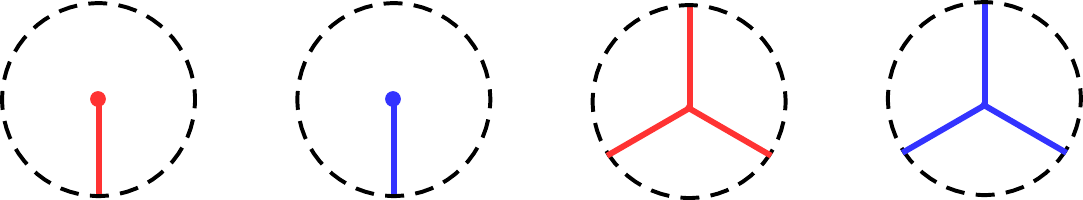}}   
 \end{equation}

\noindent
 A Soergel graph may have its regions, that is the connected components of the
complement of the graph in $\mathbb{R}\times [0,1] $,
decorated by elements of $R$. 
\end{definition}

In Figure \ref{fig:ejemploSoergel} we give an example of a Soergel graph. 
Shortly we shall give many more examples.

\begin{figure}[H]
  \centering
  \raisebox{-.5\height}{\includegraphics[scale=0.8]{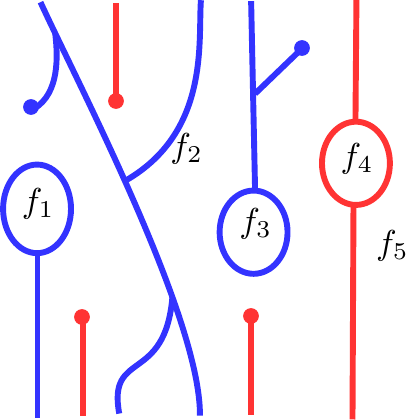}}   \, \, \, \, \, \, \, \, 
\caption{A Soergel graph for $(W,S)$. The $f_i$'s belong to $ R$.   }
\label{fig:ejemploSoergel}
\end{figure}

\medskip

We define
\begin{equation}
\Exp := \{ \underline{w} = (s_1, s_2 , \ldots, s_k)  \mid s_i  \in S, \,  k =1, 2, \ldots \}  \cup \emptyset .
\end{equation}
as the set of expressions over $ S$, that is words over the alphabet $ S$. 
The points where an arc of a Soergel graph intersects the boundary of the strip $\mathbb{R}\times [0,1] $
are called \emph{boundary points}. 
The boundary points provide two elements of $ \Exp$ 
called the \emph{bottom boundary} and \emph{top boundary}, respectively. 
In the above example the bottom boundary is $ ({\color{blue} t}, {\color{red} s},
{\color{blue} t}, {\color{blue} t}, {\color{red} s},{\color{red} s} ) $ and the top boundary is
$ ({\color{blue} t}, {\color{red} s},
{\color{blue} t}, {\color{blue} t}, {\color{red} s} ) $.

%

\begin{definition}  \label{defin endo BS}
  The diagrammatic Soergel category $\mathcal{D}$ is defined to be the
  monoidal category whose objects are the elements of $ \Exp$ and whose morphisms 
    $ \botts{x}{y} $ are the $\mathbb{F}$-vector space generated by all Soergel
  graphs with bottom boundary $\underline{x}$ and top boundary $\underline{y}$, modulo
isotopy and modulo the following local relations


\begin{equation} \label{sgraphA}
  \raisebox{-.5\height}{\includegraphics[scale=0.7]{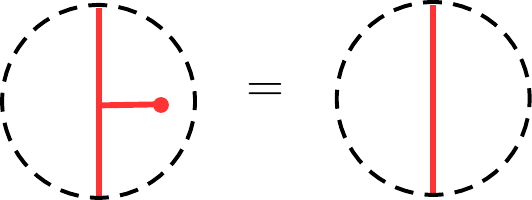}}   
\end{equation}

 \begin{equation}   \label{sgraphB}
 \raisebox{-.5\height}{\includegraphics[scale=0.7]{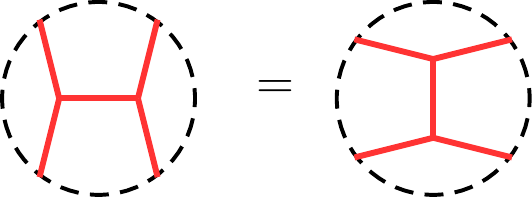}}   
 \end{equation}

\begin{equation}   \label{sgraphC}
 \raisebox{-.5\height}{\includegraphics[scale=0.7]{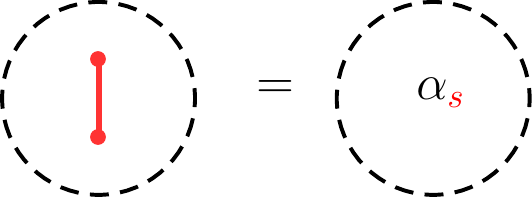}}
\end{equation}

\begin{equation}   \label{sgraphD}
    \raisebox{-.5\height}{\includegraphics[scale=0.7]{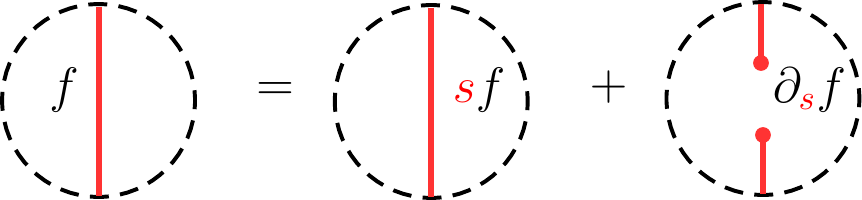}}
\end{equation}

 \begin{equation}   \label{sgraphE}
       \raisebox{-.5\height}{\includegraphics[scale=0.7]{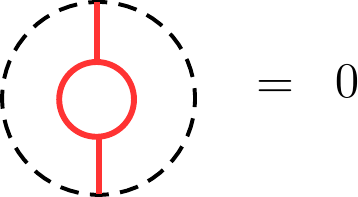}}
\end{equation}

There is a final relation saying that any Soergel graph $D$ which is
decorated in its leftmost region by an $f \in (\alpha_{\ese})$, that is a polynomial
with no constant term, is set equal to zero.
We depict it as follows

\begin{equation}  \label{rel muere a la izquierda}
         \raisebox{-.5\height}{\includegraphics[scale=0.7]{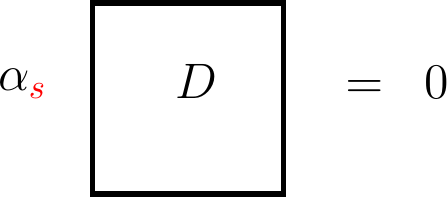}}
\end{equation}

\medskip
The relations  {\rm\ref{sgraphA}--\ref{rel muere a la izquierda}} also hold if red is replaced by blue,
of course.

\medskip
For $ \lambda \in \F $ and $ D $ a Soergel diagram, 
the scalar product $ \lambda D$ is identified 
with the multiplication by $ \lambda $ in any region of $ D $.
The multiplication {\color{black}{$ D_1 D_2 $}}
{\color{black}{of diagrams $ D_1 $ and $ D_2 $}}
is given by vertical concatenation 
{\color{black}{with $ D_1 $ on top of $ D_2$}}
and the monoidal structure by horizontal concatenation. There is natural $ \Z$-grading on
$ \cal D$, extending the
grading on $ R $, in which the dots, that is the first two diagrams in \ref{Vertices} have degree $ 1$,  and
the trivalents, that is 
the last two diagrams in \ref{Vertices}, have degree $-1$.
\end{definition}

\begin{remark}\rm
  Strictly speaking the category defined in Definition \ref{defin endo BS}
  is not the diagrammatic Soergel category introduced in \cite{EW}.
  To recover the category from \cite{EW} the relation \ref{rel muere a la izquierda} should
  be omitted. 
	
\end{remark}

Let us comment on the isotopy relation in Definition \ref{defin endo BS}. It follows from it that
the arcs of a Soergel graph may be assumed to be piecewise linear. It also follows from it together with \eqref{sgraphB} that the 
following relation holds

 \begin{equation}   \label{frob}
\raisebox{-.5\height}{\includegraphics[scale=0.7]{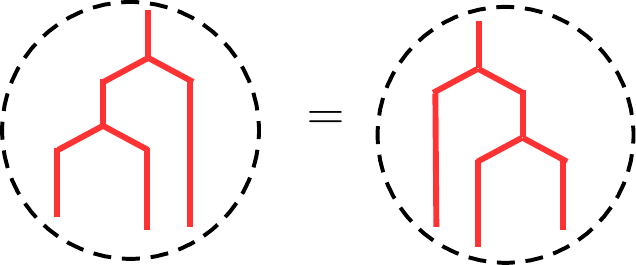}}
\end{equation}

\noindent
In other words the two trees on three downwards leaves are equal.
We also have equality for other trees.
Here is the case with four upwards leaves. Note the last
diagram which represents the way we shall often depict trees. 


\begin{equation}   \label{trees}
      \raisebox{-.5\height}{\includegraphics[scale=0.7]{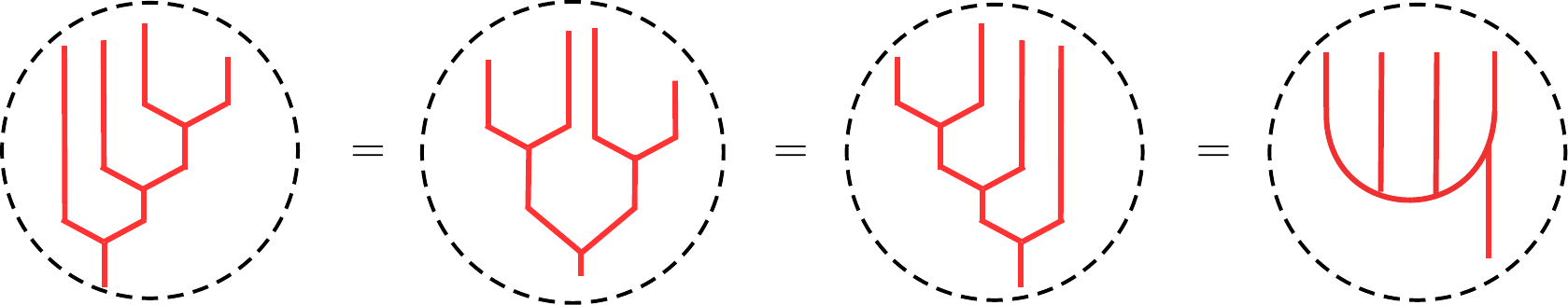}}
\end{equation}

\medskip

Let now $ n $ be a fixed positive integer and fix 
$ \underline{w}  := n_{ {\color{red} s}} \in \Exp $ as in \ref{redexp}. We then define
\begin{equation}
\tilde{A}_w :=   \mbox{End}_{\mathcal D} (\underline{w}).
\end{equation}
    {\color{black}{As mentioned above, $ w $ is a rigid element of $ W  $ and therefore we use the notation
 $ \tilde{A}_w  $ instead of $ \tilde{A}_{\underline{w}}$.}}         

\medskip
By construction, $ \tilde{A}_w $
      is an $ \F$-algebra with multiplication given by concatenation and the goal of this section is
to study the properties of this algebra.
First, for  $ i =1,\ldots, n-2 $
we define the following element of $ \tilde{A}_w $

\begin{equation}
     U_i := \quad     \raisebox{-.4\height}{\includegraphics[scale=0.9]{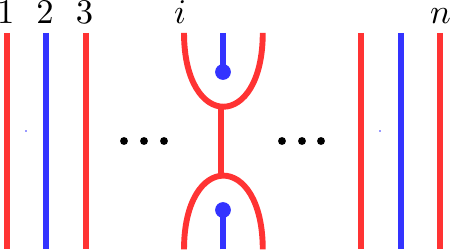}}
\end{equation}
\noindent
and similarly 
\begin{equation}
     U_0 := \quad     \raisebox{-.4\height}{\includegraphics[scale=0.9]{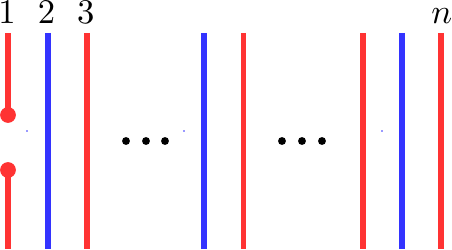}}
\end{equation}

\noindent
The following Theorem is fundamental for what follows. 
\begin{theorem}\label{universal}
 There is a homomorphism of $ \F$-algebras $ \varphi: \mathbb{NB}_{n-1} \rightarrow  \tilde{A}_w$ given 
    by $\mathbb{U}_i\mapsto U_i $ for $ i = 0, 1, \ldots, n-2$.
\end{theorem}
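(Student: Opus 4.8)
The plan is to verify directly that the diagrams $U_0, U_1, \ldots, U_{n-2} \in \tilde{A}_w$ satisfy the defining relations \ref{eq one}--\ref{eq five} of $\mathbb{NB}_{n-1}$ from Definition \ref{definition blob}; since $\mathbb{NB}_{n-1}$ is given by generators and relations, this is exactly what is needed to produce the algebra homomorphism $\varphi$ sending $\UU_i \mapsto U_i$. So the proof is a finite collection of local Soergel-calculus computations, one per relation, and the task is to organize them efficiently rather than to find a clever argument.

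First I would treat the relations \ref{eq one}, \ref{eq two}, \ref{eq three} involving only $U_1, \ldots, U_{n-2}$, i.e. the "Temperley--Lieb part". Each $U_i$ is a diagram built from a trivalent/dot configuration on the two adjacent strands $i, i+1$ of $\underline{w} = n_{\color{red} s}$, and the three relations are local: \ref{eq three} ($|i-j|>1$) is immediate from the fact that diagrams supported on disjoint sets of strands commute under horizontal juxtaposition; \ref{eq one} ($U_i^2 = -2U_i$) and the braid-like relation \ref{eq two} ($U_iU_jU_i = U_i$ for $|i-j|=1$, $i,j>0$) are the standard computations showing that the $U_i$ generate a (negatively normalized) Temperley--Lieb algebra inside $\tilde{A}_w$, using the "needle" relation \ref{sgraphA} (which produces the factor coming from $\partial_\ese(\alpha_\ese) = 2$, hence the $-2$ after accounting for signs), the Frobenius/associativity relation \ref{frob}, and the tree identities \ref{trees}. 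These are exactly the manipulations that appear in Elias--Williamson and in earlier treatments of the Temperley--Lieb quotient, so I would do them once carefully and cite the pattern.

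Next I would handle the two relations \ref{eq four} and \ref{eq five} involving $U_0$, which is where the left-hand boundary relation \ref{rel muere a la izquierda} enters. The diagram $U_0$ involves the leftmost strand and is designed so that composing it with itself, or sandwiching it as $U_1 U_0 U_1$, creates a closed component or an internal region in the leftmost position decorated by a polynomial with no constant term (a multiple of $\alpha_\ese$); by \ref{rel muere a la izquierda} such a diagram is zero. Concretely, for \ref{eq five} I expect $U_0^2$ to reduce, via the needle relation and barbell/polynomial slides, to $U_0$ times a polynomial lying in $(\alpha_\ese)$ evaluated in the leftmost region, hence $0$; and for \ref{eq four}, $U_1 U_0 U_1$ should similarly reduce to something supported with an $(\alpha_\ese)$-decoration on the far left, hence $0$. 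I would also double-check that no relation forces an unexpected nonzero scalar, e.g. that the normalization of $U_0$ is such that the only surviving term genuinely dies under \ref{rel muere a la izquierda}.

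The main obstacle will be the $U_0$ relations \ref{eq four} and \ref{eq five}: these require carefully tracking how the polynomial decorations in the leftmost region accumulate under vertical concatenation, using the "barbell forces the polynomial through a colored line" mechanics from \ref{sgraphD}--\ref{sgraphE} and the Demazure-operator values $\partial_\ese(\alpha_\ese) = 2$, $\partial_\ese(\alpha_\te) = -2$, and then seeing that what remains is annihilated by the cyclotomic-type relation \ref{rel muere a la izquierda}. The Temperley--Lieb relations \ref{eq one}--\ref{eq three} should be routine. I would present the $U_0$ computations as the bulk of the proof, drawing the relevant intermediate diagrams, and dispatch the rest by reference to the standard Temperley--Lieb-in-Soergel-calculus computation.
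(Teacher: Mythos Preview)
Your overall plan matches the paper's: verify the relations \ref{eq one}--\ref{eq five} directly in $\tilde{A}_w$. However, you have the relative difficulties reversed in one important place, and this creates a genuine gap.

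You claim that relation \ref{eq three} ($U_iU_j = U_jU_i$ for $|i-j|>1$) is ``immediate from the fact that diagrams supported on disjoint sets of strands commute.'' This is only correct for $|i-j|>2$. The diagram $U_i$ is supported on \emph{three} strands, at positions $i$, $i+1$, $i+2$, so when $|i-j|=2$ the two diagrams share a strand and the commutativity is not a triviality. The paper singles this out explicitly: for $|i-j|=2$ one must actually compute, using the $H$-relation \ref{sgraphB} and the Frobenius relation \ref{frob}, that $U_iU_{i+2}$ equals a diagram which is symmetric under horizontal reflection, and hence equals $U_{i+2}U_i$. Your proposal, as written, does not cover this case.

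Conversely, you identify \ref{eq four} and \ref{eq five} as ``the main obstacle'' and plan to track polynomial decorations carefully through \ref{sgraphD}--\ref{sgraphE}. In fact these are the easiest relations: $U_0^2$ immediately produces (via \ref{sgraphC}) the barbell $\alpha_{\ese}$ in the leftmost region, which is killed by \ref{rel muere a la izquierda}; and $U_1U_0U_1$ likewise yields a diagram with a nonconstant polynomial in the leftmost region. The paper dispatches both in a single sentence as ``a direct consequence of \eqref{rel muere a la izquierda},'' with no intermediate diagrams needed.

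So the fix is simple: move the careful diagram-chasing from the $U_0$ relations (where it is unnecessary) to the $|i-j|=2$ case of \ref{eq three} (where it is genuinely needed).
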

\begin{dem}
  We must check that $U_0, U_1, \ldots , U_{n-2} $ satisfy the relations
  given by the $ \mathbb{U}_i $'s in Definition \ref{definition blob}.
In order to show the quadratic relation \eqref{eq one} we argue as follows

\begin{equation}
  \, \, \, \, \, \, \, \, \, \, \, \, \, \, \,
\, \, \, \, \, \, \, \, \, \,
U_i^2 =
\raisebox{-.43\height}{\includegraphics[scale=0.6]{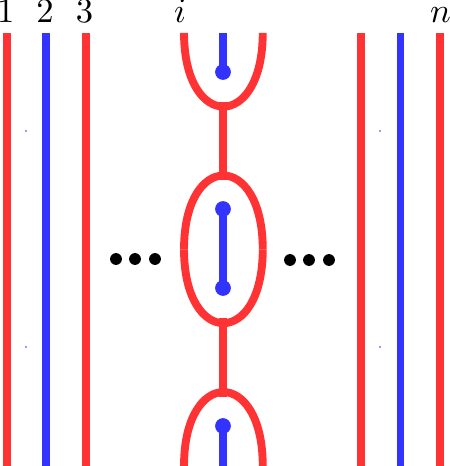}}
\, \, \,   =  \, \, \, 
\raisebox{-.43\height}{\includegraphics[scale=0.6]{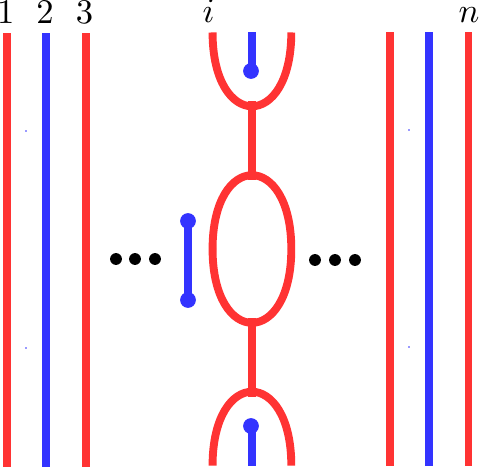}}
\, \, \, 
-2 \, \raisebox{-.43\height}{\includegraphics[scale=0.6]{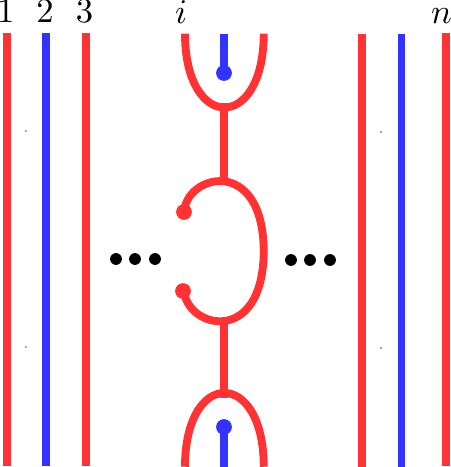}}
=  \,      -2U_i
 \end{equation}

\noindent 
where we used \eqref{sgraphA}, \eqref{sgraphC}, \eqref{sgraphD} and \eqref{sgraphE}.

We next show that \eqref{eq three} holds. 
If $|i-j|>2$ then \eqref{eq three} clearly holds, that is $ U_i U_j = U_j U_i $, 
but for $|i-j|=2$ it is not completely clear that it holds.
We shall only show it in the case $ n = 5 $, $ i= 1 $ and $ j =3 $: the 
general case is proved the same way. We have that

\begin{equation}\label{U1U3}
  U_3 U_1  = \, \, 
\raisebox{-.43\height}{\includegraphics[scale=0.6]{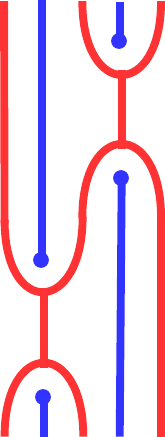}}
 \, \,  = \, \, 
\raisebox{-.43\height}{\includegraphics[scale=0.6]{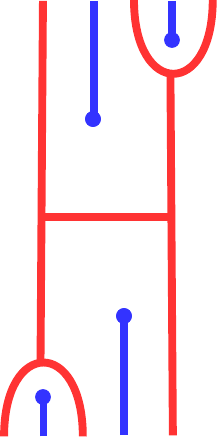}}
 \, \, = \, \, 
\raisebox{-.43\height}{\includegraphics[scale=0.6]{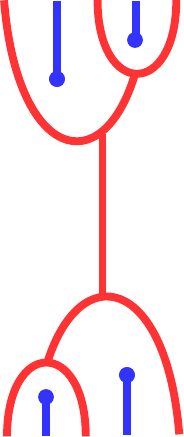}}
 \, \,  = \, \, 
\raisebox{-.43\height}{\includegraphics[scale=0.6]{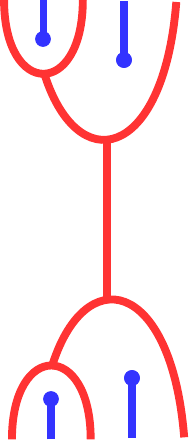}}  
\end{equation}

\noindent
where we used the `H'-relation \ref{sgraphB} for the third equality and \ref{frob} for the last equality.
But $ U_1 U_3 $ is obtained from $ U_3 U_1 $ by reflecting along a horizontal axis, and since
the last diagram of \eqref{U1U3} is symmetric along this axis, we conclude that $ U_1 U_3 = U_3 U_1 $ as
claimed.

The relation \eqref{eq two}, in the case $ n = 4, i = 1 $ and $ j = 2 $, is shown as follows.

\begin{equation}
U_1 U_2 U_1  = \, \, 
\raisebox{-.43\height}{\includegraphics[scale=0.6]{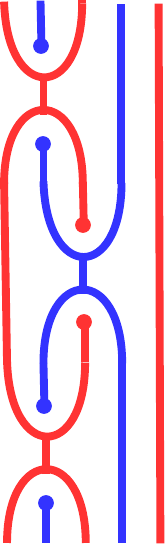}}  
 \, \,  =  \, \, 
\raisebox{-.43\height}{\includegraphics[scale=0.6]{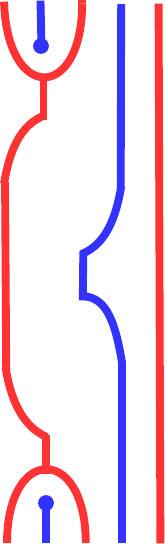}}  
  \, \, =  U_1
\end{equation}

The general case is treated the same way. We finally notice that \eqref{eq four} and \eqref{eq five}
  are a direct consequence of \eqref{rel muere a la izquierda}. The Theorem is proved.
\end{dem}

\medskip
For a general Coxeter system $ (W,S) $, Elias and Williamson 
found in \cite{EW} a recursive procedure for constructing
an $ \F$-basis for the morphism space $\botts{x}{y} $, for any
$ \underline{x}, \underline{y} \in \Exp$. It is a diagrammatic
version of Libedinsky's {\it double light leaves basis} for Soergel bimodules and the basis elements are
also called double light leaves 
in this case.
On the other hand we have fixed $ W $ as the infinite dihedral group, 
and in this particular case there is a non-recursive description of the double light leaves basis that we shall use.

\medskip

In order to describe it we first introduce some diagram conventions. 
First, in view of our tree conventions given in \ref{trees} we shall represent
the diagram from \ref{U1U3} as follows

\begin{equation}
U_1 U_3 =   \raisebox{-.43\height}{\includegraphics[scale=1]{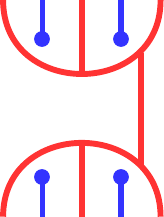}}  
\end{equation}

This can be generalized: for example using the last diagram in \ref{U1U3} we get that 

\medskip 
\begin{equation}
U_1 U_3 U_5 \, \, = \, \,
\raisebox{-.43\height}{\includegraphics[scale=0.7]{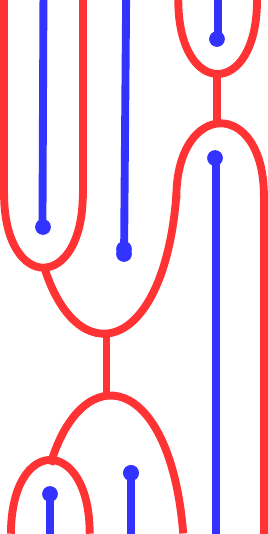}}  
\, \, \, \,= \, \, \,
\raisebox{-.43\height}{\includegraphics[scale=0.7]{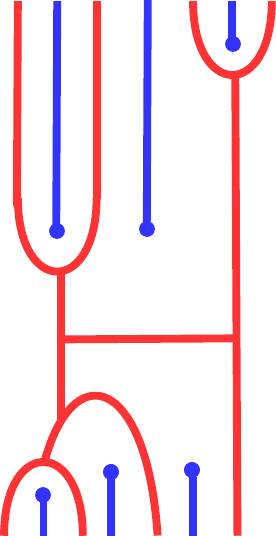}}  
\, \, =
\, \, \,
\raisebox{-.43\height}{\includegraphics[scale=0.7]{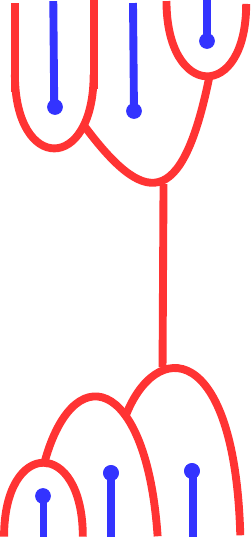}}  
=
\, \, \,
\raisebox{-.43\height}{\includegraphics[scale=0.7]{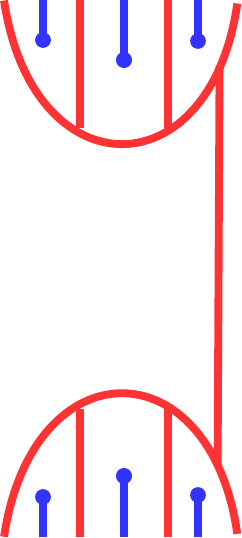}}  
\end{equation} 

\medskip \medskip \noindent
Even more generally, we have that

\vspace{-0.5cm}
\begin{equation}\label{evenmoregeneralA}
    U_i U_{i+2} \cdots U_{i+2k}
    = \, \, \,
\raisebox{-.43\height}{\includegraphics[scale=0.8]{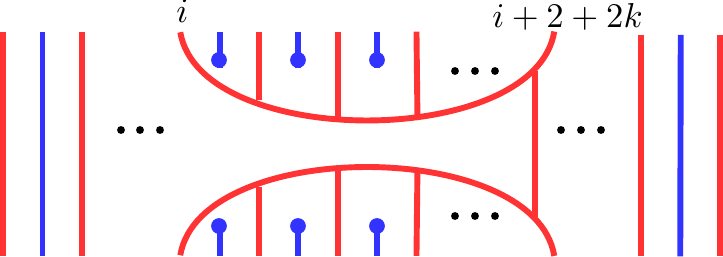}}      
\end{equation} 

\noindent
if $ i $ is odd and 

\begin{equation}\label{evenmoregeneralB}
    U_i U_{i+2} \cdots U_{i+2k}
    = \, \, \,
    \raisebox{-.43\height}{\includegraphics[scale=0.8]{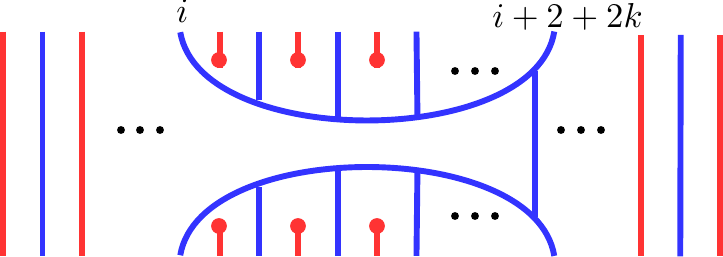}}      
\end{equation}

\noindent
if $ i $ is even. We now introduce a different kind of elements in $ \tilde{A}_w $, namely the 
\emph{JM-elements} $ L_i $ of $ \tilde{A}_w $, via
\begin{equation} 
  L_i :=
  \qquad
  \raisebox{-.43\height}{\includegraphics[scale=0.8]{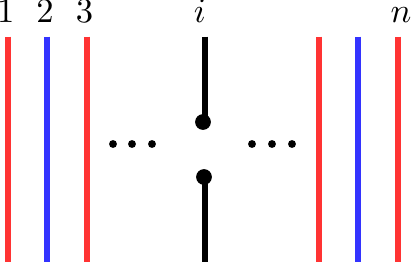}}      
\end{equation}

\noindent
where black means red if $ i $ is odd and blue if $ i $ is even.
Note that $ L_1 = U_0 $. 
(The name JM-element is motivated by the paper \cite{steen}  where it is shown that
$ L_i $ indeed is a JM-element in the sense of Mathas \cite{Mat-So}, for any Coxeter system).

\begin{lemma}  \label{lemma_writing_jucys_murphy_elements}
Let $1 < i < n$. Then we have the following formula in $\tilde{A}_w$
	\begin{equation}\label{equation_generators_not_one_not_n}
		L_i = U_{i-1}L_{i-1} +L_{i-1}U_{i-1} -2  U_{i-1} \sum_{j=1}^{i-2}  L_{j}.
	\end{equation}
        Consequently, for all $ 1 < i < n $ we have that $L_i$ 
        belongs to the subalgebra of $\tilde{A}_{w}$
        generated by the elements $L_1, U_1, \ldots ,U_{n-2}$.
\end{lemma}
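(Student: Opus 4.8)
The plan is to first prove the displayed identity \eqref{equation_generators_not_one_not_n} by a direct computation with Soergel graphs, and then to read off the ``consequently'' clause from it at once. Granting \eqref{equation_generators_not_one_not_n}, observe that $L_1 = U_0$ is one of the listed generators; moreover, for each $i$ with $1 < i < n$ the right-hand side of \eqref{equation_generators_not_one_not_n} is a noncommutative polynomial in $U_{i-1}$ and in $L_1, \dots, L_{i-1}$, and $U_{i-1}$ is one of $U_1, \dots, U_{n-2}$ since $1 \le i-1 \le n-2$. Hence an immediate induction on $i$ shows that every $L_i$ with $1 < i < n$ lies in the subalgebra of $\tilde{A}_w$ generated by $L_1, U_1, \dots, U_{n-2}$.

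So the real content is the identity \eqref{equation_generators_not_one_not_n}, which I would prove by strong induction on $i$. For the base case $i = 2$ the sum $\sum_{j=1}^{0}$ is empty and, since $L_1 = U_0$, the claim reduces to $L_2 = U_1 U_0 + U_0 U_1$; this is a direct check obtained by drawing the two composites $U_1 U_0$ and $U_0 U_1$ on $\underline{w}$, simplifying them with isotopy, the tree relations \eqref{frob} and \eqref{trees}, and the defining relations of $\mathcal{D}$, and comparing with the defining diagram of $L_2$. For the inductive step one forms the two composites $U_{i-1}L_{i-1}$ and $L_{i-1}U_{i-1}$: in each of these, the root decoration carried by the $(i-1)$-st strand in the diagram $L_{i-1}$ gets slid next to the cup--cap pattern of $U_{i-1}$, by means of \eqref{frob} and \eqref{trees}. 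Adding the two composites and then applying the polynomial-forcing relation (one of the one-colour relations \eqref{sgraphC}--\eqref{sgraphE}) to transport this root across the cap of $U_{i-1}$ onto the $i$-th strand, the ``untwisted'' contribution reassembles into the defining diagram of $L_i$, while the Demazure correction terms produce, for each $j$ with $1 \le j \le i-2$, a Soergel graph equal to $U_{i-1}$ with the $j$-th strand broken by a barbell. Using $\alpha_{\ese} = -\alpha_{\te}$ together with the Cartan values $\alpha_{\ese}(\alpha^{\vee}_{\ese}) = 2$ and $\alpha_{\ese}(\alpha^{\vee}_{\te}) = -2$ recorded in \eqref{equations realization}, each of these graphs equals $2\,U_{i-1}L_j$. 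Collecting everything then yields $U_{i-1}L_{i-1} + L_{i-1}U_{i-1} = L_i + 2\,U_{i-1}\sum_{j=1}^{i-2} L_j$, which is \eqref{equation_generators_not_one_not_n}.

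A possibly cleaner bookkeeping is obtained by passing to the partial sums $M_i := \sum_{j=1}^{i} L_j$: if $M_i$ can be identified with a single, simple Soergel graph, then, rewriting \eqref{equation_generators_not_one_not_n} as $L_i = U_{i-1}L_{i-1} + L_{i-1}U_{i-1} - 2\,U_{i-1}M_{i-2}$ and using $M_i = M_{i-1} + L_i$, the identity becomes a short telescoping manipulation. In either approach the main obstacle is the same: one must verify that the polynomial-forcing relation, applied across the full width of the relevant diagram, produces exactly one Demazure correction per strand $j$ with $j \le i-2$, each with coefficient precisely $2$ and the correct sign, and that no further terms survive the relation \eqref{rel muere a la izquierda} killing polynomials without constant term in the leftmost region. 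Making this precise is what forces the auxiliary induction (or the explicit shape of $M_i$), rather than a single local move, and is where the bulk of the work lies.
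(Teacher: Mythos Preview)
Your computation is correct and is essentially the paper's argument read in reverse: the paper starts from $L_i$, applies the polynomial relation \eqref{sgraphD} to the barbell on the $i$-th strand to produce $U_{i-1}L_{i-1} + L_{i-1}U_{i-1}$ together with a third diagram carrying a free $\alpha_\te$ in a region, and then pushes that polynomial all the way to the left (where it dies by \eqref{rel muere a la izquierda}), picking up exactly one correction $-2\,U_{i-1}L_j$ at each strand $j=1,\dots,i-2$ it crosses. Note that your ``strong induction on $i$'' is superfluous for the identity \eqref{equation_generators_not_one_not_n} itself, since your inductive step never invokes the hypothesis; as in the paper, it is a single direct computation for each fixed $i$, and the only genuine induction is the trivial one for the ``consequently'' clause.
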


\begin{dem}
  Let us show the formula \eqref{equation_generators_not_one_not_n} in the case
  $ i = n-1 $ and $ i $ odd. The general case of the formula, that is the case where $ i $ is any number strictly smaller than $ n $,
  is shown the same way. We have that 
  
\begin{equation}\label{JMexpansion}
  L_i =\raisebox{-.45\height}{\includegraphics[scale=0.6]{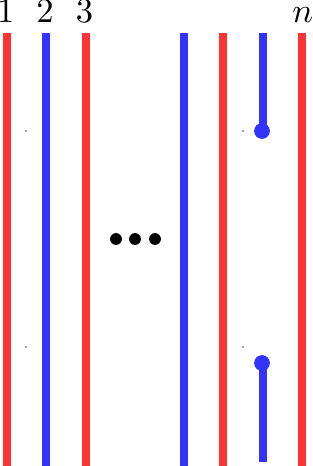}}      
  \, \,\,  =  \, \,\,
  \raisebox{-.45\height}{\includegraphics[scale=0.6]{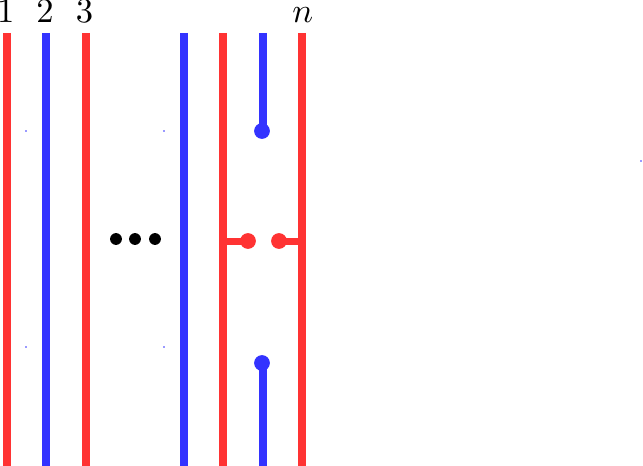}}
  \! \! \!\! \! \!\! \! \!\! \! \! \!\! \! \!\! \! \! \!\! \! \!\! \! \!  \!\! \! \!
  =    \, \,\, 
  \raisebox{-.45\height}{\includegraphics[scale=0.6]{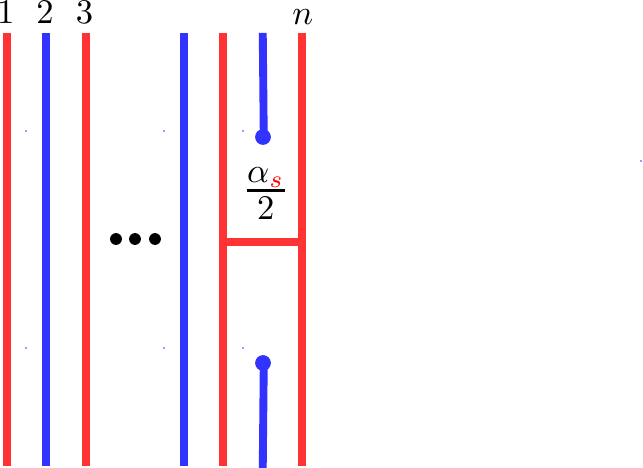}}
    \! \! \!\! \! \!\! \! \!\! \! \! \!\! \! \!\! \! \! \!\! \! \!\! \! \! \!\! \! \!
  +   \, \,\,    
  \raisebox{-.45\height}{\includegraphics[scale=0.6]{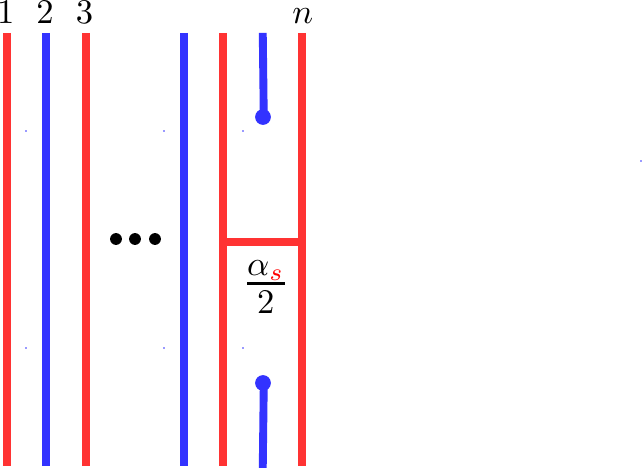}}
  \! \!\! \! \! \!\! \! \!\! \! \!  \! \!\! \! \!  \! \!\! \! \!                                     = 
\end{equation}

\medskip

\begin{equation}\label{JMexpansionA}
  \raisebox{-.43\height}{\includegraphics[scale=0.6]{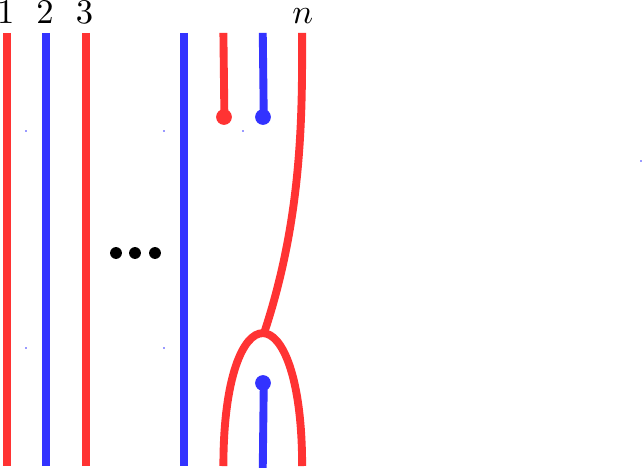}}
  \! \!\! \! \! \!\! \! \!\! \! \!\! \!\! \! \! \!\! \! \!\! \! \!   \!\! \! \!\! \! \!   
  + \, \, \, \, \, \, \, \, \, 
  \raisebox{-.43\height}{\includegraphics[scale=0.6]{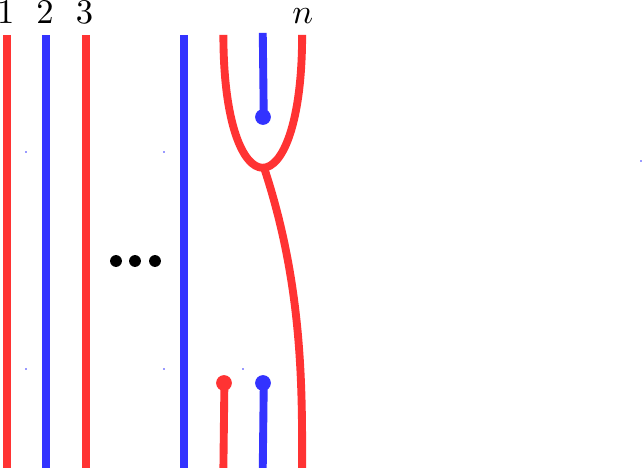}}
  \! \!\! \! \! \!\! \! \!\! \! \!\! \!\! \! \! \!\! \! \!\! \! \!\!\! \!
 +\, \, \, \, \, \, \, \, \, 
 \raisebox{-.43\height}{\includegraphics[scale=0.6]{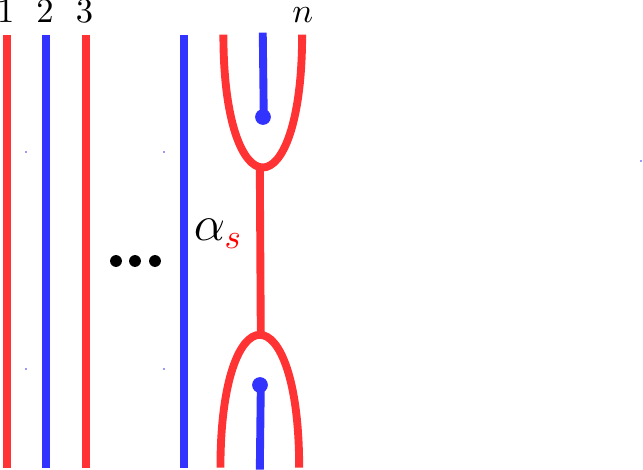}}      
\end{equation}

 \medskip\medskip
\noindent 
The first two diagrams of \ref{JMexpansionA} are 
$ U_{i-1} L_{i-1} $ and $ L_{i-1} U_{i-1}  $ and so we only have to check that the last
diagram of \ref{JMexpansionA} is equal to $-2 U_{i-1}\sum_{j=1}^{i-2}  L_{j} $.
But this follows via repeated applications
of the polynomial relation \ref{sgraphD}, moving $ {\alpha_\ese} ={-\alpha_\te} $ all the way to the left.
\end{dem}

\medskip
The $ L_i $'s are important since they allow us to generate variations of \eqref{evenmoregeneralA}
and \eqref{evenmoregeneralB} with no `connecting' arcs, as follows

\begin{equation}\label{generalnonhangingcage}
 \begin{array}{l}
  (U_1 U_3 U_5 \cdots U_{2k+1})  L_{2k+3}  (U_1 U_3 U_5 \cdots U_{2k+1})
\, \, \,  \, \, \,    = \, \, \, \, \, \, 
   \raisebox{-.43\height}{\includegraphics[scale=0.6]{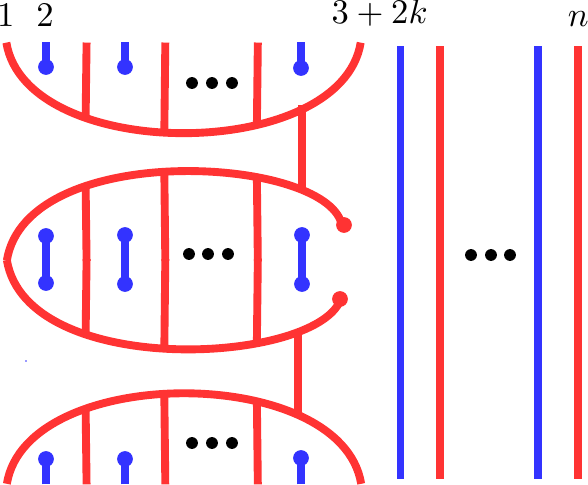}}      
   \, \, \, \,
   = 
  \\ \\ 
   \,  \, \,
\raisebox{-.43\height}{\includegraphics[scale=0.6]{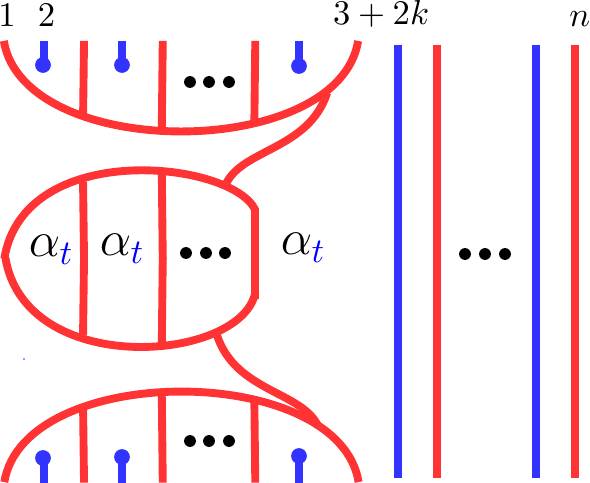}}      
 \, \, \,     \, \, \,    =   \, \, \,  \, \, \, 
\raisebox{-.43\height}{\includegraphics[scale=0.6]{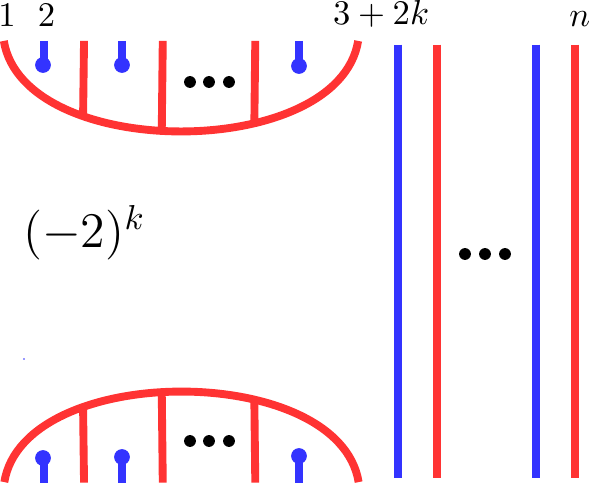}}      
 \end{array}
\end{equation} 

\medskip 
\noindent
where we for the last equality used the polynomial relation \ref{sgraphD} as well
as \ref{rel muere a la izquierda}. Thus any diagram of the form
\ref{hangingcageLEFTADJ} belongs to the subalgebra of $\tilde{A}_{w}$ generated by the $ L_i$'s and the $ U_i$'s.
Note on the other hand that in order for this argument to work, 
the diagram in question must be left-adjusted, that is
without any through arcs on the left 
as in \ref{hangingcageLEFTADJ}).

\begin{equation}\label{hangingcageLEFTADJ}
\raisebox{-.43\height}{\includegraphics[scale=0.8]{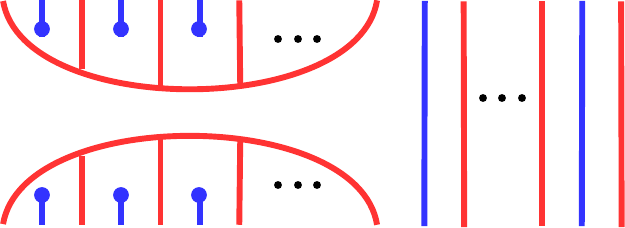}}      
\end{equation}

\noindent

\medskip
The diagrams corresponding to double light basis elements of $\tilde{A}_{w}$ are built up of top and bottom `half-diagrams', similarly to 
the Temperley-Lieb diagrams and the blob diagrams considered in the previous section. These half-diagrams are called light leaves. 

 We now introduce the following bottom half-diagrams, called \emph{full birdcages}
by Libedinsky in \cite{LibedinskyGentle}.

\begin{equation} 
  \raisebox{-.43\height}{\includegraphics[scale=0.9]{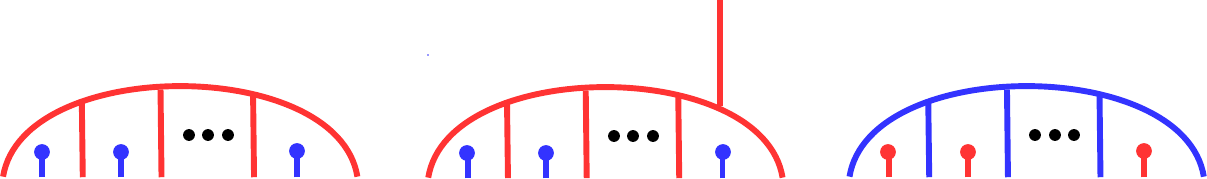}}      
\end{equation}

We say that the first and the last of these half-diagrams are \emph{non-hanging full birdcages},
whereas the middle one is \emph{hanging}. We also say that the 
first two full birdcages are \emph{red}, and the third one is \emph{blue}. 
We define the \emph{length} of a full birdcage to be the number of dots contained in it.
We view the half-diagrams

\begin{equation}
\!   \! \!   \! \!   \! \!   \! \!   \! \!   \! \!   \! \!   \! \!   \! \!   \! 
  \raisebox{-.43\height}{\includegraphics[scale=0.9]{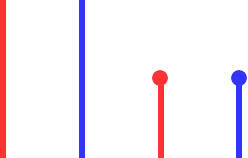}}      
\end{equation} 
as \emph{degenerate} full birdcages of lengths $0$. A full birdcage which is not degenerate is called
\emph{non-degenerate}. We shall also consider \emph{top full birdcages}, that are obtained from
bottom full birdcages, by a reflection through a horizontal axis. Here are two examples of lengths four and
three.

\begin{equation}
      \raisebox{-.43\height}{\includegraphics[scale=1]{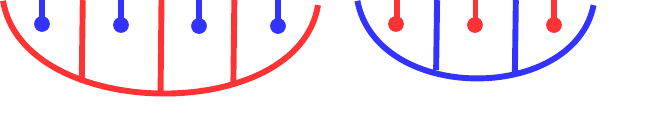}}      
\end{equation} 

Light leaves are built up of full birdcages in a suitable sense that we shall now explain.
We first consider the operation of \emph{replacing a degenerate non-hanging full birdcage by a non-hanging
non-degenerate full birdcage 
of the same color}. Here is an example

\begin{equation}\label{insertionStepFirst}
\! \! \! \! \! \! \! \! \! \! \! \! \! \!
 \raisebox{-.43\height}{\includegraphics[scale=1]{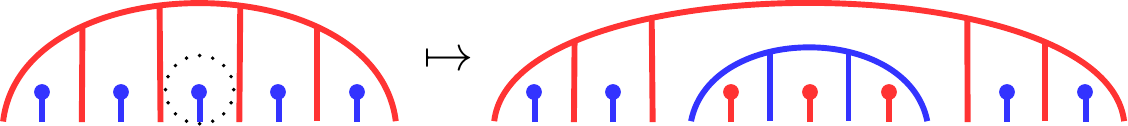}}      
\end{equation}

\noindent
The reason why we only consider the application of this operation to non-hanging birdcages is that 
applying it to a degenerate hanging birdcage only gives a new, larger full birdcage; in other words nothing new.
Here is an example

\begin{equation}\label{insertionStepNothing}
  \! \! \! \! \! \! \! \! \! \! \! \! \! \! \! \! \! \! \! \! \! \! \! \! \! \! \! \! \! \!
   \raisebox{-.43\height}{\includegraphics[scale=1]{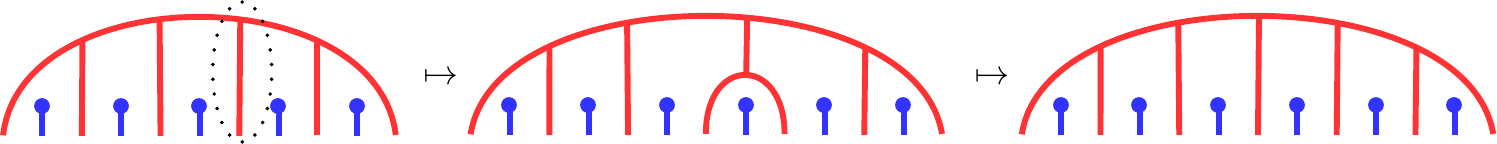}}      
\end{equation}

Following Libedinsky, we now define a \emph{birdcagecage} to be any diagram
that can be obtained from a  degenerate non-hanging birdcage by performing 
the above operation recursively a finite number of times on the degenerate birdcages that appear at each step.
Here is an example of a birdcagecage.


\begin{equation}\label{insertionStep}
   \raisebox{-.43\height}{\includegraphics[scale=1]{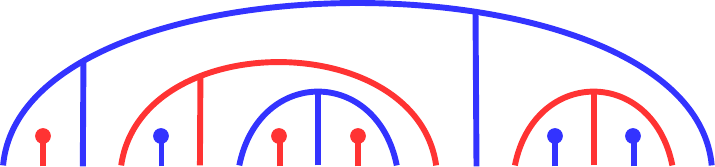}}      
\end{equation}

Now, according to  \cite{LibedinskyGentle}, any light leaf is built up of
birdcagecages as indicated below in \ref{birdcagecagezonesA}. 
Here in \ref{birdcagecagezonesA} 
the number of bottom boundary points is $ n $.
Zone A consists of a number of non-hanging birdcagecages whereas 
zone B consists of a number of hanging birdcagecages. On the other hand
zone C consists of at most one 
non-hanging birdcagecage.

\begin{equation}{\label{birdcagecagezonesA}}
 \! \!  \! \! \!  \! \! \!  \! \! \!
  \raisebox{-.43\height}{\includegraphics[scale=0.9]{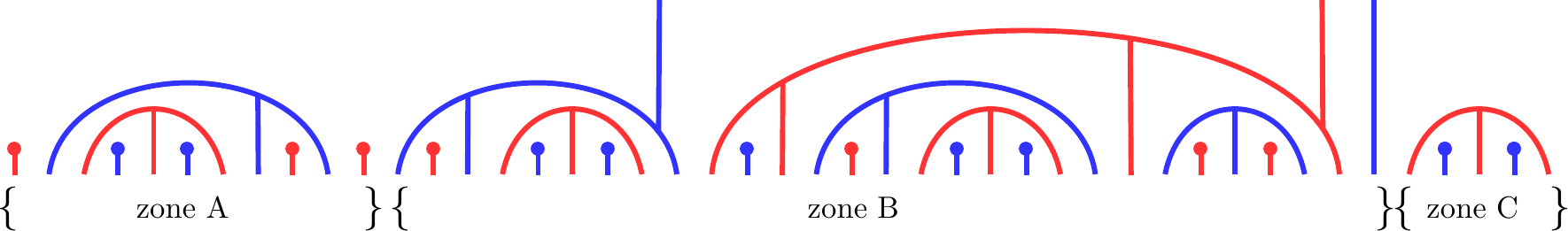}}      
\end{equation}

Note that each of the three zones may be empty, but they cannot all be empty since $ n > 0$.
In the case where zone B is empty, we define zone C to be the last birdcagecage. In other words, if zone B is empty then zone C is always nonempty, whereas zone A may be empty.

\medskip
The hanging birdcagcages of zone B define an element $ v \in W $.
It satisfies $ v \leq w $ where $ \leq $ denotes  
the Bruhat order on $W$. In the
above example we have $ v = \color{blue} t \color{red} s \color{blue} t$. 
The \emph{double leaves basis of $ \tilde{A}_w $} is now obtained by running over all $ v \le w  $
and over all pairs of light leaves that are 
associated with that $ v $. For each such pair $ (D_1, D_2) $ 
the second component $ D_2 $ is reflected through a horizontal axis, and finally the two
components are glued together. The resulting diagram is a double leaf.  
{\color{black} In Figure \ref{fig:an example of a double left} we give an example.}

\begin{figure}[h]
  \raisebox{-.43\height}{\includegraphics[scale=0.85]{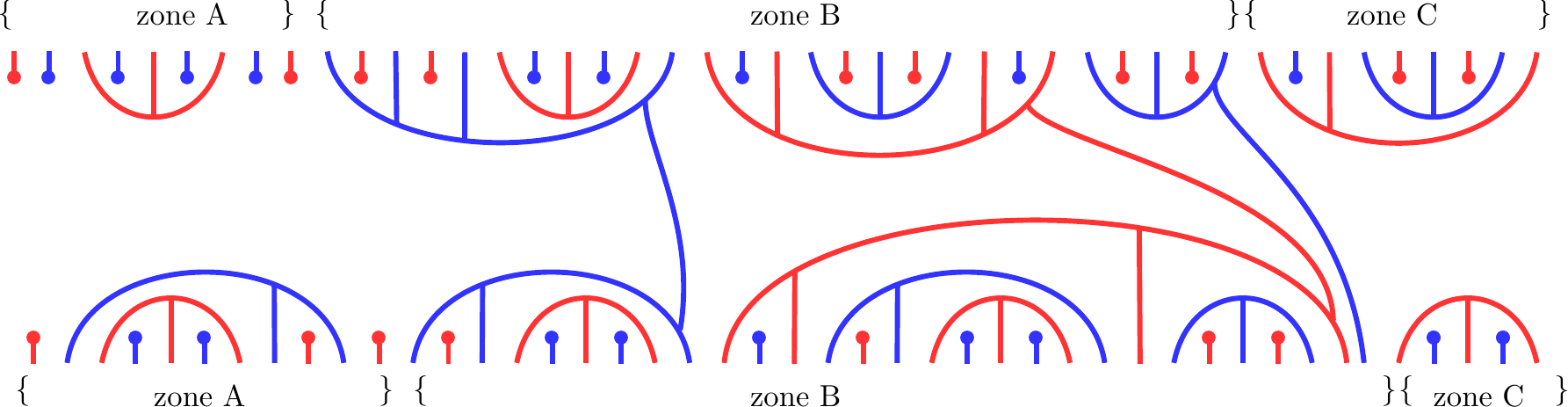}}      
\caption{An example of a double leaf.}
\label{fig:an example of a double left}
\end{figure}

%

\noindent
Note that although the total number of top and bottom boundary points of each double leaf is the same,
the number of boundary points in each of the three zones need not coincide, although the parities do coincide.
In the above example, there are for instance nine top boundary points in zone C but only five bottom boundary
points in zone C.
Note also that the number of top and bottom birdcagecages in zone B always is the same,
three in the above example. 
This is of course also the case in zone C but not necessarily in zone A, although the parities must coincide. In the above example,  we have five top birdcagecages in zone A but only three bottom birdcagecages in zone A. Moreover, there are nine top boundary points in zone A but eleven bottom boundary points in zone A. 

\medskip
For future reference we formulate the Theorem already alluded to several times.
\begin{theorem}
The double leaves form an $ \F $-basis for $ \tilde{A}_w$. 
\end{theorem}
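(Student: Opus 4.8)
The plan is to prove this as essentially a specialization of the general Elias--Williamson double light leaves theorem, with the non-recursive reformulation of Libedinsky providing the combinatorial bookkeeping. Recall that \cite{EW} establish that for any Coxeter system $(W,S)$, any realization, and any pair of expressions $\underline{x},\underline{y}\in\Exp$, the double light leaves form an $\F$-basis of $\botts{x}{y}$; taking $\underline{x}=\underline{y}=\underline{w}=n_{\color{red}s}$ gives a basis of $\tilde{A}_w$. The one subtlety is that our category $\mathcal D$ is not literally the category of \cite{EW}: by the Remark following Definition \ref{defin endo BS} we have imposed the extra cyclotomic-type relation \ref{rel muere a la izquierda}, killing any diagram whose leftmost region carries a polynomial with no constant term. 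So the first step is to check that the double light leaves basis descends to a basis of the quotient, i.e. that passing to the quotient by \ref{rel muere a la izquierda} exactly removes the double light leaves whose last term $v\in W$ (read off from zone B, the hanging birdcagecages) is too large, or more precisely that the surviving double light leaves --- those indexed by $v\le w$ with appropriate light leaf data --- remain linearly independent and spanning. In the infinite dihedral case the Bruhat order is a total order $e<s<ts<sts<\cdots$ (and likewise starting with $t$), and the cellular structure on $\tilde A_w$ has cell modules indexed by $v\le w$; relation \ref{rel muere a la izquierda} is precisely the cyclotomic quotient that truncates the cell datum. So the claim is that the double light leaves with $v\le w$ already live in $\tilde A_w$ and form a basis there.

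Concretely I would proceed as follows. First, recall Libedinsky's non-recursive description of light leaves in type $\tilde A_1$ (the birdcagecage / zone A, B, C decomposition of \ref{birdcagecagezonesA}), and note that the hanging birdcagecages of zone B encode an element $v\in W$ with $v\le w$, exactly as stated before the theorem. Second, invoke the general theorem of \cite{EW}: the double light leaves --- obtained by gluing a light leaf to a horizontally-reflected light leaf sharing the same middle element $v$, running over all $v\le w$ and all such pairs --- form an $\F$-basis of $\mathrm{Hom}_{\mathcal D_{EW}}(\underline w,\underline w)$ in the Elias--Williamson category $\mathcal D_{EW}$ (without relation \ref{rel muere a la izquierda}). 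Third, analyze the effect of the quotient map $\mathcal D_{EW}\twoheadrightarrow\mathcal D$: relation \ref{rel muere a la izquierda} is compatible with the cellular filtration, and its effect on $\tilde A_w=\End_{\mathcal D_{EW}}(\underline w)$ is to quotient by the span of those double light leaves whose leftmost region is forced to carry a positive-degree polynomial --- but in the non-recursive picture of \cite{LibedinskyGentle} the light leaves have been normalized so that no polynomial decoration in the leftmost region is needed, and every double light leaf indexed by $v\le w$ survives. Hence the images of these double light leaves span $\tilde A_w$. Fourth, for linear independence one can either appeal to the fact that the cellular pairing computation of \cite{EW}, which proves independence, goes through verbatim after imposing \ref{rel muere a la izquierda} because the relevant cell modules (indexed by $v\le w$) are unaffected by it, or one can count dimensions against the light leaves combinatorics directly. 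Putting these together yields that the double leaves form an $\F$-basis of $\tilde A_w$.

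The main obstacle, and the part requiring genuine care rather than citation, is the compatibility of relation \ref{rel muere a la izquierda} with the double light leaves basis --- that is, verifying that quotienting by \ref{rel muere a la izquierda} does not collapse any of the surviving double leaves and does not fail to kill anything extra. This is where the specific structure of the infinite dihedral group and Libedinsky's explicit non-recursive light leaves are essential: one needs that the leftmost region of a normalized light leaf in type $\tilde A_1$ never needs a positive-degree polynomial decoration, so that \ref{rel muere a la izquierda} acts on the double leaves basis by deleting a spanning-set-complement rather than by relating basis elements to each other. Once this is established the rest is a direct invocation of \cite{EW} and \cite{LibedinskyGentle}. I would therefore structure the proof as: (i) recall the Libedinsky normal form; (ii) cite the EW basis theorem for $\mathcal D_{EW}$; (iii) check relation \ref{rel muere a la izquierda} is adapted to the cellular filtration and acts cleanly on the basis; (iv) conclude. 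An alternative, cleaner route --- worth mentioning --- is to avoid the quotient analysis entirely by instead using the upper bound $\dim\tilde A_w\le 2\binom{2n}{n}$ that follows from Corollary \ref{cor dim} together with the homomorphism of Theorem \ref{universal}, and matching it against a count of double leaves indexed by $v\le w$; but the count of light leaves in type $\tilde A_1$ is itself $2\binom{2n}{n}$, so linear independence plus the correct cardinality forces the basis property, sidestepping the subtler part of the argument.
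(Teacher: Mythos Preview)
Your main approach is correct and in fact considerably more careful than the paper's own proof, which consists of a single sentence: it is ``mentioned in \cite{LibedinskyGentle}'' and ``is a consequence of the recursive construction of the light leaves.'' The paper treats the result as a black-box import from the literature and does not engage with the subtlety you raise about the cyclotomic relation \ref{rel muere a la izquierda}. Your step (iii) --- that the normalized Libedinsky light leaves carry no positive-degree polynomial in the leftmost region, so that imposing \ref{rel muere a la izquierda} amounts precisely to passing from the free $R$-module structure on $\mathrm{End}_{\mathcal D_{EW}}(\underline w)$ to the free $\F$-module structure on $\tilde A_w$ --- is the genuine content, and your argument for it is sound. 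So your main route is a legitimate expansion of what the paper leaves implicit in its citation.

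Your alternative route, however, is circular as stated. You propose to use ``the upper bound $\dim\tilde A_w\le 2\binom{2n}{n}$ that follows from Corollary \ref{cor dim} together with the homomorphism of Theorem \ref{universal}.'' But Theorem \ref{universal} only furnishes a homomorphism $\mathbb{NB}_{n-1}\to\tilde A_w$, which yields no upper bound on $\dim\tilde A_w$ unless you already know it is surjective --- and surjectivity onto $A_w\subset\tilde A_w$ is exactly what Theorem \ref{mainTheoremSection3} proves \emph{using} the double leaves basis. Likewise, the extension $\tilde\varphi:\widetilde{\mathbb{NB}}_{n-1}\to\tilde A_w$ appears only in Corollary \ref{corollary short presentation}, after the present theorem. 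So the dimension-matching shortcut is not available at this point in the logical order of the paper; stick with your main argument.
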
  
\begin{dem}
  This is mentioned in \cite{LibedinskyGentle}. It is a consequence of the recursive construction of
  the light leaves. 
\end{dem}

\begin{definition} 
Let $ A_w $ be the subspace of $ \tilde{A}_w$ spanned by the double leaves with empty zone C .
\end{definition}

With these notions and definitions at hand, we can now formulate and prove the following Theorem.

\begin{theorem}{\label{mainTheoremSection3}}
Let $w\in W$ with $w=n_{\ese} $. Then, we have
\begin{description}
	\item[a)] As an algebra, $\tilde{A}_w$ is generated by the elements $U_1,\ldots , U_{n-2}$ and $L_1,\ldots,L_n$.
	\item[b)]  $ A_w $ is a subalgebra of $ \tilde{A}_w$. It is generated by $U_1,\ldots , U_{n-2}$ and $L_1 = U_0 $.
	\item[c)] The dimensions of $ A_w $ and $ \tilde{A}_w $ are given by the 
  formulas
\begin{equation}
  \dim_{\mathbb{F}}( A_w) = \binom{2n}{n}   \qquad
  \mbox{ and } \qquad  \dim_{\mathbb{F}}(\tilde{ A}_w) = 2\binom{2n}{n}   .	
\end{equation}	
\end{description}
\end{theorem}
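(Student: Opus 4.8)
The plan is to combine the generation result that is available for the abstract algebra $\mathbb{NB}_{n-1}$, the double leaves basis of $\tilde{A}_w$, and the homomorphism $\varphi$ of Theorem \ref{universal}, and then squeeze dimensions. I would first prove part (a). By Lemma \ref{lemma_writing_jucys_murphy_elements} each $L_i$ with $1<i<n$ lies in the subalgebra generated by $L_1,U_1,\ldots,U_{n-2}$, so it suffices to show that $U_1,\ldots,U_{n-2},L_1,\ldots,L_n$ generate $\tilde{A}_w$, which by the same token reduces to showing that $U_1,\ldots,U_{n-2},L_1,\ldots,L_n$ together generate, allowing all $L_i$'s as generators for convenience. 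For this I use the double leaves basis: every basis diagram is glued from a top light leaf and a (reflected) bottom light leaf, each of which decomposes into zones A, B, C built from birdcagecages as in \ref{birdcagecagezonesA}. The key point, already essentially recorded in equations \eqref{evenmoregeneralA}--\eqref{hangingcageLEFTADJ}, is that a left-adjusted birdcage (hanging or not) can be written as a product of the $U_i$'s and one $L_j$, via the identity \eqref{generalnonhangingcage}; a non-left-adjusted birdcage in zone C is handled by conjugating with the appropriate $U_i$'s so as to move the through arcs out of the way, and a birdcagecage (nested birdcages) is obtained by iterating this. Gluing the pieces of zones A, B and C, one expresses an arbitrary double leaf as a word in the $U_i$'s and $L_j$'s, proving (a).

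For part (b), the subalgebra $A_w$ is by definition the span of double leaves with empty zone C. I would first observe, from the description of the multiplication via concatenation of half-diagrams, that the span of the empty-zone-C double leaves is closed under multiplication — concatenating two light leaves whose zone C is empty cannot create a zone C, since zone C arises precisely from through strands on the right that have no partner, and with empty zone C on both factors there are none. (This is the most delicate point in (b): one must check that no zone-C component can be produced by the relations when multiplying, i.e. that "empty zone C" is a two-sided ideal-of-bases condition in the appropriate sense.) Granting that, $A_w$ is a subalgebra, and the generation argument of part (a) restricted to empty-zone-C diagrams uses only the $U_i$'s and $L_1=U_0$ (zones A and B need no zone-C birdcage), so $A_w$ is generated by $U_1,\ldots,U_{n-2},U_0$.

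For part (c) I would exploit the homomorphism $\varphi\colon \mathbb{NB}_{n-1}\to\tilde{A}_w$ of Theorem \ref{universal}, together with its evident extension $\widetilde{\varphi}\colon \widetilde{\mathbb{NB}}_{n-1}\to\tilde{A}_w$ sending $\mathbb{J}_{n-1}$ to a suitable central nilpotent element, e.g. the length-$1$ full birdcage $L_1$ in zone C (one checks it is central and squares to zero using \eqref{rel muere a la izquierda} and the polynomial relations). By parts (a) and (b), $\varphi$ has image $A_w$ and $\widetilde{\varphi}$ is surjective onto $\tilde{A}_w$. Hence
\begin{equation}
\dim_{\mathbb{F}}(A_w)\le \dim_{\mathbb{F}}(\mathbb{NB}_{n-1})=\binom{2(n-1)}{n-1},\qquad \dim_{\mathbb{F}}(\tilde{A}_w)\le 2\binom{2(n-1)}{n-1}
\end{equation}
by Theorem \ref{blob diagram realization} and Corollary \ref{cor dim}. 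Wait — the target formulas are $\binom{2n}{n}$ and $2\binom{2n}{n}$, so the correct source algebra is $\mathbb{NB}_n$, i.e. $w=n_\ese$ corresponds to $\mathbb{NB}_n$; I would align the indexing accordingly (the excerpt's Theorem \ref{universal} is stated for $\mathbb{NB}_{n-1}\to\tilde A_w$ with $\underline w$ of length $n$, so here $\dim\le\binom{2(n-1)}{n-1}$ and the stated formula must be read with the paper's own normalization — in any case the inequality is "$\le$" the nil-blob dimension). For the reverse inequality I count double leaves directly: a double leaf with empty zone C is determined by a pair (top light leaf, bottom light leaf) with matching $v\in W$ and empty zone C, and a standard bijective count of birdcagecage configurations matches the count of normal monomials $\mathcal{NM}$, i.e. gives exactly $\binom{2n}{n}$ for $A_w$ and twice that (the extra factor coming from whether the unique zone-C birdcage is present) for $\tilde{A}_w$. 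Equality of the two bounds forces $\varphi$ (resp. $\widetilde{\varphi}$) to be an isomorphism and yields the dimension formulas.

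The main obstacle I anticipate is part (b)'s closure claim together with the zone-C birdcage handling in part (a): verifying that an arbitrary (possibly non-left-adjusted, possibly deeply nested) birdcagecage in zone C can genuinely be produced from the $U_i$'s and $L_j$'s requires a careful induction on nesting depth using the Frobenius/tree identities \eqref{frob}, \eqref{trees} and the polynomial sliding relation \eqref{sgraphD}, and one must be vigilant that conjugating by $U_i$'s to left-adjust a diagram does not introduce spurious through strands elsewhere. The purely combinatorial dimension count is routine once the bijection with $\mathcal{NM}$ is set up.
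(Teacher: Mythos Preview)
Your outline for part (a) is broadly in line with the paper's proof---both proceed by showing that every double leaf lies in the subalgebra generated by the $U_i$'s and $L_i$'s---though the paper's argument is considerably more structured: it first builds all horizontally symmetric diagrams of full birdcages, then uses the $L_i$'s to cut connecting arcs one at a time, then uses $L_n$ specifically to disconnect the rightmost zone-C birdcage, then breaks symmetry by ``moving points'' with overlapping $U_i$'s, and finally nests birdcages to obtain birdcagecages. Your sketch conflates several of these steps, and your ``conjugate by $U_i$'s to left-adjust'' manoeuvre is not how the paper handles the non-left-adjusted case.

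The genuine gap is in part (b). You attempt to prove directly that the span $A_w$ of empty-zone-C double leaves is closed under multiplication, and you yourself flag that this is delicate: when two such diagrams are concatenated, the result must be rewritten in the double-leaves basis using the defining relations, and there is no a priori reason why zone-C terms cannot appear. The paper avoids this problem entirely. It proves (c) \emph{before} (b), then argues as follows: let $A_w'$ be the subalgebra generated by $U_0,U_1,\ldots,U_{n-2}$ (equivalently, by Lemma~\ref{lemma_writing_jucys_murphy_elements}, by $U_1,\ldots,U_{n-2},L_1,\ldots,L_{n-1}$). Tracing through the proof of (a), one sees that $L_n$ is used \emph{only} in the step that disconnects the rightmost birdcage to create zone C, so every empty-zone-C double leaf already lies in $A_w'$; thus $A_w\subseteq A_w'$. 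But $\dim A_w'\le\dim\mathbb{NB}_{n-1}$ by Theorem~\ref{universal}, which by part (c) equals $\dim A_w$. Hence $A_w=A_w'$, and in particular $A_w$ is a subalgebra---no direct closure argument is needed.

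Two further issues. First, your proposed image for $\mathbb{J}_{n-1}$ is wrong: $L_1$ is $U_0$, which already lies in $A_w$ and is certainly not central. The paper's choice (see Corollary~\ref{corollary short presentation}) is $J_n:=L_1+\cdots+L_n$, whose centrality and nilpotency require separate verification. Second, for the dimension of $\tilde A_w$ the paper does not do a direct combinatorial count of double leaves but invokes an external vector-space isomorphism from \cite{Esp-Pl}; your ``standard bijective count'' is not supplied, and the indexing confusion you noticed is a symptom of trying to extract the dimension from the wrong source.
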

\begin{dem}
  We first prove $a)$ of the Theorem. We define $\tilde{A}^{\prime}_w$ as the subalgebra of $\tilde{A}_w$
  generated by the $ U_i$'s and the $ L_i$'s. Thus, in order to show $a) $ we must prove that
  $\tilde{A}^{\prime}_w = \tilde{A}_w $. We shall do so by proving that $\tilde{A}^{\prime}_w$ contains
  all the double leaves basis elements for $ \tilde{A}_w$.

  \medskip
  We first observe that the diagrams in \ref{evenmoregeneralA} and \ref{evenmoregeneralB} both
  belong to $\tilde{A}^{\prime}_w$. In fact, multiplying them together we get that any diagram
  of the form   


\begin{equation}\label{symmetric}
 \raisebox{-.5\height}{\includegraphics[scale=0.85]{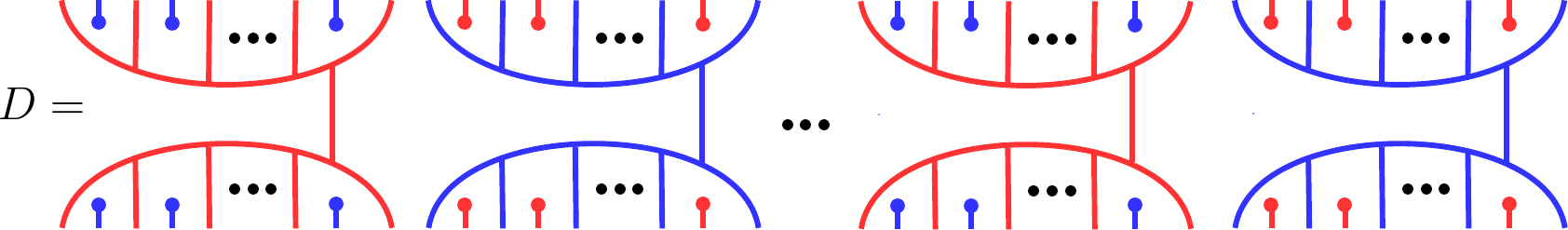}}      
\end{equation}
\noindent
belongs to $\tilde{A}^{\prime}_w$. Here the length of each full birdcage on the bottom (which may
be zero) 
is equal to the length of the
corresponding full birdcage on top of it, that is the diagram in
\ref{symmetric} is symmetric with respect to a horizontal axis.
Note that the diagram $ D $ in \ref{symmetric} is a preidempotent; to be precise we have that 
\begin{equation}
D^2 = (-2)^{l_1 + \ldots + l_r } D,
\end{equation}
where $ l_1, l_2, \ldots, l_r $ are the lengths of the bottom full birdcages that appear in $ D $.
Now we can repeat the calculations from \ref{generalnonhangingcage} and \ref{hangingcageLEFTADJ}
in order to remove the connecting arc between the first bottom full birdcage of
$ D $ and its top mirror image: 

\vspace{-0.5cm}

\begin{equation}\label{twokindsof}
  D L_{k_1} D =  
  \raisebox{-.5\height}{\includegraphics[scale=0.5]{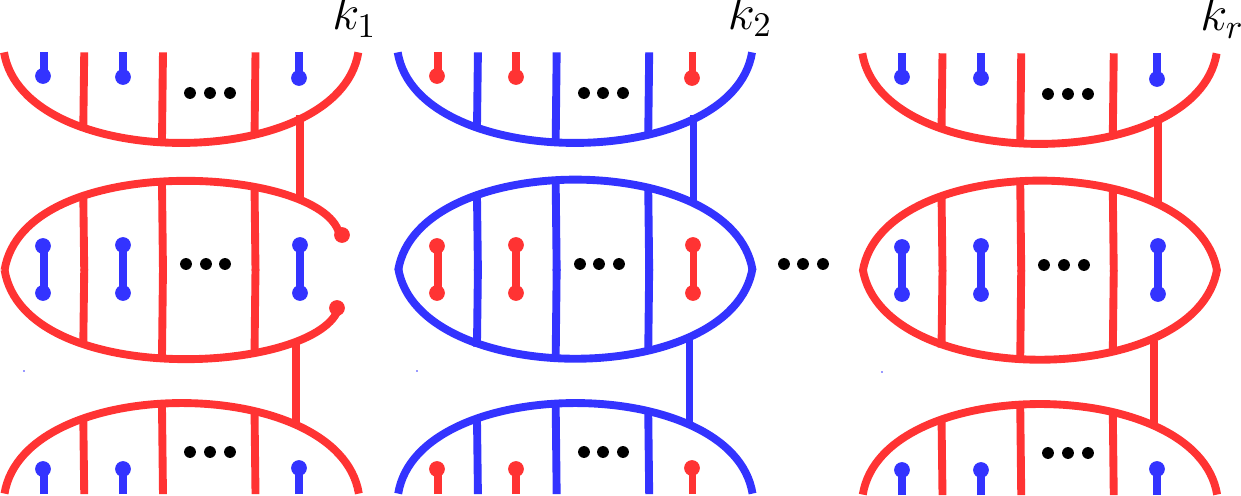}}      
  = 
\raisebox{-.5\height}{\includegraphics[scale=0.5]{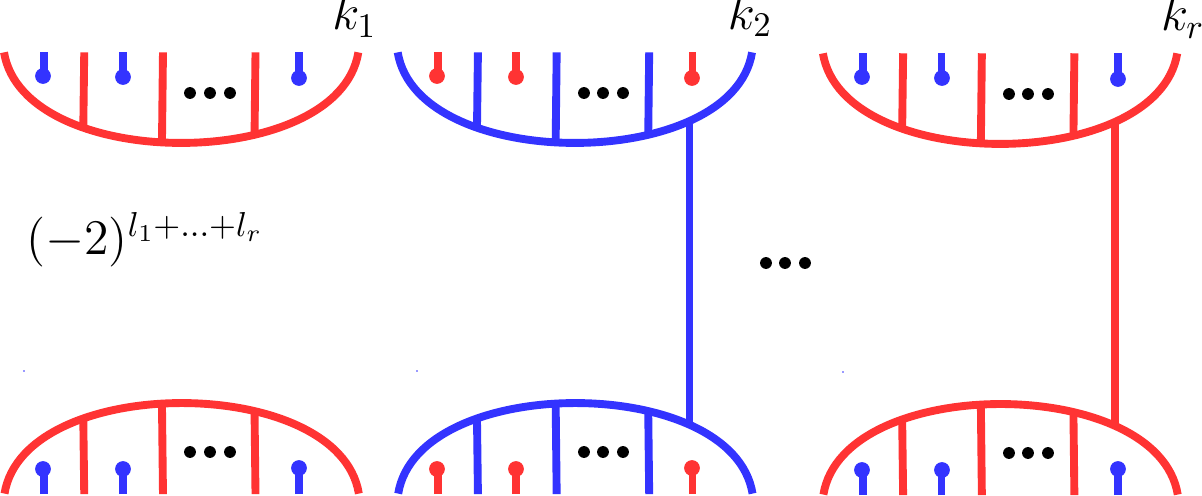}}      
\end{equation}

\noindent
In other words, we get that $ D_1 := (-2)^{-(l_1 + \ldots + l_r) }   D L_{k_1} D    $
is equal to $ D $, but with the first connecting arc removed, and that $D_1$ belongs to $ \tilde{A}^{\prime}_w$.

\medskip
From $ D_1 $ we can now remove the next connecting arc as follows

\begin{equation}\label{twokindsofX}
  D_1 L_{k_2} D =  
  \raisebox{-.5\height}{\includegraphics[scale=0.5]{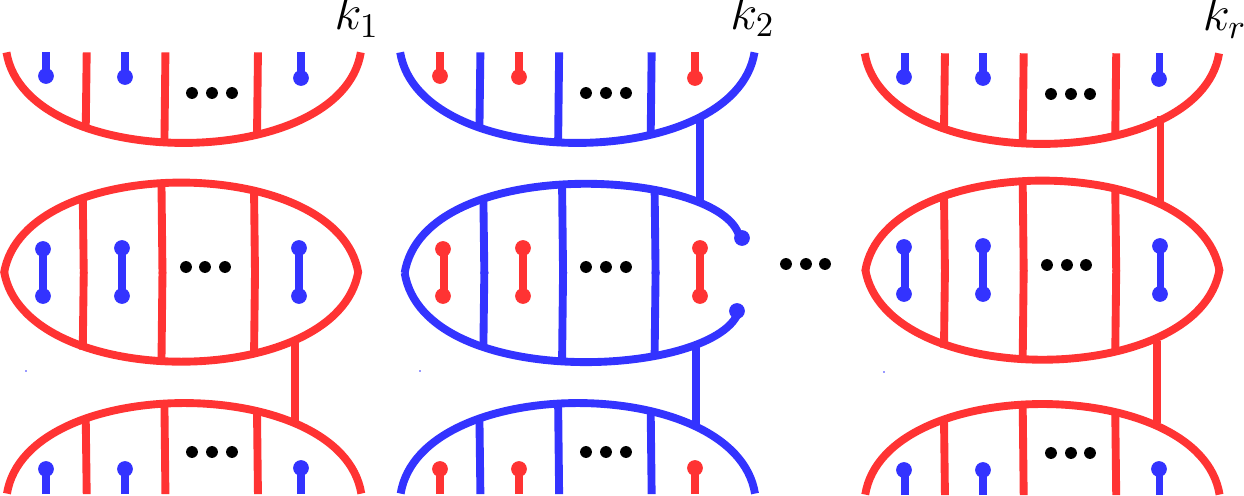}}      
= 
\raisebox{-.5\height}{\includegraphics[scale=0.5]{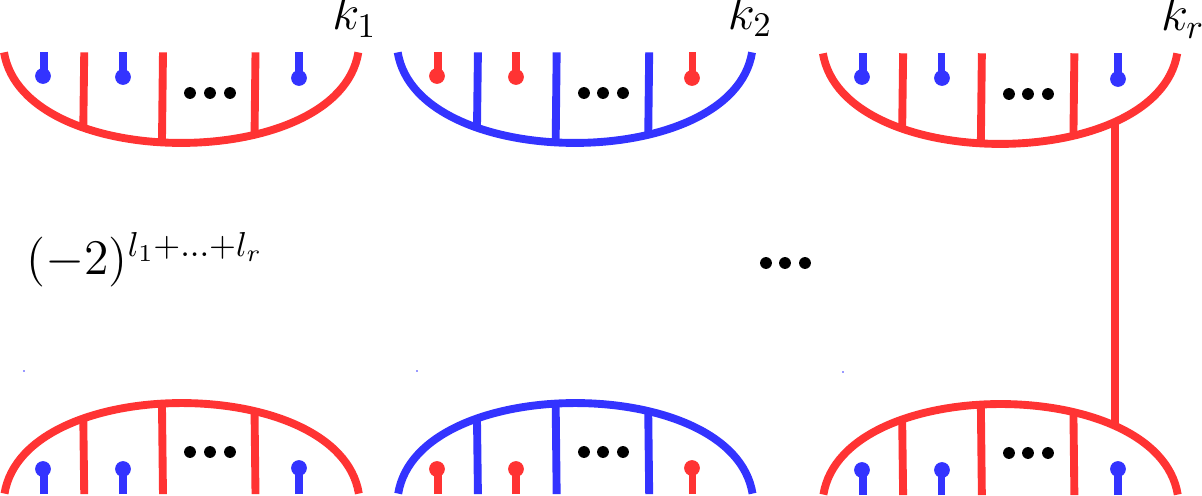}}      
\end{equation}

\medskip
\noindent
Continuing this way we find that any diagram of the form 

\begin{equation}\label{symmetricXX}
  \raisebox{-.5\height}{\includegraphics[scale=0.8]{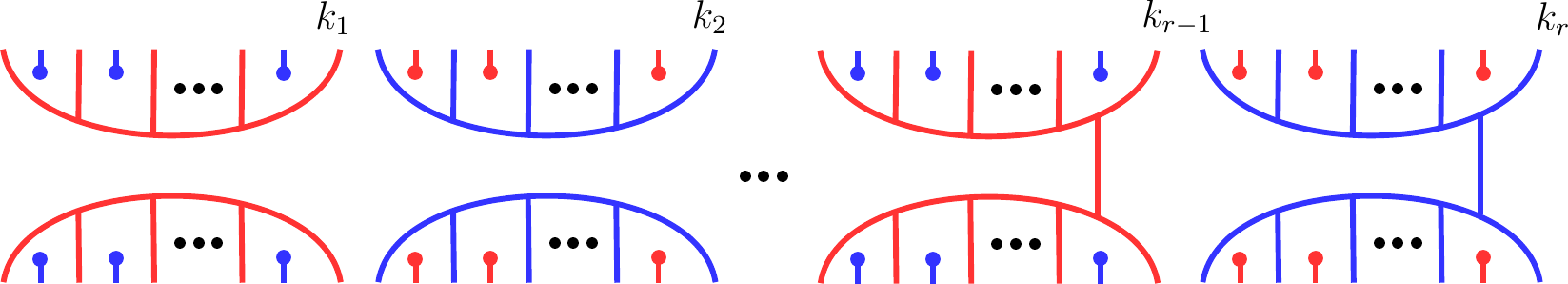}}      
\end{equation}

\noindent
belongs to $ \tilde{A}^{\prime}_w$. 

\medskip
The diagrams in \ref{symmetricXX} consist of a number of non-hanging full birdcages followed by
a number of hanging full birdcages. We shall now prove that the rightmost hanging full birdcage
of \ref{symmetricXX} may be
transformed into a non-hanging full birdcage and still give rise to an element of $ \tilde{A}^{\prime}_w$.
Let $i<n$ be a positive integer
{\color{black}{of}} the same parity {\color{black}{as}} $n$. We consider
the diagram $F_i:=U_i U_{i+3} \cdots U_{n-2}$:
\begin{equation}\label{last1}
 F_i= 
 \raisebox{-.45\height}{\includegraphics[scale=0.8]{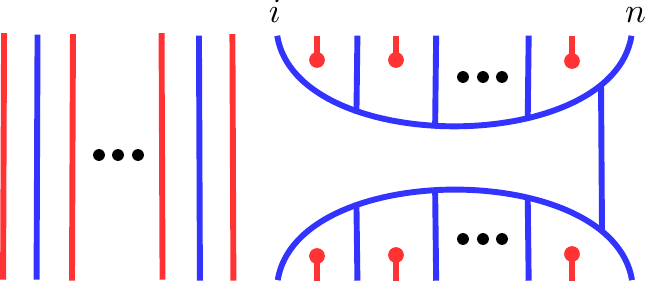}}      
\end{equation}
\medskip \noindent
We notice that only the rightmost top and bottom full birdcages
{\color{black}{of $ F_i$}} 
are non-degenerate, {\color{black}{of length $l:=(n-i)/2$}}.

Then we have that $ F_i  L_n  F_i \in \tilde{A}^{\prime}_w$. On the other hand,
we also have that 
\begin{equation}\label{lastdiagram}
    F_i  L_n  F_i = 
   \raisebox{-.45\height}{\includegraphics[scale=0.6]{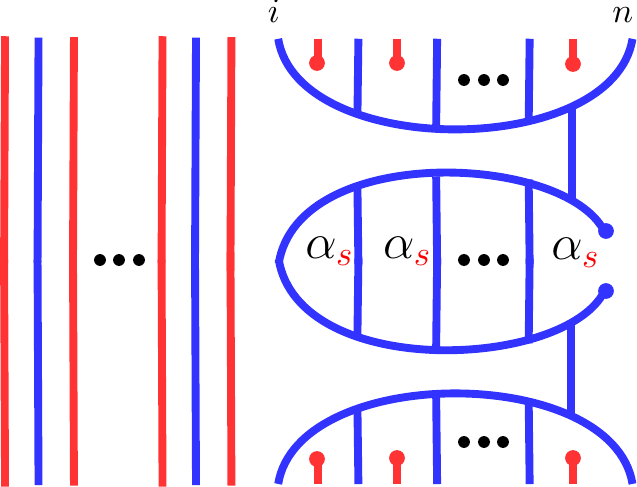}}      
  \, \,\, =   \, \,\,
   \raisebox{-.45\height}{\includegraphics[scale=0.6]{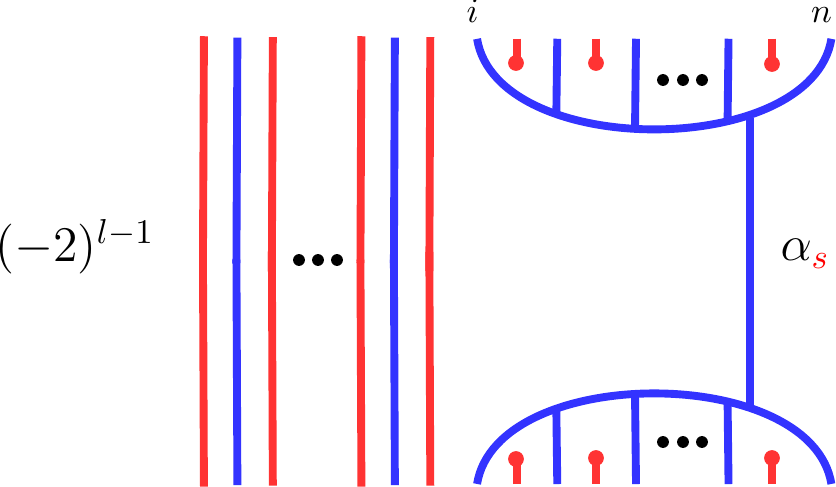}}      
   =
\end{equation}    
\begin{equation}    
      \raisebox{-.45\height}{\includegraphics[scale=0.6]{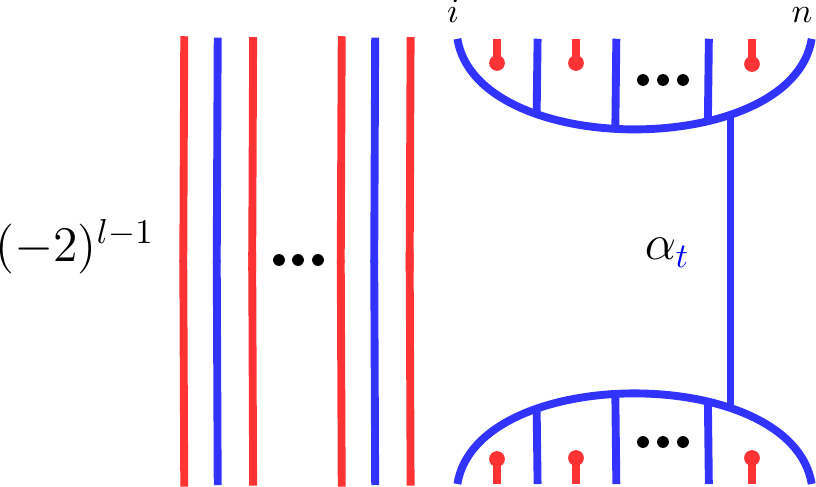}}      
 \, \,\,  +   \, \,\, 
   \raisebox{-.45\height}{\includegraphics[scale=0.6]{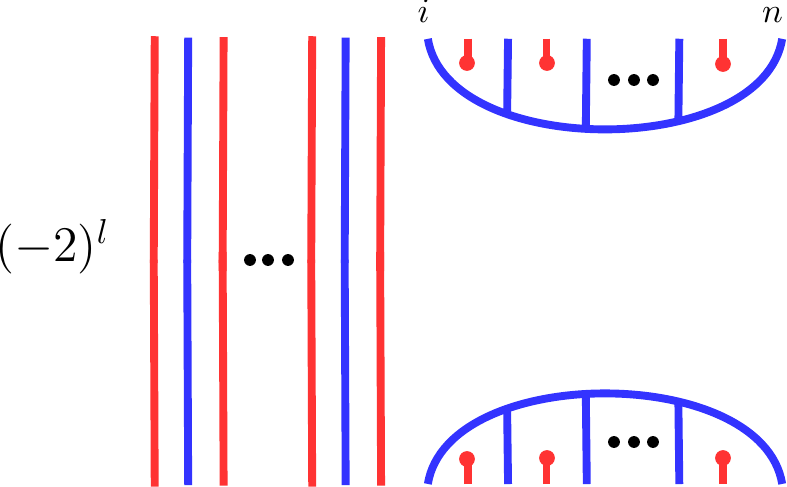}}      
  \end{equation} 
\noindent
We consider the first diagram $ X$ of the last sum.
Moving $  \alpha_\te $
all the way to the left we get that
\begin{equation}
	X= -2F_i\sum_{j=1}^{i-1} L_i
\end{equation}
Therefore, $X$ belongs to $ \tilde{A}^{\prime}_w$. But from this we conclude that also 
the second diagram of
the sum belongs to $ \tilde{A}^{\prime}_w$.
Finally, multiplying this diagram with diagrams from \ref{symmetricXX} we conclude that 
any diagram of the form 

\vspace{-0.5cm}

\begin{equation}\label{symmetricXXX}
 \raisebox{-.5\height}{\includegraphics[scale=0.75]{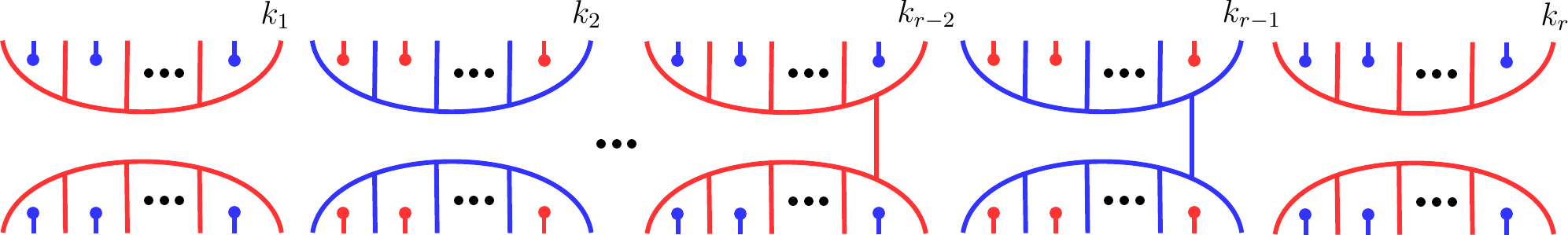}}      
\end{equation}

\noindent
belongs to $ \tilde{A}^{\prime}_w$, proving the above claim. In other words, we have shown that any double leaves basis
element of $ \tilde{A}_w$, that is 
built up of full birdcages and is symmetric 
with respect to a horizontal axis, belongs to $ \tilde{A}^{\prime}_w$.

\medskip
We next show that omitting the symmetry condition in the diagrams \ref{symmetricXXX} still 
gives rise to an element of $ \tilde{A}^{\prime}_w$.
Our first step for this is to produce a way of `moving points' from a full birdcage to its
neighboring full birdcage. 
We do this by multiplying by `overlapping' $ U_i$'s. Consider the following example

\begin{equation}\label{birdcagemovepoints}
  \raisebox{-.5\height}{\includegraphics[scale=1]{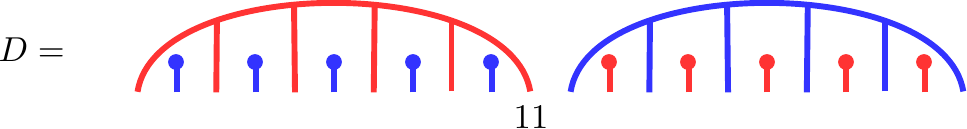}}      
\end{equation}

\noindent 
consisting of two full birdcages, both of length 5. In this case the overlapping $ U_i$'s are
$ U_{10} $ and $ U_{11} $. Multiplying $ D $ below with $ U_{10} $ produces a diagram with
two full birdcages as well, but this time of lengths 4 and 6, whereas multiplying $ D $ below by $ U_{11} $ 
produces a diagram with
two full birdcages, of lengths 6 and 4: 


\begin{equation}\label{birdcagemovepointsX}
   \raisebox{-.5\height}{\includegraphics[scale=0.7]{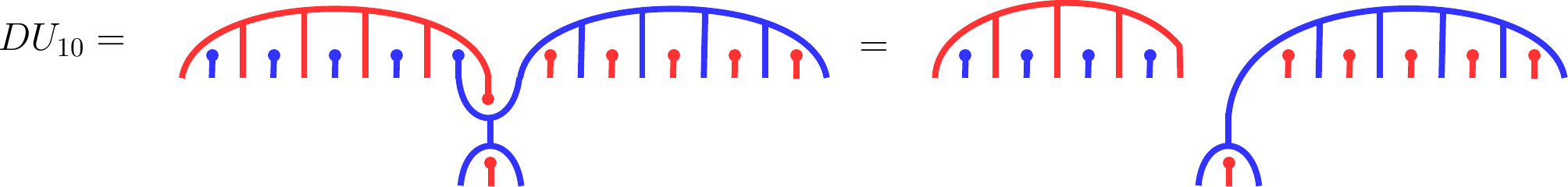}}      
\end{equation}

\begin{equation}\label{birdcagemovepointsXX}
     \raisebox{-.5\height}{\includegraphics[scale=0.7]{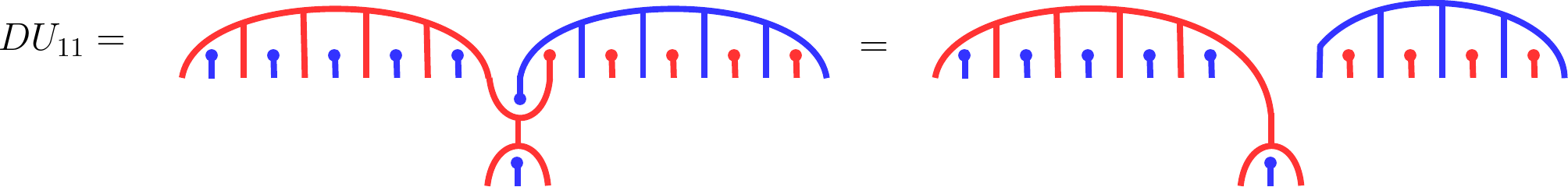}}      
\end{equation}


\medskip
This gives us a method for moving points from one full birdcage to a neighboring full birdcage that works
in general, for hanging as well as for non-hanging full birdcages, and so we get that any diagram of the
form

\begin{equation}\label{nonsymmetricX}
  \raisebox{-.5\height}{\includegraphics[scale=0.8]{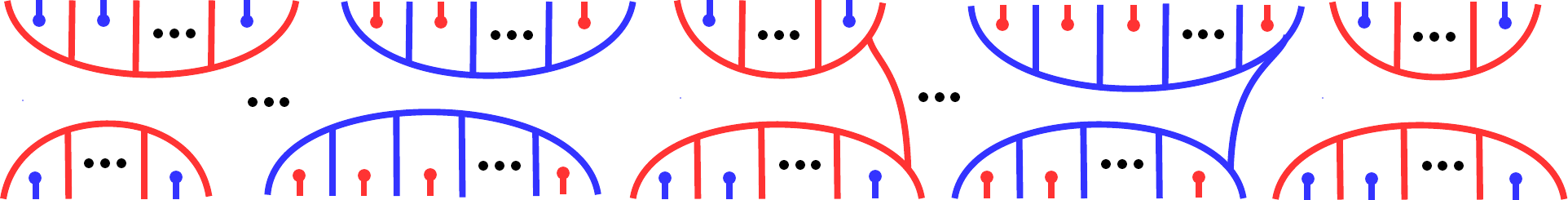}}      
\end{equation}

\noindent
belongs to $ \tilde{A}^{\prime}_w$. These diagram are not horizontally symmetric anymore
but still the total number of top full birdcages is equal to the total number of
bottom full birdcages. Actually, by the description of the light leaves basis, this is expected in zones B and 
C, but not in zone A. However, multiplying a full birdcage in zone A
with an JM-element $ L_i $ of the opposite color it 
breaks up in three smaller full birdcages, the middle one being degenerate. For example, for

\begin{equation}\label{birdcagebreak}
   \raisebox{-.5\height}{\includegraphics[scale=0.8]{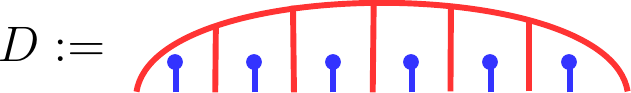}}      
\end{equation}
we have that \\

\begin{equation}\label{birdcagebreak}
     \raisebox{-.5\height}{\includegraphics[scale=0.8]{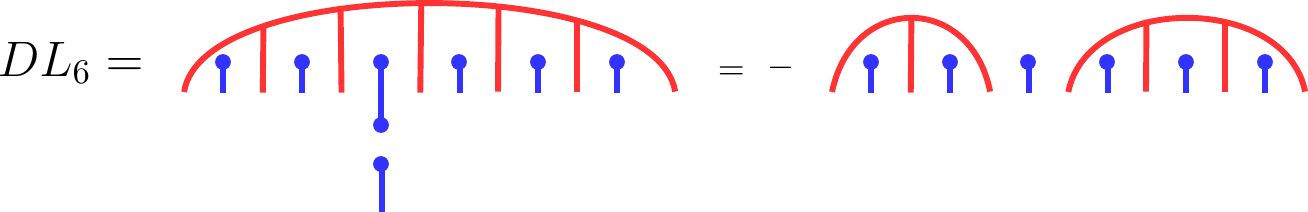}}      
\end{equation}

Combining this with the procedure
of moving points from a full birdcage to a neighboring full birdcage, we conclude
that in the diagram \ref{nonsymmetricX} we may
assume that the number of top full birdcages in zone A is different
from the number of bottom full birdcages and still the diagram 
belongs to $ \tilde{A}^{\prime}_w$.

\medskip
Thus, to finish the proof of $a)$ we now only have to show that the full birdcages in the diagram \ref{nonsymmetricX}
may be replaced
by birdcagecages.
It is here enough to consider a single bottom birdcage. 

\medskip
The replacing of a degenerate non-hanging birdcage by a non-degenerate full birdcage can be viewed as  
the  
insertion of a non-hanging birdcage in a full birdcage of the opposite color.
But this can be achieved via multiplication with appropriate diagrams 
of the 
form \ref{evenmoregeneralA} and \ref{evenmoregeneralB}.
Consider for example the birdcagecage $ D $ in \ref{insertionStepFirst}. It can be obtained as follows

\begin{equation}\label{insertX}
  \raisebox{-.5\height}{\includegraphics[scale=0.8]{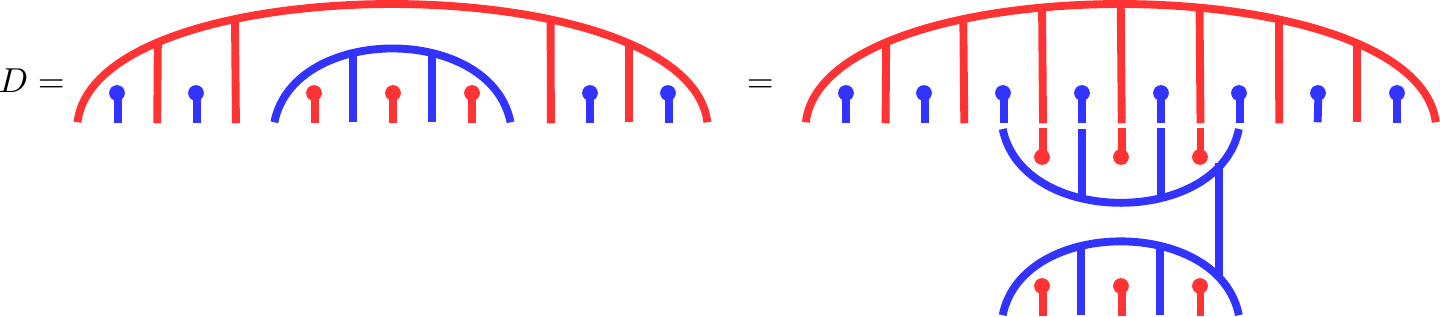}}      
\end{equation}

\noindent
Repeating this process we can obtain any birdcagecage. 
This finishes the proof of $a)$.

\medskip
We next show $c) $. For this we first note that 
there is a bijection between double leaves with empty zone C and double leaves with
nonempty zone C, given by removing the connecting line between the 
last bottom and top birdcagecage. Hence we have that  
\begin{equation}
\dim_{\mathbb{F}}( \tilde{A}_w) = 2 \dim_{\mathbb{F}}( A_w).
\end{equation}
On the other hand, from the vector space isomorphism given in Corollary 8.3 of 
\cite{Esp-Pl} it follows that 
$  \dim (\tilde{A}_w) = \dim \widetilde{\mathbb{NB}_n} $ and so $ c) $ follows from Theorem
\ref{blob diagram realization} and
Corollary \ref{cor dim}. 
(Note that in \cite{Esp-Pl} the authors use the notation $ {A}_w $ for $ \tilde{A}_w $).

\medskip

We finally show $b) $.  Let $ A_w^{\prime}$ be the subalgebra of $ \tilde{A}_w $ generated by 
$ U_1, \ldots, U_{n-2} $ and $U_0 = L_1 $. In view of Lemma \ref{lemma_writing_jucys_murphy_elements} we first observe that 
$ A_w^{\prime}$ is the same 
as the subalgebra of $ \tilde{A}_w$ generated by $ U_1, \ldots, U_{n-2} $ and $L_1, \ldots, L_{n-1}  $. 
On the other hand,  going through the proof of $ a) $ we see that the last JM-element $ L_n $ is only needed for the steps \ref{last1} and 
\ref{lastdiagram} where a hanging birdcage at the right
end of the diagram is transformed into a non-hanging one, and so we have that 
 $ A_w \subseteq A_w^{\prime}   $. But from Theorem \ref{universal} we have that $ \dim (A_w^{\prime} ) \le  \dim {\mathbb{NB}_{n-1}} =  \dim (A_w )$ 
where we used $ c) $ for the last equality. Hence the inclusion $ A_w \subseteq A_w^{\prime}   $ is an equality and $ b) $ is proved.
\end{dem}

\begin{corollary}  \label{corollary short presentation}
Let $w\in W$ with $w=n_{\ese} $. Then, we have
\begin{description}
	\item[a)] The map $\varphi$ defined in Theorem \ref{universal} induces an algebra 
  isomorphism $ \varphi:  \mathbb{NB}_{n-1} \rightarrow A_w$. 
    \item[b)] Setting $ J_n:= L_1+L_2+\ldots +L_n $ we have that the
  extension of $ \varphi $ to $ \widetilde{\mathbb{NB}}_{n-1} $ given by $ \tilde{\varphi}(\mathbb{J}_{n-1}) = J_n  $ 
induces an algebra isomorphism $  \tilde{\varphi}: \widetilde{\mathbb{NB}}_{n-1} \rightarrow \tilde{A}_w$.
\end{description}
\end{corollary}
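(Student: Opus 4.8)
The plan is to prove both statements by combining the generation result of Theorem \ref{mainTheoremSection3} with the dimension count already established there and in Theorem \ref{blob diagram realization}. For part a), Theorem \ref{universal} gives a homomorphism $\varphi: \mathbb{NB}_{n-1} \to \tilde{A}_w$; by part b) of Theorem \ref{mainTheoremSection3} its image is precisely the subalgebra $A_w$, since $A_w$ is generated by $U_1,\ldots,U_{n-2}$ and $L_1 = U_0$, which are exactly the images of the generators $\mathbb{U}_0,\ldots,\mathbb{U}_{n-2}$. Hence $\varphi$ restricts to a surjective algebra map $\mathbb{NB}_{n-1} \twoheadrightarrow A_w$. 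Now compare dimensions: $\dim_{\mathbb{F}} \mathbb{NB}_{n-1} = \binom{2n}{n}$ by Theorem \ref{blob diagram realization} (with $n$ replaced by $n-1$), and $\dim_{\mathbb{F}} A_w = \binom{2n}{n}$ by part c) of Theorem \ref{mainTheoremSection3}. A surjection between finite-dimensional vector spaces of equal dimension is an isomorphism, so $\varphi: \mathbb{NB}_{n-1} \xrightarrow{\sim} A_w$ as claimed.

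For part b), I would first check that $\tilde{\varphi}$ is well-defined, i.e. that $J_n := L_1 + L_2 + \cdots + L_n$ satisfies the defining relations of the extra generator $\mathbb{J}_{n-1}$ in $\widetilde{\mathbb{NB}}_{n-1}$: namely that $J_n$ is central in $\tilde{A}_w$ and that $J_n^2 = 0$. Centrality should follow from the fact that $J_n$ is (up to normalization) the sum of all JM-elements, which acts as a symmetric function of the $L_i$'s; concretely one can verify $U_i J_n = J_n U_i$ for $1 \le i \le n-2$ using the recursion for $L_{i+1}$ in Lemma \ref{lemma_writing_jucys_murphy_elements} — the point being that $U_i(L_i + L_{i+1})$ and $(L_i + L_{i+1})U_i$ both simplify to $U_i(L_i + L_{i+1}) = L_i U_i + U_i L_i + \ldots$ in a way symmetric under left-right reflection, while $U_i$ commutes with $L_j$ for $j \ne i, i+1$ and with the partial sums $\sum_{k < i} L_k$ that appear. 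The relation $J_n^2 = 0$ should come from a direct Soergel-calculus computation, or alternatively be transported from the known nilpotency statement in $\widetilde{\mathbb{NB}}_{n-1}$ once surjectivity is in hand. Granting well-definedness, $\tilde{\varphi}$ is surjective by part a) of Theorem \ref{mainTheoremSection3}, which states that $\tilde{A}_w$ is generated by $U_1,\ldots,U_{n-2}$ and $L_1,\ldots,L_n$: indeed the $L_i$ all lie in the subalgebra generated by $\varphi(\mathbb{NB}_{n-1})$ together with $J_n$, because $L_n = J_n - (L_1 + \cdots + L_{n-1})$ and the remaining $L_i$ lie in $A_w = \varphi(\mathbb{NB}_{n-1})$ by Lemma \ref{lemma_writing_jucys_murphy_elements}. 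Finally, $\dim_{\mathbb{F}} \widetilde{\mathbb{NB}}_{n-1} = 2\binom{2n}{n} = \dim_{\mathbb{F}} \tilde{A}_w$ by Corollary \ref{cor dim} and part c) of Theorem \ref{mainTheoremSection3}, so the surjection $\tilde{\varphi}$ is an isomorphism.

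The main obstacle I anticipate is the well-definedness of $\tilde{\varphi}$ in part b): verifying that $J_n$ is central in $\tilde{A}_w$ and that $J_n^2 = 0$. Centrality reduces to commuting $J_n$ past each $U_i$, and here the natural tool is the recursion $L_{i+1} = U_i L_i + L_i U_i - 2 U_i \sum_{j=1}^{i-1} L_j$ from Lemma \ref{lemma_writing_jucys_murphy_elements}, together with the observation that $U_i$ commutes with $L_j$ whenever $|i - j| \ge 2$ (this itself needs the separated-support geometry of the JM-diagrams and the commutation $U_i U_j = U_j U_i$ for $|i-j|>1$). One then checks $U_i(L_i + L_{i+1}) = (L_i + L_{i+1})U_i$ directly from the recursion and the $U_i^2 = -2U_i$ relation, and since $J_n = (L_i + L_{i+1}) + \sum_{j \ne i, i+1} L_j$ with every remaining summand commuting with $U_i$, centrality follows. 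The nilpotency $J_n^2 = 0$ is then the last piece; the cleanest route is probably to note that once $\tilde{\varphi}$ is known to be a \emph{surjective} homomorphism onto a $2\binom{2n}{n}$-dimensional algebra from a $2\binom{2n}{n}$-dimensional algebra, it is automatically injective, so any relation forced by dimension counting — in particular the image of $\mathbb{J}_{n-1}^2 = 0$ — is consistent; alternatively one extracts $J_n^2 = 0$ from a short Soergel-graph calculation showing that the square of the sum of the JM-diagrams has a factor killed by relation \ref{rel muere a la izquierda}. Either way, this central-nilpotent verification is the only genuinely computational step; the rest is bookkeeping with the already-proven generation and dimension facts.
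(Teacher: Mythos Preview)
Your argument for part a) is fine and matches the paper exactly: surjectivity from Theorem \ref{mainTheoremSection3} b) plus the dimension count.

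Part b) has two genuine gaps.

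\textbf{Centrality.} Your commutation claim ``$U_i$ commutes with $L_j$ whenever $|i-j|\ge 2$'' is false. The diagram for $U_i$ involves positions $i,\,i+1,\,i+2$, while $L_j$ carries an $\alpha$ on the $j$-th strand; hence $[U_i,L_j]=0$ only when $j\notin\{i,i+1,i+2\}$. In particular $U_i$ does \emph{not} commute with $L_{i+2}$, so your reduction to $[U_i,L_i+L_{i+1}]=0$ is not valid. The correct reduction is to $[U_i,\,L_i+L_{i+1}+L_{i+2}]=0$, and this is exactly what the paper checks, via a short Soergel-calculus computation: one rewrites the middle $\alpha_{\te}=-\tfrac{1}{2}\alpha_{\ese}-\tfrac{1}{2}\alpha_{\ese}$, pushes the two halves out of the birdcage in opposite directions using relation \eqref{sgraphD}, and observes that the first and third terms cancel, leaving a horizontally symmetric diagram.

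\textbf{Nilpotency.} Your primary route for $J_n^2=0$ is circular: you cannot invoke surjectivity (let alone injectivity) of $\tilde\varphi$ before $\tilde\varphi$ is known to be a well-defined homomorphism, and well-definedness is precisely what requires $J_n^2=0$ in $\tilde A_w$. The paper avoids this by quoting the relation $L_i^2=-2L_i\sum_{j=1}^{i-1}L_j$ from \cite[Lemma~3.4]{Esp-Pl} (together with $L_1^2=0$) and then expanding
\[
J_n^2=\sum_{i=2}^n L_i^2+2\sum_{i<j}L_iL_j=-2\sum_{i=2}^n\sum_{j<i}L_iL_j+2\sum_{i<j}L_iL_j=0,
\]
which is an honest computation in $\tilde A_w$. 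Once centrality and $J_n^2=0$ are established this way, your surjectivity-plus-dimension argument finishes the proof exactly as in the paper.
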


\begin{dem}
Part $ a) $ was already proved in the previous Theorem so let us concentrate on part $ b) $. 
Here we have already checked all the relations that do not involve $J_n$
and so we only have to check that $J_n^2=0$ and that $J_n$ is central in $ \tilde{A}_w$.
Now by \cite[Lemma 3.4]{Esp-Pl} we know that $L_1^2=0$ and that 
	\begin{equation}\label{JorgeDavid}
		L_i^2=-2L_i \sum_{j=1}^{i-1} L_j,
	\end{equation}
	for all $2\leq i \leq n$. Thus we obtain
			\begin{align} \label{JorgeDavidA}
		J_n^2	&=(L_1+L_2+\ldots +L_n)^2 
		        = \sum_{i=2}^n L_i^2 +2\sum_{i=1}^{n-1}\sum_{j=i+1}^nL_iL_j   \\
		        &= -2\sum_{i=2}^n\sum_{j=1}^{i-1}L_i  L_j+2\sum_{i=1}^{n-1}\sum_{j=i+1}^nL_iL_j 
		        =0, 
		\end{align}
as claimed. Now let us show that $J_n$ is central in $ \tilde{A}_w $. It is enough to show that $[U_j,J_n]=0$,
                        for all $1\leq j\leq n-2$, where $ [\cdot, \cdot ] $ denotes the 
                        usual commutator bracket. We notice that $[U_j,L_i]=0$ if $i\neq j,j+1,j+2$.
                        Then we are done if we are able to show that 
	\begin{equation} \label{check relation uno}
	  [U_i,L_i+L_{i+1}+L_{i+2}]=0.
         \end{equation}

\noindent
But we have that 
\vspace{0.5cm}

\begin{equation*}\label{JMexp}  
\begin{array}{lll}  U_i \cdot (L_i+L_{i+1} + L_{i+2})  =
 \raisebox{-.5\height}{\includegraphics[scale=0.4]{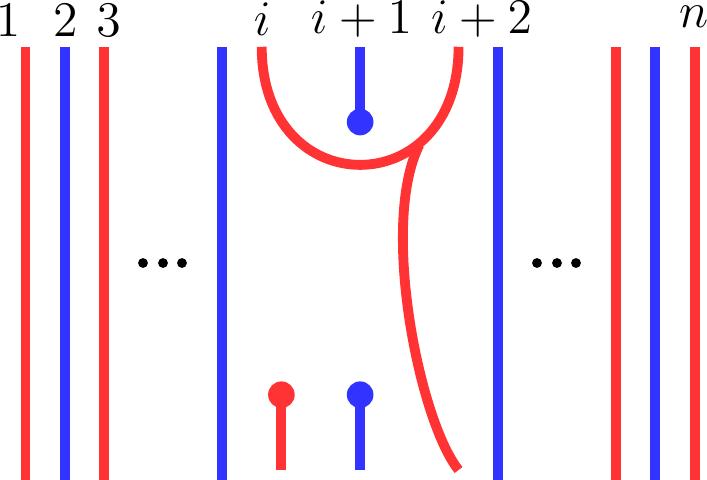}}      
 &  \, \, \,  +  \,\, \,
 \raisebox{-.5\height}{\includegraphics[scale=0.4]{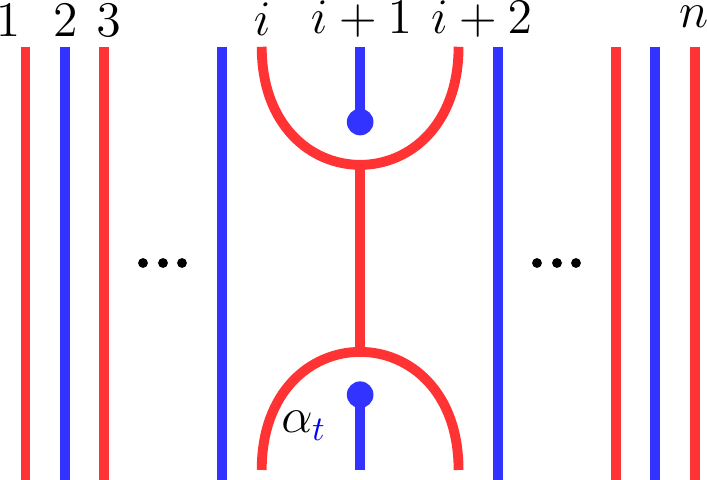}}      
 & \, \, \,  +  \,\, \, 
 \raisebox{-.5\height}{\includegraphics[scale=0.4]{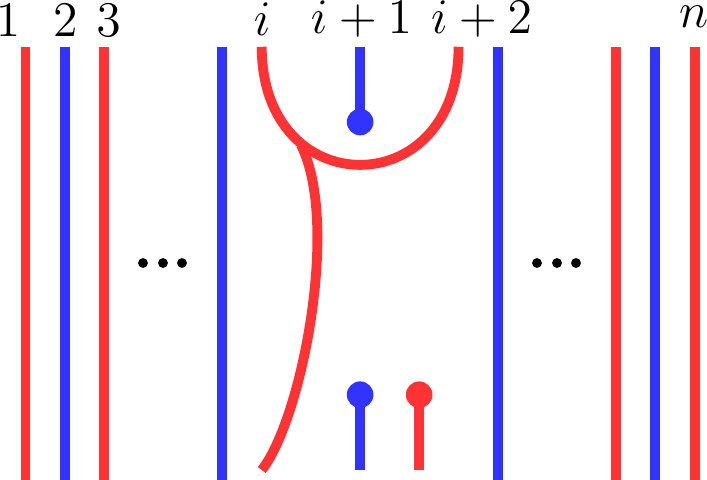}}      
\end{array}
\end{equation*}

\vspace{0.5cm}

\noindent
In the second diagram we first rewrite $ \alpha_\te= -\frac{\alpha_\ese}{2} - \frac{\alpha_\ese}{2} $
and next use the polynomial relation \ref{sgraphD}, to take the first $ -\frac{\alpha_\ese}{2} $ out of the
birdcage to the left and the second $ -\frac{\alpha_\ese}{2} $ out of the birdcage to the right. 
This will give rise to a cancellation of the first and the third terms in the expression for
$  U_i \cdot (L_i+L_{i+1} + L_{i+2})  $ and so we have that

\begin{equation*}\label{JMexp}  
\begin{array}{lll} U_i \cdot  (L_i+L_{i+1} + L_{i+2})  =
   \raisebox{-.5\height}{\includegraphics[scale=0.4]{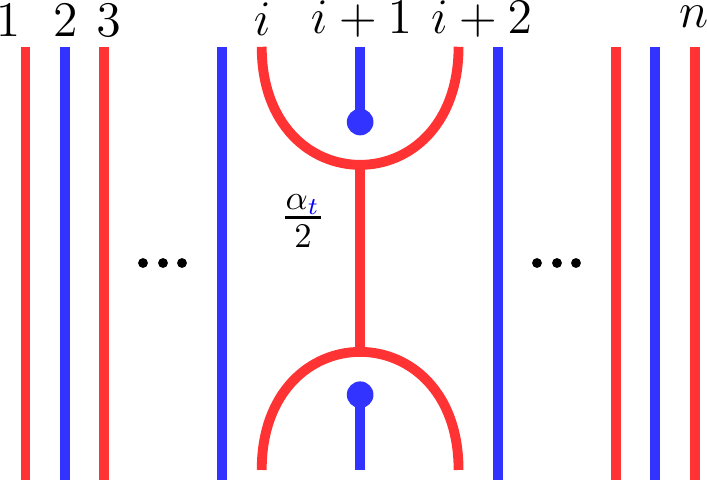}}      
  &   \, \, \,  +  \,\, \, 
 \raisebox{-.5\height}{\includegraphics[scale=0.4]{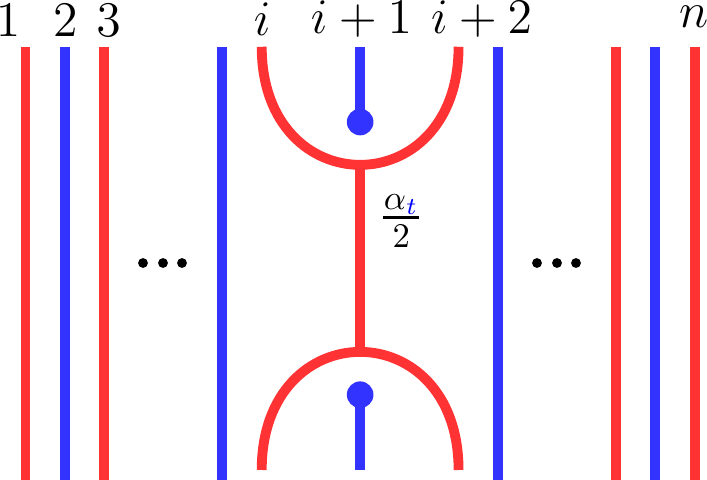}}      
   &
 \,\, \,    =  \, \,   -\, \, \,
  \raisebox{-.5\height}{\includegraphics[scale=0.4]{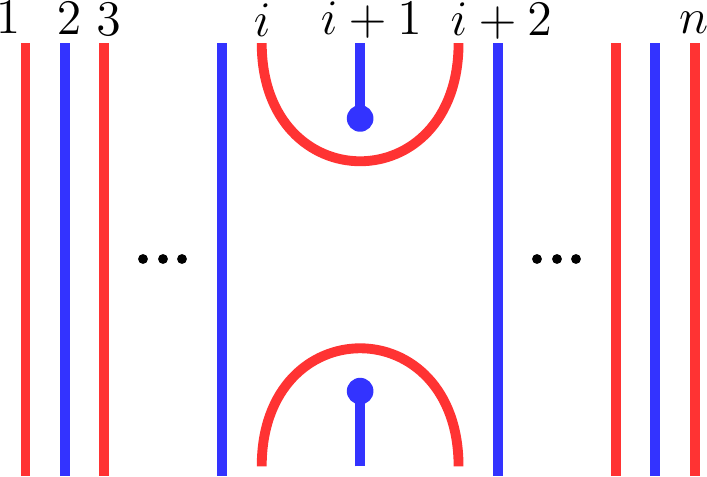}}      
\end{array}
\end{equation*}
This last diagram is symmetric with respect to a horizontal reflection and so 
\begin{equation}
	 U_i  \cdot (L_i+L_{i+1} + L_{i+2}) = (L_i+L_{i+1} + L_{i+2}) \cdot U_i   
\end{equation}
  as claimed. The Corollary is proved.
\end{dem}

\begin{remark}\label{Jucys-Murphy-proof} \rm
  Combining the isomorphism $  \mathbb{NB}_{n-1} \cong A_w$
  with Lemma \ref{lemma_writing_jucys_murphy_elements}, we obtain a proof of
Lemma \ref{JMB}.
  \end{remark} 
  
 \begin{remark} \rm
   All the results in this section consider the case $w=n_{\ese}$. Of course, {\color{black}{they}}
   remain valid if we replace $n_{\ese}$ by $n_{\te }$.
  \end{remark}

\section{The blob algebra}\label{sect-gen-blob-alg}
In this section we briefly recall the homogeneous presentation and the graded cellular basis
for the blob algebra $ \mathbb{B}_n $, given in \cite{PlazaRyom}.  We also introduce certain subalgebras of $ \mathbb{B}_n $ that are obtained by idempotent truncation. These algebras will be the main objects of study in the next two sections.

\subsection{KLR-type presentation for $\mathbb{B}_n$}
Recall from Definition \ref{definition original blob} that $ \mathbb{B}_n $ depends on the
parameter $ q \in \F$.
In general, the quantum characteristic of $ q \in F $ is defined as the minimal integer $ n \ge  0 $ such
that
\begin{equation}
1 + q + q^2 + \ldots + q^{n-1} = 0 
\end{equation}  
with the convention that it is $ \infty $ if no such $ n $ exists. We suppose from now on that $e< \infty $ is the
quantum characteristic of $q^2$.  Set $\II:=\mathbb{Z}/e\mathbb{Z}$. The symmetric group $ \Si_n $ acts on the left on $ \II^n$ via permutation of 
the coordinates $ \II^n $, that is   $s_k \cdot \bi   :=(i_1, \ldots,  i_{k+1}, i_k, \ldots, i_n)$.

\medskip
An element $ \kappa = (\kappa_1, \kappa_2 ) \in  I_e^2 $
is called a \emph{bi-charge}. We fix such a $ \kappa $ and assume that it is \emph{adjacency-free}, that is 
$\kappa_2-\kappa_1 \not \equiv 0, \pm 1  \, \,\mbox{mod }\,  e   \mbox{ for all } i\neq j $.

\medskip

We now define a diagrammatic algebra $\B^{diag}(\kappa) $ 
by introducing some extra relations in the 
Khovanov-Lauda-Rouquier algebra, see \cite{KhovanovLauda}.

\begin{definition} 
  A Khovanov-Lauda-Rouquier (KLR)
  diagram $ D $ for $ \B^{diag}(\kappa)  $ is a finite and decorated graph embedded in the 
  strip $\mathbb{R}\times [0,1]$. There are $n $ arcs in $ D $ that 
  may intersect transversally, but triple intersections are not allowed. The intersections
  of two arcs are called crosses. Each arc
  is decorated with an element of $ \II $ and its segments may be decorated with a finite number of dots. 
Each arc intersects the top boundary $\mathbb{R}\times \{ 0 \} $ and
the bottom boundary $\mathbb{R}\times \{ 1 \} $ in exactly one point.
{\color{black}{For the details concerning this definition, we refer the reader to \cite{KhovanovLauda}.}}
\end{definition} 

\noindent
Here is an example of a KLR-diagram for $ {\mathbb B}_8^{diag}(\kappa)   $  with $ e= 4 $.

\begin{equation}\label{KLRex}
  \! \! \! \! \! \! \! \! \! \! \! \! \! \! \! \! \! \! \! 
  D=\raisebox{-.5\height}{\includegraphics[scale=0.9]{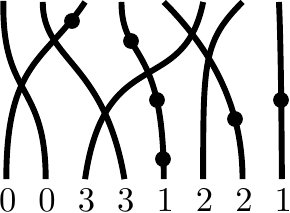}}
\end{equation}

\noindent
A KLR-diagram $D$ for $\B^{diag}(\kappa) $ gives rise to two residue sequences 
$ top(D), bot(D)  \in  \II^n $ 
obtained by reading the residues of the top and  and bottom boundary points from left to right.
In the above example \ref{KLRex} we have that $bot(D)=(0,0,3,3,1,2,2,1)$ and $top(D)=(0,3,0, 1, 2,3,2,1)$.

\medskip

Let us now define the algebra $\B^{diag}(\kappa) $.  As an $ \F$-vector
space it consists of the $\F$-linear combinations of KLR-diagrams for
$ \B^{diag}(\kappa)$ modulo planar isotopy and modulo the following relations:

\begin{equation}\label{mal-inicio}
  \! \! \! \! \! \! \! \! \! \! \! \! 
\raisebox{-.6\height}{\includegraphics[scale=0.9]{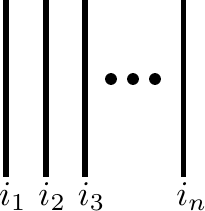}}
= 0 \,\, \,\,\, \,  \mbox{ if } i_1 \notin \{ \kappa_1, \kappa_2 \} 
\end{equation}

\begin{equation}\label{otro-mal-inicio}
  \! \! \! \! \! \! \! \! \! \! \! \! 
  \raisebox{-.6\height}{\includegraphics[scale=0.9]{malinicio.pdf}}
= 0 \,\, \,\,\, \,  \mbox{ if }  i_2 = i_1 +1
\end{equation}

\begin{equation}\label{dot-al-inicio}
  \! \! \! \! \! \! \! \! \! \! \! \!  \! \! \! \! \! \! \! \! \! \! \! \!  \! \! \! \! \! \! \! \! \! \! \! \!  \! \! \! \! \! \! \! \! \! \! \! \! 
    \raisebox{-.6\height}{\includegraphics[scale=0.9]{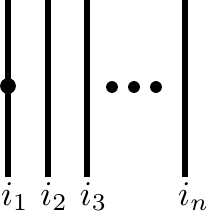}}
  = 0 \,\, \,\,\, \,
\end{equation}

\begin{equation}\label{punto-arriba}
  \! \! \! \! \! \! \! \! \! \! \! \! \! \! \! \! \!  \! \! \! \! \! \! \! \! \! \! \,\, \,\,\, \,
  \! \! \! \! \! \! \! \! \! \! \! \! \! \! \! \! \!  \! \! \! \! \! \! \! \! \! \! \,\, \,\,\, \,
  \raisebox{-.6\height}{\includegraphics[scale=0.9]{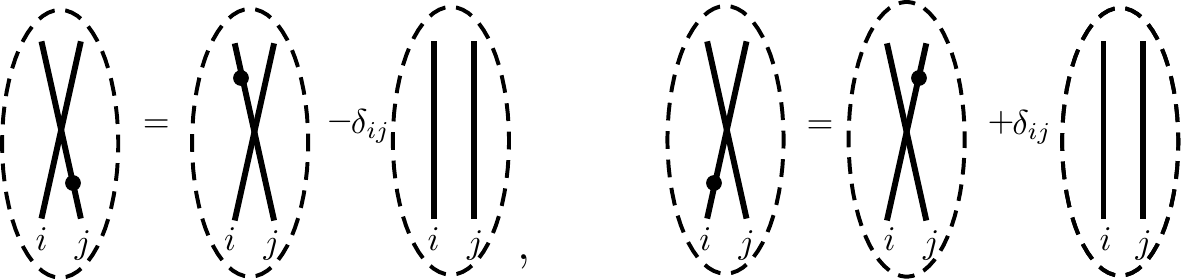}}
\end{equation}

where $ \delta_{ij} $ is Kronecker delta. Moreover we have the usual braid relation

\begin{equation}\label{KLRBraid}
  \! \! \! \! \! \! \! \! \! \! \! \! \! \! \! \! \!  \! \! \! \! \! \! \! \! \! \! \,\, \,\,\, \,
      \raisebox{-.6\height}{\includegraphics[scale=0.9]{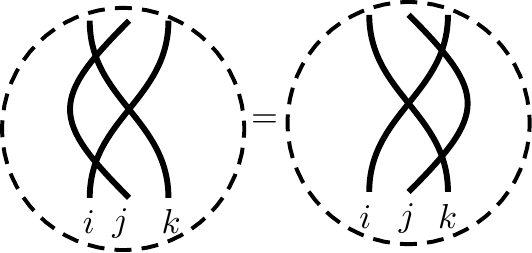}}
\end{equation}
for all $ i,j,k $ except when $ i =k = j \pm 1 $. In those cases we have that

\begin{equation}\label{KLRBraid}
  \! \! \! \! \! \! \! \! \! \! \! \! \! \! \! \! \!  \! \! \! \! \! \! \! \! \! \! \,\, \,\,\, \,
    \! \! \! \! \! \! \! \! \! \! \! \! \! \! \! \! \!  \! \! \!
          \raisebox{-.6\height}{\includegraphics[scale=0.9]{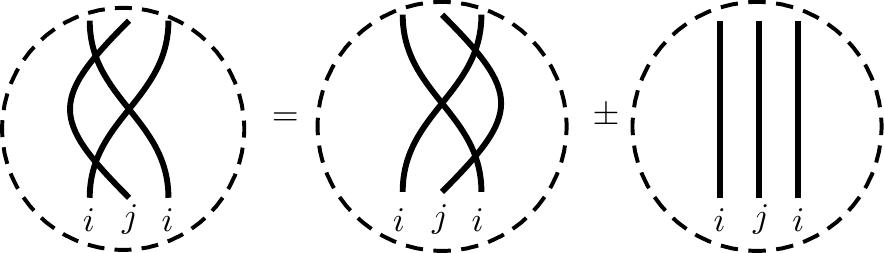}}
\end{equation}

\noindent
Finally we have the following quadratic relations

\begin{equation}\label{KLRquadratic}
  \! \! \! \! \! \! \! \! \! \! \! \! \! \! \! \! \!  \! \! \! \! \! \! \! \! \! \! \,\, \,\,\, \,
    \raisebox{-.6\height}{\includegraphics[scale=0.9]{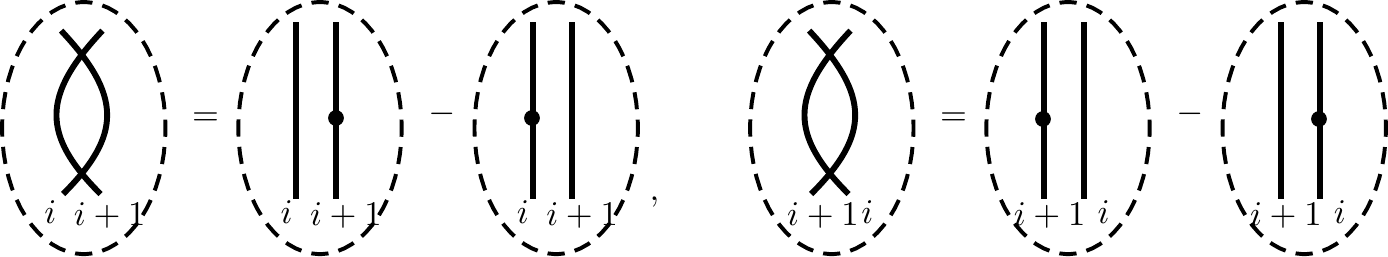}}
\end{equation}

\begin{equation}\label{KLRquadraticA}
  \! \! \! \! \! \! \! \! \! \! \! \! \! \! \! \! \!  \! \! \! \! \! \! \! \! \! \! \,\, \,\,\, \,
  \! \! \! \! \! \! \! \! \! \! \! \! \! \! \! \! \!  \! \! \! \! \! \! \! \! \! \! \,\, \,\,\, \,
        \raisebox{-.6\height}{\includegraphics[scale=0.9]{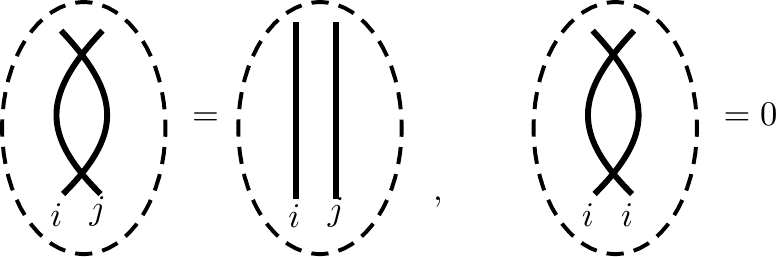}}
\end{equation}

\noindent
where $ i \neq j, j\pm 1 $. 
The identity element of  $ \B^{diag}(\kappa) $ is as follows


\begin{equation}\label{identity}
  \! \! \! \! \! \! \! \! \! \! \! \!   \! \! \! \! \! \! \! \! \! \! \! \! 
  \raisebox{-.5\height}{\includegraphics[scale=0.9]{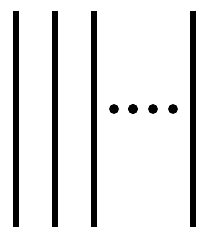}} \, \, := \, \, 
  \sum\limits_{\boldsymbol{i} \in I^n_e}
  \raisebox{-.57\height}{\includegraphics[scale=0.9]{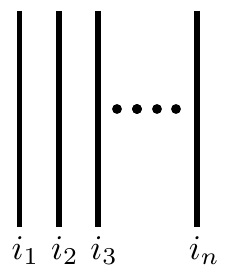}} 
\end{equation}

\noindent
with $ \bi = (i_1, \ldots, i_n) \in \II^n $.
For two diagrams $D$ and $D'$ for $\B^{diag}(\kappa)$, the multiplication $DD'$ 
is defined via vertical concatenation with $D$ on top of $D'$ if $bot(D)=top(D')$.
If $bot(D)\neq top(D')$ the product is defined to be zero.
This product is extended to all $ \B^{diag}(\kappa)$ by linearity. 

\medskip
Let $ \psi_i $, $ y_i $ and $ e({ \bi}) $ be the following elements of $ \B^{diag}(\kappa)$ (where the
upper indices refer to the positions rather than residues)



\begin{equation}\label{KLRpsi}
  \psi_i:=  \sum\limits_{\boldsymbol{i} \in I^n_e}\raisebox{-.5\height}{\includegraphics[scale=0.9]{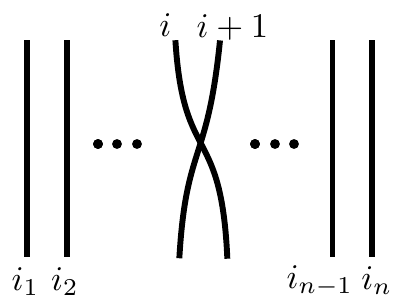}}
  \, \, \,     \, \, \,   
  y_i:=  \sum\limits_{\boldsymbol{i} \in I^n_e}\raisebox{-.5\height}{\includegraphics[scale=0.9]{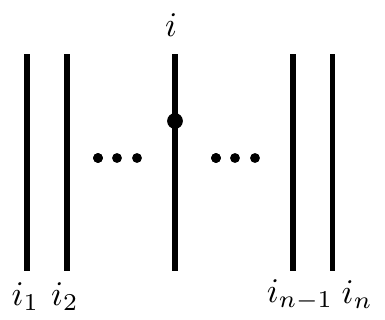}}
  \, \, \, \, \,    
  e(\bi):=  \raisebox{-.55\height}{\includegraphics[scale=0.9]{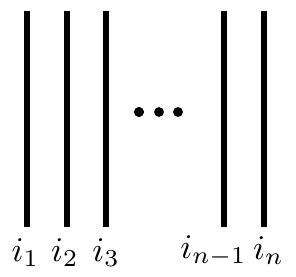}}
  \end{equation}

\noindent
with ${ \bi}  = (i_1, i_2, \ldots, i_n ) $.
Then $ \B^{diag}(\kappa)$ can also be described as the
algebra generated by the generators $ \psi_i,  y_i $ and $ e({ \bi}) $ subject to relations
\ref{mal-inicio} to \ref{KLRquadraticA} but now formulated in terms of $ \psi_i $ and $ y_i$
and $ e({ \bi}) $. In particular, by the multiplication rule for $ \B^{diag}(\kappa)$ we have that the  $ e({ \bi}) $'s
are orthogonal idempotents
\begin{equation}\label{orto}
e(\bi) e(\bj) = \delta_{ \bi, \bj } e(\bi).
\end{equation}  

{\color{black}
  \begin{remark} \rm
    Let $ R_n(\kappa) $ be the algebra with the same generators and relations as $\B^{diag}(\kappa) $,
  except relation \ref{otro-mal-inicio}, which is omitted. Then $ R_n(\kappa) $ is the cyclotomic
  KLR-algebra, see for example \cite{brundan-klesc}. Thus, $\B^{diag}(\kappa) $ is
  the quotient of $ R_n(\kappa) $ by the ideal generated by \ref{otro-mal-inicio}. 
\end{remark}}

The following Theorem is proved in \cite{PlazaRyom}. It is fundamental for the results of this section.
\begin{theorem}
  Suppose that $ 1 < m < e-1 $ and that $ \kappa = (0, m ) $. Then $ \kappa $ is an
 adjacent-free bi-charge and $  \B^{diag}(\kappa)$ is isomorphic to the blob algebra $ \B(m) $.
\end{theorem}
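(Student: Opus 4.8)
The plan is to derive both assertions from the structure theory of cyclotomic Hecke and KLR algebras. The adjacency-freeness is immediate: for $\kappa=(0,m)$ one has $\kappa_2-\kappa_1=m$, and the hypothesis $1<m<e-1$ forces $m\not\equiv 0$, $m\not\equiv 1$ and $m\not\equiv e-1\equiv -1$ modulo $e$, which is exactly the adjacency-free condition for a bi-charge; since moreover $0<m<e$ we have $[m]\neq 0$, so $\B(m)$ is defined in the sense of Definition \ref{definition original blob}. The substance is the isomorphism, and I would begin from the classical description of the blob algebra as a cellular quotient of a level-two cyclotomic Hecke algebra. Write $\mathcal{H}_n^\Lambda$ for the Ariki--Koike algebra of level two built on the parameter $q^2$ and the cyclotomic charges $q^{2\kappa_1},q^{2\kappa_2}$ --- equivalently, the Hecke algebra of type $B_n$ with those parameters. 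It is classical (Martin--Woodcock, Graham--Lehrer; see \cite{MW,GL1}) that $\B(m)$ is the quotient of $\mathcal{H}_n^\Lambda$ by the two-sided ideal imposing the type-$A$ Temperley--Lieb relation on the parabolic subalgebra generated by $T_1,\dots,T_{n-1}$; on the level of cellular data this means discarding every cell module whose indexing bipartition is not ``two-line shaped''.

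Next I would invoke the graded isomorphism theorem of Brundan and Kleshchev (see \cite{brundan-klesc}), which identifies $\mathcal{H}_n^\Lambda$ with the cyclotomic KLR algebra $R_n(\kappa)$ attached to the affine type $A_{e-1}^{(1)}$ weight $\Lambda_{\kappa_1}+\Lambda_{\kappa_2}$, carrying the residue idempotents of $\mathcal{H}_n^\Lambda$ to the KLR idempotents $e(\bi)$. Since $\B^{diag}(\kappa)$ is, by the Remark immediately preceding this theorem, precisely the quotient of $R_n(\kappa)$ by the two-sided ideal generated by the idempotents $e(\bi)$ with $i_2=i_1+1$, the whole statement reduces to the assertion that, under the Brundan--Kleshchev isomorphism, this ideal corresponds to the Temperley--Lieb ideal defining $\B(m)$ inside $\mathcal{H}_n^\Lambda$. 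Granting this, the isomorphism $R_n(\kappa)\cong\mathcal{H}_n^\Lambda$ descends to the quotients and yields an isomorphism $\B^{diag}(\kappa)\cong\B(m)$ under which the $y$- and $\psi$-type generators of $\B^{diag}(\kappa)$ go to the expected rescalings of the Hecke generators, $\UU_0$ to the blob generator, and the $\UU_i$ with $i>0$ to the Temperley--Lieb generators, so that the relations of Definition \ref{definition original blob} are recovered with their Gaussian-integer coefficients.

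The hard part is this last matching of two-sided ideals, and it is precisely where the adjacency-free hypothesis is used. A bipartition survives the Temperley--Lieb quotient if and only if it is two-line shaped, and one has to show that this is equivalent to the residue sequences of its standard tableaux never exhibiting the pattern ``$i_2=i_1+1$'' --- equivalently, to $e(\bi)$ remaining nonzero in $\B^{diag}(\kappa)$ for the relevant residue sequences $\bi$. The hypothesis $m\not\equiv 0,\pm 1$ guarantees that the two charges $0$ and $m$ produce well-separated residues, so that the residue combinatorics of the surviving shapes is unambiguous and no cell module is accidentally killed or retained; this degeneracy is exactly what adjacency-freeness is designed to exclude. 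A more self-contained route --- presumably the one followed in \cite{PlazaRyom} --- bypasses Brundan--Kleshchev altogether: one works directly inside $\B(m)$ over $\mathbb{F}$, uses its Jucys--Murphy elements and seminormal form to write down closed Brundan--Kleshchev--Wang-type formulas for $e(\bi)$, $y_k$ and $\psi_k$, checks the KLR relations \eqref{mal-inicio}--\eqref{KLRquadraticA} together with the extra relation \eqref{otro-mal-inicio} by direct computation, and then concludes surjectivity and injectivity by exhibiting a spanning set of $\B^{diag}(\kappa)$ of cardinality $\binom{2n}{n}=\dim_{\mathbb{F}}\B(m)$.
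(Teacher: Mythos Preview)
The paper does not actually prove this theorem: it is stated with the preface ``The following Theorem is proved in \cite{PlazaRyom}'' and no argument is given. So there is no proof in the present paper to compare against.

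That said, your sketch is a fair outline of how the result is established. The adjacency-freeness check is correct. Of your two routes, the second --- constructing $e(\bi)$, $y_k$, $\psi_k$ inside $\B(m)$ from its Jucys--Murphy elements and seminormal form, verifying the relations \eqref{mal-inicio}--\eqref{KLRquadraticA} together with \eqref{otro-mal-inicio} by hand, and concluding by a dimension count against $\binom{2n}{n}$ --- is indeed the approach taken in \cite{PlazaRyom}. Your first route via Brundan--Kleshchev and a matching of two-sided ideals is morally right, but the step you flag as ``the hard part'' (identifying the Temperley--Lieb ideal with the ideal generated by the $e(\bi)$ with $i_2=i_1+1$) is not purely residue-sequence combinatorics; it really requires control of the cellular structure on both sides, which is what the direct approach in \cite{PlazaRyom} supplies.
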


In view of the Theorem we simply write $ \B =   \B^{diag}(\kappa)$ in the following.
We shall from now on fix $ \kappa = (0, m ) $.

{\color{black}\label{comparisonofdefi}
  \begin{remark} \rm
    In \cite{PlazaRyom}, relation \ref{otro-mal-inicio} is formulated using the condition
    $ i_2 = i_1 -1$. On the other hand, as pointed out in Remark 1.4 of \cite{HMP}, this sign change is irrelevant.
    Indeed, let $ \B^{\prime}(m)$ be the algebra defined by the relations of \cite{PlazaRyom}. Then there is 
    an isomorphism $  \B(e-m) \cong  \B^{\prime}(m)$, induced by
    \begin{equation}
\psi_i \mapsto -\psi_i, y_i \mapsto -y_i, e(\bi) \mapsto e(-\bi).
    \end{equation}  
\end{remark}}

\medskip
We next recall the graded cellular basis for $\B$, constructed also in \cite{PlazaRyom}. 
For this we need some combinatorial notions. A \emph{one-column bipartition} of  $n$ is an ordered
pair $\blambda=(1^{\lambda_1} , 1^{\lambda_2})$ with $\lambda_1,\lambda_2 \in \mathbb{Z}_{\geq 0}$
and $\lambda_1+\lambda_2=n$. We denote by $\OnePar$ the set of all one-column bipartitions of $n$.   Given $\blambda , \bmu \in \OnePar$ we write $\blambda  \lhd \bmu$ if $|\lambda_1-\lambda_2 |>|\mu_1 -\mu_2|$. This defines a partial order on $\OnePar$. We define the \emph{Young diagram} of $\blambda$ by 
\begin{equation}
	[\blambda] = \{ (r,j) \, |\, 1\leq j\leq 2 \mbox{ and } 1\leq r\leq \lambda_j  \}.
\end{equation}
For $ j=1 $ or $ j=2 $ we refer to the elements of the form $ (r,j) $ as the \emph{$j$'th column} of $[\blambda]$
and in a similar way we define the \emph{$ r$'th} row of $[\blambda]$.
We represent graphically the elements of  $[\blambda]$ as boxes in the plane.
For instance, the Young diagram of $\blambda = (1^5,1^6) $ is depicted in \ref{examples Young}.
A \emph{tableau of shape $\blambda$} is a bijection $\T:[\blambda] \rightarrow \{1,2,\ldots , n \} $. We represent $\T$ graphically via a labelling of the boxes of $[\blambda]$ according to the bijection $\T$, that is the box $(r,j)$ is labelled with  $\T (r,j)$.

We say that $ i $ is in the $j$'th column (resp. $r$'th row) of $ \T $ if
$ \T^{-1}(i) $ is in the $j$'th column (resp. $r$'th row) of $[\blambda] $. 
We denote by $\tab (\blambda) $  the set of all tableaux of shape $\blambda$.
{\color{black}{We write $\shape(\T) = \blambda $ if $ \T \in \tab (\blambda)$.}}
A tableau is called \emph{standard} if its entries are increasing along each column. Two examples of
standard tableaux of shape $\blambda=(1^5,1^6) $ are given below

\begin{equation}\label{examples Young}
  [\blambda ] =  \left(
\raisebox{-.5\height}{\includegraphics[scale=0.9]{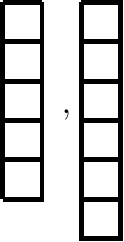}}
  \, \right) \qquad \qquad
  \s = \left(  \,
  \raisebox{-.5\height}{\includegraphics[scale=0.9]{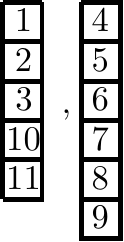}}
  \, \right) \qquad \qquad 
  \T = \left(
  \raisebox{-.5\height}{\includegraphics[scale=0.9]{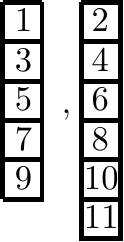}}
  \, \right)
\end{equation}

\vspace{0.3cm} 

We denote by $\std (\blambda) $ the set of all standard tableaux of shape $[\blambda ]$.
We define $\T^{\blambda}\in \std (\blambda)$ as the standard tableau in which the numbers $\{ 1,2,\ldots , n  \}$ are filled in increasingly along the rows of $[\blambda]$. For instance, if $\blambda=(1^5,1^6) $  then $\T^\blambda$ corresponds to the standard tableau $\T$ in \eqref{examples Young}. The symmetric group $\mathfrak{S}_n$ acts faithfully on the right on $\tab (\blambda)$ by permuting the entries inside a given tableau. Given $\T \in \std (\blambda )$ we define $d(\T) \in \mathfrak{S}_n$ by the condition 
$\T = \T^{\blambda} d(\T) $.
Let $s_i$ be the simple transposition $(i,i+1)\in \mathfrak{S}_n$ and let $S_n=\{ s_1,\ldots , s_{n-1}  \}$.
Then it is well known that the pair $(\mathfrak{S}_n, S_n)$ is a Coxeter system.  For $w\in \mathfrak{S}_n$ and a
reduced expression $\underline{w}=s_{i_1} \cdots s_{i_k}$ of $w$ we define 
\begin{equation}\label{psiw}
 	\psi_{\underline{w}}:= \psi_{i_1} \cdots \psi_{i_k} \in \B .
\end{equation}
In particular, for $\T \in \std (\blambda )$ we define $\psi_{d(\T)}:= \psi_{\underline{d(\T)}}$, where $\underline{d(\T)}$ is any reduced expression for $d(\T)$. It can be shown that  $\psi_{d(\T)}$ is
independent of the choice of reduced expression.


\medskip
It follows from the the relations for $ \B $ that there is a $ \Z$-grading on $ \B$ given by 
\begin{equation}
  {\rm deg}(e(\bi) ) = 0,
\, \, \,  \, \, \, \, \, \, \, \, \, \, \, \, \, \, \, 
   {\rm deg}(y_i) = 2,
\, \, \,  \, \, \, \, \, \, \, \, \, \, \, \, \, \, \,    
   {\rm deg}(\psi_je(\bi)  ) = \left\{
\begin{array}{rl}
  -2, & \mbox{ if } i_j = i_{j+1}; \\
  1,  &  \mbox{ if } i_j = i_{j+1} \pm 1; \\
  0,  &  \mbox{ otherwise. }
\end{array}   \right.
\end{equation}

It also follows from the relations for $\B$ that the reflection along a horizontal axis defines an
anti-automomorphism $ \ast $ of $\B$. It fixes the generators $ \psi_i $, $ y_i $ and $ e(\bi)$.

\medskip

For a box $A=(r,j)$ we define its \emph{residue} by $\res (A) := \kappa_j -(r-1) \in I_e$, that
is
\begin{equation}
\res (A) := \left\{ \begin{array}{rl} -(r-1) ,&  \mbox{ if } j = 1; \\  m-(r-1), & \mbox{ if } j = 2. \end{array} \right.
\end{equation}  
Given a  tableau $\T$ we define its \emph{residue sequence} by $\boldsymbol{i}^\T :=(i_1,\ldots , i_n)\in I_e^n $,
where $i_k=\res (\T^{-1}(k))$. For notational convenience we
define $\boldsymbol{i}^{\blambda}:= \boldsymbol{i	}^{\T^{\blambda}}$ for $\blambda \in \OnePar$.

\medskip
We are now in position to define the elements of the graded cellular basis for $\B$. Let $\blambda \in \OnePar$ and $\s , \T \in \std (\blambda)$. We define
\begin{equation}
	m_{\s \T}^\blambda := \psi_{d(\s)}^\ast e(\boldsymbol{i}^\blambda  ) \psi_{d(\T)}
\end{equation}

 \medskip

The following is the main result of \cite{PlazaRyom}. 

\begin{theorem}\label{Cellular}
  The set $\Basis := \{ m_{\s \T}^{\blambda} \mid \s, \T \in \std(\blambda), \blambda \in \OnePar \}$ is a graded
  cellular basis for $\B$, {\color{black}{in the sense
  of \cite{hu-mathas}}}, with respect to the order $\lhd$ and the degree function given by  $\deg (\T )= \deg{ ( m_{\T \T^\blambda  } )} $.    
  \end{theorem}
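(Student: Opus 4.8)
This result is the main theorem of \cite{PlazaRyom}, so below I only sketch how one would prove it; in the paper itself one would simply cite \cite{PlazaRyom}.

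The plan is to argue in three stages: show that $\Basis$ spans $\B$, match cardinalities, and then verify the cellular axioms directly. For the spanning statement, recall that $\B$ is generated by the $\psi_i$, $y_i$ and $e(\bi)$, and is obtained from the cyclotomic KLR algebra $R_n(\kappa)$ by imposing the extra relation \ref{otro-mal-inicio}, namely $e(\bi)=0$ whenever $i_2=i_1+1$. First I would establish the key vanishing lemma: in $\B$ one has $e(\bi)=0$ unless $\bi=\bi^{\s}$ for some standard tableau $\s$ of a \emph{column} shape $\blambda\in\OnePar$. Indeed, if a bipartition $\bmu$ is not column-shaped then some of its components has a row of length $\ge 2$, and one can choose a standard $\bmu$-tableau $\mathfrak{u}$ in which the two boxes of such a row carry the labels $1,2$; since $\kappa$ is adjacency-free these two boxes have residues $r$ and $r+1$, so $\bi^{\mathfrak{u}}$ begins with an adjacent pair and $e(\bi^{\mathfrak{u}})=0$ by \ref{otro-mal-inicio}, and then the braid and quadratic relations \ref{KLRBraid}, \ref{KLRquadratic}, \ref{KLRquadraticA} propagate this vanishing to $e(\bi)$ for every $\bi$ in the $\mathfrak{S}_n$-orbit of $\bi^{\bmu}$. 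Combining this with $y_1e(\bi)=0$ and the defining KLR relations, one rewrites an arbitrary product of generators as an $\F$-linear combination of elements $\psi_{d(\s)}^\ast e(\bi^\blambda)\psi_{d(\T)}$ with $\s,\T\in\std(\blambda)$, $\blambda\in\OnePar$, that is, of elements of $\Basis$.

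Second, a cardinality count. A standard tableau of shape $(1^a,1^{n-a})$ is precisely a choice of the $a$ entries filling the first column, so $|\std(1^a,1^{n-a})|=\binom na$ and $|\Basis|=\sum_{a=0}^n\binom na^2=\binom{2n}{n}$. On the other hand the earlier identification $\B^{diag}(\kappa)\cong\B(m)$ together with the diagram basis of the blob algebra gives $\dim_\F\B=\binom{2n}{n}$. Hence the spanning set $\Basis$ is in fact a basis.

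Finally I would check the graded cellular axioms relative to the anti-involution $\ast$ and the order $\lhd$. That $\ast$ fixes $\psi_i$, $y_i$ and $e(\bi)$ gives $(m^\blambda_{\s\T})^\ast=m^\blambda_{\T\s}$ at once, and the degree statement is a direct grading computation. The substantive point — and the main obstacle — is the straightening relation: for each generator $x\in\{\psi_i,y_i\}$ one must show $m^\blambda_{\s\T}\,x\equiv\sum_{\V}r_{\V}\,m^\blambda_{\s\V}\pmod{\B^{\rhd\blambda}}$, where $\B^{\rhd\blambda}$ is spanned by the basis elements $m^\bmu_{\cdot\,\cdot}$ with $|\mu_1-\mu_2|<|\lambda_1-\lambda_2|$ and the $r_{\V}$ do not depend on $\s$. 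Because the order $\lhd$ here is the ``folded'' order on $\OnePar$ rather than the restriction of the dominance order attached to $R_n(\kappa)$, this is not a formal consequence of the cellularity of $R_n(\kappa)$; it has to be verified by explicit manipulation of the KLR diagrams $e(\bi^\blambda)\psi_{d(\T)}x$, controlling how the residue pattern, and hence the column shape, evolves. These computations are exactly the technical core of \cite{PlazaRyom}.
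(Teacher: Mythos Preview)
You correctly identify that the paper does not prove this theorem at all: it is stated as ``the main result of \cite{PlazaRyom}'' and simply cited, which is exactly what you propose doing. Your supplementary sketch is a fair outline of the strategy in \cite{PlazaRyom}; the one point to tighten is the ``propagation'' step, since vanishing of a single $e(\bi^{\mathfrak{u}})$ does not formally kill all idempotents in its $\mathfrak{S}_n$-orbit via the KLR relations---the actual argument shows that the two-sided ideal generated by the relations \ref{otro-mal-inicio} coincides with the cell ideal in $R_n(\kappa)$ spanned by the basis elements of non-column shape, which is what forces the surviving cellular structure to be indexed by $\OnePar$ with the folded order $\lhd$.
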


We now explain an algorithm for producing a reduced expression for the elements $d(\T)$.
This algorithm has already been used in the previous papers \cite{PlazaRyom}, \cite{HMP}, \cite{Esp-Pl} and \cite{LiPl}.


\medskip

We first need to reinterpret standard tableaux as paths on the Pascal triangle.
This is a generalization of the correspondence,
explained in the paragraph prior to Figure \ref{fig:two walks}, 
between usual two-column standard tableaux
and walks in a coordinate system. 
Let $\T \in \std (\blambda )$.  Then we define $p_{\T}:\{0,1,\ldots ,n\}\rightarrow \mathbb{Z} $
as the function given recursively by $p_{\T } (0)=0$ and $p_\T (k)= p_\T (k-1) +1$ (resp. $p_\T (k)= p_\T (k-1) -1$) if $k$ is located in the second (resp. first) column of $\T$. Moreover, we define $P_\T: [0,n] \rightarrow {\mathbb R}^2 $
as the piecewise linear path such that $ P_\T(k) = (p_\T (k),k) $ for $k =0,1,\ldots, n $ and 
such that $ P_\T\! \mid_{ [k,k+1]} $ is a line segment for all $ k=0,1,\ldots, n-1$.

\medskip
We depict $ P_\T$ graphically inside the standard two-dimensional coordinate system, but reflected through the $x$-axis.
For instance, if $ \s $ and $\T$ are the standard tableaux in \eqref{examples Young} then $ P_\s$ and
$P_{\T} $ are depicted in \eqref{example path beginning}, with $ P_\s$ in red and $ P_\T $ in black.
In general, we denote by $P_{\blambda}$ the path obtained from the tableau $\T^{\blambda}$.
Thus in \ref{example path beginning} we have that $ P_\T = P_{\blambda}$ for $ \blambda = ( 1^5, 1^6) $.

\begin{equation}  \label{example path beginning}
 \raisebox{-.6\height}{\includegraphics[scale=0.9]{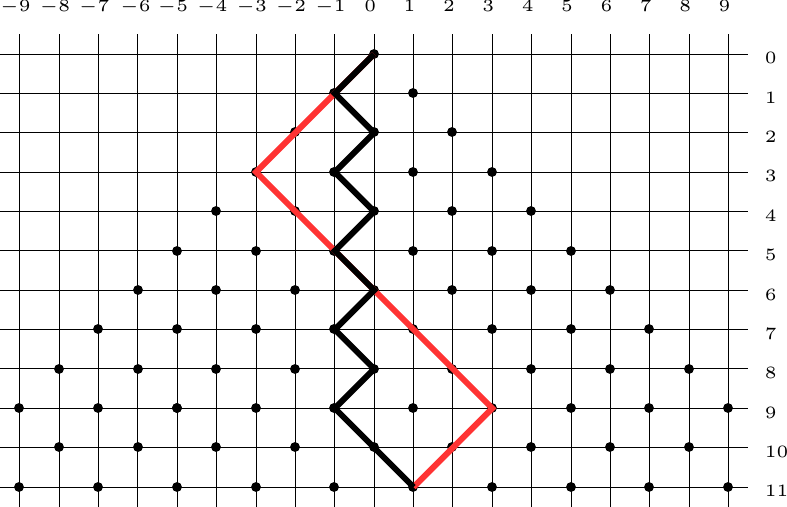}}
\end{equation}

Note that in general the integral values of $ P_\T $ belong to the set $ \{ (p, k) \mid k \in  {\mathbb Z}_{\ge 0} , \, \,
p=  -k, -k+2, \ldots, k-2, k   \} $. This set has a Pascal triangle structure which is 
why we say that standard tableaux correspond to paths on the Pascal triangle. 

\medskip

It is clear that the map $ \T \mapsto P_\T $  defines a bijection
between $\std (\blambda)$ and the set of all such piecewise linear paths
with final vertex $(\lambda_2-\lambda_1, n )$.
For this reason, we sometimes identify $\blambda$ with the
{\color{black}{point
$(\lambda_2-\lambda_1,n)$.}}

\medskip
Suppose now that both $ \T $ and $  \T s_k$ are standard tableaux for some $ \blambda \in \OnePar $ and
$ s_k \in S$. Then $ k $ and $ k+1 $ are in different columns of $ \T$ and so we conclude that the functions 
$ p_\T $ and $ p_{\T s_k } $ are equal except that $ p_\T(k) = p_{\T s_k}(k) \pm 2$, and hence also the
paths $ P_\T $ and $ P_{\T s_k } $ are equal except in the interval $[k-1, k+1] $ where they are related 
in the following two possible ways

\begin{equation}  \label{hooks}
  \raisebox{-.6\height}{\includegraphics[scale=0.9]{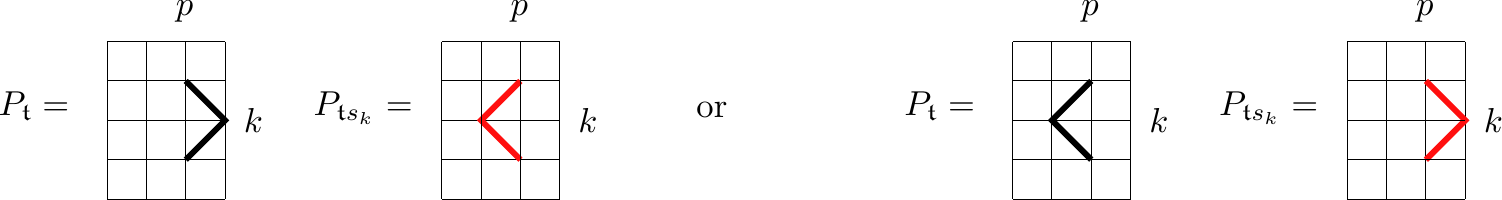}}
\end{equation}

\noindent
Conversely, if $ \s$ and $ \T $ are standard tableaux in $ \std(\blambda) $
such that $ P_\s$ and $ P_\T$ are equal except in the interval $[k-1, k+1] $ where they are related as in
\eqref{hooks}, then we have that $ \s = \T s_k$.  Let us now consider the following algorithm.

\begin{algorithm}\rm  \label{algorithm paths}
  Let $\blambda \in \OnePar $ and $ \T \in \std (\blambda )$.
  Then we define a sequence  $ seq:=(s_{i_1} ,s_{i_2} ,\ldots, s_{i_N} )  $ of elements of $ S_n $ as follows.
	\begin{description}
		\item[Step 1.] Set $P_0 := P_{\blambda }$.
                  If $ P_0 \neq P_\T$ then choose $ i_1 $ any such that $ \T^\blambda s_i \in \std(\blambda)  $ and such that
                  the area bounded by
                  $ P_1 :=P_{\T^\blambda s_i} $ and $ P_\T $ is strictly smaller than the area bounded by $ P_0 $ and $ P_\T$. 
                 \item[Step 2.] If $ P_1 = P_\T $ then the algorithm stops
                   with $ seq:=(s_{i_1})$. Otherwise choose any $ {i_2} $ such
                   that $ \T^\blambda s_{i_1} s_{i_2} \in \std(\blambda) $ and such that the area bounded by
                   $ P_2 := P_{\T^\blambda s_{i_1} s_{i_2}} $ and $ P_\T $ is strictly
                   smaller than the area bounded by $ P_1 $ and $ P_\T$.
                 \item[Step 3.] If $ P_2 = P_\T $ then the algorithm stops
                   with $ seq:=(s_{i_1}, s_{i_2})$. Otherwise choose any $ {i_3} $
                   such that $ \T^\blambda s_{i_1} s_{i_2} s_{i_3} \in \std(\blambda) $ and such that the area bounded by
                  $ P_3 := P_{\T^\blambda s_{i_1} s_{i_2} s_{i_3} } $ and $ P_\T $ is strictly smaller than the area
                  bounded by $ P_2 $ and $ P_\T$.
                \item[Step 4.] Repeat until $ P_N = P_\T$. The resulting sequence
                  $ seq = (s_{i_1}, s_{i_2}, \ldots, s_{i_N} ) $ gives
                  rise to a reduced expression for $ d(\T) $ via $  d(\T)= s_{i_1} s_{i_2} \cdots s_{i_N}$.
        \end{description}
\end{algorithm}


\medskip
\noindent
Note that it follows from \ref{hooks} that the $ i_k$'s in $ {\rm \bf Step \, \, 2}  $ and 
$ {\rm \bf Step \, \, 3}  $ do exist and so the Algorithm \ref{algorithm paths} makes sense.
For example in the case of the tableau $ \s $ from \ref{examples Young} we get, using
\eqref{example path beginning}, that for example 
\begin{equation}
d(\s)  = s_2 s_4s_3  s_7 s_9 s_8 s_{10} s_9
\end{equation}
is a reduced expression for $d(\s) $.
{\color{black}{For completeness, we now present a proof of the correctness of the Algorithm.}}
\begin{theorem}\label{Algorithm}
Algorithm \ref{algorithm paths} computes a reduced expression for $d(\s ) $.
\end{theorem}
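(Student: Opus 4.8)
The plan is to separate the statement into two parts: that Algorithm \ref{algorithm paths} terminates and outputs a word whose product is $d(\T)$ (I rename the tableau of the statement as $\T\in\std(\blambda)$, to match the algorithm), and that this word is reduced. Throughout I identify a standard tableau with its lattice path. For the first part, the discussion preceding \eqref{hooks} makes every step legitimate: as long as the current path $P_{k-1}$ differs from $P_\T$ there is an index $i_k$ with $\T^\blambda s_{i_1}\cdots s_{i_k}\in\std(\blambda)$ whose path $P_k$ is obtained from $P_{k-1}$ by shifting, at a single position, one unit rhombus towards $P_\T$; such a move strictly decreases the nonnegative integer $a(P_{k-1}):=\tfrac{1}{2}\sum_{j=0}^{n}|p_{P_{k-1}}(j)-p_\T(j)|$, which counts the unit rhombi enclosed between $P_{k-1}$ and $P_\T$. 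Hence the algorithm halts, after $N$ steps say, with $P_N=P_\T$; iterating the equivalence in \eqref{hooks} along the sequence gives $\T^\blambda s_{i_1}\cdots s_{i_N}=\T$ as tableaux, and faithfulness of the $\mathfrak{S}_n$-action on $\tab(\blambda)$ then forces $d(\T)=s_{i_1}\cdots s_{i_N}$.

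For reducedness one must show $N=\ell(d(\T))$; since $d(\T)$ has just been exhibited as a product of $N$ generators, only the inequality $\ell(d(\T))\ge N$ needs an argument. The key input will be the identity $\ell(d(\U))=\mathcal{A}(\U)$ for every $\U\in\std(\blambda)$, where $\mathcal{A}(\U):=\tfrac{1}{2}\sum_{j=0}^{n}|p_\U(j)-p_{\blambda}(j)|$ is the number of unit rhombi enclosed between $P_{\blambda}$ and $P_\U$. Granting this, I would argue as follows. One first checks that every intermediate path $P_k$ produced by the algorithm lies pointwise weakly between $P_{\blambda}$ and $P_\T$: this holds for $P_0=P_{\blambda}$, and is preserved at each step because the chosen move shifts the path at one position by $2$ strictly towards $p_\T$ (so it cannot overshoot $p_\T$, nor leave the interval towards $p_{\blambda}$). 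Consequently the region between $P_{\blambda}$ and $P_\T$ splits into the region between $P_{\blambda}$ and $P_k$ and the region between $P_k$ and $P_\T$, i.e. $\mathcal{A}(\T^\blambda s_{i_1}\cdots s_{i_k})+a(P_k)=\mathcal{A}(\T)$ for all $k$. Since $a(P_k)$ drops by exactly one at each step (the move removes a single rhombus from the region towards $P_\T$), $\mathcal{A}(\T^\blambda s_{i_1}\cdots s_{i_k})$ grows by exactly one at each step, from $0$ to $\mathcal{A}(\T)$; hence $N=\mathcal{A}(\T)=\ell(d(\T))$.

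The main obstacle is precisely the identity $\ell(d(\U))=\mathcal{A}(\U)$; everything else is routine bookkeeping in the tableaux--paths dictionary. I would prove it by induction on $\mathcal{A}(\U)$: it is immediate for $\U=\T^\blambda$, and otherwise I would pick a position $k$ at which $P_\U$ has a strict local maximum lying above $P_{\blambda}$ — equivalently, a unit rhombus of the enclosed region incident to $P_\U$ — so that $\U s_k\in\std(\blambda)$, $\mathcal{A}(\U s_k)=\mathcal{A}(\U)-1$ and $d(\U)=d(\U s_k)\,s_k$; comparing the positions of $k$ and $k+1$ in $\U$ one checks directly that $\ell(d(\U))=\ell(d(\U s_k))+1$, and then invokes the inductive hypothesis. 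An alternative route is to produce an explicit bijection between the inversions of the permutation $d(\U)$ and the unit rhombi enclosed by $P_{\blambda}$ and $P_\U$; this kind of identity is classical in the cellular-basis combinatorics of Temperley--Lieb type algebras.
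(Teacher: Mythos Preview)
Your argument is correct and follows essentially the same strategy as the paper's, but organised differently. The paper inducts directly on the steps of the algorithm, showing that $\operatorname{inv}(d(\T_k))=k$ by reading the one-line representation of $d(\T_k)$ and checking that at each step the swap of $i_k$ and $i_k+1$ creates exactly one new inversion. You instead isolate the general identity $\ell(d(\U))=\mathcal{A}(\U)$ and combine it with the ``between'' property of the intermediate paths. These two inductions are the same computation viewed from opposite ends; your version has the advantage of making explicit why each flip in the algorithm is \emph{outward} from $P_{\blambda}$ (this is exactly your ``between'' claim), a point the paper asserts without justification.

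One imprecision to fix: in your inductive proof of $\ell(d(\U))=\mathcal{A}(\U)$ you say ``pick a position $k$ at which $P_\U$ has a strict local maximum lying above $P_{\blambda}$''. This is only half of the picture, since $P_\U$ may lie \emph{below} $P_{\blambda}$; for instance with $\blambda=(1^2,1^1)$ the tableau with path $(0,-1,-2,-1)$ has no such local maximum. What you need is a strict local extremum of $P_\U$ at which $P_\U$ differs from $P_{\blambda}$ --- equivalently (and this is what your parenthetical already says correctly) a unit rhombus of the enclosed region incident to $P_\U$. With that correction the inductive step goes through, and your sketch of the length comparison (``comparing the positions of $k$ and $k+1$ in $\U$'') is exactly the row-reading/inversion check that the paper carries out in detail.
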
  
\begin{dem}
This is a statement about the symmetric group $ \Si_n $ viewed as a Coxeter group.
Let $ \T_k := \T^{\blambda} s_{i_1} s_{i_1} \cdots s_{i_k} $ be the tableau constructed after
$ k$ steps of the algorithm. Then we have that $d(\T_k) = s_{i_1} s_{i_1} \cdots s_{i_k} $ 
and we must show that $ l(s_{i_1} s_{i_1} \cdots s_{i_k}) = k $ where $l(\cdot) $ is the length function for $ \Si_n$.
We therefore identify $ d(\T_k) $ with a permutation of $ \{1,2,\ldots,n\}  $ via the row reading for $ \T_k $. 
To be precise, using the usual one line notation for permutations, we write

\begin{equation}\label{bijection}
      \raisebox{-.6\height}{\includegraphics[scale=0.9]{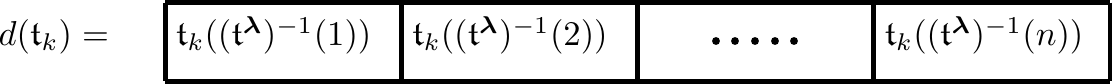}}
\end{equation}

\noindent
We call this the \emph{one line representation} for $ d(\T_k)$.
If for example $ \T_k = \s $ from \ref{examples Young} then we have the following one line representation
for $ d(\T_k) $

\begin{equation}\label{bijection}
    \raisebox{-.6\height}{\includegraphics[scale=0.9]{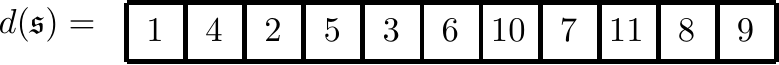}}
\end{equation}

\noindent

\noindent
whereas for $ \T_k = \T^{\blambda} $ from \ref{examples Young} we have the identity one line representation, that is

\begin{equation}\label{bijectionId}
  \raisebox{-.6\height}{\includegraphics[scale=0.9]{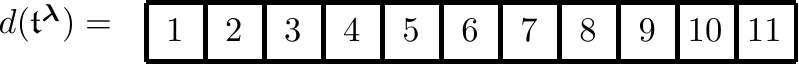}}
\end{equation}

\noindent
In general, by the Coxeter theory for $ \Si_n $, we have that $ l( d(\T_k )) $
is the number of \emph{inversions} of the one line representation of $ d(\T_k ) $ that is
\begin{equation}
 l( d(\T_k )) = inv( d(\T_k )) := | \{ (i, j ) : i< j \mbox{ and } \T_k (  ( \T^{\blambda})^{-1}(i)  >
\T_k (  ( \T^{\blambda})^{-1}(j)   \} |
\end{equation}
To prove the Theorem we must now show that $ inv( d(\T_k )) = k $. 
We proceed by induction on $ k $. For $ k= 0 $ we have that
$inv( d(\T_k )) =  inv( d(\T^{\blambda} )) = 0 $, see
\ref{bijectionId}, and so the induction basis is ok.
We next assume that $ inv( d(\T_{k-1} ))  = k-1 $ and must show that $ inv( d(\T_{k} ))  = k $.
At step $ k $ of Algorithm \ref{algorithm paths},
we have that $ \T_{k-1},  \T_{k}  \in \std(\blambda) $  
and $ \T_{k-1} s_{i_k} = \T_{k} $ and hence
$ \T_{k-1} $ and $ \T_{k} $ are in one of the two situations
described in \ref{hooks}. Let $ p $ be as in \ref{hooks}. Then, since $ \T_{k} $ is closer to $ \T $ than $ \T_{k-1} $, 
we have that $ \T_{k-1} $ and $ \T_{k} $ are in the first situation of \ref{hooks}
if $ p \le -1 $ and in the second situation of \ref{hooks} if $ p \ge 0 $. In other words,
the first situation of \ref{hooks} only takes places in the left half of the Pascal triangle \eqref{example path beginning}
and the second situation of \ref{hooks} only takes places in the right half of the Pascal
triangle \eqref{example path beginning},
with the vertical axis $ p=0$ is included.

\medskip

These two situations translate into the following two possible relative positions for $ k $ and $ k+1 $ in
$ \T_{k-1}$.

\begin{equation}\label{examplesORDER}
 \left(  \,
\raisebox{-.5\height}{\includegraphics[scale=1]{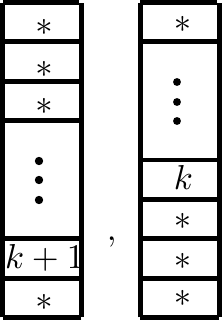}}
\, \right) \, \, \, \, \, \, \, \, \, \, \, \, \, \, \, \, \, \, \, \, \, \, \, \, 
\left(  \,
\raisebox{-.5\height}{\includegraphics[scale=1]{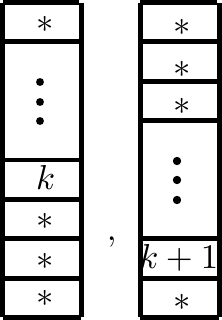}}
\, \right)
\end{equation}

Here, in both tableaux $k$ and $ k+1 $ are in different columns, but 
in the first tableau, corresponding to $ p < 0 $, we have that $ k+1 $ is in a strictly lower
{\color{black}{row}}
than $ k $, whereas in the second tableau, corresponding to $ p \ge 0 $, we have that $ k+1 $ is in a lower
or equal {\color{black}{row}} than $ k$.

\medskip
On the other hand, in each of the two cases of \ref{examplesORDER} we have that $ k $ appears before
$ k+1 $ in the one line representation for $ \T_{k-1}$ and so $ inv( d(\T_{k} )) = inv( d(\T_{k-1} )) +1$.
This proves the Theorem.
\end{dem}

{\color{black}{\begin{remark} \rm
We remark that the reduced expression for $d(\s)$ obtained via
Algorithm \ref{algorithm paths} is by no means unique. In general,
we have many choices for the $i_k$'s and the reduced expression obtained
depends on the choices we make. On the other hand, it is known that
$d(\s)$ is fully commutative.
In other words, any two reduced expressions 
for $d(\s)$ are related via the commuting braid relations.
\end{remark}}}



\subsection{Idempotent truncations of $\mathbb{B}_n$}
From now on we shall study a certain subalgebra of $\B $ that arises from idempotent truncation of $ \B$.
This subalgebra has already appeared in the literature, for example in  \cite{Esp-Pl}, \cite{LiPl}.

\begin{definition}\rm
	Suppose that $\blambda  \in \OnePar$. Then the subalgebra $ \trunc $ of $ \B $ is defined as 
\begin{equation}
  \trunc := e( \bi^{\blambda } ) \B e( \bi^{\blambda}  ).
\end{equation}
\end{definition}


\noindent
Let us mention the following Lemma without  proof.
\begin{lemma} \label{lemma super reduction}
  Let $\blambda = (1^{\lambda_1},1^{\lambda_2} ) \in \OnePar$.
Set $ \bmu := ( 1^{\lambda_2},1^{\lambda_1} ) \in \OnePar$ and 
$\bnu= (1^{\lambda_1-M}, 1^{\lambda_2-M}) \in \mbox{Par}_{2,n-2M}^1$
where $ M=\min \{ \lambda_1 , \lambda_2  \}$.
  There is an isomorphism $\trunc \cong \mathbb{B}_{n-2M}(\bnu )$ of $\F $-algebras.
\end{lemma}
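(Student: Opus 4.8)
\emph{Proof strategy.} The plan is to deduce the isomorphism from the graded cellular basis $\Basis = \{m_{\s\T}^{\btau}\mid \btau\in\OnePar,\ \s,\T\in\std(\btau)\}$ of $\B$ furnished by Theorem~\ref{Cellular}. Since $m_{\s\T}^{\btau} = \psi_{d(\s)}^{\ast}e(\bi^{\btau})\psi_{d(\T)}$ lies in $e(\bi^{\s})\,\B\,e(\bi^{\T})$, left- and right-multiplication by $e(\bi^{\blambda})$ annihilates $m_{\s\T}^{\btau}$ unless $\bi^{\s}=\bi^{\T}=\bi^{\blambda}$ and leaves it unchanged otherwise; hence
\[
\trunc = e(\bi^{\blambda})\B e(\bi^{\blambda}) \text{ has } \F\text{-basis } \{m_{\s\T}^{\btau}\mid \bi^{\s}=\bi^{\T}=\bi^{\blambda}\},
\]
and likewise $e(\bi^{\bnu})\mathbb{B}_{n-2M}e(\bi^{\bnu})$ has $\F$-basis $\{m_{\s'\T'}^{\btau'}\mid \bi^{\s'}=\bi^{\T'}=\bi^{\bnu}\}$, where $\mathbb{B}_{n-2M}$ denotes the blob algebra on $n-2M$ strands. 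Using the evident residue-relabelling isomorphism between two blob algebras whose (adjacency-free) bicharges differ by a constant, $\mathbb{B}_{n-2M}(\bnu)$ is identified with $e(\bi^{\bnu})\mathbb{B}_{n-2M}e(\bi^{\bnu})$ for a suitably shifted bicharge, so it suffices to build an algebra isomorphism between these two corner algebras. We may assume $\lambda_1 \le \lambda_2$, so $M = \lambda_1$; the case $\lambda_1 > \lambda_2$ is entirely analogous.

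\emph{Rigidity of an initial block.} The crucial combinatorial input is that every $\s$ with $\bi^{\s} = \bi^{\blambda}$ agrees with $\T^{\blambda}$ on the first $M$ rows. Writing out
\[
\bi^{\blambda} = (0,\,m,\,-1,\,m-1,\,\ldots,\,-(M-1),\,m-(M-1),\;m-M,\,m-M-1,\,\ldots)
\]
and using $1<m<e-1$ (so $m\not\equiv 0,\pm 1 \pmod e$), a short induction on $k\le 2M$ shows that at step $k$ the path $P_{\s}$ has no freedom: the two boxes into which step $k$ could go have distinct residues, only one of which equals $(\bi^{\blambda})_k$, and this pins $P_{\s}$ to the staircase $P_{\blambda}\!\mid_{[0,2M]}$. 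In particular $\tau_1,\tau_2\ge M$, and $\s$ fills the first $M$ rows of $[\btau]$ with $1,\ldots,2M$ exactly as $\T^{\btau}$ does, so $d(\s)=(\T^{\btau})^{-1}\s$ fixes $1,\ldots,2M$ and therefore $d(\s),d(\T)\in\Si_{\{2M+1,\ldots,n\}}$. Consequently $m_{\s\T}^{\btau}$, read as a KLR diagram, is a horizontal juxtaposition $\mathrm{id}_{2M}\sqcup D_{\s\T}$ of a straight, dotless block of $2M$ strands with residues $0,m,-1,\ldots,m-(M-1)$ and a KLR diagram $D_{\s\T}$ on the remaining $n-2M$ strands. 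Deleting the block and raising every residue by $M$ sends $m_{\s\T}^{\btau}$ to $m_{\s^{\flat}\T^{\flat}}^{\btau^{\flat}}$, where $\btau^{\flat}:=(1^{\tau_1-M},1^{\tau_2-M})\in\mbox{Par}_{2,n-2M}^1$ and $\s^{\flat}$ is $\s$ with its first $M$ rows removed and entries lowered by $2M$; one checks $\bi^{\s^{\flat}}$ is the shifted tail of $\bi^{\blambda}$, namely $\bi^{\bnu}$. The maps $\s\mapsto\s^{\flat}$ and $\btau\mapsto\btau^{\flat}$ thus set up a bijection of $\{\s : \bi^{\s}=\bi^{\blambda}\}$ with $\{\s' : \bi^{\s'}=\bi^{\bnu}\}$ (the inverse reinstates the forced staircase rows), compatible with $\lhd$ and with the anti-involution $m_{\s\T}^{\btau}\mapsto m_{\T\s}^{\btau}$.

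\emph{The map and the main difficulty.} Let $\Psi : \trunc\to e(\bi^{\bnu})\mathbb{B}_{n-2M}e(\bi^{\bnu})$ be the linear map $m_{\s\T}^{\btau}\mapsto m_{\s^{\flat}\T^{\flat}}^{\btau^{\flat}}$; by the previous paragraph it is a bijection between the two bases, so everything comes down to showing that $\Psi$ is multiplicative. A product of two basis elements of $\trunc$ is evaluated in $\B$ by vertical concatenation followed by the KLR straightening relations, and since in each of the diagrams occurring the leftmost $2M$ strands remain straight and dotless, with leftmost residue $0=\kappa_1$ and next residue $m\ne\kappa_1+1$, none of the left-edge relations \eqref{mal-inicio}, \eqref{otro-mal-inicio}, \eqref{dot-al-inicio} is ever used: all the straightening happens among the strands to the right of the block and involves only the position-translation-invariant local relations \eqref{KLRBraid}, \eqref{KLRquadratic}, \eqref{KLRquadraticA}, together with the dot-past-crossing relations and planar isotopy, whose residue-dependent coefficients depend only on differences of residues. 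Exactly the same holds for products in $e(\bi^{\bnu})\mathbb{B}_{n-2M}e(\bi^{\bnu})$: the single leftmost strand stays straight and dotless with residue in $\{\kappa_1,\kappa_2\}$ and next residue $(\bi^{\bnu})_1-1\ne(\bi^{\bnu})_1+1$, so no left-edge relation fires there either, and the straightening again uses only the same local relations. Since the uniform shift of residues by $M$ carries the local relations among strands $2M+1,\ldots,n$ onto the local relations among strands $1,\ldots,n-2M$, the two product expansions are matched term by term, whence $\Psi(xy)=\Psi(x)\Psi(y)$ and $\Psi$ is an algebra isomorphism; composing with the identification of the first paragraph yields $\trunc\cong\mathbb{B}_{n-2M}(\bnu)$. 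I expect the main obstacle to be precisely these two verifications: the rigidity induction, and the claim that no left-edge relation intervenes in a corner product --- the latter resting on the fact that dots are created only at crossings and hence never reach the protected leftmost strand(s).
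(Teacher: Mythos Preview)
The paper states this lemma \emph{without proof} (``Let us mention the following Lemma without proof''), so there is no argument in the paper to compare yours against. Your overall architecture --- rigidity of the first $2M$ entries, the induced bijection $\s\mapsto\s^{\flat}$ of cellular basis labels, and a dimension count --- is sound and is exactly the right shape of argument.

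There is, however, a genuine gap in your multiplicativity step. Your claim that in products inside $e(\bi^{\bnu})\mathbb{B}_{n-2M}e(\bi^{\bnu})$ ``the single leftmost strand stays straight and dotless'' is false. By your own rigidity argument pushed one step further, entry $2M{+}1$ of any $\s\in\std(\bi^{\blambda})$ lies in box $(M{+}1,2)$, so entry $1$ of $\s^{\flat}$ is always in column $2$; but whenever $\tau_1>M$ the shape $\btau^{\flat}$ has a nonempty first column, $\T^{\btau^{\flat}}$ puts $1$ in column $1$, and hence $d(\s^{\flat})$ does \emph{not} fix $1$. The leftmost strand of $m^{\btau^{\flat}}_{\s^{\flat}\T^{\flat}}$ therefore crosses others, and in straightening a product the relation \eqref{dot-al-inicio} can genuinely fire at position $1$ of $\mathbb{B}_{n-2M}$, while no relation at position $2M{+}1$ of $\mathbb{B}_n$ has been invoked on the other side.

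What is actually needed is that the three left-edge relations of $\mathbb{B}_{n-2M}$ lift, through the ``add $2M$ straight strands'' map, to relations that already hold in $\mathbb{B}_n$. This is true but requires work: for \eqref{mal-inicio} and \eqref{otro-mal-inicio} one checks, using rigidity and Lemma~\ref{previousLemma}, that any nonzero idempotent $e(\bj)$ in $\mathbb{B}_n$ with $\bj\!\mid_{[1,2M]}=\bi^{\blambda}\!\mid_{[1,2M]}$ automatically has $\bj_{2M+1}\in\{-M,\,m{-}M\}$ and $\bj_{2M+2}\neq \bj_{2M+1}+1$; for \eqref{dot-al-inicio} one must show $y_{2M+1}e(\bj)=0$ for all such $\bj$, which follows from Lemma~\ref{lemma muere y a la izquerda} applied with $k=2M{+}1$ (taking $\bnu=\blambda$ or its swap $\bmu$ according to whether $\bj_{2M+1}=m{-}M$ or $-M$). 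With these three verifications in hand, the map in the direction $\mathbb{B}_{n-2M}(\bnu)\to\trunc$ is a well-defined algebra homomorphism, surjective onto the span of your cellular basis, and hence an isomorphism by your dimension count.
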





We shall from now on fix  
$ \blambda $ of the form
\begin{equation}\label{fixlambda} 
\blambda = (1^{n}, 1^0).
\end{equation}

{\color{black}{
 \begin{remark} \rm
When defining $ \trunc $ we could have taken more general $ \blambda$, but in view of the Lemma it is enough to
consider $ \blambda $ either of the form $(1^{n}, 1^0) $ or $\bmu:=(1^{0}, 1^n) $. Moreover,
using the notation and isomorphism of Remark \ref{comparisonofdefi} we have that
\begin{equation}\label{analogues}
 e( \bi^{\bmu } ) \B e( \bi^{\bmu}  ) \cong e( \bi^{\blambda } ) \B^{\prime}(e-m) e( \bi^{\blambda})  .
\end{equation}
On the other hand, the methods and results for $ \trunc $ that we shall develop during the rest of
the paper will have almost identical analogues for
the right hand side of \ref{analogues}, as the reader will notice during the reading, 
with the only difference that one-column bipartitions
and tableaux are replaced by one-row bipartitions
and tableaux. Thus, there is no loss of generality in assuming that $ \blambda $ is of the form
given in \ref{fixlambda}.
 \end{remark} }}

One of the advantages of the choice of $ \blambda $ in \ref{fixlambda} is that the residue sequence
$ \bi^\blambda $ is particularly simple since it decreases in
steps by one.
Let us state it for future reference
\begin{equation}\label{lambdaresidue} 
\bi^\blambda = (0,-1,-2,-3, \ldots, -n+1) \in \II^n.
\end{equation}

In the main theorems of this section we shall find generators 
for $\trunc $, verifying the same relations as the generators 
${\mathbb{NB}}_n$ or $\widetilde{\mathbb{NB}}_n$. 
The following series of definitions and recollections of known results from the literature
are aimed at introducing these generators.

\medskip

It follows from general principles that $ \trunc $ is a graded cellular algebra with identity element 
$ e( \bi^{\blambda } )$.  Let us describe the corresponding cellular basis.
Set first $ \std( \OnePar) := \bigcup_{ \bmu \in \OnePar}  \std(\bmu)  $ and define for $ \bi \in \II^{n} $: 
\begin{equation} 
 \std(\bi):=\{\bT\in\std(\OnePar) \mid \bi^{\T} = \bi \}. 
\end{equation}
Furthermore, for $  \bmu \in \OnePar $ define 
\begin{equation}\label{commutationRule}
  \std_{\blambda}(\bmu  ) := \std(\bi^{\blambda}) \cap \std (\bmu). 
\end{equation}
Then we have the following Lemma.

\begin{lemma}\label{isacellularbasis}

\begin{description}
	\item[a)]  For $ \s, \T \in \std(\bmu) $ we have that 
  \begin{equation}
    e(  \bi^\bmu) \psi_{ d(\T)} = \psi_{ d(\T)}e( \bi^\T)
     \, \, \, \, \,  \mbox{      and     }  \, \, \, \, \, \,
 \psi_{ d(\s)}^{\ast} e(\bi^\bmu) = e( \bi^\s) \psi_{ d(\s)}^{\ast}.
  \end{equation}  
\item[b)] The set $ \Basis(\blambda):=\{ m_{\s \T}^{\bmu} \mid \s, \T \in
 \color{black}{ \std(\bi^\blambda  ), \bmu = \shape(\s) = \shape(\T) }  \}$
  is a graded cellular basis for $ \trunc$.
\end{description}
\end{lemma}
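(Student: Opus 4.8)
The statement is an instance of the general principle that a (graded) cellular structure passes to an idempotent truncation $eAe$ when $e$ is a sum of the KLR idempotents $e(\bi)$; the whole argument rests on one computation, namely that conjugating the cellular basis $\Basis$ of Theorem \ref{Cellular} by $e(\bi^\blambda)$ sends each basis element $m_{\s\T}^\bmu$ either to itself or to zero. Part a) supplies exactly the input needed for that computation, so the plan is to prove a) first and then deduce b) formally.

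For a), I would use the basic KLR intertwining relation $\psi_i\,e(\bj)=e(s_i\cdot\bj)\,\psi_i$, which holds for every $i$ and $\bj\in\II^n$ (immediate from the multiplication rule in the diagrammatic model, and a defining relation in the algebraic presentation). Fixing a reduced expression $d(\T)=s_{i_1}\cdots s_{i_k}$ and iterating gives $\psi_{d(\T)}\,e(\bj)=e(d(\T)\cdot\bj)\,\psi_{d(\T)}$ for all $\bj$; this does not depend on the chosen reduced expression because $d(\T)$ is fully commutative, so $\psi_{d(\T)}$ is well defined. Since $\T=\T^\bmu d(\T)$ unwinds to $d(\T)\cdot\bi^{\T}=\bi^\bmu$, the identity specializes to $e(\bi^\bmu)\psi_{d(\T)}=\psi_{d(\T)}e(\bi^{\T})$, which is the first assertion of a). Applying the anti-automorphism $\ast$ to this identity with $\T$ replaced by $\s$, and using $e(\bj)^{\ast}=e(\bj)$, gives the second assertion.

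For b), the first step is to combine a) with the orthogonality $e(\bj)e(\bj')=\delta_{\bj\bj'}e(\bj)$ to get $m_{\s\T}^\bmu=e(\bi^{\s})\,m_{\s\T}^\bmu\,e(\bi^{\T})$ and hence
\[
e(\bi^\blambda)\,m_{\s\T}^\bmu\,e(\bi^\blambda)=\delta_{\bi^{\s},\bi^\blambda}\,\delta_{\bi^{\T},\bi^\blambda}\,m_{\s\T}^\bmu .
\]
Since $\Basis$ is an $\F$-basis of $\B$ and $\trunc=e(\bi^\blambda)\B e(\bi^\blambda)$ is spanned by $\{e(\bi^\blambda)\,m\,e(\bi^\blambda):m\in\Basis\}$, this shows that $\Basis(\blambda)$ is an $\F$-basis of $\trunc$. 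For the remaining cellular axioms I would take as cell datum the sub-poset $\{\bmu\in\OnePar:\std_\blambda(\bmu)\neq\emptyset\}$ ordered by $\lhd$, with index sets $\std_\blambda(\bmu)=\std(\bi^\blambda)\cap\std(\bmu)$, the anti-involution $\ast$ (which preserves $\trunc$ since $e(\bi^\blambda)^{\ast}=e(\bi^\blambda)$), and the restriction of the degree function $\deg$. The relation $(m_{\s\T}^\bmu)^{\ast}=m_{\T\s}^\bmu$ is inherited verbatim. For the straightening axiom, one notes that the span $\trunc^{\lhd\bmu}$ of the elements of $\Basis(\blambda)$ of shape strictly $\lhd\bmu$ equals $e(\bi^\blambda)\,\B^{\lhd\bmu}\,e(\bi^\blambda)$, where $\B^{\lhd\bmu}$ is the corresponding cell ideal of $\B$; then for $a\in\trunc$ one left-multiplies the cellular straightening relation $a\,m_{\s\T}^\bmu\equiv\sum_{\s'\in\std(\bmu)}r_{\s'}(a)\,m_{\s'\T}^\bmu\pmod{\B^{\lhd\bmu}}$ by $e(\bi^\blambda)$ and uses the displayed identity to delete the terms with $\s'\notin\std_\blambda(\bmu)$. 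All degrees are inherited from $\B$, so $\Basis(\blambda)$ is a graded cellular basis in the sense of \cite{hu-mathas}.

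\textbf{Main obstacle.} There is no serious difficulty here; the only point that needs care is the bookkeeping in b) — checking that the cell ideals of $\trunc$ are spanned by the truncated basis elements of the right shapes, and that the $\B$-structure constants $r_{\s'}(a)$ survive truncation — and both reduce entirely to the single identity $e(\bi^\blambda)\,m_{\s\T}^\bmu\,e(\bi^\blambda)\in\{0,\,m_{\s\T}^\bmu\}$ coming from part a).
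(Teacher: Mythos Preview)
Your proposal is correct and follows essentially the same approach as the paper. For a) the paper uses the identical intertwining argument $\psi_k e(\bi)=e(s_k\bi)\psi_k$ iterated along a reduced expression, and for b) the paper performs exactly your key computation $e(\bi^\blambda)\,m_{\s\T}^{\bmu}\,e(\bi^\blambda)=\delta_{\bi^\s,\bi^\blambda}\delta_{\bi^\T,\bi^\blambda}\,m_{\s\T}^{\bmu}$ and then simply writes ``and so b) follows''; your version merely spells out the inherited cellular axioms that the paper leaves implicit.
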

\begin{proof}  
  From the multiplication rule in $ \B$ we have that $ \psi_k e( \bi ) =  e(s_k  \bi ) \psi_k $ for any
  $k=1,\ldots, n-1   $ and $ \bi \in \II^n$. Hence if $ d( \T) = s_{i_1} s_{i_2} \cdots s_{i_N} $ is a reduced expression 
  we get that 
  \begin{equation}  
   e(\bi^{\bmu})  \psi_{ d(\T)} =   \psi_{ d(\T) }e ( s_{i_N} \cdots s_{i_2} s_{i_1} \bi^{\bmu} )
    = \psi_{ d(\T) } e ( \bi^{\T }),
  \end{equation}
  proving the first formula of $a)$. The second formula of $a)$ is proved the same way. On the other hand, by using $ a) $ and \eqref{orto} we obtain
\begin{equation}
  e( \bi^\blambda)  m_{\s \T}^{\bmu} e( \bi^\blambda) =
  e( \bi^\blambda)  \psi_{ d(\s)}^{\ast}  e( \bi^\bmu) \psi_{ d(\T)}  e( \bi^\blambda)  =
  e( \bi^\blambda) e( \bi^{ \s})  \psi_{ d(\s)}^{\ast}   \psi_{ d(\T)}  e( \bi^{ \T})   e( \bi^\blambda)
  = \delta_{ \bi^\s, \bi^{\blambda}} \delta_{ \bi^\T, \bi^{\blambda}} m_{\s \T}^{\bmu}
\end{equation}  
and so $ b) $ follows. 
%
%
\end{proof}

We now introduce  
an $ \tilde{A}_{1}$ alcove geometry on $ {\mathbb R}^2$.
For each $ j  \in \mathbb Z $ we introduce a 
\emph{wall} $ M_j $ in $ {\mathbb  R}^2 $ via
\begin{equation}  
M_j := \{ ((j-1)e+m, a) \mid a \in {\mathbb  R} \} \subset {\mathbb  R}^2.
\end{equation}   

The connected components of $\mathbb{R}^2 \setminus \bigcup_j M_j  $ are called \emph{alcoves} 
and the alcove containing $(0,0)$ is denoted by $\mathcal{A}^0$ and is called the \emph{fundamental alcove}.
Recall that we have fixed $W$ as the infinite
dihedral group with generators $\ese $ and $\te $.
We view $ W$ as the reflection group associated with this alcove geometry, where
$\ese$ and $\te $ are the reflections through the walls $ M_0 $ and $ M_1$, respectively.
This defines a right action of $W$ on $\mathbb{R}^2 $ and on the set of alcoves.
For $w\in W$, we write $\mathcal{A}^w :=  \mathcal{A}^0 \cdot w$.




\medskip
Let $P: [0,n] \rightarrow {\mathbb R}^2 $ be a path on the Pascal
triangle and suppose that $ P(k) \in M_j  $ for some integers $ k $ and $j$.
Let $ r_j $ be the reflection through the wall $ M_j $. We then define a new path $ P^{(k,j)} $ by
applying $ r_j $ to the part of $ P $ that comes after $ P(k) $, that is 
\begin{equation}
 P^{(k,j)}(t) := \left\{  \begin{array}{ll} P(t), & \mbox{ if } 0 \le t \le k; \\
     P(t) r_j, & \mbox{ if } k \le t \le n . \end{array} \right.
\end{equation}
For two paths on the Pascal triangle we write $ P \stackrel{(k,j)}{\sim} Q$ if $  Q = P^{(k,j)} $ 
and denote by $ \sim$ the equivalence relation on the paths on the
Pascal triangle induced by the $ \stackrel{(k,j)}{\sim} $'s. 
Then we have the following Lemma which is a straightforward consequence of the 
definitions. 

\begin{lemma} \label{lemma equivalence classes}
  Suppose that $ \s, \T \in \std( \OnePar) $. Then 
  $\bi^\s = \bi^\T$ if and only if $P_{\s} \sim P_{\T}$. 
\end{lemma}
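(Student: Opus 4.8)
The claim is: for $\s,\T \in \std(\OnePar)$, one has $\bi^\s = \bi^\T$ if and only if $P_\s \sim P_\T$. The relation $\sim$ is generated by the moves $P \stackrel{(k,j)}{\sim} P^{(k,j)}$, which reflect the tail of a path through a wall it touches. The plan is to prove the two implications separately, and the key observation tying them together is that the residue of a box at position $(r,j)$ in a column bipartition corresponds geometrically to which wall $M_\ell$ (modulo the $W$-action / modulo translation by $e$) the path passes through when it makes its $k$-th step. So I would first establish a clean dictionary: if $P_\T(k) = (p,k)$, then the residue $i_{k}^\T = \res(\T^{-1}(k))$ is determined by $p \bmod e$ via the formula coming from $\res(A) = \kappa_j - (r-1)$ together with $\kappa = (0,m)$; concretely, stepping right from $(p-1,k-1)$ to $(p,k)$ (box in column 2, so $r = (k+p)/2$, residue $m - (r-1)$) versus stepping left (box in column 1, residue $-(r-1)$), and in both cases the residue only depends on $p$ and the direction, hence ultimately one checks it depends only on the unordered geometric data of the edge, i.e. on the coset of the midpoint under the wall arrangement. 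I expect this bookkeeping step to be the main obstacle — not conceptually deep, but one must be careful to get the arithmetic of $\res$ versus the wall positions $M_j = \{((j-1)e+m, a)\}$ exactly right, and to handle the left/right step asymmetry correctly.

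**The direction $P_\s \sim P_\T \implies \bi^\s = \bi^\T$.** It suffices to treat a single elementary move $P_\T = P_\s^{(k,j)}$, since $\sim$ is generated by these. Here $P_\s$ and $P_\T$ agree on $[0,k]$, and on $[k,n]$ the path $P_\T$ is the image of $P_\s$ under the reflection $r_j$ through $M_j$. The residues $i_1,\ldots,i_k$ therefore coincide trivially. For the steps after $k$: the $t$-th step of $P_\s$ (for $t > k$) goes between two lattice points, and the corresponding step of $P_\T$ is the $r_j$-image of that step. Using the dictionary above, the residue attached to a step is invariant under reflection through any wall $M_j$ — this is essentially the statement that the $W$-action on $\mathbb R^2$ induces the identity on residues, which one reads off from \eqref{lambdaresidue}-type computations combined with the fact that $r_j$ reflects the horizontal coordinate $p$ to $2((j-1)e+m) - p$, and $\res$ depends on $p$ only modulo $e$ in a reflection-symmetric way. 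Hence $i_t^\s = i_t^\T$ for all $t$, so $\bi^\s = \bi^\T$.

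**The direction $\bi^\s = \bi^\T \implies P_\s \sim P_\T$.** I would argue by induction on the number of steps at which $P_\s$ and $P_\T$ disagree, or equivalently on the area between them. If they agree everywhere, we are done. Otherwise, let $k$ be the first index where they differ; then $P_\s(k-1) = P_\T(k-1) =: (p, k-1)$ but one of them steps left and the other steps right at step $k$. Since $i_k^\s = i_k^\T$ but the two boxes placed at $k$ are in different columns (one in column 1, one in column 2), the dictionary forces the common point $(p,k-1)$ to lie on some wall $M_j$: indeed a left-step and a right-step from the same point can only carry the same residue when that point is on a wall (this is exactly where the wall arrangement, rather than an arbitrary lattice, enters). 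Now apply the move $P_\s \stackrel{(k-1,j)}{\sim} P_\s^{(k-1,j)}$: the reflected path $P_\s^{(k-1,j)}$ now agrees with $P_\T$ on $[0,k]$ (the step at $k$ gets flipped to match $\T$), it is equivalent to $P_\s$, and by the first implication it has the same residue sequence, so $\bi^{P_\s^{(k-1,j)}} = \bi^\T$ still. The number of disagreement steps (or the enclosed area) has strictly decreased, so by induction $P_\s^{(k-1,j)} \sim P_\T$, whence $P_\s \sim P_\T$.

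**Remarks on the write-up.** The whole proof hinges on the single arithmetic lemma — "two steps out of a common lattice point carry equal residues iff either they are in the same direction, or the common point lies on a wall, and reflection in a wall preserves residues" — so in the actual paper I would isolate that as a short sublemma with an explicit computation using $\res(r,j) = \kappa_j - (r-1)$, $\kappa = (0,m)$, $M_j = \{((j-1)e+m,a)\}$, and $1 < m < e-1$. Everything else is a clean induction on area, mirroring Algorithm \ref{algorithm paths}. The authors explicitly flag this as "a straightforward consequence of the definitions," consistent with the above being essentially just unwinding the residue formula against the wall positions.
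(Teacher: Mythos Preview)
Your proposal is correct and is precisely the argument the paper has in mind: the paper offers no proof at all for this lemma, merely stating that it ``is a straightforward consequence of the definitions.'' Your unpacking --- the residue/wall dictionary (two steps out of a common point $(p,k-1)$ carry equal residues iff $p\equiv m\bmod e$, i.e.\ iff the point lies on a wall; reflection in a wall preserves step-residues) followed by induction on the first index of disagreement --- is exactly the verification the authors are suppressing, and the arithmetic checks out against the conventions $\kappa=(0,m)$, $\res(r,j)=\kappa_j-(r-1)$, and $M_j=\{((j-1)e+m,a)\}$.
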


We can now provide an alcove geometrical description of $\std (\bi^{\blambda})$.
It is  a direct consequence of Lemma \ref{lemma equivalence classes}.


\begin{lemma}  \label{lemma description singular}
Let $[P_\blambda]$ be the equivalence class of $ P_{\blambda } $ under the equivalence relation $ \sim$. Then, 
$ \std(\bi^\blambda) = [ P_{\blambda}] $.
\end{lemma}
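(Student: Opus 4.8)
The plan is to read the statement off directly from Lemma \ref{lemma equivalence classes} together with the bijection $\T\mapsto P_\T$ recalled just before Algorithm \ref{algorithm paths}. First I would unwind the definitions. By definition $\std(\bi^\blambda)=\{\T\in\std(\OnePar)\mid \bi^\T=\bi^\blambda\}$, and since $\bi^\blambda$ was defined to be the residue sequence $\bi^{\T^\blambda}$ of the maximal tableau $\T^\blambda$, the defining condition $\bi^\T=\bi^\blambda$ is literally the condition $\bi^\T=\bi^{\T^\blambda}$.

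Next I would invoke Lemma \ref{lemma equivalence classes}, which states that $\bi^\s=\bi^\T$ if and only if $P_\s\sim P_\T$. Taking $\s=\T^\blambda$, this yields, for every $\T\in\std(\OnePar)$, the chain of equivalences $\T\in\std(\bi^\blambda)\iff \bi^\T=\bi^{\T^\blambda}\iff P_\T\sim P_{\T^\blambda}\iff P_\T\in[P_\blambda]$. Thus, under the identification of a standard tableau with its Pascal-triangle path, $\std(\bi^\blambda)$ corresponds precisely to $[P_\blambda]$, which is exactly the assertion. I do not expect a genuine obstacle here; the proof is essentially a bookkeeping exercise once Lemma \ref{lemma equivalence classes} is in hand.

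The only point that requires a word of care is to check that this identification makes sense on all of $[P_\blambda]$, i.e. that every path equivalent to $P_\blambda$ is of the form $P_\T$ for some $\T\in\std(\OnePar)$. This holds because each elementary move $P\stackrel{(k,j)}{\sim}Q$ reflects a terminal piece of $P$ through a \emph{vertical} wall $M_j$ and hence fixes the height coordinate; so $\sim$ relates only paths of the same length, and $P_\blambda$ has length $n$. The set of all Pascal-triangle paths of length $n$ is precisely $\{P_\T\mid \T\in\std(\OnePar)\}$, by the bijection recalled just before Algorithm \ref{algorithm paths} (taken as the union over $\bmu\in\OnePar$ of the bijections between $\std(\bmu)$ and the paths with final vertex $(\mu_2-\mu_1,n)$). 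Hence $[P_\blambda]$ is contained in this set, and spelling out the two inclusions using Lemma \ref{lemma equivalence classes} as above completes the proof. For orientation one may also note that here, with $\blambda=(1^n,1^0)$, the path $P_\blambda$ is the left edge $k\mapsto(-k,k)$ of the Pascal triangle, since $1,\dots,n$ all lie in the first column of $\T^\blambda$.
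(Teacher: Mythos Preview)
Your proposal is correct and follows essentially the same approach as the paper, which simply states that the lemma is a direct consequence of Lemma~\ref{lemma equivalence classes}. You have merely spelled out this direct consequence in detail, including the (minor but valid) observation that elementary moves preserve the length of paths so that $[P_\blambda]$ lies inside $\{P_\T\mid \T\in\std(\OnePar)\}$.
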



\begin{figure}[h]
\centering
  \raisebox{-.6\height}{\includegraphics[scale=0.9]{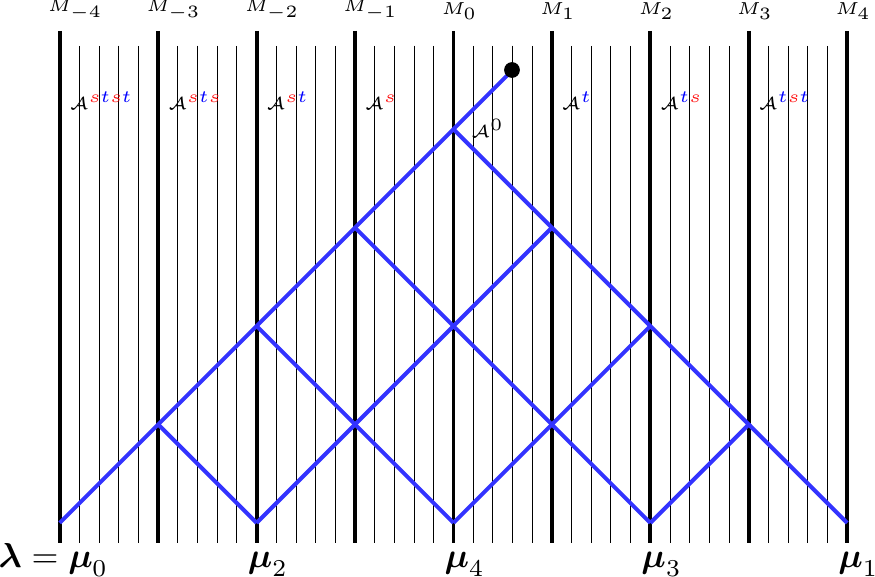}}
\caption{The Pascal triangle for $ m=2, e = 5 $ and $ n = 23$.}
\label{fig:pascalAA}
\end{figure}

In Figure \ref{fig:pascalAA} we indicate for $ m=2, e = 5 $ and $ n = 23$ the
paths corresponding to elements in $ \std(\bi^\blambda)$, according to Lemma \ref{lemma description singular}. The path $ P_{\blambda} $ is
the one to the extreme left.
The endpoints of the paths are enumerated according to the order relation $\lhd$ on
$\OnePar$, 
with $ \bmu_0 = \blambda$,  
$ \bmu_1 $ the rightmost path, and so on.

\vspace{0.2cm}
To illustrate the connection between
paths and tableaux, we present in Figure \ref{fig:standardpaths} the six elements of $ \std_{\blambda}(\bmu_4) $
for Figure \ref{fig:pascalAA} as tableaux. 
We have here colored the entries of each tableau according to
the \emph{path intervals} to which they belong.
The \emph{zero'th path interval} corresponds to the path segment from the origin $ (0,0) $ to the first wall $ M_0$
and its entries have been colored red. The first \emph{full path interval} 
corresponds to the path segment from $M_0$ to the next wall
which may be either $ M_{-1}$ or $ M_{1}$ depending on the tableau and the corresponding elements have
been colored blue, and so on. We shall give the precise definition of full path intervals shortly.

In Figure \ref{fig:standardpaths}
we have also given the \emph{residue tableau} $ {\rm res }\, \bmu_4 $
for $ \bmu_4$. By definition, it is obtained from $ [\bmu_4] $ by decorating each node $ A $
with its residue $ {\rm res}(A)$.
Using it, one checks that for each $ \T\in \std_{\blambda}(\bmu_4) $
the corresponding residue sequence is $ \bi^{\blambda} $, as it should be: 
\begin{equation}
\bi^{\blambda}= \bi^{\T}=(0,4,3,2,1,0,4,3,2,1,0,4,3,2,1,0,4,3,2,1,0,4,3,2,1)
\end{equation}


\begin{figure}[h]
 $\std_{\blambda}(\bmu_4) =  \left(
\raisebox{-.5\height}{\includegraphics[scale=1]{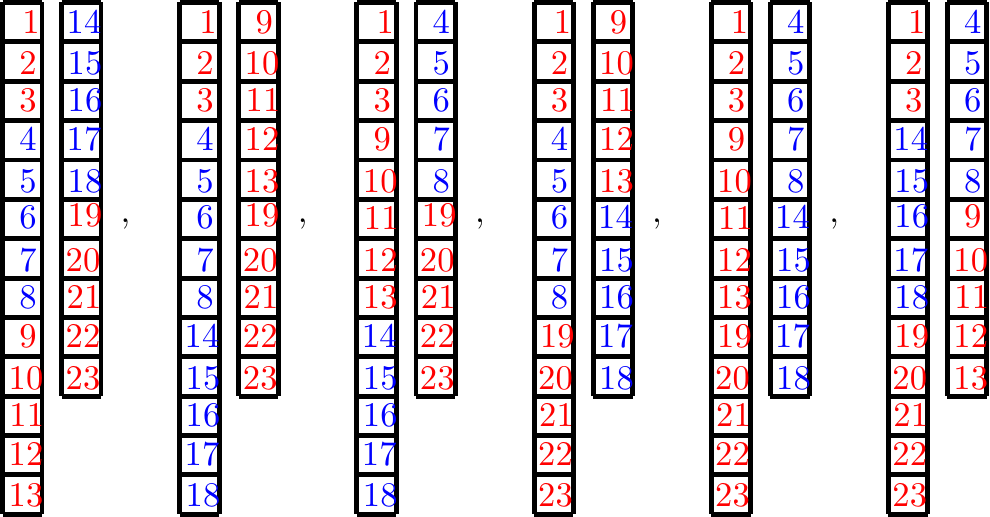}}
  \right) \! \!, \, \,
        [{\rm res} \, \bmu_4] =
        \left(
\raisebox{-.5\height}{\includegraphics[scale=1]{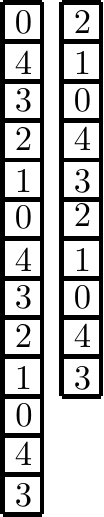}}  \right)$
\caption{The six elements of $ \std_{\blambda}(\bmu_4) $
for Figure \ref{fig:pascalAA} as tableaux.}
\label{fig:standardpaths}
\end{figure}


\begin{figure}[h]
\centering
  \raisebox{-.6\height}{\includegraphics[scale=0.9]{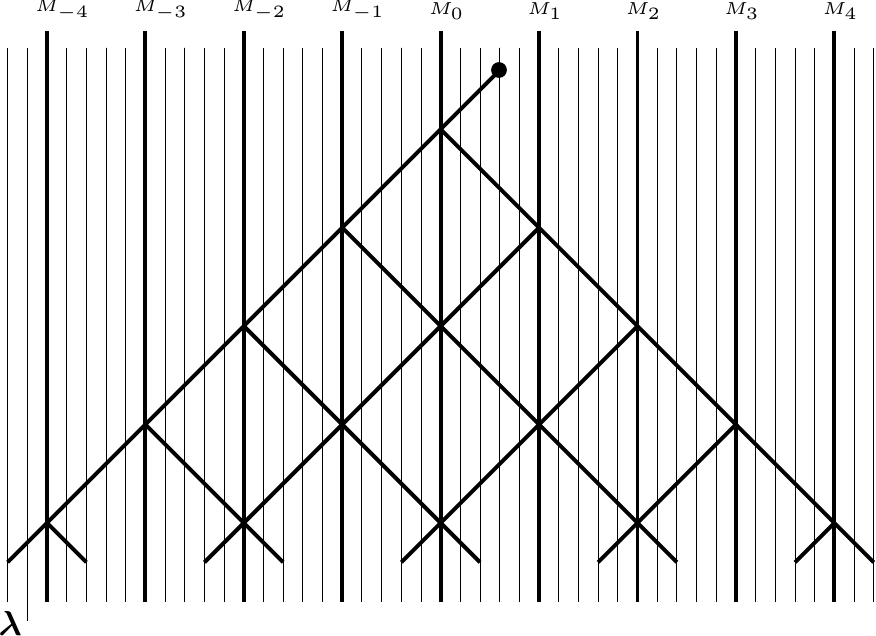}}
  \caption{The regular situation.}
\label{pascalAAA}
\end{figure}


The structure of $  \std(\bi^\blambda) $ depends on whether $ \blambda $ is \emph{singular or regular}:

\begin{definition}\label{definition epsilon}
Let the integers $K_{n,m}=K $ and $0\leq R_{n,m}=R<e$ be defined via integer division 
$ n -(e-m) = Ke+R$. Then we say that $\blambda $ is singular if $R=0$ and otherwise we say $\blambda $ that is regular.
  Graphically, $\blambda$ is singular if it is located on a wall, otherwise it is regular.
\end{definition}

The paths in Figure \ref{fig:pascalAA} represent a singular situation whereas the paths in
Figure \ref{pascalAAA} represent
a regular situation. In both cases, regular or singular, the cardinality $ \mid \! \! \std_{\blambda}(\bmu) \! \! \mid $
is given by binomial coefficients {\color{black}{and so we have the following Lemma}}.
{\color{black}{
\begin{lemma}  \label{lemma description singular}
  \begin{description}
  \item[a)]  Let $[P_\blambda]$ be the equivalence class of $ P_{\blambda } $ under
    the equivalence relation $ \sim$. Then, 
  $ \std(\bi^\blambda) = [ P_{\blambda}] $.
\item[b)]  Suppose that $ \blambda $ is singular. Then $ \sum_{\bmu \in [P_{\blambda}](n)} \mid \! \! \std_{\blambda}(\bmu) \! \! \mid^2 = \binom{2K}{K}$.	
        \item[c)]  Suppose that $ \blambda $ is regular. Then
          $ \sum_{\bmu \in [P_{\blambda}](n)} \mid \! \! \std_{\blambda}(\bmu) \! \! \mid^2 = 2\binom{2K}{K}$.	
 \end{description}          
\end{lemma}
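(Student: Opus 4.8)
The plan is this. Part a) is a verbatim restatement of the earlier Lemma and is immediate from Lemma \ref{lemma equivalence classes}, so I would concentrate on b) and c). Since by Lemma \ref{isacellularbasis} the set $\Basis(\blambda) = \{ m_{\s\T}^{\bmu} \}$, with $\s, \T$ ranging over $\std_{\blambda}(\bmu)$ and $\bmu$ over $[P_{\blambda}](n)$, is a basis of $\trunc$, the number $\sum_{\bmu}|\std_{\blambda}(\bmu)|^{2}$ equals the number of ordered pairs of paths in the equivalence class $[P_{\blambda}] = \std(\bi^{\blambda})$ that share the same endpoint. Thus both b) and c) become purely combinatorial statements about $[P_{\blambda}]$, and the whole proof is a study of that equivalence class.

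The first and main step is to describe an arbitrary path in $[P_{\blambda}]$. Recall that $P_{\blambda}$ is the straight path $p_{P_{\blambda}}(k) = -k$; writing $n-(e-m) = Ke+R$ with $0 \le R < e$, it meets a wall precisely at the times $e-m, 2e-m, \ldots, (K+1)e-m = n-R$, hitting there the walls $M_0, M_{-1}, \ldots, M_{-K}$ in turn. A reflection $P \stackrel{(k,j)}{\sim} Q$ is allowed only when $k$ is a wall-touch time of $P$, and it replaces the part of $P$ after time $k$ by its mirror image through the vertical line $M_j$ — an operation that interchanges left-steps and right-steps on that part. An induction on the number of reflections then shows that every $P \in [P_{\blambda}]$ has the form: an initial block $L^{e-m}$, common to the whole class, followed by $K$ \emph{full} blocks of length $e$, each of which is either $L^{e}$ or $R^{e}$, followed, when $R>0$, by a \emph{final} block of length $R$ which is again either $L^{R}$ or $R^{R}$ (note $(e-m) + Ke + R = n$). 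Conversely I would check that each such step sequence does belong to $[P_{\blambda}]$ — it is obtained from $P_{\blambda}$ by flipping suitable suffixes at wall-touch times, and the Pascal-triangle constraint $|p_{P}(k)| \le k$ is automatically met. Consequently a path of $[P_{\blambda}]$ is encoded by a walk $(j_{0}, j_{1}, \ldots, j_{K})$ with $j_{0}=0$ and $j_{s}-j_{s-1} = \pm 1$ (the $s$-th full block being $R^{e}$ if $j_{s} = j_{s-1}+1$ and $L^{e}$ if $j_{s} = j_{s-1}-1$), together with, when $R>0$, a sign $\delta \in \{\pm 1\}$ recording the direction of the final block; the endpoint of this path is the lattice point whose first coordinate is $(j_{K}-1)e + m + \delta R$, with the convention $\delta R = 0$ when $R=0$.

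Granting this normal form, the counting is short. In the singular case $R=0$, two paths of $[P_{\blambda}]$ have the same endpoint if and only if they have the same terminal wall index $j_{K}$; the number of walks of length $K$ from $0$ with $i$ up-steps is $\binom{K}{i}$ and these end at $j_{K} = 2i-K$, so $\sum_{\bmu}|\std_{\blambda}(\bmu)|^{2} = \sum_{i=0}^{K}\binom{K}{i}^{2} = \binom{2K}{K}$ by Vandermonde, which is b). In the regular case $R>0$ a path is encoded by a pair $(w,\delta)$, and two paths $(w,\delta)$, $(w',\delta')$ have equal endpoints iff $(j_{K}(w)-j_{K}(w'))e = (\delta'-\delta)R$. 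When $\delta = \delta'$ this forces $j_{K}(w)=j_{K}(w')$ and contributes $2\sum_{i=0}^{K}\binom{K}{i}^{2} = 2\binom{2K}{K}$ pairs; when $\delta \ne \delta'$ it forces $e \mid 2R$, hence $R = e/2$ and $j_{K}(w)-j_{K}(w') = \pm 1$, which is impossible since $j_{K}(w) \equiv j_{K}(w') \equiv K \pmod 2$, so this case contributes nothing. Hence $\sum_{\bmu}|\std_{\blambda}(\bmu)|^{2} = 2\binom{2K}{K}$, proving c).

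The hard part will be the structural description of $[P_{\blambda}]$ in the second paragraph: one must argue carefully that wall touches occur exactly at the stated times for \emph{every} path in the class, and that no path can develop a non-monotone block between two consecutive wall touches. Once that normal form is established, the remainder is just the Vandermonde identity plus the parity observation above.
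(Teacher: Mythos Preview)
Your proposal is correct. Part a) matches the paper exactly: it is stated as a direct consequence of Lemma~\ref{lemma equivalence classes}. For parts b) and c) the paper does not actually provide a proof; it only remarks, immediately before the Lemma, that ``the cardinality $\mid\!\!\std_{\blambda}(\bmu)\!\!\mid$ is given by binomial coefficients and so we have the following Lemma.'' Your argument supplies precisely the details the paper omits, via the natural route the paper's remark implicitly points to: the block decomposition of paths in $[P_{\blambda}]$ into $K$ full $e$-blocks (plus a final $R$-block in the regular case), the identification of $\mid\!\!\std_{\blambda}(\bmu)\!\!\mid$ with a binomial coefficient, and the Vandermonde identity $\sum_i \binom{K}{i}^2 = \binom{2K}{K}$. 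The parity obstruction you use to rule out cross-terms in the regular case is a clean way to handle the potential $2R = e$ coincidence.
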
}}

\medskip

We now define the integer valued function 
\begin{equation}\label{functionblocks}
f_{n,m}(j) = f(j):= -m +je  \mbox{ for }  j \in \Z_{+}.
\end{equation}  
Then for $ \T \in  \std_{\blambda}(\bmu) $ we have that 
$ k=f(1), f(2), \ldots,f(K) $ are the values of $ k $ such that $ P_{\T}( k) $ belongs to
a wall $ M_j $ and we then define for $ i =1,2, \ldots, K$
the $i$'th \emph{full path interval for $ \blambda $} as the set 
\begin{equation}\label{DEFfullBlock}
 B_i :=[ f(i)+1, f(i)+2, \ldots, f(i)+e].
\end{equation}
For example, in the situations of Figure \ref{fig:pascalAA} and \ref{pascalAAA} we have the following full
path interval
\begin{equation}\label{fullblocks}
B_1=[4,5,6,7,8], \, B_2=[9,10,11,12,13], \, B_3=[14,15,16,17,18], \, B_4=[19,20,21,22,23].
\end{equation}

For $ 1 \le i < K $ we next define $ U_i \in \Si_n $ as the order preserving permutation that
interchanges the path intervals $ B_i $ and $ B_{i+1} $ that is
\begin{equation}\label{Si}
U_i := (f(i)+1, f(i+1)+1    ) \, (f(i)+2, f(i+1)+2 ) \, \cdots \, ( f(i)+e, f(i+1)+e).
\end{equation}
For example, in the situation \ref{fullblocks} we have 
\begin{equation}
U_1 = (4,9)(5,10) (6,11) (7,12) (8,13)
\end{equation}
written as a product of non-simple transpositions.
We need a reduced expression for \ref{Si} and therefore
for $ i \le j $ of the same parity we introduce the following element of $ \Si_n$ 
\begin{equation}\label{sameparity}
  s_{[i,j]} := s_i s_{i+2} \cdots s_{j-2}s_j.
\end{equation}
Then we have
\begin{equation}
U_i = s_{[a,a]}  s_{[a-1,a+1]}   \cdots
   s_{[a-e+1,a+e-1]} \cdots s_{[a-1,a+1]} s_{[a,a]} 
\end{equation}  
where $ a = f(i+1)$ which upon expanding out the $ s_{[i,j]}$'s becomes a reduced expression for $ U_i $.
We can now recall the following important definition
from \cite{LiPl}. 
\begin{definition}\rm 
  For $ 1 \le i < K $ we define the \emph{diamond} of $\blambda$ at position $f(i)$
  by
\begin{equation}  
  U^{\blambda}_i:= \psi_{U_i}e(\bi^{\blambda})  = \psi_{[a,a]}  \psi_{[a-1,a+1]}   \cdots
   \psi_{[a-e+1,a+e-1]} \cdots \psi_{[a-1,a+1]} \psi_{[a,a]} e(\bi^{\blambda})
\end{equation}
where $ a = f(i+1)$ and $ \psi_{[i,j]} := \psi_i \psi_{i+2} \cdots \psi_{j-2} \psi_j$.
\end{definition}

The name `diamond' comes from the diagrammatic realization of $ \trunc$. Here is for example
the $n=13,m= 2, e=5 $ and $  i=1  $ case 

\begin{equation}
  U^{\blambda}_1  = 
\raisebox{-.5\height}{\includegraphics[scale=0.6]{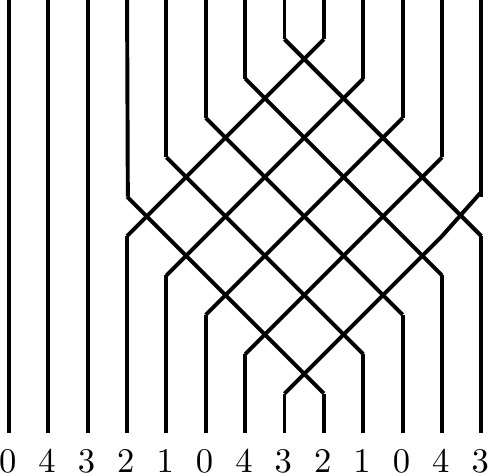}}
\end{equation}


In this section we consider the case where $\blambda $ is singular.
Our aim is to show that $\trunc$ and $\mathbb{NB}_{K} $ are isomorphic $\F$-algebras. 
The first step towards this goal is
to prove that the following subset of $ \trunc$  
\begin{equation}\label{genG}
  {G}(\blambda)
  :=  \{ U_j^\blambda  \, |\, 1\leq j < K  \}	 \cup \{ y_ie(\bi^{\blambda}) \, |\, 1\leq i \leq n \}
\end{equation}
is a generating set for $ \trunc$. 
To be precise, letting $ \truncPrime $ be the subalgebra of $ \trunc$ generated by
${G}(\blambda) $ we shall show that each element
$ m_{\s \T}^{\bmu}$ of the cellular basis $ \Basis(\blambda) $ for $ \trunc $, given in
Lemma \ref{isacellularbasis}, belongs to $ \truncPrime $. The proof of this will take up the next few
pages. 

\medskip
We shall rely on a systematic way of applying
Algorithm \ref{algorithm paths} to get 
reduced expressions for the elements 
 $ d(\T)$, $\T \in \std (\bi^\blambda)$. Let us now explain it.

\medskip
Let $\blambda_{\max} \in \OnePar$ be the maximal element in the $W$-orbit of $\blambda$ with respect to the order $\lhd$. Clearly, $\blambda_{\max}$ is located on one of the two walls of the fundamental alcove.
{\color{black}{Recall that $ P_{\blambda_{\max}} $ is the path associated with the tableau $ \T^{\blambda_{\max}}$; it 
    zigzags along the vertical central axis of the Pascal triangle as long as possible, and finally goes linearly off to
    $ \blambda_{\max}$.}}
The set of paths $ P_{\T} $ for $ \T \in \std(\bi^{\blambda})$ together with $ P_{\blambda_{\max}} $,
{\color{black}{which does \emph{not} belong to $\std(\bi^{\blambda}) $}}, 
determine three kind of bounded regions 
that we denote by $ h_i, u_i $ and $ u_i^{\prime}$: 


\begin{equation}\label{regions}
  \raisebox{-.6\height}{\includegraphics[scale=0.7]{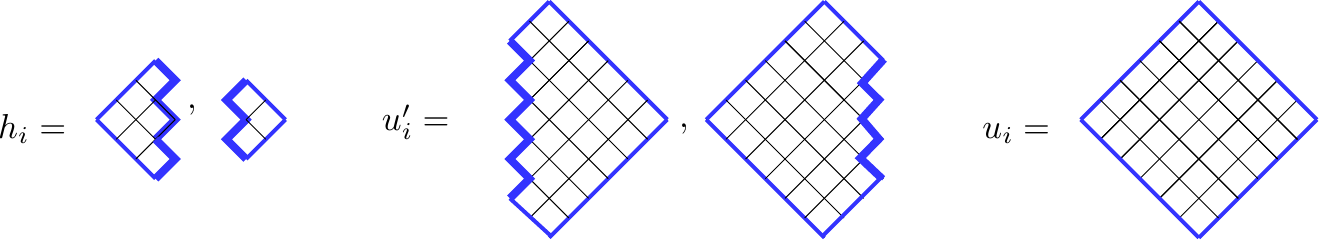}}
\end{equation}

\medskip
\noindent
See also Figure 
\ref{fig:cutdiagrams}.
{\color{black}{In \ref{regions} as well as Figure \ref{fig:cutdiagrams} we have indicated
    $ P_{\blambda_{\max}} $ with bold blue.}} 
 
In general the $ h_i$'s are completely embedded in $ {\mathcal A }^0$,
whereas the `diamond' regions $ u_i$'s have empty intersection with $ {\mathcal A }^0$. The  
`cut diamond' regions $ u_i^{\prime}$'s have 
non-empty intersection with $ {\mathcal A }^0$ but also with one of the alcoves $ {\mathcal A }^{\ese}$ or
$ {\mathcal A }^{\te}$. Note that the union of $ h_i $ and $   u_i^{\prime} $ forms a diamond shape. 
We enumerate the regions from top to bottom as in \ref{fig:cutdiagrams},
with the $ h_i$'s starting with $ i=0 $ and the $  u_i^{\prime} $ and $ u_i$'s with $ i=1$. 
Note that there are repetitions of the $ u_i$'s.


\begin{figure}[h]
\centering
\raisebox{-.6\height}{\includegraphics[scale=0.7]{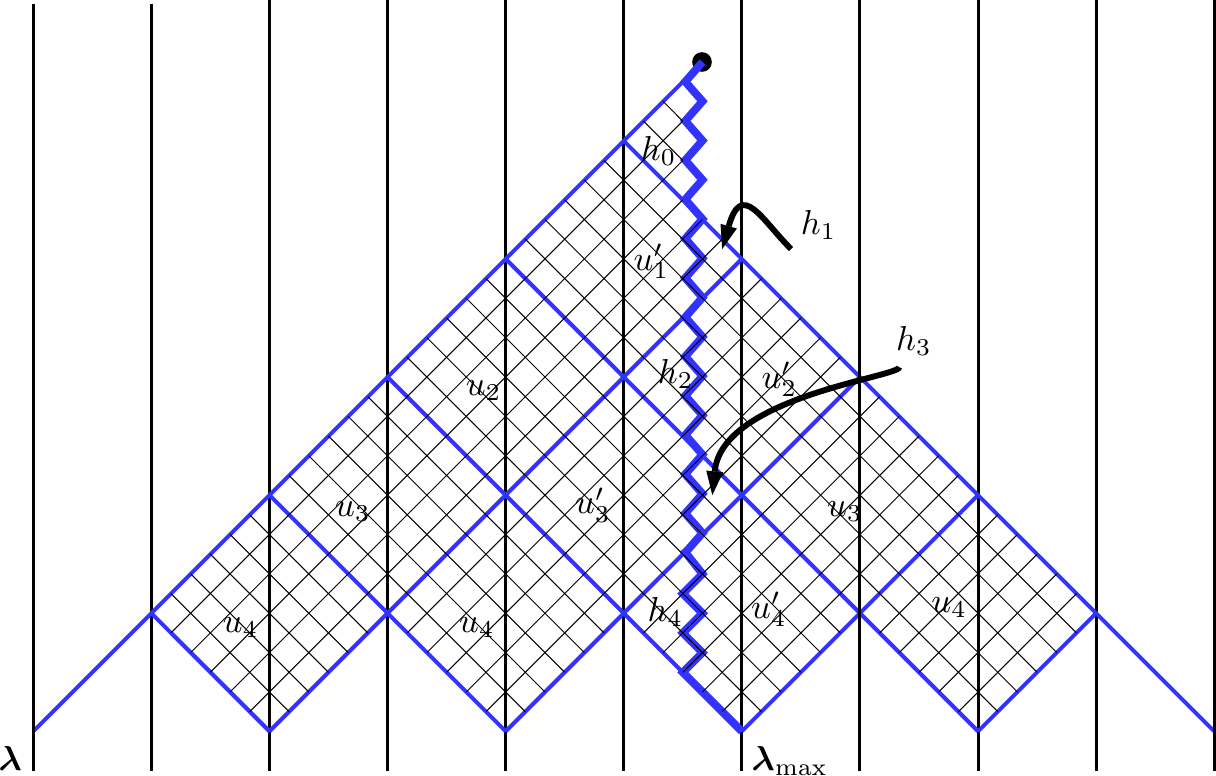} }
  \caption{The different regions $ h_i, u_i $ and $ u_i^{\prime}$.  }
\label{fig:cutdiagrams}
\end{figure}


For each of the three kinds of regions $  h_i, u_i, u_i^{\prime} $  we now introduce 
an element $ H_i, U_i ,  U_i^{\prime} \in  \Si_n$ in the following way. For $ R = h_i, u_i, u_i^{\prime} $  we
let $ \partial(R) $ be the boundary of $ R $ with respect to the usual metric topology.
Then for any $ R = h_i, u_i, u_i^{\prime} $ we have that $ \partial(R) $
is a union of line segments and we define the outer boundary, $ \partial_{out}(R) $,
as the union of the two line segments that are the furthest away from {\color{black}{$ P_{\blambda_{\max}}$.}}
Moreover we define
the \emph{inner boundary} as 
$ \partial_{in}(R) =  \overline{\partial(R) \setminus \partial_{out}(R) }$,
where the overline means closure 
{\color{black}{with respect to the metric topology.}}


Suppose now that $ R = h_i $ (resp. $ R= u_i $ and $ R= u_i^{\prime} $).
We then choose any tableau $ \bb \in \std(\OnePar) $ such that
$ \partial_{in}(R) \subseteq P_{\bb}$. Let $ P_{\bb}^{\prime}$ be the path obtained from 
$ P_{\bb} $ by replacing $  \partial_{in}(R) $ by $ \partial_{out}(R)$. Then we define $ H_i \in S_n $ (resp. 
$ U_i \in \Si_n $ or $ U_i^{\prime}\in \Si_n $) by the equation
\begin{equation}
  P^{\prime}_{\bb} = P_{\bb H_i} \, \, (\mbox{resp. } P^{\prime}_{\bb} = P_{\bb U_i}  \mbox{ and }
  P^{\prime}_{\bb} =
\color{black}{P_{\bb U_i^{\prime}}}).
\end{equation}
In other words, $ H_i $ (resp. $ U_i $ and $ U_i^{\prime} $) is simply the element of $ \Si_n$ that is used to fill
in the region $ h_i$ (resp. $ u_i $ and $ u_i^{\prime} $) in the sense of Algorithm \ref{algorithm paths},
where each $ s_i $ appearing in
$ H_i $ (resp. $ U_i $ and $ U_i^{\prime} $) corresponds to the filling in of one of
the little squares of $ h_i $ (resp. $ u_i $ and $ u_i^{\prime} $).
For example, in the situation of Figure \ref{fig:cutdiagrams} we have that
\begin{equation}
  H_0 = s_2 s_4 s_6 s_3 s_5 s_4, \,\, \, H_1 =s_9 s_{11} s_{10}, \,\, \, U_1^{\prime} =s_{[8,12]}  s_{[7,13]}  s_{[6,14]}
  s_{[5,15]}       s_{[6,14]} s_{[7,13]} s_{[8,12]} s_{[9,11]} s_{[10,10]}
\end{equation}  
where we used the notation from \ref{sameparity} for the formula for $ U_1^{\prime} $.
Note that the $ U_i $'s coincide
with the $ U_i$'s defined in \ref{Si}. It is also possible to give formulas 
for the $ H_i$'s and the $ U_i^{\prime}$'s, in the spirit of \eqref{Si}, but we do not need them.




\medskip
For any $ \T \in \std_{\blambda}(\bmu) $ we now introduce a reduced expression
for $ d(\T) $ by applying Algorithm \ref{algorithm paths} in a way compatible with the regions.
To be precise, starting with {\color{black}{$ P_{\blambda_{\max}}$}} we first choose those regions $ h_i $ that
give rise to a path closer to $ P_\T $ than {\color{black}{$ P_{\blambda_{\max}}$}}, by replacing the inner boundaries
with the outer boundaries. Having adjusted   
{\color{black}{$ P_{\blambda_{\max}}$}}
for those $ h_i$'s we next choose those regions $ u_i^{\prime} $ that the same way
give rise to a path even closer to $ P_\T $ and finally we repeat the process with the
regions $ u_i $. It may be necessary to repeat the last step more than once.
The product of the corresponding symmetric group elements is
now a reduced expression for $ d(\T) $: this is \emph{our favorite reduced expression} for
$ d(\T) $ that we shall henceforth use.

\medskip
In Figure \ref{fig:cutdiagramsA} we consider two examples with $ e=6 $ and $ m=2$.
{\color{black}{These examples shall be applied repeatedly throughout this section.}}

\begin{figure}[h]
\centering
$  P_\s= \, \, \, \, \, \, 
\raisebox{-.5\height}{\includegraphics[scale=0.5]{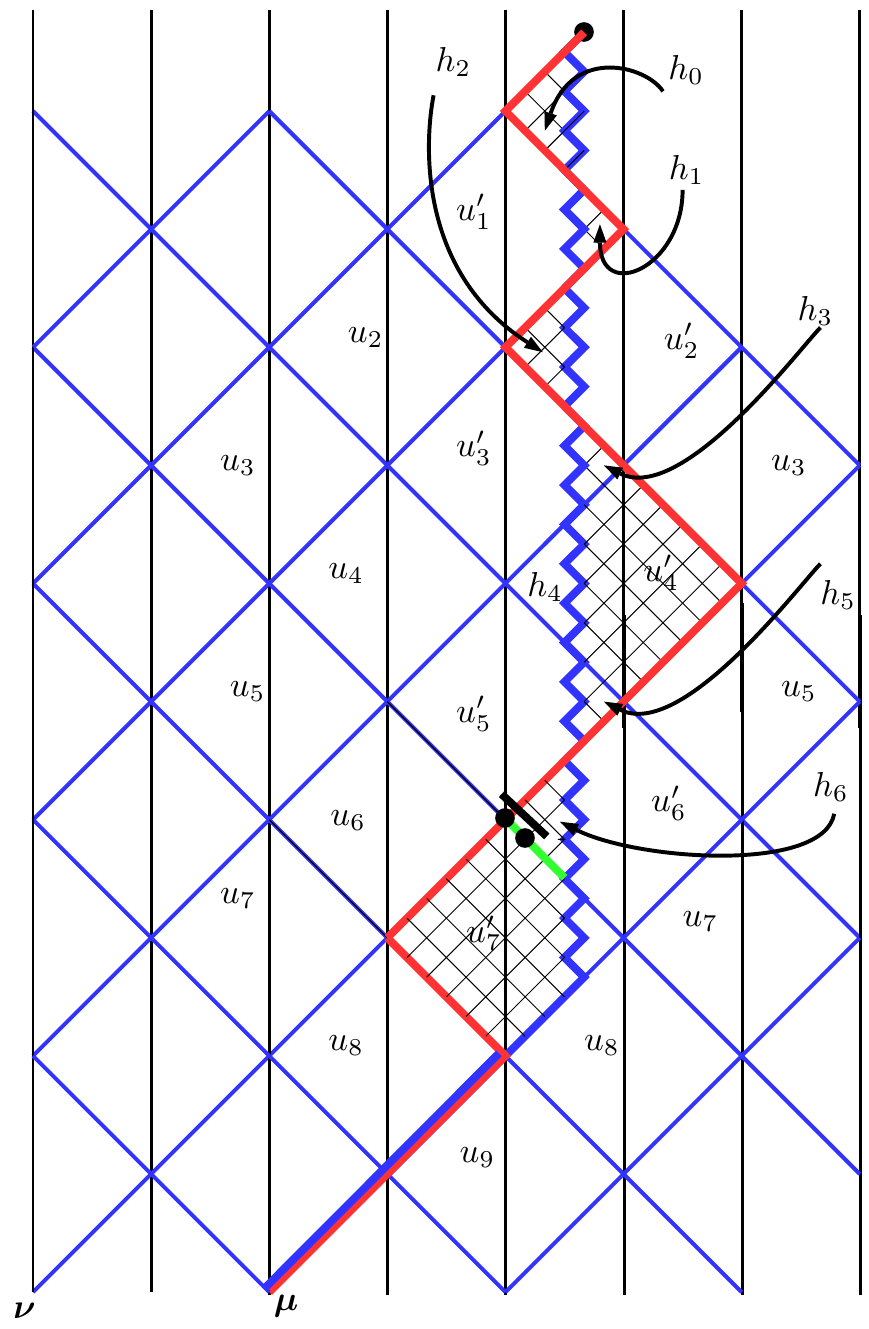}}
  \, \, \, \, \, \, \, \, \, \, \, \, \, \, \, \, \, \, \, \, \, \, \, \, \, \, \, 
  \, \, \, \, \, \, \, \, \,
  P_\T= \, \, \, \, \, \, \, \, \, 
\raisebox{-.5\height}{\includegraphics[scale=0.5]{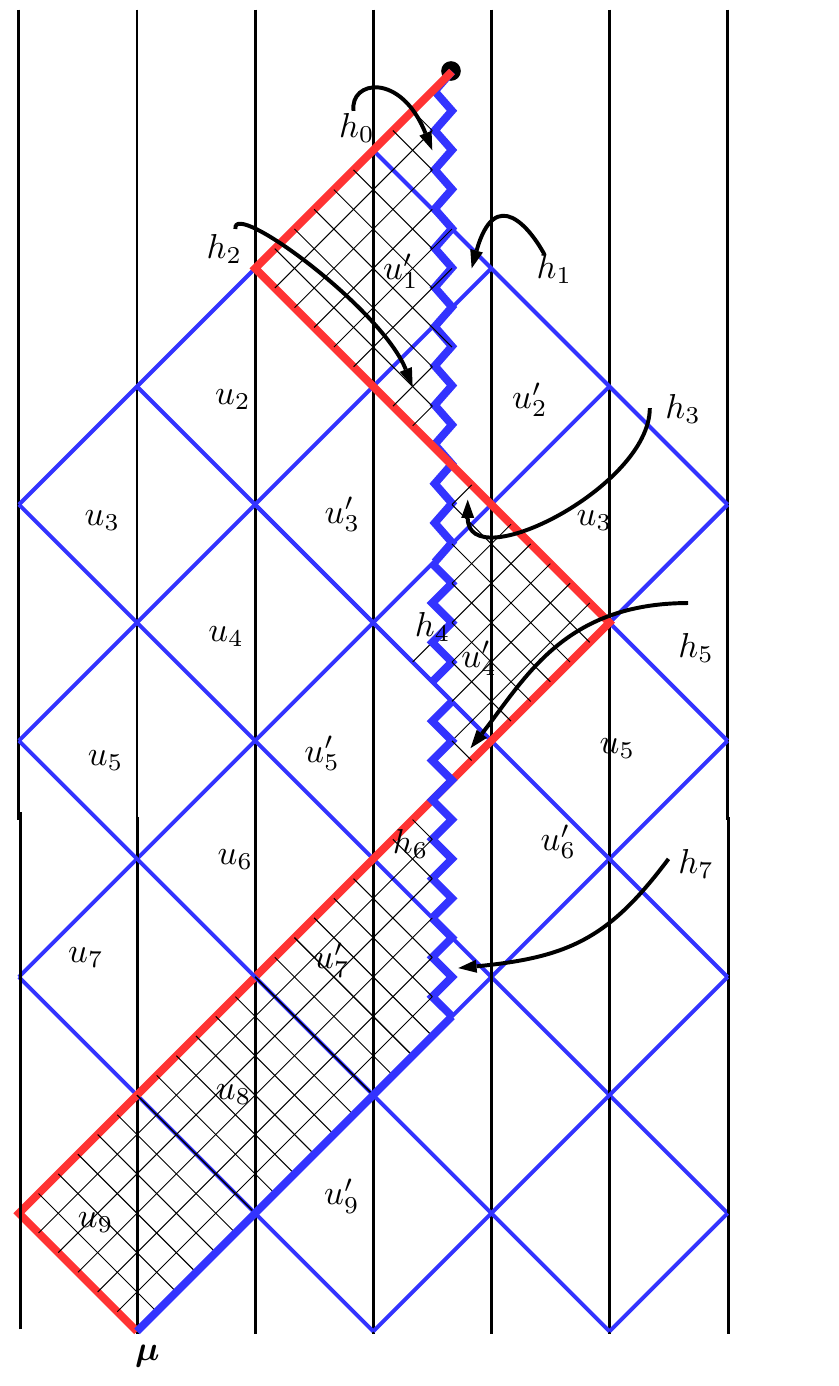}} $
\caption{Examples of the application of Algorithm \ref{algorithm paths} with $ e=6 $ and $m=2$.}
\label{fig:cutdiagramsA}
\end{figure}

\begin{figure}[h]
 \centering
${\color{black}{e(\bi^{\bmu})}}  \psi_{d(\s)} = $
 \raisebox{-.5\height}{\includegraphics[scale=0.7]{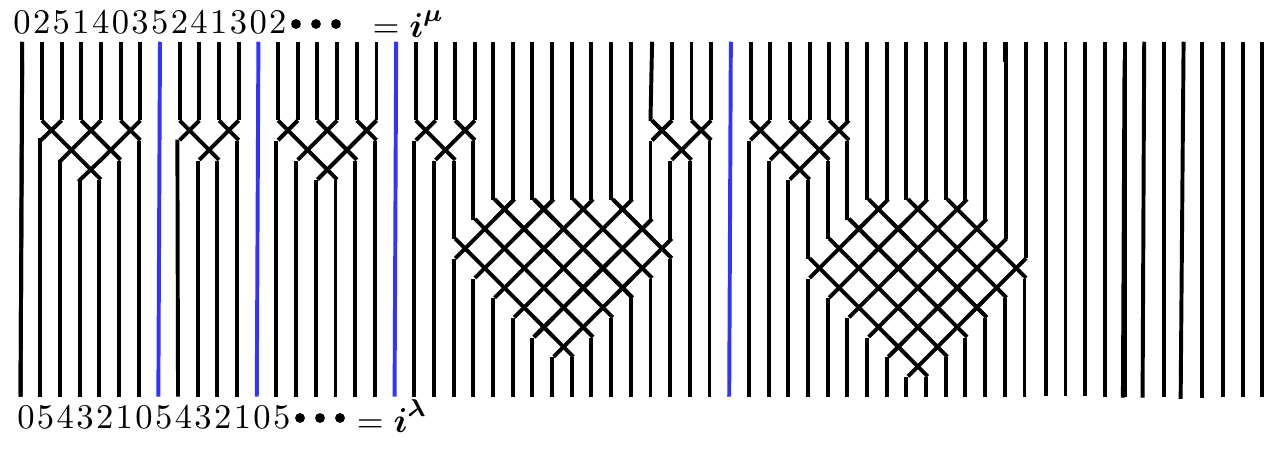}}
\caption{The KLR-diagram corresponding to the path $ P_\s $ of Figure \ref{fig:cutdiagramsA}.}
\label{fig:TKLRXXX}
\end{figure}


\begin{figure}[h!]
 \centering
${\color{black}{e(\bi^{\bmu})}} \psi_{d(\T)}=  \raisebox{-.5\height}{\includegraphics[scale=0.7]{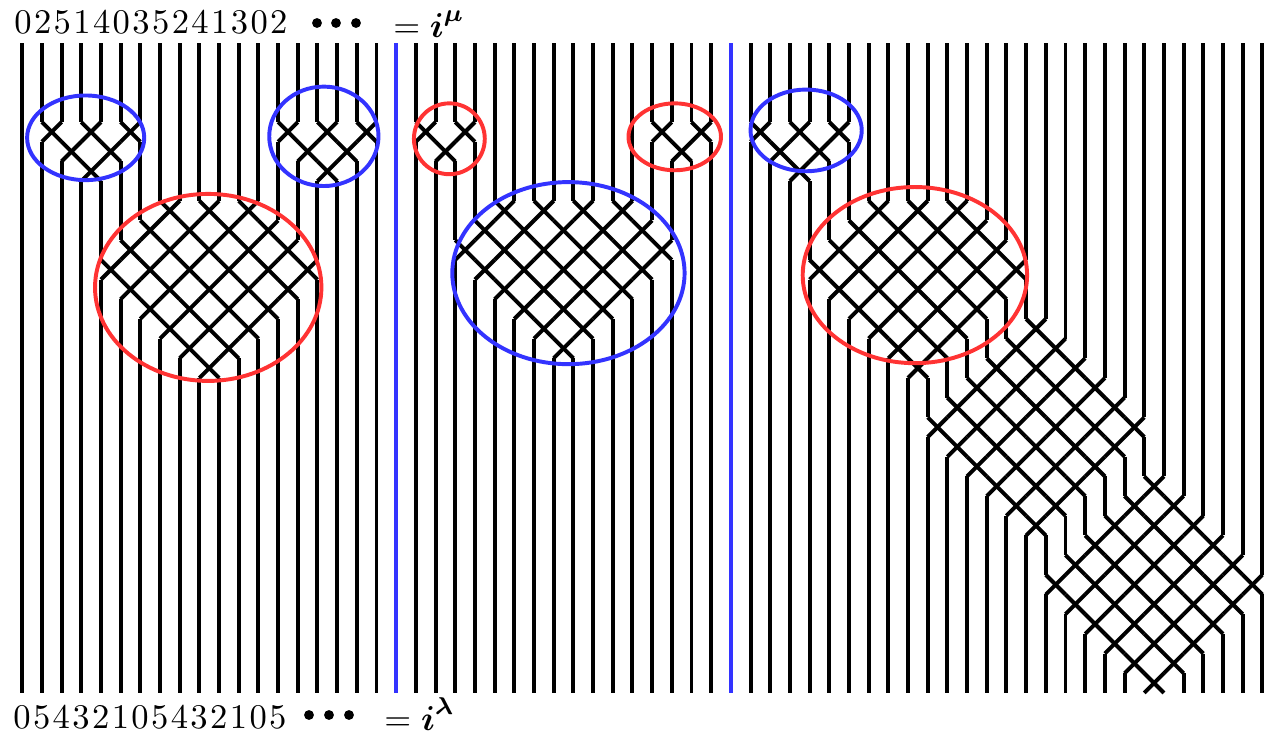}}$
\caption{The KLR-diagram corresponding to the path $ P_{\T} $ of Figure \ref{fig:cutdiagramsA}.}
\label{fig:KLRTXXX}
\end{figure}



\medskip
We let $ \psi_{H_i} $ (resp. $ \psi_{U_i} $ and $ \psi_{U_i^{\prime}} $) be the element of $\B $ obtained by
    replacing each $ s_i \in \Si_n$ in $ H_i $ (resp.  $ U_i $ and $ U_i^{\prime}$) with the corresponding $ \psi_i$.
We then get an expression for $ \psi_{ d(\T)} $ by replacing each occurring 
$ H_i $ (resp.  $ U_i $ and $ U_i^{\prime}$) in the above expansion for $ d(\T) $ by     
$ \psi_{H_i} $ (resp. $ \psi_{U_i} $ and $ \psi_{U_i^{\prime}} $).
Note that $ \psi_{U_i}e(\bi^\blambda) = U_{i}^{\blambda} \in G(\blambda) $ from \ref{genG}.
For example, in the cases given in Figure 
\ref{fig:cutdiagramsA} we have
\begin{equation}
\psi_{d(\s)}  = \psi_{H_0} \psi_{H_1} \psi_{H_2} \psi_{H_3} \psi_{H_5} \psi_{H_6}  \psi_{U_4^{\prime}} \psi_{U_7^{\prime}}
\qquad \mbox{ and } \qquad 
\psi_{d(\T)} = \psi_{H_0} \psi_{H_2} \psi_{H_3} \psi_{H_5} \psi_{H_6}  \psi_{U_1^{\prime}}
  \psi_{U_4^{\prime}} \psi_{U_7^{\prime}}  \psi_{ U_8} \psi_{U_{9}}.
\end{equation}

Let us give some comments related to the combinatorial
structure of the KLR-diagrams in Figure \ref{fig:TKLRXXX} and Figure \ref{fig:KLRTXXX}; these comments
hold in general. 
Note first that only the lower residue sequence of the KLR-diagrams in Figure \ref{fig:TKLRXXX}
and \ref{fig:KLRTXXX} is $ \bi^{\blambda} $ and
so $ {\color{black}{e(\bi^{\bmu})}} \psi_{d(\s)} $
and $e(\bi^{\bmu})\psi_{d(\T)} $ actually do not belong to $ \trunc$, only to $ \B$.
Secondly, note that the KLR-diagrams for the $\psi_{H_i}$'s are located in the `top lines' of
the KLR-diagrams in 
\ref{fig:TKLRXXX} and \ref{fig:KLRTXXX}, whereas the KLR-diagrams for the $\psi_{U_i^{\prime}}$'s and the
$\psi_{U_i}$'s are situated in `the middle and the bottom lines' of the diagrams in \ref{fig:TKLRXXX}
and \ref{fig:KLRTXXX}, respectively.
For each $ i $ only one of the diagrams $\psi_{H_i}$ or $\psi_{U_i^{\prime}}$ appears. 
The appearing $\psi_{H_i}$'s and $\psi_{U_i^{\prime}}$'s are ordered from the left to the right, with
$\psi_{H_0}$, that always appears, to the extreme left and so on. On the other hand, in  general
the $\psi_{U_i}$'s do not appear ordered.

Next, we observe that the shapes of $\psi_{H_i}$'s and the $\psi_{U_i^{\prime}}$'s depend on their parity. In other words, if $i$ and $j$ have the same parity then  $\psi_{H_i}$ and   $\psi_{H_j}$ (resp. $\psi_{U_i^\prime}$ and $\psi_{U_j^\prime}$) have the same shape.  In Figure \ref{fig:KLRTXXX} we have encircled with blue the \emph{even} diagrams $\psi_{H_i}$ and
$\psi_{U_i^{\prime} }$ and with red the \emph{odd} diagrams $\psi_{H_i}$ and
$\psi_{U_i^{\prime} }$. 

\medskip
Our next observation is that the diagrams $\psi_{U_i^{\prime}}$ always lie between two
diagrams $\psi_{H_{i-1}}$ and $\psi_{H_{i+1}}$, except possibly for the rightmost $\psi_{U_i^{\prime}}$. 
The rightmost $\psi_{U_i^{\prime}}$ is always preceded by $\psi_{H_{i-1}}$ but it may be followed by
$\psi_{U_{i+1}}$, as in Figure \ref{fig:KLRTXXX}, or by a number of
through lines, {\color{black}{as in Figure \ref{fig:TKLRXXX}}}.

\medskip
In general, we have that the $ \psi_{ H_i}$'s are `distant' apart and so pairwise commuting. This is not the case for the $\psi_{U_i^{\prime}}$'s. However, we still have that $\psi_{U_i^{\prime}}\psi_{U_j^{\prime}}= \psi_{U_j^{\prime}}\psi_{U_i^{\prime}}$ if $|i-j|>1$. By the previous paragraph we know that each occurrence of $\psi_{U_i^{\prime}}$ is surrounded by $\psi_{H_{i-1}}$ and $\psi_{H_{i+1}}$. We conclude that if $\psi_{U_i^{\prime}}$ and $\psi_{U_j^{\prime}}$ occur in the diagram of some $\psi_{d(\T)}$ then $|i-j|>1$, and therefore, they do commute.  The relations between the $\psi_{U_i}$'s are known from \cite{LiPl}, we shall return to them shortly. Between the different groups there is no commutativity in general, that
is $\psi_{U_i^{\prime}}$ does not commute with $\psi_{H_{i-1}}$ and $\psi_{H_{i+1}}$ and so on.


\medskip
Finally, we observe that the all of the diagrams $\psi_{H_i}, \psi_{U_i^{\prime}}$ and
$  \psi_{U_i}$ are organized tightly. There are for example only two
through lines in Figure \ref{fig:KLRTXXX}. 
In both Figure \ref{fig:TKLRXXX} and Figure \ref{fig:KLRTXXX} we have colored blue the through lines
that correspond to the places where $ P_\s $ and $ P_\T $ change from the left to right half of the
Pascal triangle, or reversely. In general these lines lie between two $ \psi_{H_i} $'s.
Thus the contours' of the diagrams \ref{fig:TKLRXXX} and \ref{fig:KLRTXXX} are a mirror of the shapes of the
paths of Figure \ref{fig:cutdiagramsA}, with the modification that the through blue lines indicate a
change from left to right of reversely.

\medskip

For $\T \in \std (\bi^{\blambda})$ we define $\theta(\T) $ as the element of $\Si_n$
obtained from the favorite reduced expression for $d(\T)$ by erasing all the $ U_i$-factors 
and similarly we define $u(\T) \in \Si_n$
by erasing both the $H_i$ and the $ U^{\prime}_i$-factors.
Then clearly 
\begin{equation}
d(\T) =   \theta(\T) u(\T).
\end{equation}
We now have the following Lemma.
\begin{lemma} \label{lemma reduction to central}
Suppose that $\s,\T \in \std_\blambda (\bmu)$ and let
$ P_{\s_1} $ and $ P_{\T_1}$ be the paths obtained from  $P_\s$ and $ P_\T$ by replacing outer boundary with
inner boundary for all the $\color{black}{ u_i}$-regions. Then we have that 
$\theta(\s)= d(\s_1) $ and $ \theta(\T) = d(\T_1) $. Moreover 
\begin{equation}\label{reducion}
m_{\s \T}^{\bmu} = \psi_{u(\s)}^{\ast} m_{\s_1 \T_1}^{\bmu} \psi_{u(\T)}.
\end{equation}  
\end{lemma}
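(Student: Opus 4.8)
The plan is to reduce the claim about $m_{\s\T}^{\bmu}$ to a purely combinatorial statement about the symmetric group, namely that the favorite reduced expression for $d(\T)$ splits as a concatenation $\theta(\T)u(\T)$ with both factors reduced, and that $\theta(\T)$ is itself the favorite reduced expression for $d(\T_1)$. First I would analyze the structure of the favorite reduced expression as described just above the statement: it is built in three passes, the $H_i$-pass, then the $U_i^{\prime}$-pass, then one or more $U_i$-passes. Erasing all $U_i$-factors leaves the subword consisting of the $H_i$- and $U^{\prime}_i$-factors, which is exactly the expression Algorithm \ref{algorithm paths} produces when one stops after the $U^{\prime}_i$-pass, i.e.\ when one replaces outer boundary by inner boundary only for the $h_i$- and $u^{\prime}_i$-regions and leaves the $u_i$-regions alone. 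By the very definition of $P_{\s_1}$ and $P_{\T_1}$ as the paths obtained from $P_\s$, $P_\T$ by reverting the $u_i$-regions to their inner boundaries, this truncated expression is a reduced expression for $d(\s_1)$, resp.\ $d(\T_1)$. Hence $\theta(\s)=d(\s_1)$ and $\theta(\T)=d(\T_1)$, and since $\theta(\T)$ is an initial reduced subword of the reduced expression for $d(\T)$, the remaining factors $u(\T)$ form a reduced expression for $\theta(\T)^{-1}d(\T)$, giving $d(\T)=\theta(\T)u(\T)$ with $\ell(d(\T))=\ell(\theta(\T))+\ell(u(\T))$.

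Next I would feed this factorization into the definition of $m_{\s\T}^{\bmu}$. Recall $m_{\s\T}^{\bmu}=\psi_{d(\s)}^{\ast}e(\bi^{\bmu})\psi_{d(\T)}$. Because the factorizations $d(\s)=\theta(\s)u(\s)$ and $d(\T)=\theta(\T)u(\T)$ are length-additive, and because $\psi_w$ is independent of the chosen reduced expression for fully commutative $w$ (which all $d(\T)$ are, as noted in the remark after Theorem \ref{Algorithm}), we may compute $\psi_{d(\T)}=\psi_{\theta(\T)}\psi_{u(\T)}$ directly at the level of KLR-diagrams by concatenating the two diagrams; no braid correction terms appear precisely because the concatenation is length-additive. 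Applying $\ast$ to the $\s$-side gives $\psi_{d(\s)}^{\ast}=\psi_{u(\s)}^{\ast}\psi_{\theta(\s)}^{\ast}$. Substituting,
\begin{equation}
m_{\s\T}^{\bmu}=\psi_{u(\s)}^{\ast}\,\psi_{\theta(\s)}^{\ast}\,e(\bi^{\bmu})\,\psi_{\theta(\T)}\,\psi_{u(\T)}.
\end{equation}
It remains to recognize the middle three factors as $m_{\s_1\T_1}^{\bmu}$, i.e.\ $\psi_{\theta(\s)}^{\ast}e(\bi^{\bmu})\psi_{\theta(\T)}=\psi_{d(\s_1)}^{\ast}e(\bi^{\bmu})\psi_{d(\T_1)}=m_{\s_1\T_1}^{\bmu}$, which is immediate once $\theta(\s)=d(\s_1)$, $\theta(\T)=d(\T_1)$ are established, together with the fact that $\s_1,\T_1\in\std(\bmu)$ as well (they have the same shape as $\s,\T$ since reverting $u_i$-regions does not change the endpoint of the path). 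This yields exactly \eqref{reducion}.

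The step I expect to be the main obstacle is the first one: carefully verifying that erasing the $U_i$-factors from the favorite reduced expression really produces the favorite reduced expression for $d(\s_1)$, and in particular that the $H_i$- and $U^{\prime}_i$-factors that survive are literally unchanged and still form a reduced word. This requires checking that the filling-in procedure of Algorithm \ref{algorithm paths} applied to the $h_i$- and $u^{\prime}_i$-regions of $P_{\blambda_{\max}}\rightsquigarrow P_\T$ and of $P_{\blambda_{\max}}\rightsquigarrow P_{\T_1}$ agree region by region — which is true because $P_\T$ and $P_{\T_1}$ differ only inside the $u_i$-regions, so the $h_i$- and $u^{\prime}_i$-regions bounded by the two paths against $P_{\blambda_{\max}}$ are identical — and that interleaving of passes causes no cancellation, which follows from fully-commutativity of $d(\T)$ so that any reordering into "all $H_i,U^{\prime}_i$ first, then all $U_i$" stays reduced. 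Everything after that is bookkeeping with $e(\bi^\bmu)$ and the anti-automorphism $\ast$.
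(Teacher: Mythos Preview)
Your proof is correct and follows the same approach as the paper, which simply says ``the result is a direct consequence of the definitions.'' You have carefully unpacked what that one-line proof means: the favorite reduced expression is built in passes ($H_i$'s, then $U_i^{\prime}$'s, then $U_i$'s), erasing the $U_i$-pass yields exactly the favorite reduced expression for the tableau $\T_1$ whose path agrees with $P_\T$ outside the $u_i$-regions, the factorization $d(\T)=\theta(\T)u(\T)$ is length-additive so $\psi_{d(\T)}=\psi_{\theta(\T)}\psi_{u(\T)}$, and the rest is the definition of $m_{\s\T}^{\bmu}$ together with $\ast$.
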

\begin{proof}
The result is a direct consequence of the definitions. 
\end{proof}

Our goal is to prove that $ m_{\s \T}^{\bmu}$ belongs to $ \truncPrime $. On the other hand, 
$  \psi_{u(\s)} $ and $ \psi_{u(\T)} $ in \ref{reducion} are products of $ U_i^{\blambda}$'s 
and so it follows from Lemma \ref{lemma reduction to central} that to achieve this goal it is enough to consider the case where
$ \s= \s_1 $ and $ \T = \T_1$. Let us give the corresponding formal definition.

\begin{definition}\rm
  Let $\T \in \std (\bi^{\blambda})$. We say that $\T$ is \emph{central} if $u(\T)$ is the empty word.
  Equivalently, $\T$ is \emph{central} if  ${d(\T)}= \theta(\T)$. 
\end{definition}
Geometrically, $ \T $ is central if the path $ P_\T$ stays close to the central vertical axis of the Pascal triangle. 
{\color{black}{In other words, $P_\T $ does not cross the walls $ M_{-1} $ and $ M_{2} $, except possible once
in the final stage.}}
For example, in Figure \ref{fig:cutdiagramsA} we have that $ \s$ is central but $ \T $ is not.
In view of  Lemma \ref{lemma reduction to central} we will from now on only consider central tableaux.




\medskip

Suppose therefore that $\T \in \std_\blambda(\bmu_k)$ is central 
where $ \bmu_k$ is as described in Figure \ref{fig:pascalAA}. 
Then one checks that the total number of $\psi_{H_i} $'s and $ \psi_{U_i^{\prime}}$'s
appearing in $ \psi_{ d(\T)} $ is $k$.
We now define a 
$(2\times k)$-matrix $c(\T)=(c_{ij})$ of symbols that completely 
determines $\psi_{d(\T)}$. It is given by the following rules.
 \begin{enumerate}
 \item  If $H_i$ appears in appears in $d(\T)$ then $c_{1,i+1}:={H_i}$
   and $c_{2,i+1} := \emptyset$.
 \item If $U_{i}^{\prime}$ appears in $d(\T)$ then $c_{2,i+1}={U_{i}}'$ and $c_{1,i+1}:= \emptyset $.
 \end{enumerate}

We view the matrix $ c(\T) $ as a codification for $ \psi_{d(\T )}$,
where the first row of $ c(\T) $ corresponds to the top line of $ \psi_{d(\T )}$ and
the second row of $ c(\T) $ to the second line of $ \psi_{d(\T )}$. The comments that were made on the
structure of the digrams in Figure \ref{fig:TKLRXXX} and Figure \ref{fig:KLRTXXX}
carry over to the matrices $c(\T)$. 
In particular, 
exactly one of $ H_i $ or $ U_i^{\prime} $ appears in $c(\T)$ for each $ i $.
Moreover, $ H_0 $ always appears and each $ U_i^{\prime} $, except possibly $  U_{k-1}^{\prime} $, 
is surrounded by $ H_{i-1}  $ and $ H_{i+1} $.

For example if $ \psi_{d(\s)} $ is as in Figure \ref{fig:TKLRXXX}, then   
%
\begin{equation}\label{codeA}
c(\s) = \raisebox{-.5\height}{\includegraphics[scale=1]{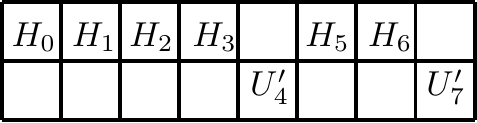}}  
\end{equation}
Note that we leave the entries containing $ \emptyset $ empty. 
Similarly, let $ \T $ be as in Figure \ref{fig:cutdiagramsA} but with 
the regions $ U_8 $ and $ U_9 $ eliminated. Then $ \T $ is central and
$ \psi_{d(\T)}$ 
is obtained by 
deleting $ \psi_{U_8}$ and $ \psi_{U_9}$ from the diagram in Figure \ref{fig:KLRTXXX} and we have

\begin{equation}\label{TKLRtheta}
  \! \!\! \! \! \! \! \!   \! \!   \! \!   \! \!   \! \!    \! \!   \! \!   \! \!    \! \!   \! \!   \! \!
  \! \!   \! \!   \! \!   \! \!   \! \!   \! \!   \! \!   \! \!    \! \!   \! \!   \! \!    \! \!   \! \!   \! \!
  \! \!   \! \!   \! \!   \! \!   \! \!   \! \!   \! \!   \! \!  
  \psi_{d(\T)} =
  \raisebox{-.5\height}{\includegraphics[scale=0.7]{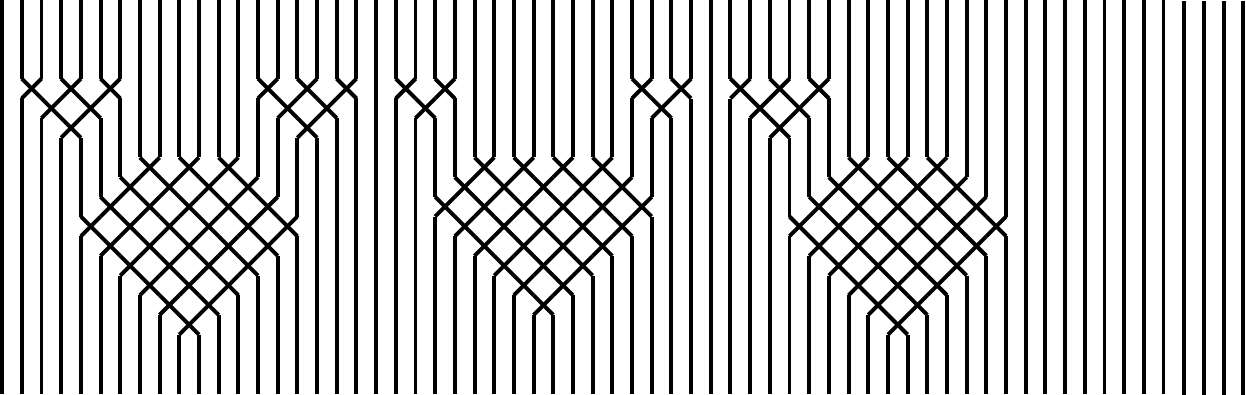}}
\end{equation}
with corresponding matrix
\begin{equation}\label{codeB}
  c(\T) =  \raisebox{-.5\height}{\includegraphics[scale=1]{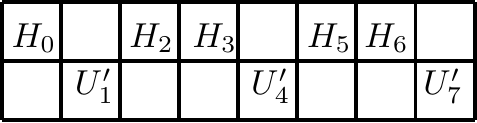}}  
\end{equation}
We are interested in the elements $ m_{\s \T}^{\bmu}$. In the cases of $ \s $ and $ \T$ given in Figure
 \ref{fig:TKLRXXX} and in \ref{TKLRtheta} it is given in Figure \ref{fig:KLRmst}.

\begin{figure}[h!]
 \centering
$  m_{\s \T}^{\bmu}=
   \raisebox{-.5\height}{\includegraphics[scale=0.7]{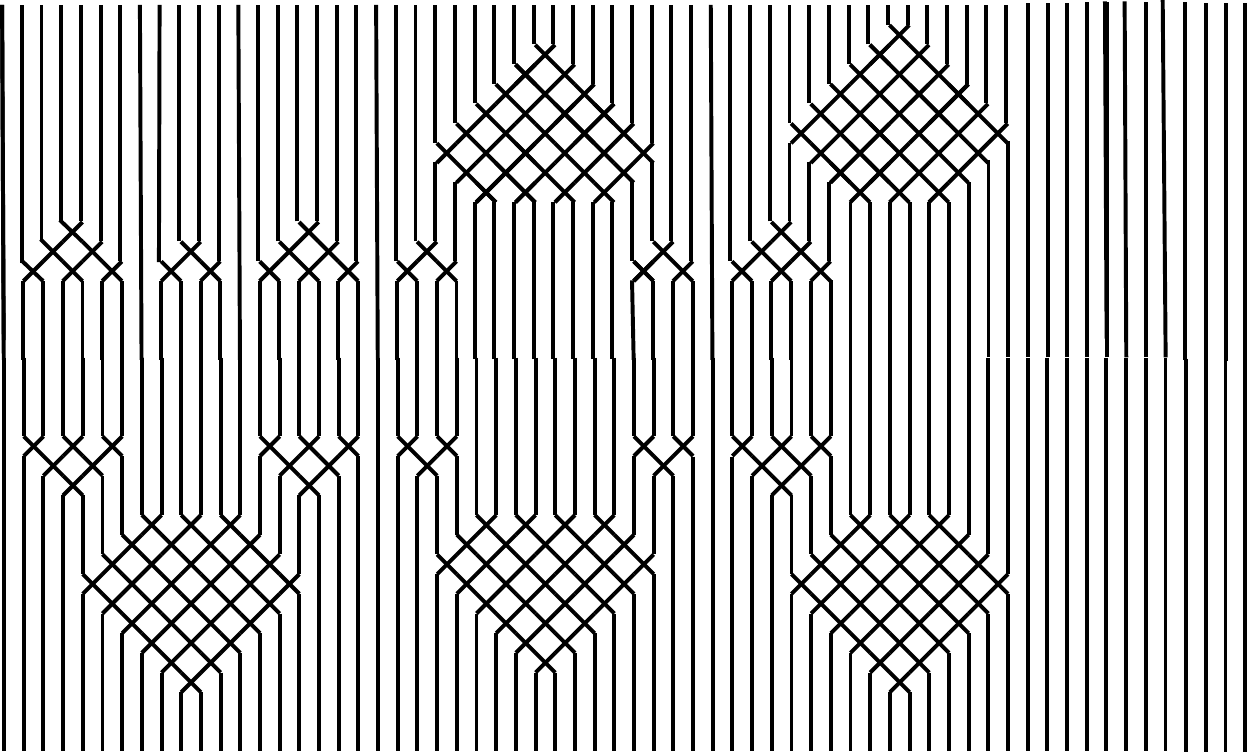}} \, \, \, 
   c(\s, \T) = 
      \raisebox{-.5\height}{\includegraphics[scale=0.7]{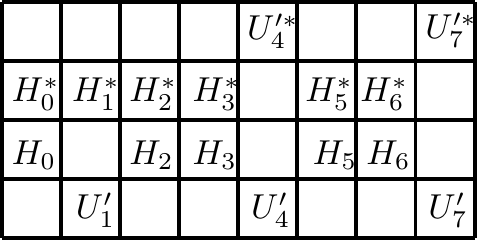}}$
\caption{The element $m_{\s \T} $ for $\s $ and $ \T$ given in Figure
 \ref{fig:TKLRXXX} and in \ref{TKLRtheta} and its codification.}
\label{fig:KLRmst}
\end{figure}


In general, for $\T \in \std_\blambda(\bmu_k)$ central we 
define $c^\ast(\T)$ as the $(2\times k)$-matrix $(d_{ij})$ where
$ d_{1j}= c_{2j}^\ast $ and $ d_{2j}= c_{1j}^\ast $. Here we set $ \emptyset^\ast := \emptyset$.
Moreover, for $\s, \T \in \std_\blambda(\bmu_k)$ both central we define
$ c(\s, \T)$ as the $(4\times k)$-matrix that has $ c^\ast(\s) $ on top of $ c(\T)$.
Then $c(\s, \T)$ is our codification of $ m_{\s \T}^{\bmu}$. 
In Figure \ref{fig:KLRmst} we have given $c(\s, \T)$ next to $ m_{\s \T}^{\bmu}$.

\medskip
Our task is now to show that any diagram as in Figure \ref{fig:KLRmst} can be written in 
terms of the elements from $ G(\blambda)$. This requires calculations using the defining relations  
for $ \B$. Let us first recall a couple of results from the literature.  



\begin{lemma}\label{previousLemma}
The idempotent $ e( \bi ) \in \B $ is nonzero only if $ \bi = \bi^{\T} $ for some $ \T \in \std(\OnePar)$.
\end{lemma}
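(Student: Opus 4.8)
The plan is to argue that the idempotents $e(\bi)$ of $\B$ are controlled by the residue sequences of standard tableaux, exactly as happens for cyclotomic KLR algebras. First I would recall that the graded cellular basis $\Basis = \{ m_{\s\T}^{\bmu} \mid \s,\T \in \std(\bmu), \bmu \in \OnePar \}$ from Theorem \ref{Cellular} spans $\B$ over $\F$. So if $e(\bi) \neq 0$, then writing $e(\bi)$ as an $\F$-linear combination of the $m_{\s\T}^{\bmu}$, at least one coefficient must be nonzero, say the coefficient of $m_{\s\T}^{\bmu}$ for some $\s,\T \in \std(\bmu)$.

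The key step is then to extract the idempotent relation $e(\bi) e(\bj) = \delta_{\bi,\bj} e(\bi)$ from \eqref{orto} together with the identity $\sum_{\bj \in \II^n} e(\bj) = 1$ to pin down which residue sequences can occur. Concretely, $e(\bi) = e(\bi) \cdot 1 = \sum_{\bj} e(\bi)e(\bj) = e(\bi)e(\bi)$, which is automatic, so this by itself gives nothing; instead I would use that multiplication in $\B$ respects residue sequences. Recall from Lemma \ref{isacellularbasis}(a) that $e(\bi^{\bmu}) \psi_{d(\T)} = \psi_{d(\T)} e(\bi^{\T})$ and $\psi_{d(\s)}^{\ast} e(\bi^{\bmu}) = e(\bi^{\s}) \psi_{d(\s)}^{\ast}$; hence
\begin{equation}
e(\bi^{\s}) \, m_{\s\T}^{\bmu} = e(\bi^{\s}) \psi_{d(\s)}^{\ast} e(\bi^{\bmu}) \psi_{d(\T)} = \psi_{d(\s)}^{\ast} e(\bi^{\bmu}) \psi_{d(\T)} = m_{\s\T}^{\bmu},
\end{equation}
and similarly $m_{\s\T}^{\bmu} \, e(\bi^{\T}) = m_{\s\T}^{\bmu}$, while $e(\bj) m_{\s\T}^{\bmu} = 0$ for $\bj \neq \bi^{\s}$ by \eqref{orto}. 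Therefore, multiplying the expansion $e(\bi) = \sum c_{\s\T}^{\bmu} m_{\s\T}^{\bmu}$ on the left by $e(\bj)$ picks out exactly the terms with $\bi^{\s} = \bj$; since $e(\bj)e(\bi) = \delta_{\bi,\bj}e(\bi)$, all surviving terms must have $\bi^{\s} = \bi$, and likewise $\bi^{\T} = \bi$ by multiplying on the right. In particular, for the nonzero coefficient $c_{\s\T}^{\bmu} \neq 0$ fixed above we get $\bi^{\s} = \bi$, so $\bi = \bi^{\s}$ is the residue sequence of a standard tableau $\s \in \std(\bmu) \subseteq \std(\OnePar)$, which is the claim.

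I expect the main (minor) obstacle to be making rigorous the claim that $e(\bj) m_{\s\T}^{\bmu}$ vanishes unless $\bj = \bi^{\s}$; this needs the fact, implicit in the construction of the cellular basis in \cite{PlazaRyom}, that $m_{\s\T}^{\bmu}$ lies in $e(\bi^{\s}) \B e(\bi^{\T})$, which follows from Lemma \ref{isacellularbasis}(a) as spelled out above. A cleaner alternative, which I would mention as a remark, is simply to invoke that $\B$ is a quotient of the cyclotomic KLR algebra $R_n(\kappa)$ (Remark after \eqref{orto}), for which the analogous statement — that $e(\bi) \neq 0$ forces $\bi$ to be an \emph{addable} residue sequence, i.e.\ the residue sequence of some standard tableau of a multipartition fitting inside the cyclotomic constraint — is standard; passing to the quotient only makes more idempotents vanish, so the statement is inherited. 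Either route is short; the cellular-basis argument is the more self-contained one and is the one I would write out.
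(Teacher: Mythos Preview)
Your argument is correct. The paper's own proof is a one-line citation: it invokes Lemma~4.1(c) of Hu--Mathas \cite{hu-mathas} for cyclotomic Hecke algebras, together with the fact from \cite{PlazaRyom} that $\B$ is a graded quotient of the cyclotomic Hecke algebra of type $G(2,1,n)$. This is essentially the ``cleaner alternative'' you sketch in your final paragraph.

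Your main argument takes a genuinely different route: rather than passing through the larger cyclotomic algebra, you use the graded cellular basis of Theorem~\ref{Cellular} directly, together with the orthogonality \eqref{orto} and the commutation rule of Lemma~\ref{isacellularbasis}(a), to conclude that every cellular basis element $m_{\s\T}^{\bmu}$ lies in $e(\bi^{\s})\,\B\,e(\bi^{\T})$, and hence any nonzero $e(\bi)$ must pick out a term with $\bi = \bi^{\s}$ for some standard $\s$. This is more self-contained within the paper's own framework, since Theorem~\ref{Cellular} and Lemma~\ref{isacellularbasis}(a) are both available at this point and do not depend on the lemma in question; the paper's approach is shorter on the page but defers the actual work to \cite{hu-mathas}. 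Either is perfectly adequate here.
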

\begin{proof}
  This follows from Lemma 4.1(c) of \cite{hu-mathas}, where it was proved for cyclotomic Hecke algebras in general,
  combined with the fact that $ \B$ is a graded quotient of the cyclotomic Hecke
  algebra of type $G(2,1,n)$, see \cite{PlazaRyom}. 
\end{proof}

\begin{lemma}\label{blockcancel}
  Let $ B_i $ be a full path interval for $ \blambda $ as introduced in \ref{DEFfullBlock} and suppose that
  $ k, l \in B_i $. Then we have that
  \begin{equation}
y_k e(\bi^\blambda) = y_l  e(\bi^\blambda).
   \end{equation}   
\end{lemma}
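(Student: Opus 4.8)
The plan is to reduce the equality to the quadratic KLR relation at a single crossing. First note that it is enough to prove $y_k\, e(\bi^\blambda) = y_{k+1}\, e(\bi^\blambda)$ whenever both $k$ and $k+1$ lie in $B_i$; chaining such equalities along $B_i$ then gives $y_k\, e(\bi^\blambda) = y_l\, e(\bi^\blambda)$ for all $k,l \in B_i$. So fix $k$ with $k,k+1 \in B_i$.

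By \ref{lambdaresidue} the two residues involved are $i^\blambda_k = -(k-1)$ and $i^\blambda_{k+1} = -k$. Since $1<m<e-1$ forces $e \geq 4$, these are distinct and adjacent in the cyclic quiver $\II$, so the quadratic relation \ref{KLRquadratic} for the $k$-th strand has the form $\psi_k^2\, e(\bi^\blambda) = \pm(y_k - y_{k+1})\, e(\bi^\blambda)$. Hence it is enough to show $\psi_k\, e(\bi^\blambda) = 0$, and since $\psi_k\, e(\bi) = e(s_k \cdot \bi)\, \psi_k$ in $\B$ (see the proof of Lemma \ref{isacellularbasis}), it suffices to prove $e(s_k \cdot \bi^\blambda) = 0$.

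The main obstacle is precisely this last point: one must show that $s_k \cdot \bi^\blambda$ is not the residue sequence $\bi^\T$ of any standard tableau $\T$ of a one-column bipartition of $n$, for then $e(s_k \cdot \bi^\blambda) = 0$ by Lemma \ref{previousLemma}. Recall that $\bi^\T$ is always the shuffle of the sequence $(0,-1,-2,\dots) \bmod e$ read along the column-one positions of $\T$ with the sequence $(m,m-1,m-2,\dots) \bmod e$ read along the column-two positions. Now $s_k \cdot \bi^\blambda$ differs from $\bi^\blambda = (0,-1,-2,\dots) \bmod e$ only by transposing the entries $-(k-1)$ and $-k$ in positions $k$ and $k+1$, so positions $k-1$ and $k+1$ of $s_k \cdot \bi^\blambda$ both carry the residue $-(k-1)$; as $e \geq 4$ and these three positions are consecutive, $k-1$ and $k+1$ cannot belong to the same column of $\T$. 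One then treats the two cases separately. In each, the entries $\bi^\T_{k-1} = -(k-2)$ and $\bi^\T_{k+1} = -(k-1)$ determine, modulo $e$, the numbers of column-one and column-two boxes occurring among the positions $1,\dots,k-1$, and these in turn force $\bi^\T_k$ to be $\equiv m$ or $\equiv 1-k \pmod e$; comparing with the actual value $\bi^\T_k = -k$ and using that $k \not\equiv -m \pmod e$ — which holds because $k$ is not the largest index of $B_i$, by \ref{DEFfullBlock} — produces a contradiction in every case. Combined with the quadratic relation, this proves the Lemma; the other steps are routine computations with the KLR relations.
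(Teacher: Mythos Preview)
Your approach is exactly the one the paper indicates: reduce to consecutive $k,k+1\in B_i$, use the quadratic relation \eqref{KLRquadratic} to get $\psi_k^2\,e(\bi^\blambda)=\pm(y_k-y_{k+1})e(\bi^\blambda)$, and kill $\psi_k\,e(\bi^\blambda)=e(s_k\cdot\bi^\blambda)\psi_k$ via Lemma~\ref{previousLemma}. The endgame---that any realization of $s_k\cdot\bi^\blambda$ as $\bi^\T$ would force $k\equiv -m\pmod e$, impossible since $k$ is not the top index of $B_i$---is also the right one.

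There is, however, a slip in the middle that breaks your case analysis. Position $k-1$ of $s_k\cdot\bi^\blambda$ carries residue $-(k-2)$, not $-(k-1)$ (as you yourself write two lines later), so the claim that positions $k-1$ and $k+1$ share a residue is false. Consequently your assertion that $k-1$ and $k+1$ cannot lie in the same column of $\T$ is unjustified, and in fact they \emph{can}: e.g.\ $k-1,k+1$ both in column~$1$ with $k$ in column~$2$ is not ruled out by residues alone. Your two-case split therefore omits cases. The repair is painless: drop position $k-1$ entirely and split on the columns of $k$ and $k+1$. If $k,k+1$ lie in the same column their residues would differ by $-1$, not $+1$, so $2\equiv 0\pmod e$, contradicting $e\ge 4$; in each mixed case the column positions are determined modulo $e$ and one reads off $k\equiv -m\pmod e$ exactly as you conclude. (Alternatively, one checks directly that your two missing same-column cases also force $\bi^\T_k\equiv m$, so the final conclusion survives even though the written argument does not cover them.)
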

\begin{proof}
This follows from relation \ref{KLRquadratic} and Lemma \ref{previousLemma}.
\end{proof}

    \begin{lemma} \label{lemma muere y a la izquerda}
{\color{black}{Suppose that $ \bnu \in  \OnePar$ and that}}
  $\T \in \std(\OnePar ) $. Suppose moreover that
  $ P_\T \! \mid_{[0,k]}  = P_{\bnu} \! \! \mid_{[0,k]} $ for 
some integer $ k \ge 0$ and that $ P_\T([0,k-1]) \subseteq  {\mathcal A }^0 \setminus (M^0 \cup M^1) $.
Then for all $1\leq  r \leq  k$ we have in $\B $ that 
    \begin{equation}
	y_re(\bi^{\T}) = 0.
\end{equation}
\end{lemma}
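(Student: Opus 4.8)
The plan is to prove $y_r e(\bi^{\T}) = 0$ by induction on $r$, for $1 \le r \le k$. For $r = 1$ the statement is immediate: the diagram of $y_1 e(\bi^{\T})$ carries a single dot on the leftmost strand, just above the bottom boundary, so it vanishes by relation \ref{dot-al-inicio} (and if $i^{\T}_1 \notin \{0,m\}$ then $e(\bi^{\T}) = 0$ already by \ref{mal-inicio}).

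Now fix $1 < r \le k$ and assume $y_s e(\bi^{\T}) = 0$ for every $1 \le s < r$. The idea is to push the dot carried by $y_r$ on strand $r$ towards the left edge of the diagram, where relation \ref{dot-al-inicio} finishes the job; the hypothesis that $P_{\T}$ stays inside $\mathcal{A}^0$ off the walls $M_0, M_1$ for the first $k-1$ steps is exactly what forces the intermediate diagrams to vanish. Concretely, by Lemma \ref{lemma equivalence classes} the wall-reflection moves relating paths are unavailable at every step $j \le k-1$, so the residue sequence $\bi^{\T}$ determines the column of each entry $j \le k-1$ of $\T$; from this one decides, for each $j < r$, whether the swapped sequence $s_j\bi^{\T}$ is of the form $\bi^{\s}$ for some $\s \in \std(\OnePar)$, that is whether the crossing $\psi_j e(\bi^{\T}) = e(s_j\bi^{\T})\psi_j$ survives, using Lemma \ref{previousLemma}.

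One then analyses $y_r e(\bi^{\T})$ according to the columns of the entries $r-1$ and $r$ of $\T$, equivalently the local shape of $P_{\T}$ near step $r$, which the alcove condition constrains. In the case modelled on $\bi^{\blambda}$ itself, the sequence $s_{r-1}\bi^{\T}$ is not realised by any standard tableau, so $\psi_{r-1} e(\bi^{\T}) = 0$; hence $\psi_{r-1}^2 e(\bi^{\T}) = 0$ and, since the residues $i^{\T}_{r-1}, i^{\T}_r$ are then adjacent, the quadratic relation \ref{KLRquadratic} gives $(y_r - y_{r-1}) e(\bi^{\T}) = 0$, so that $y_r e(\bi^{\T}) = y_{r-1} e(\bi^{\T}) = 0$ by the inductive hypothesis. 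In the remaining configurations $\psi_{r-1} e(\bi^{\T})$ need not vanish, and instead one slides the dot leftwards through the block of strands lying between strand $r$ and the first strand to its left whose residue differs from $i^{\T}_r$ by one --- a block which, by the alcove constraint, lies within a single column of $\T$ --- by iterated use of the dot-slide relation \ref{punto-arriba} and the quadratic relations \ref{KLRquadratic}, \ref{KLRquadraticA}. Each elementary move produces either a term with a dot on a strand of index $< r$ (which dies after one further application of the inductive hypothesis), or a term containing a crossing $e(\bi')\psi_{\bullet}$ with $\bi'$ not a standard residue sequence (killed by Lemma \ref{previousLemma}), or a configuration with $i'_2 = i'_1 + 1$ once the strands in question reach the left edge (killed by relation \ref{otro-mal-inicio}); iterating, $y_r e(\bi^{\T})$ collapses to $0$.

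The main obstacle is precisely this last reduction: contrary to the $\bi^{\blambda}$-type case, the crossings do not vanish automatically, so one must run the dot-slide through an entire block of strands and verify that every diagram appearing along the way dies --- and it is here that the confinement of $P_{\T}$ to the fundamental alcove is used in an essential way, to guarantee that the shifted residue sequences occurring are never those of a standard tableau and that relation \ref{otro-mal-inicio} applies when needed. I would organise this inner argument as a secondary induction on the distance from strand $r$ to the nearest strand on its left with an adjacent residue, each dot-slide strictly decreasing this distance, so that the process terminates in the base case $r = 1$ treated above.
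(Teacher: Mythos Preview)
Your linear-part argument (where $r-1,r$ lie in the same column and consecutive residues differ by $1$) is exactly what the paper does: invoke Lemma~\ref{previousLemma} to kill $\psi_{r-1}e(\bi^\T)$ and then read off $y_r e(\bi^\T)=y_{r-1}e(\bi^\T)$ from the quadratic relation. For the zigzag part the paper does not argue directly at all --- it simply cites Lemmas~14--15 of \cite{Lobos-Ryom-Hansen} and Theorem~6.4 of \cite{Esp-Pl} --- so your dot-slide approach is attempting something more self-contained than the paper provides.

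There is, however, a genuine gap in your induction. In the zigzag part the residues $i^\T_{r-1},i^\T_r$ differ by $m$ or $-(m{+}1)$; when this is neither $0$ nor $\pm1$ you correctly have $\psi_{r-1}^2 e(\bi^\T)=e(\bi^\T)$, and the dot-slide gives
\[
y_r e(\bi^\T)=\psi_{r-1}\,y_{r-1}\,e(s_{r-1}\bi^\T)\,\psi_{r-1}.
\]
The trouble is that $s_{r-1}\bi^\T$ \emph{is} the residue sequence of a standard tableau $\s$ (swap the columns of the entries $r-1$ and $r$; this is always standard for one-column bipartitions), so $e(s_{r-1}\bi^\T)$ does not vanish and you now need $y_{r-1}e(\bi^{\s})=0$ --- which is not covered by your inductive hypothesis, stated as it is for the fixed $\T$. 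The repair is to run the induction on $r$ over \emph{all} admissible tableaux simultaneously, and then check that the new $\s$ again satisfies the lemma's hypotheses with $k'=r-1$ and a suitable $\bnu'$ (one with a shorter zigzag and one extra linear step). Even with this fix, the boundary case $m=e-2$, where $-(m{+}1)\equiv 1$ and one of the zigzag differences becomes adjacent, has to be treated separately; so the direct argument is less uniform than your sketch suggests, which is presumably why the paper outsources it.
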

    \begin{proof}
      {\color{black}{Recall that $ P_{\bnu} $
          zigzags along the vertical central axis of the Pascal triangle
      and finally goes linearly off to
    $ \bnu$. If $ r $ belongs to the zigzag part of $ P_{\bnu} $}}, the result 
follows from the Lemmas 14 and 15 of \cite{Lobos-Ryom-Hansen}, see also Theorem 6.4 of \cite{Esp-Pl}.
{\color{black}{Otherwise, if $ r $ belongs to the linear part of $ P_{\bnu} $, we argue as in
    the previous Lemma and get that $ y_re(\bi^{\T}) = y_{r-1}e(\bi^{\T})$. Continuing like this, we finally end up in
the zigzag part of $ P_{\bnu} $.}}
\end{proof}

Henceforth, we color the intersections of our KLR-diagrams 
according to the difference of the relevant residues. More precisely, we shall use
the following color scheme

\begin{equation}
  \raisebox{-.5\height}{\includegraphics[scale=1]{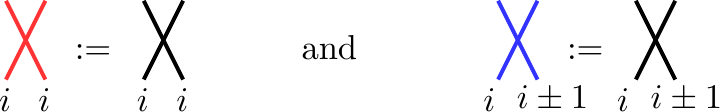}} 
\end{equation}

\noindent
whereas for all other crossing we keep the usual black color.
In this notation we now have the following Lemma which is a direct consequence of the relations
\eqref{punto-arriba} and \eqref{KLRquadratic}.

\begin{lemma} \label{doublecross}
  We have the following relations in $ \B$
  \begin{equation}
 \raisebox{-.5\height}{\includegraphics[scale=1]{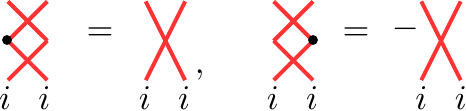}} 
  \end{equation}
  \end{lemma}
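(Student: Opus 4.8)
The plan is to verify each identity of Lemma~\ref{doublecross} by a short local computation, organised according to the colour of the two crossings, since the colour convention introduced just above records precisely the difference of the residues that meet at a crossing. Fix the two strands taking part in the double crossing, let $a$ and $b$ be their residues, and let $k$ be the position so that the relevant idempotent is $e(\bi)$ with $i_k = a$ and $i_{k+1} = b$; the colours are then determined by $a-b$, the coloured cases being $a = b$ and $a = b \pm 1$, while $|a-b|>1$ is the plain Reidemeister~II situation which is immediate from \eqref{KLRquadraticA}.

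In each coloured case I would first use the dot-past-crossing relation \eqref{punto-arriba} to move any dot sitting in the region between the two crossings across the lower crossing; this leaves a genuine double crossing $\psi_k^2 e(\bj)$ for the appropriate residue sequence $\bj$, plus, when the two residues at that crossing coincide, a Kronecker-delta correction term consisting of a single crossing. I would then collapse the remaining $\psi_k^2 e(\bj)$ using the quadratic relation \eqref{KLRquadratic}: when $a = b$ this specialises to $\psi_k^2 e(\bj) = 0$, and when $a = b \pm 1$ it rewrites $\psi_k^2 e(\bj)$ as the difference of two diagrams each carrying one dot on an uncrossed strand, the side on which the dot sits being dictated by the sign of $a-b$. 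Collecting the surviving terms and sliding the remaining dots to the heights shown in the picture — which on a plain vertical strand of constant residue is unobstructed — produces exactly the right-hand sides displayed in the Lemma.

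I do not expect any real obstacle here: as the paper itself signals, the statement is a direct consequence of \eqref{punto-arriba} and \eqref{KLRquadratic}, and the only thing that requires attention is the bookkeeping of signs and of which of the two strands ends up carrying the dot in the bichrome cases. If one wishes to be completely careful one should also invoke Lemma~\ref{previousLemma} to discard any idempotent $e(\bj)$ that is not of the form $e(\bi^{\T})$ and hence vanishes, but in the configurations relevant here all occurring residue sequences are genuine, so this plays no role.
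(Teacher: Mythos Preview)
Your proposal is correct and matches the paper's approach exactly: the paper does not give a detailed argument but simply states that Lemma~\ref{doublecross} is a direct consequence of relations \eqref{punto-arriba} and \eqref{KLRquadratic}. Your case-by-case unpacking via the dot-sliding and quadratic relations is precisely the intended verification, and your remark that Lemma~\ref{previousLemma} is not actually needed here is also accurate.
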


We can now finally prove the Theorem that was announced in the beginning of this section.

\begin{theorem}\label{bigtheorem}
  The set $ G(\blambda)$ introduced in \ref{genG} generates $ \trunc$.
\end{theorem}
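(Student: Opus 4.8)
The plan is to establish the equivalent assertion that every element $m_{\s\T}^{\bmu}$ of the graded cellular basis $\Basis(\blambda)$ of $\trunc$ from Lemma~\ref{isacellularbasis} lies in the subalgebra $\truncPrime$ generated by $G(\blambda)$. The first reduction is immediate from Lemma~\ref{lemma reduction to central}: there $m_{\s\T}^{\bmu} = \psi_{u(\s)}^{\ast}\,m_{\s_1\T_1}^{\bmu}\,\psi_{u(\T)}$, and $\psi_{u(\s)},\psi_{u(\T)}$ are products of diamonds $\psi_{U_i}e(\bi^{\blambda}) = U_i^{\blambda}\in G(\blambda)$, so it suffices to treat the case in which $\s$ and $\T$ are central. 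From now on $\s,\T$ are central.

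For central $\s,\T\in\std_{\blambda}(\bmu_k)$, with $\bmu_k$ enumerated as in Figure~\ref{fig:pascalAA}, I would argue by induction on $k$. When $k=0$ one has $\bmu_0=\blambda$, $\s=\T=\T^{\blambda}$ and $m_{\s\T}^{\bmu_0}=e(\bi^{\blambda})$, the identity of $\truncPrime$. For the inductive step I would work with the codification matrix $c(\s,\T)$. Its rightmost column records the highest--index piece --- either $\psi_{H_{k-1}}$ or $\psi_{U_{k-1}^{\prime}}$ --- occurring at the extreme left of $\psi_{d(\s)}^{\ast}$ and at the extreme right of $\psi_{d(\T)}$. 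Detaching these two pieces writes $m_{\s\T}^{\bmu}$ as $X^{\ast}\,m'\,Y$, where $X,Y$ are the detached pieces (for a rightmost cut diamond, the piece together with its single flanking $\psi_{H_{k-2}}$) and $m'$ is governed by a codification with one fewer column; using Lemma~\ref{isacellularbasis}(a) to keep track of the idempotent it carries, $m'$ is of the form $m_{\s''\T''}^{\bmu''}$ with $\bmu''=\bmu_{k'}$ for some $k'<k$. The goal is then to rewrite $X^{\ast}m'Y$ as a linear combination of expressions of the shape $(\text{element of }\truncPrime)\cdot m_{\s''\T''}^{\bmu''}\cdot(\text{element of }\truncPrime)$, so that the inductive hypothesis applies.

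The crux, and the main obstacle, is the explicit KLR computation carried out in this rewriting. If the detached piece comes from a full path interval interchange, it is absorbed directly into a diamond $U_i^{\blambda}\in G(\blambda)$. If it is a cut diamond $\psi_{U_{k-1}^{\prime}}$ --- which, being rightmost, is flanked by $\psi_{H_{k-2}}$ on its left but abuts a bundle of through lines on its right rather than a $\psi_{H_k}$ --- one must straighten the cut diamond against the neighbouring H--diagram into a genuine diamond plus correction terms. This uses the resolution of coloured double crossings (Lemma~\ref{doublecross}), the collapse of dots inside a full path interval (Lemma~\ref{blockcancel}), the vanishing $y_re(\bi^{\T})=0$ of Lemma~\ref{lemma muere y a la izquerda} applied to dots pushed leftwards into $\mathcal{A}^0$, and the braid-- and quadratic--type relations among the $\psi_{U_i}$'s from \cite{LiPl}. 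The generators $y_ie(\bi^{\blambda})\in G(\blambda)$ enter precisely here: the residual dots living inside a full path interval of $\blambda$ are identified, via Lemma~\ref{blockcancel}, with fixed generators $y_ie(\bi^{\blambda})$ and moved aside.

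Finally one must order these operations compatibly with the structure of $c(\s,\T)$ recorded after Figure~\ref{fig:KLRTXXX}: the $\psi_{H_i}$'s are pairwise distant and commute, each occurring $\psi_{U_i^{\prime}}$ is surrounded by $\psi_{H_{i-1}}$ and $\psi_{H_{i+1}}$, and any two $\psi_{U_i^{\prime}},\psi_{U_j^{\prime}}$ occurring in a single $\psi_{d(\T)}$ commute; this ensures the rightmost pieces can be peeled off in the required order without producing unexpected terms, and that the $m_{\s''\T''}^{\bmu''}$ that appear genuinely correspond to shapes $\bmu_{k'}$ with $k'<k$. I expect the conceptual skeleton above to be short, and the diagram bookkeeping --- together with the case analysis for the rightmost piece --- to account for almost all of the work.
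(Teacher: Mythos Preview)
Your reduction to central tableaux via Lemma~\ref{lemma reduction to central} is correct and matches the paper. The difficulty lies entirely in the inductive step you sketch, and there the scheme does not work as written.

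First, for a central $\T\in\std_\blambda(\bmu_k)$ the favourite reduced expression has the form $d(\T)=(\text{product of }H_i\text{'s})(\text{product of }U_j'\text{'s})$; the last \emph{factor} is the $U'_j$ of highest index (or the $H_j$ of highest index if no $U'$'s occur), which need not be the piece in column $k-1$. So the claim that the column-$k{-}1$ piece sits ``at the extreme left of $\psi_{d(\s)}^{\ast}$ and at the extreme right of $\psi_{d(\T)}$'' is false in general (e.g.\ $d(\T)=H_0H_1H_3U_2'$ with $k=4$). More importantly, even when one can peel off outermost factors $X^{\ast}$ and $Y$, the remaining middle block $m'$ carries on its top and bottom the idempotents $e(\bi^{?})$ obtained by pushing $e(\bi^{\blambda})$ through $X$ and $Y$; these are \emph{not} $e(\bi^{\blambda})$, so $m'\notin\trunc$ and is not of the form $m_{\s''\T''}^{\bmu''}$ for $\s'',\T''\in\std_\blambda$. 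Lemma~\ref{isacellularbasis}(a) only lets you shuttle idempotents along a full $\psi_{d(\T)}$, not recover $e(\bi^{\blambda})$ after a partial peel. Hence the induction hypothesis cannot be invoked. A further symptom of this is your case ``detached piece comes from a full path interval interchange'': for central tableaux there are no $U_i$-pieces at all, only $H_i$'s and $U_i'$'s, so this case is empty and the intended absorption into a diamond does not occur that way.

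The paper does \emph{not} argue by induction on $k$. It treats all columns of $c(\s,\T)$ in one pass but in a carefully chosen order: first every column of type $\{H_i^{\ast},H_i\}$, processed from right to left. These pairs sit adjacently across the middle line $e(\bi^{\bmu})$; the quadratic relation collapses each to a difference of dotted diagrams, and Lemma~\ref{lemma muere y a la izquerda} (applied to the new middle residue sequence) kills one of the two dots, leaving a single $y_j$ at the start of the next path interval. Second, each mixed column $\{U_i'^{\ast},H_i\}$ or $\{H_i^{\ast},U_i'\}$ is handled by sliding the $H$-piece through the now-straightened strands to complete the cut diamond to a genuine diamond $\psi_{U_i}$. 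Third, each remaining column $\{U_i'^{\ast},U_i'\}$ has by step one acquired a dot on the segment between the two $U_i'$-pieces (coming from the adjacent $\{H_{i-1}^{\ast},H_{i-1}\}$ column, which necessarily exists), and a short local computation with Lemma~\ref{doublecross} shows this dotted double cut diamond equals $\psi_{U_i}$. The upshot is a single expression for $m_{\s\T}^{\bmu}$ as a product of $U_i^{\blambda}$'s and $y_je(\bi^{\blambda})$'s. The essential point you are missing is that the $\{U_i'^{\ast},U_i'\}$ columns cannot be resolved in isolation: they need the dot produced by first resolving the neighbouring $\{H_{i-1}^{\ast},H_{i-1}\}$ column, and this interdependence is exactly what breaks a column-by-column induction from the right.
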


\begin{proof}
  Using the coloring scheme introduced above, the diagram Figure \ref{fig:KLRmst}
  looks as follows

\begin{equation}\label{KLRmstXA}
\raisebox{-.5\height}{\includegraphics[scale=0.7]{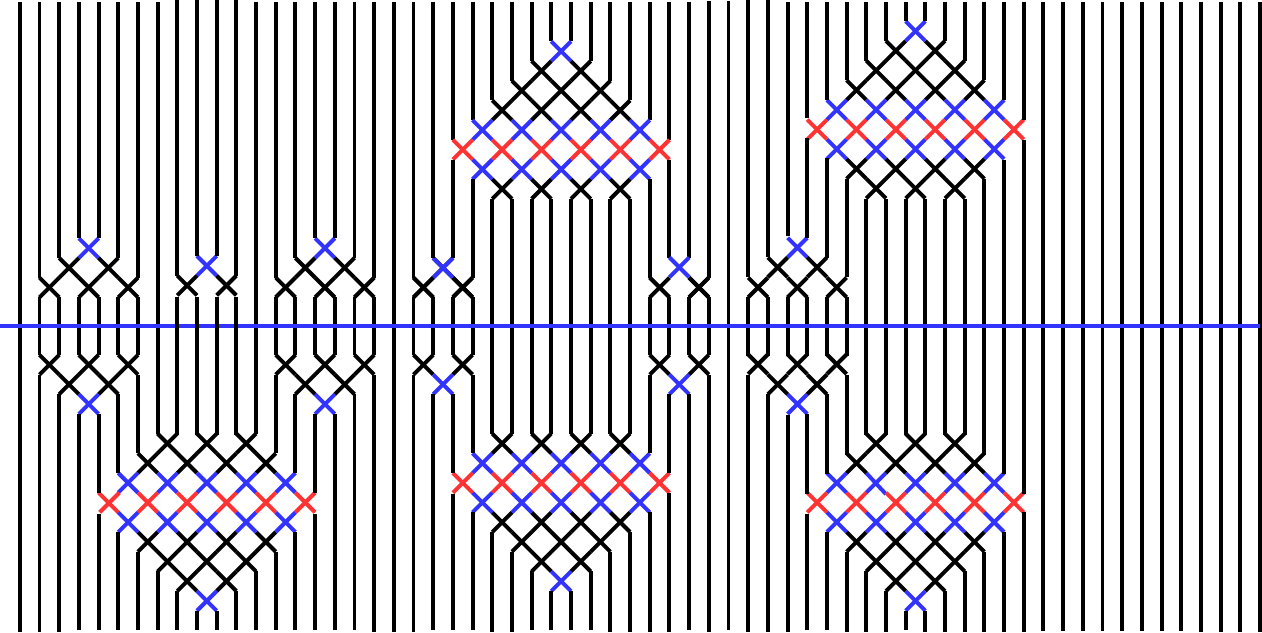}}
   \, \, \, \, \, \, \, \, \, \, \, \, \, \, \, \, \, 
   \raisebox{-.5\height}{\includegraphics[scale=0.7]{codeST.pdf}}
\end{equation}

\medskip

We must show that the elements  $ m_{\s \T}^{\bmu}$  can be written in terms of the elements of $ G(\blambda)$.
We will do so by pairing the elements of the columns of the corresponding $ c(\s, \T)$. 

\medskip
Note that the residue sequence for the middle blue horizontal of \ref{KLRmstXA} 
is
{\color{black}{$ \bi^{{\bmu}}$}}.
The idea is to apply Lemma \ref{lemma muere y a la izquerda} 
and therefore it is of importance to resolve the 
columns from the right to the left.

\medskip

Let us first consider columns containing pairs $ \{ H_{i}^{\ast}, H_{i} \} $,
starting with the rightmost of these columns. Thus in the above case we consider first 
$ \{ H_{6}^{\ast}, H_{6} \} $. We now use relation \eqref{KLRquadratic} to undo all the
crossings in $  H_{i}^{\ast} $ and $ H_{i}  $, arriving at a 
diagram like \ref{KLRmstX}. Here we use an overline on the two dots to denote 
that the result is a difference of two equal diagrams
but each with \emph{one} dot in the indicated place.
Note that the residue sequence for the middle line has now changed, and correspondingly we have 
changed the color from blue to red
{\color{black}{and green}}
around the two dots. In the above case, the new middle residue sequence is
$ \bi^{\T_1} $ where $ \T_1 = \T^{\bmu} H_6$, {\color{black}{that is $ \T_1 $ is obtained from
    $ \T^{\bmu} $ by replacing $ \partial_{in}(h_6) $ with $ \partial_{out}(h_6) $.
    In the leftmost diagram of Figure \ref{fig:cutdiagramsA}, we have indicated $ P_{\T_1} $, using the same colors
    red and green. On the leftmost dot, given by $ y_{40} $ in the above example, 
    we can now apply Lemma \ref{lemma muere y a la izquerda},
    with $ \T = \T_1 $ and $  \bnu $ as indicated in Figure \ref{fig:cutdiagramsA}. 
    We conclude from the Lemma that the corresponding diagram is zero. 
}}


\medskip
Thus in the above case \ref{KLRmstX} only the second term dot with $ y_{41} $
stays. We now repeat this process for all the other pairs of the form $ \{ H_{i}^{\ast}, H_{i} \} $,
from the right to the left. For example in the case \ref{KLRmstX} we arrive 
at the diagram \ref{KLRmstXBlockA}.
We have indicated the path intervals for $ \blambda$ on the top of
the diagrams \ref{KLRmstX} and \ref{KLRmstXBlockA}.
Note that each $  H_{i} $ (resp. $H_{i}^{\ast},  U_{i}^{\prime} $ and $U_{i}^{\prime  \ast}$) `intersects' both of
the path intervals $ B_{i} $ and $ B_{i+1} $ and that the dots of \ref{KLRmstXBlockA} are all situated at the beginning of a path interval.

\begin{equation}\label{KLRmstX}
m_{\s \T}^{\bmu}= \, \, \, \, \, \,
\raisebox{-.5\height}{\includegraphics[scale=0.7]{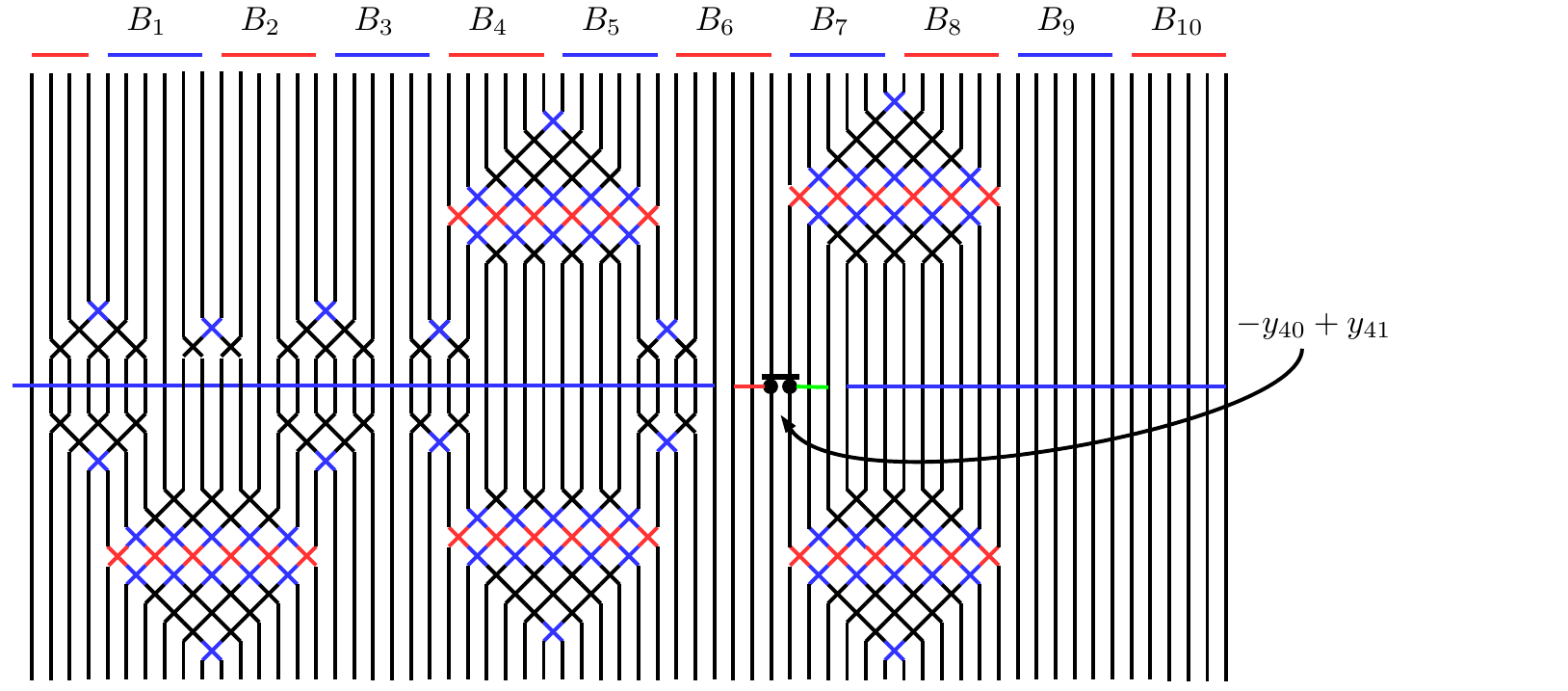}}
\end{equation}

\begin{equation}\label{KLRmstXBlockA}
\! \! \!\! \! \!  \! \!\! \! \!
m_{\s \T}^{\bmu}=   \, \, \, \, \, \,
\raisebox{-.5\height}{\includegraphics[scale=0.7]{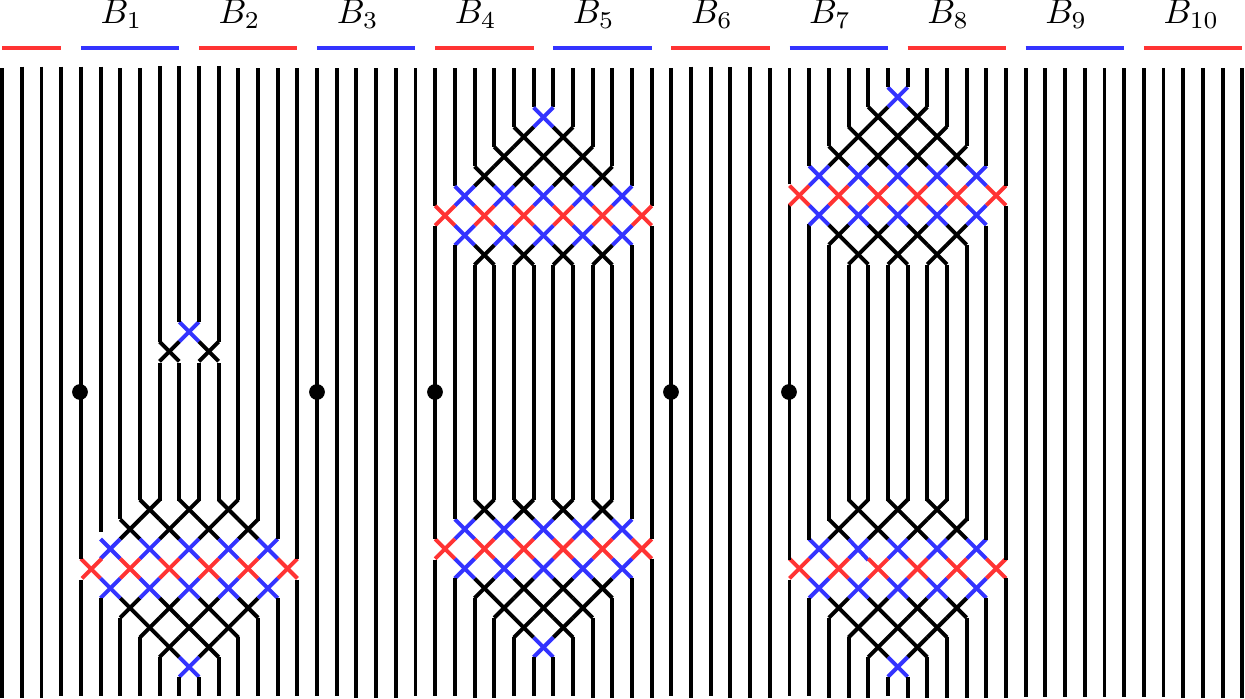}}
    \, \, \,      \, \, \,      \, \, \,      \, \, \,  
\end{equation}

\medskip
Next we treat the pairs of the form $ \{ U_{i}^{\prime \ast}, H_{i} \} $ or
$ \{  H_{i}^{\ast}, U_{i}^{\prime  }\} $. By the combinatorial remarks made earlier,
each appearing $ H_i $-term (resp. $H_{i}^{\ast}$-term) fits perfectly with
the corresponding $ U_{i}^{\prime \ast } $-term (resp. $U_{i}^{\prime}$-term) to form a diamond. 
We then move the $ H_i $-term up (resp. the $H_{i}^{\ast}$-term down) to form
this diamond. Note that this process does not involve any other terms since 
the $ H_i $-terms (resp. the $H_{i}^{\ast}$-terms) are distant from the surrounding
dots. In the above case \ref{KLRmstXBlockA} we get the following diagram.

\medskip 
\begin{equation}\label{KLRmstXBlock}
m_{\s \T}^{\bmu}=   \, \, \, \,
\raisebox{-.5\height}{\includegraphics[scale=0.7]{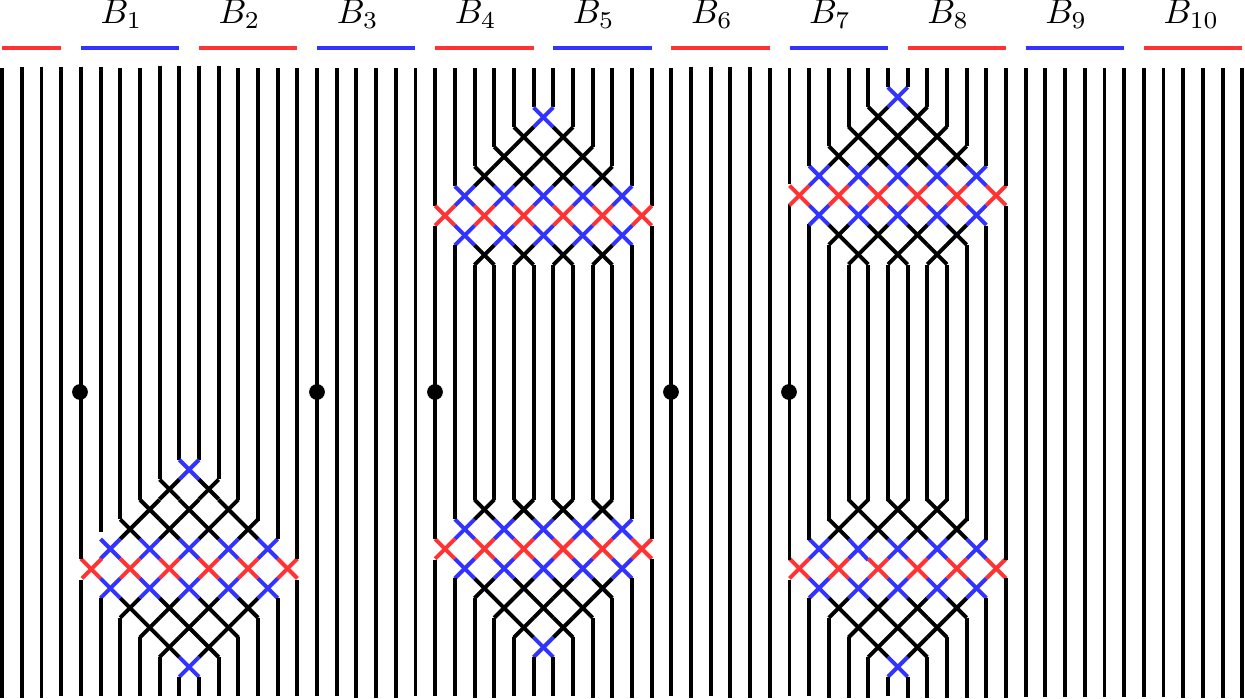}}
    \, \, \,      \, \, \,      \, \, \,      \, \, \,  
\end{equation}

\medskip
We are only left with columns containing pairs of the form $ \{ U_{i}^{\prime \ast}, U_{i}^{\prime} \} $.
By the previous step there is now a dot between the top $  U_{i}^{\prime \ast}$ and the bottom $ U_{i}^{\prime}  $,
at the left end of the `line segment' between them, see \ref{KLRmstXBlock}. We show that this
kind of configuration $ C_i $ is equal to diamond
$ \psi_{U_i}  $. In fact, the arguments we employ for this
have already appeared in the literature, see for example \cite{LiPl}.
Let us give the details corresponding to $ i= 7$ in \ref{KLRmstXBlock}; the general case is done the same way.
Using relation \ref{KLRquadraticA} to undo the black double crosses, next relation \ref{KLRquadratic}
to undo the last blue cross and finally \ref{KLRquadraticA} on the red double cross, we have the 
following series of identities. 
\begin{equation}\label{lastresolve}
C_7    \, \,
\raisebox{-.45\height}{\includegraphics[scale=0.6]{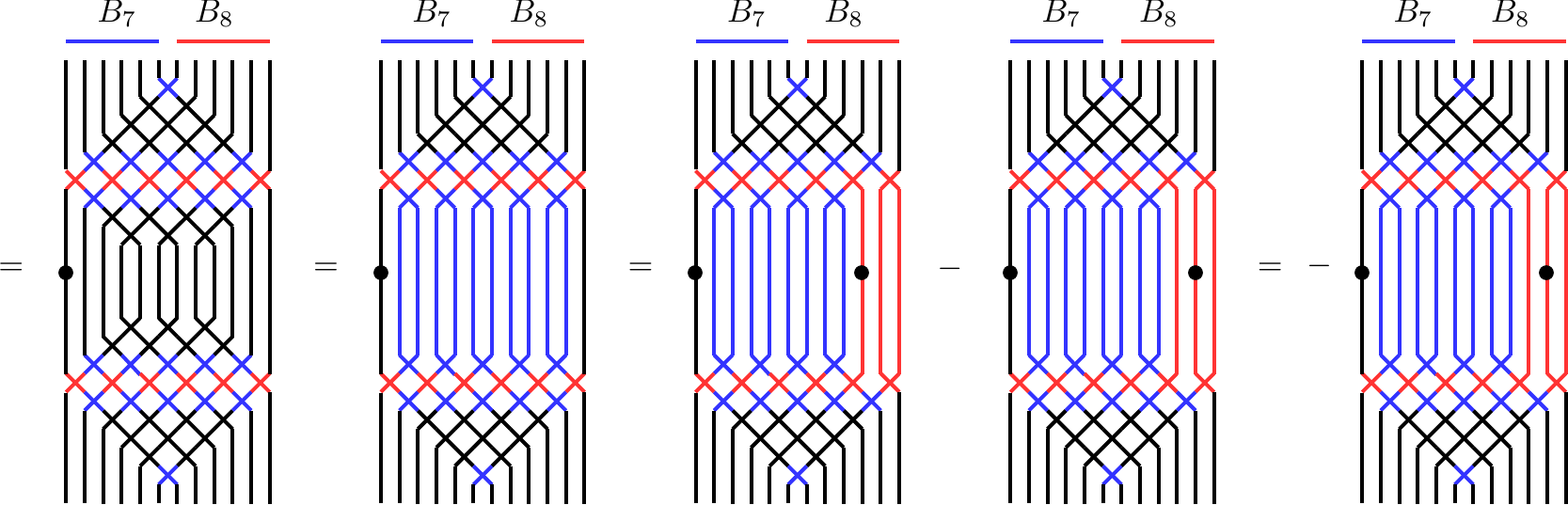}}
    \, \, \,      \, \, \,      \, \, \,      \, \, \,  
\end{equation}
But this process can be repeated on all the blue double crosses and so we have via Lemma 
\ref{doublecross} that 
\begin{equation}\label{lastresolveA}
C_7 \,
\raisebox{-.42\height}{\includegraphics[scale=0.7]{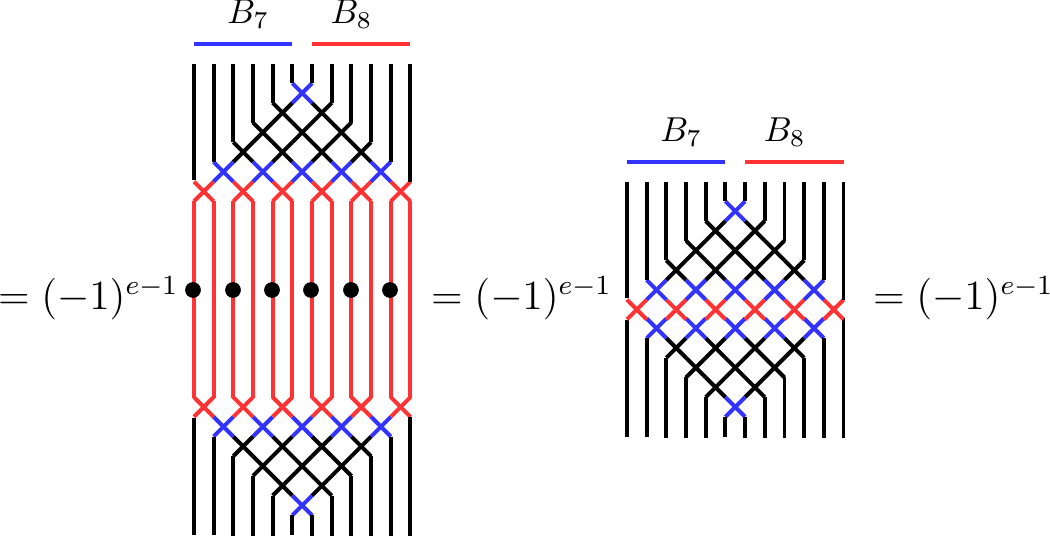}} \,
 \psi_{U_7}.
\end{equation}

The same procedure can be carried out for the other columns of the form $ \{ U_{i}^{\prime \ast}, U_{i}^{\prime} \} $.
In the above case there is only one such column, corresponding to $ i =4 $ and so get finally that


\begin{equation}\label{KLRlast}
m_{\s \T}^{\bmu}= {\color{black}{\pm}}   \, \,
\raisebox{-.45\height}{\includegraphics[scale=0.7]{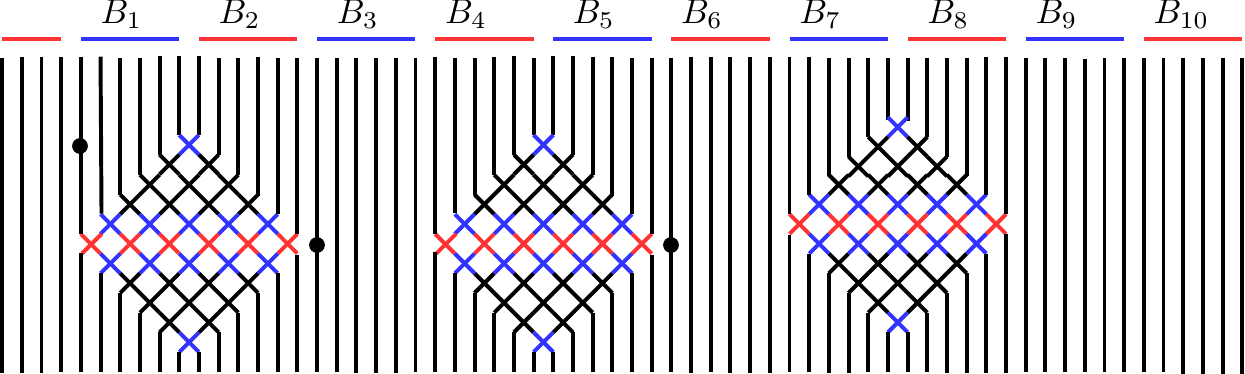}}
    \, \, \,      \, \, \,      \, \, \,      \, \, \,  
\end{equation}
In other words, since multiplication in $ \B$ is from top to bottom, we have that 

\begin{equation}
 m_{\s \T}^{\bmu} ={\color{black}{\pm}} \, y_5 U_1^{\blambda} y_{17} U_4^{\blambda} y_{35} U_7^{\blambda}.
\end{equation}
All appearing factors of $  m_{\s \T}^{\bmu} $ belong to $ G(\blambda) $ and so we have proved the Theorem.
\end{proof}

Let us point out some remarks concerning  Theorem \ref{bigtheorem} and its proof. First of all, we already saw that only a few of the $ y_i$'s are needed to generate $ \trunc$.
Let us make this more precise. Choose any $ k $ in the $i$'th path interval $ B_i$. Then we define 
\begin{equation}\label{blocki}
\YL_i:= y_k e(\bi^\blambda) \in \trunc .
\end{equation}
Note that by Lemma \ref{blockcancel}, we have that $ \YL_i $ is independent of the choice
of $ k$. Moreover, it follows immediately from Theorem \ref{bigtheorem} that $\trunc $ is generated by the set

\begin{equation}\label{gene}
\{ U_j^\blambda  \, |\, 1\leq j < K  \}	 \cup \{ \YL_i \, |\, 1\leq i \leq K \}.
\end{equation}

Secondly we remark that the proof of Theorem \ref{bigtheorem} gives rise to an algorithm for writing the above
$ m_{\s \T}^\bmu $ in terms of the generators in \ref{gene}. Although the algorithm itself is not necessary for what follows, for  the sake of completeness we prefer to establish it formally.

\begin{algorithm}\rm  \label{algorithm matrix}
  Let $\bmu \in \OnePar $ and let $\s,  \T \in \std_{\blambda} (\bmu )$ be central tableaux.
  Let $ c(\s, \T) $ be the matrix associated with $m_{\s\T}^\bmu$.

	\begin{description}
	    \item[Step 0.] Add an empty column to the right of $c(\s, \T)$.  
		\item[Step 1.] For each column in $ c(\s, \T) $ containing $ \{ U_{i}^{\prime \ast}, H_{i} \} $ (resp.
                  $ \{  H_{i}^{\ast}, U_{i}^{\prime  }\}) $ we remove $ H_{i} $ (resp. $   H_{i}^{\ast}$) from $ c(\s, \T) $ and
                  replace $ U_{i}^{\prime \ast}$ (resp. $ U_{i}^{\prime  }) $ in $ c(\s, \T) $ by $ U_i$.
                \item[Step 2.] Working from the right two the left, for each column
                  in $ c(\s, \T) $ containing $ \{ H_{i}^{ \ast}, H_{i} \} $
                  we remove $  H_{i}^{ \ast} $ and $ H_{i}  $ from $ c(\s, \T) $ and write $ Y_{i+1 } $ in one of the two
                  middle boxes of the following column, one to the right. 
                \item[Step 3.] Each column in $ c(\s, \T) $ containing $ \{ U_{i}^{\prime \ast}, U_{i}^{\prime } \} $
                  will now also contain $ Y_{i } $. We replace these three ingredients of that column
                  by one $ U_i$ which is placed in one
                  of the two middle boxes of the column.
                \item[Step 4.] Replacing each $ U_i $ by $ U_i^\blambda $ and each $ Y_i $ by $ \YL_i$ 
                  we form the product of all appearing elements of $ c(\s, \T) $, starting with the top
                  line, then the two middle lines and finally the bottom line. This product is
                  ${\color{black}{\pm}}  m_{\s \T}^\bmu $.
        \end{description}
\end{algorithm}
Let us give an example to illustrate how the algorithm works. Suppose that $ \s $ and $ \T$ are
central tableaux and that $ c({\s ,\T})$ is as follows.

\begin{equation}\label{codeALG}
   \, \, \, \, \, \, \,
c({\s ,\T}) = \raisebox{-.5\height}{\includegraphics[scale=1]{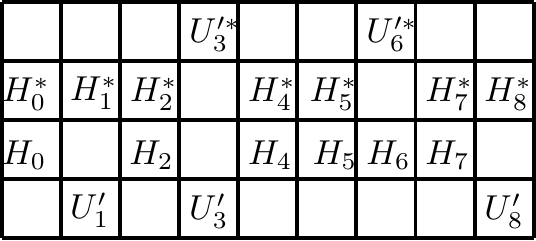}}
\end{equation}

Then going through the algorithm we get

\begin{equation}\label{codeALGA}
   c({\s ,\T}) \rightarrow \raisebox{-.5\height}{\includegraphics[scale=1]{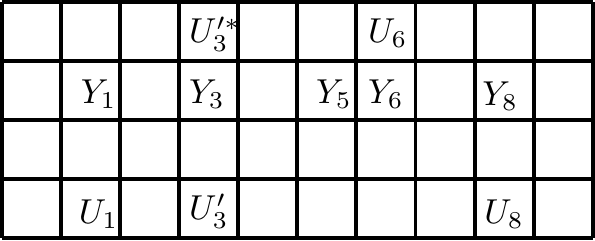}}
\rightarrow \raisebox{-.5\height}{\includegraphics[scale=1]{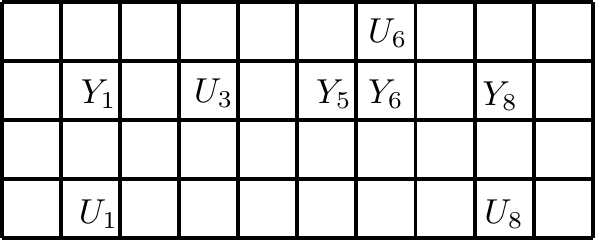}}
\end{equation}
and so we conclude that 

\begin{equation}
 m_{\s \T}^\bmu ={\color{black}{\pm}} U_6^\blambda \YL_1 U_3^\blambda \YL_5 \YL_6 \YL_8 U_1^\blambda U_8^\blambda .
\end{equation}  

\medskip 

Our next step is to show that actually only $ \YL_1 $ is needed in order to generate $ \trunc$.
Let us first prove the following result. 
\begin{lemma} \label{lemma second reduction of ys}
For all $1\leq i <K$ we have 
\begin{equation}\label{jugada-clave}
  \mathcal{Y}_{i+1}^\blambda U_i^\blambda =U_i^\blambda \mathcal{Y}_i^\blambda+
  (-1)^{e}(\mathcal{Y}_i^\blambda-\mathcal{Y}_{i+1}^\blambda).
    \end{equation}
\end{lemma}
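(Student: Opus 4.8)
The identity \eqref{jugada-clave} is an identity in the idempotent truncation $\trunc$, so the natural approach is to compute both sides as KLR-diagrams and compare. I would fix $i$ with $1\le i<K$, and recall that $\mathcal{Y}_j^\blambda = y_k e(\bi^\blambda)$ for any $k\in B_j$ (independence of the representative $k$ being Lemma \ref{blockcancel}), while $U_i^\blambda = \psi_{U_i}e(\bi^\blambda)$ is the diamond interchanging the full path intervals $B_i$ and $B_{i+1}$. The point is that $U_i$ is the order-preserving permutation swapping $B_i=[f(i)+1,\dots,f(i)+e]$ and $B_{i+1}=[f(i+1)+1,\dots,f(i+1)+e]$, and when one puts a dot $y_k$ on a strand entering or leaving the diamond, the dot can be ``pushed through'' the crossings of the diamond using the KLR relations \eqref{punto-arriba} and the quadratic relations \eqref{KLRquadratic}, \eqref{KLRquadraticA}, picking up correction terms each time the dot passes a crossing of residue difference making the relevant $Q$-polynomial nontrivial.

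\textbf{Key steps.} First I would choose convenient representatives: write $\mathcal{Y}_i^\blambda = y_a e(\bi^\blambda)$ and $\mathcal{Y}_{i+1}^\blambda = y_b e(\bi^\blambda)$ where $a$ is the \emph{last} index of $B_i$ and $b$ is the \emph{last} index of $B_{i+1}$ (so that the dots sit at the right ends, adjacent to the diamond). Then $\mathcal{Y}_{i+1}^\blambda U_i^\blambda$ is the diagram with a dot $y_b$ placed on top of the diamond $\psi_{U_i}$. I would then push this dot downward through the diamond strand by strand. Because $U_i$ moves the strand originally in position $b$ all the way across to position $a$ (or the corresponding position in $B_{i+1}$), the dot travels along one strand of the cage and crosses the other $e-1$ strands of the relevant column exactly twice each; for each double crossing, Lemma \ref{doublecross} (which is exactly the combination of \eqref{punto-arriba} and \eqref{KLRquadratic} I need) tells me whether a dotted correction survives. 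Only the crossings between strands of residue difference $\pm1$ contribute, and since the residue sequence $\bi^\blambda$ decreases by steps of one with period $e$, the bookkeeping is governed purely by the parameter $e$: the total sign accumulated is $(-1)^e$, and one obtains $U_i^\blambda \mathcal{Y}_i^\blambda$ as the ``dot passed all the way through'' term plus a single correction term. That correction, after using Lemma \ref{blockcancel} again to normalize the position of the resulting dot, is exactly $(-1)^e(\mathcal{Y}_i^\blambda-\mathcal{Y}_{i+1}^\blambda)$: the two pieces come from the two ends of the interval across which the dot was transported, with the minus sign encoding that a dot appearing at the ``far'' block versus the ``near'' block differ by this. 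This kind of computation is precisely of the type carried out in \cite{LiPl} for the diamonds $U_i^\blambda$, and I would cite those manipulations where possible rather than redo them.

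\textbf{Expected main obstacle.} The genuinely delicate point is keeping track of the signs and of which corrections vanish. When the dot is pushed through a crossing, relation \eqref{punto-arriba} produces a $\delta$-term and the quadratic relations produce the $Q$-polynomial terms; many of these correction diagrams will in fact be zero in $\trunc$ by Lemma \ref{lemma muere y a la izquerda} (a dot at the far left of an appropriate sub-path kills the diagram), or will be re-absorbed by Lemma \ref{blockcancel}. Isolating exactly which one term survives on the right-hand side, and verifying that its coefficient is $(-1)^e$ and not, say, $-(-1)^e$ or $(-1)^{e-1}$, is where care is required; the cleanest way is probably to treat the small-$e$ case (say $e=2$ or $e=3$) explicitly as a model computation and then observe that each additional strand in the cage contributes one more sign flip and no new surviving term, giving the overall $(-1)^e$. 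An alternative, possibly shorter, route would be to verify \eqref{jugada-clave} by evaluating both sides on the cellular basis $\Basis(\blambda)$ of $\trunc$ and using the codification matrices $c(\s,\T)$ together with Algorithm \ref{algorithm matrix}, but I expect the direct diagrammatic push-through to be more transparent.
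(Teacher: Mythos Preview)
Your approach is essentially the same as the paper's: push the dot on $\mathcal{Y}_{i+1}^\blambda$ through the diamond $U_i^\blambda$ using the dot-sliding relation \eqref{punto-arriba}, obtain $U_i^\blambda\mathcal{Y}_i^\blambda$ as the ``pushed-through'' term plus a single correction diagram, and then simplify that correction. The paper carries this out for $i=1$ with $e=6$ as a model and notes the general case is identical.

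Two small corrections to your plan. First, the tool that actually collapses the correction diagram is not Lemma \ref{doublecross} but the braid-type ``unraveling'' relations \eqref{trenzadesataA} (quoted from \cite{LiPl}); applying these repeatedly to the error term and finishing with one use of the quadratic relation \eqref{KLRquadraticA} gives $(-1)^{e-1}(\mathcal{Y}_i^\blambda-\mathcal{Y}_{i+1}^\blambda)$ directly. Second, Lemma \ref{lemma muere y a la izquerda} is \emph{not} used here: nothing vanishes by a ``dot at the left'' argument, the computation is entirely local to the two path intervals $B_i\cup B_{i+1}$. So your expected obstacle (tracking which corrections die) is milder than you anticipate --- exactly one $\delta$-term is produced when the dot crosses the unique equal-residue crossing along its strand, and that single term is what \eqref{trenzadesataA} simplifies. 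Your instinct to cite \cite{LiPl} for these manipulations is exactly what the paper does.
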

\begin{proof}

Let us first recall the following relations valid in $ \B$, see Lemma 5.16 of \cite{LiPl}.
\begin{equation}\label{trenzadesataA}
 \raisebox{-.5\height}{\includegraphics[scale=1]{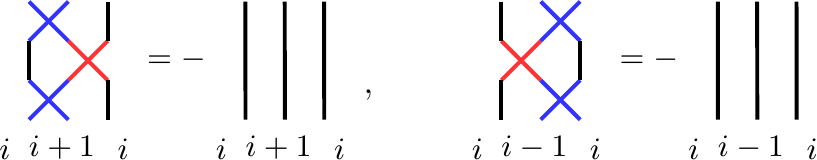}}
\end{equation}
They are a consequence of the braid relation \ref{KLRBraid} together with Lemma
\ref{previousLemma}.

\medskip
Let us now show the Lemma for $ i = 1$, since the general case is treated the same way.
We take $ e= 6 $. Then we have that via repeated applications of relation \ref{punto-arriba})
that

\begin{equation}\label{trenzadesataB}
  \mathcal{Y}_{2}^\blambda U_1^\blambda = \raisebox{-.5\height}{\includegraphics[scale=0.8]{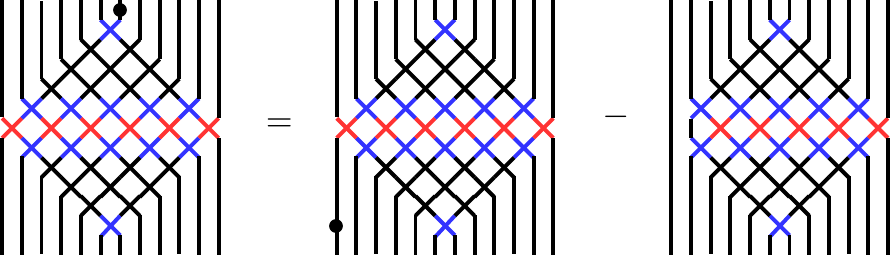}}
\end{equation}
The first diagram is here $ U_1^\blambda \mathcal{Y}_{1}^\blambda  $ so let us focus on the second diagram.
Using the first relation in \ref{trenzadesataA} repeatedly we get that it is equal to

\begin{equation}\label{trenzadesataC}
  \raisebox{-.5\height}{\includegraphics[scale=0.8]{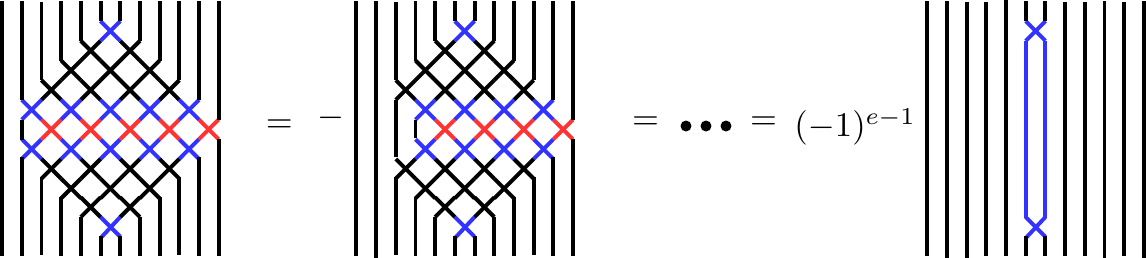}} =
(-1)^{e-1}  (\mathcal{Y}_{1}^\blambda -  \mathcal{Y}_{2}^\blambda) 
\end{equation}
where we used the quadratic relation \ref{KLRquadraticA} for the last step.
Combining \ref{trenzadesataB} and \ref{trenzadesataC}, we then get \eqref{jugada-clave}.
\end{proof}

Let us recall the commutation relations between the $ U_i^\blambda$'s, see Proposition 5.18 of \cite{LiPl}.
\begin{theorem}\label{TemperleyLieb}
  The subset $ \{ U_i^\blambda \mid i = 1, \ldots K-1 \} $ of $ \trunc $ verifies the
  Temperley-Lieb relations, or to be more precise
	\begin{align}
\label{eqUno}	&   (U_i^\blambda)^2   	 =  (-1)^{e-1} 2 U_i^\blambda , &   &  \mbox{if }  1\leq i < K; \\
\label{eqDos}	& U_i^\blambda  U_j^\blambda U_i^\blambda   =  U_i^\blambda , &  & \mbox{if } |i-j|=1 ; \\
\label{eqTres} & U_i^\blambda  U_i^\blambda 	  =  U_j^\blambda  U_i^\blambda  ,  &  &   \mbox{if } |i-j|>1. 
	\end{align}
  \end{theorem}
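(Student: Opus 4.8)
The plan is to verify the three relations one at a time, following Proposition 5.18 of \cite{LiPl}; since that reference already contains a complete proof, the outline below is only meant to indicate the structure of the argument.

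First I would dispose of the commutation relation \ref{eqTres}. When $|i-j|>1$, the full path intervals $B_i, B_{i+1}$ and $B_j, B_{j+1}$ of \ref{DEFfullBlock} are blocks of consecutive integers, and the indices appearing in the reduced expressions for $U_i$ and $U_j$ are separated by at least two; hence the order-preserving permutations $U_i$ and $U_j$ commute in $\Si_n$ and the concatenation of their reduced expressions is again reduced. No braid or quadratic relation is triggered because all the crossings involved are distant, so $\psi_{U_i}\psi_{U_j}=\psi_{U_j}\psi_{U_i}$; multiplying on the right by $e(\bi^\blambda)$ gives \ref{eqTres}.

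Next I would treat the quadratic relation \ref{eqUno} by a direct computation with the diagrammatic realization: stack two copies of the diamond $\psi_{U_i}e(\bi^\blambda)$ and resolve the crossings, which all occur in ``double-crossing'' pairs along the two nested diamond shapes. Each such pair is removed using the quadratic relations \ref{KLRquadratic} and \ref{KLRquadraticA} together with Lemma \ref{previousLemma}, which annihilates every term whose middle residue sequence is not of the form $\bi^\T$. What survives is $\psi_{U_i}e(\bi^\blambda)$ carrying a single dot, and since that dot sits in a full path interval, Lemma \ref{blockcancel} lets us move it freely; a short telescoping argument — the relevant $\psi^2e(\bi)$ contributing a term $\pm(y_{k+1}-y_k)e(\bi)$, with the boundary term killed by the cyclotomic relation \ref{dot-al-inicio} — then yields the scalar $(-1)^{e-1}2$.

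Finally, the braid relation \ref{eqDos} with $j=i+1$ is the step I expect to be the main obstacle. Here one stacks $\psi_{U_i}$, $\psi_{U_{i+1}}$ and $\psi_{U_i}$ over $e(\bi^\blambda)$; although the underlying permutation $U_iU_{i+1}U_i$ is the big transposition exchanging $B_i$ and $B_{i+2}$, the KLR diagram is very far from reduced. The idea is to push the crossings of the middle diamond through the two outer diamonds using the braid relations \ref{trenzadesataA} of \cite[Lemma 5.16]{LiPl}, and then to undo the resulting double crossings with \ref{KLRquadratic}, \ref{KLRquadraticA}, Lemma \ref{previousLemma} and Lemma \ref{doublecross}, exactly in the spirit of the last steps in the proof of Theorem \ref{bigtheorem}. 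The delicate point is the bookkeeping of these many crossings and, above all, checking that the cancellations produced by \ref{trenzadesataA} leave $U_i^\blambda$ with coefficient exactly $1$, rather than merely some nonzero unit of $\F$. Rather than reproduce this bookkeeping here, I would simply invoke \cite[Proposition 5.18]{LiPl}.
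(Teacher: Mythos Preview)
Your proposal is correct and matches the paper's approach: the paper does not give a proof of this theorem at all but simply recalls it from \cite[Proposition 5.18]{LiPl}, exactly as you do in your final line. Your additional sketch of how the argument in \cite{LiPl} proceeds is reasonable and goes beyond what the paper itself provides.
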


With this at our disposal we can now prove, as promised, that $ \mathcal{Y}_1^{\blambda} $ is the only
$ \mathcal{Y}_i^{\blambda} $ which is needed in order to generate $ \trunc$. 

\begin{theorem}  \label{theorem generating set without superfluous generators singular }
Suppose that $\blambda$ is singular. Then, the set
	\begin{equation}\label{defineG1}
          G_1(\blambda) := \{ U_i^\blambda  \, |\, 1\leq i < K   \}	 \cup \{ \mathcal{Y}_1^{\blambda} \}
	\end{equation}
	generates $\trunc$ as an $\F$-algebra.
\end{theorem}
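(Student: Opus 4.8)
The plan is to bootstrap on Theorem \ref{bigtheorem}. By that theorem together with the discussion following it, $\trunc$ is generated by the set \eqref{gene}, that is by the diamonds $U_1^{\blambda},\dots,U_{K-1}^{\blambda}$ together with the full list $\mathcal{Y}_1^{\blambda},\dots,\mathcal{Y}_K^{\blambda}$. Since $G_1(\blambda)$ already contains all the diamonds, it suffices to show that $\mathcal{Y}_j^{\blambda}$ lies in the (unital) subalgebra $\langle G_1(\blambda)\rangle$ of $\trunc$ generated by $G_1(\blambda)$, for every $1\le j\le K$. I would prove this by induction on $j$, the base case $j=1$ being immediate since $\mathcal{Y}_1^{\blambda}\in G_1(\blambda)$.

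The key point is that, for each $1\le i<K$, the element $\Phi_i:=U_i^{\blambda}+(-1)^{e}\,e(\bi^{\blambda})$ conjugates $\mathcal{Y}_i^{\blambda}$ to $\mathcal{Y}_{i+1}^{\blambda}$ and lies in $\langle G_1(\blambda)\rangle$. Using the quadratic Temperley--Lieb relation \eqref{eqUno} of Theorem \ref{TemperleyLieb} one computes
\begin{equation*}
\Phi_i^2=(U_i^{\blambda})^2+2(-1)^{e}U_i^{\blambda}+e(\bi^{\blambda})=(-1)^{e-1}2U_i^{\blambda}+2(-1)^{e}U_i^{\blambda}+e(\bi^{\blambda})=e(\bi^{\blambda}),
\end{equation*}
so $\Phi_i$ is an involution; in particular it is a unit of $\trunc$ with $\Phi_i^{-1}=\Phi_i$. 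Now rewrite Lemma \ref{lemma second reduction of ys}: adding $(-1)^{e}\mathcal{Y}_{i+1}^{\blambda}$ to both sides of \eqref{jugada-clave} collapses it to the identity $\mathcal{Y}_{i+1}^{\blambda}\,\Phi_i=\Phi_i\,\mathcal{Y}_i^{\blambda}$, and multiplying this on the right by $\Phi_i$ while using $\Phi_i^2=e(\bi^{\blambda})$ gives
\begin{equation*}
\mathcal{Y}_{i+1}^{\blambda}=\Phi_i\,\mathcal{Y}_i^{\blambda}\,\Phi_i=(U_i^{\blambda}+(-1)^{e})\,\mathcal{Y}_i^{\blambda}\,(U_i^{\blambda}+(-1)^{e}).
\end{equation*}
This is exactly the analogue of the recursion $\YY_{i+1}=(\UU_i+1)\YY_i(\UU_i+1)$ defining the JM-elements of $\mathbb{NB}_K$, which serves as a sanity check. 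Granting the inductive hypothesis $\mathcal{Y}_i^{\blambda}\in\langle G_1(\blambda)\rangle$, the right-hand side is a product of elements of $\langle G_1(\blambda)\rangle$, hence $\mathcal{Y}_{i+1}^{\blambda}\in\langle G_1(\blambda)\rangle$; this closes the induction and proves the theorem.

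Since the argument is short and purely computational once $\Phi_i$ is produced, I do not expect a genuine obstacle here; the whole content is the observation that $U_i^{\blambda}+(-1)^{e}e(\bi^{\blambda})$ squares to the identity. The only points that need a little care are: that the cancellation of the $U_i^{\blambda}$-terms in $\Phi_i^2$ uses nothing beyond \eqref{eqUno}; that \eqref{jugada-clave} must be rearranged in the correct direction, so that $\Phi_i$ appears as a \emph{right} factor of $\mathcal{Y}_{i+1}^{\blambda}$ and a \emph{left} factor of $\mathcal{Y}_i^{\blambda}$; and that $\langle G_1(\blambda)\rangle$, being a unital subalgebra, contains $\Phi_i=U_i^{\blambda}+(-1)^{e}e(\bi^{\blambda})$, so the conjugation stays inside it. (If one preferred not to quote Theorem \ref{bigtheorem} via \eqref{gene}, the same induction could be run starting from the generating set $G(\blambda)$ of \eqref{genG}, using Lemma \ref{blockcancel} to discard the redundant $y_ie(\bi^{\blambda})$, but invoking \eqref{gene} is cleaner.)
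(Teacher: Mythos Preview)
Your proposal is correct and is essentially identical to the paper's own proof: the paper defines $S_i^{\blambda}:=U_i^{\blambda}+(-1)^{e}$ (your $\Phi_i$), checks $(S_i^{\blambda})^2=1$ via \eqref{eqUno}, rewrites \eqref{jugada-clave} as $\mathcal{Y}_{i+1}^{\blambda}S_i^{\blambda}=S_i^{\blambda}\mathcal{Y}_i^{\blambda}$, and deduces $\mathcal{Y}_{i+1}^{\blambda}=S_i^{\blambda}\mathcal{Y}_i^{\blambda}S_i^{\blambda}$. Your write-up is slightly more explicit about the induction and about invoking the generating set \eqref{gene}, but the argument is the same.
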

\begin{proof}
Recall that $ e(\bi^\blambda) $ is the identity element of $ \trunc$, for simplicity we denote it by $ 1$.
Let us define 
\begin{equation}
S_i^\blambda :=  U_i^\blambda + (-1)^{e}.
\end{equation}
Then from Theorem \ref{TemperleyLieb} we get that
\begin{equation}  \label{SiSquare}
	(S_i^\blambda)^2   	 =  1. 
\end{equation}
On the other hand, we notice that using the notation introduced above, the relation \eqref{jugada-clave} becomes
\begin{equation}\label{jugada-clave-versionS}
	\mathcal{Y}_{i+1}^\blambda S_i^\blambda  = S_i^\blambda \mathcal{Y}_{i}^\blambda. 
\end{equation}
Finally, by combining \eqref{SiSquare} and \eqref{jugada-clave-versionS} we obtain 
\begin{equation} \label{jugada-clave-JM}
	\mathcal{Y}_{i+1}^\blambda   = S_i^\blambda \mathcal{Y}_{i}^\blambda  S_i^\blambda 
\end{equation}
and the result follows.
\end{proof}

We are now in position to prove the main result of this section.

\begin{theorem}\label{theoremsing}
Suppose that $\blambda$ is singular. Then, there is an isomorphism $f: \mathbb{NB}_{K} \rightarrow \trunc $ given by 
\begin{equation}
 \UU_0 \mapsto \YL_1 \,\,\,   \mbox{ and } \,\,\,  \UU_i \mapsto (-1)^{e}U_i^\blambda  \mbox{   for }  1\leq i < K.
\end{equation}
\end{theorem}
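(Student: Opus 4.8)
The plan is to build the inverse isomorphism by first checking that $f$ is a well-defined algebra homomorphism and then arguing that it is bijective via dimension count. For the first part, I would verify that the images $\YL_1$ and $(-1)^e U_i^\blambda$ satisfy the defining relations \eqref{eq one}--\eqref{eq five} of $\mathbb{NB}_K$. The Temperley-Lieb relations \eqref{eq two} and \eqref{eq three} for the $(-1)^e U_i^\blambda$ follow immediately from Theorem \ref{TemperleyLieb}, noting that the sign $(-1)^e$ squares away in \eqref{eqDos} as a cube of a sign and cancels in \eqref{eqTres}; the quadratic relation \eqref{eq one} with the $\sign$ on the right follows from \eqref{eqUno} since $((-1)^e U_i^\blambda)^2 = (U_i^\blambda)^2 = (-1)^{e-1}2U_i^\blambda = -2\cdot(-1)^e U_i^\blambda$. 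The relation $\UU_0^2 = 0$, that is $(\YL_1)^2 = 0$, follows from $y_1^2 e(\bi^\blambda) = 0$; this is exactly the case $r=1$ of Lemma \ref{lemma muere y a la izquerda} applied to $\bnu = \blambda$ (or can be extracted from Lemma \ref{JMA}(b) via the results already established, or from the relation \eqref{dot-al-inicio} since $\YL_1 = y_1 e(\bi^\blambda)$ and $i_1^\blambda = 0 = \kappa_1$, so the dot-squared vanishes after one application of \eqref{dot-al-inicio}). The remaining relation $\UU_1\UU_0\UU_1 = 0$, i.e.\ $U_1^\blambda \YL_1 U_1^\blambda = 0$, is the one requiring genuine computation: I would use the explicit KLR-diagram of $U_1^\blambda$ (the diamond) sandwiching a dot $y_k$ with $k$ in the first path interval $B_1$, pull the dot leftward using the polynomial relations \eqref{punto-arriba} and \eqref{KLRquadratic}, and show it either slides into the leftmost region where Lemma \ref{lemma muere y a la izquerda} kills it, or collapses by \eqref{dot-al-inicio}; essentially this is a small instance of the pattern in \eqref{KLRmstX}.

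Once $f$ is known to be a homomorphism, surjectivity is immediate from Theorem \ref{theorem generating set without superfluous generators singular }, since $G_1(\blambda) = \{U_i^\blambda\} \cup \{\YL_1\}$ generates $\trunc$ and every element of $G_1(\blambda)$ is visibly in the image of $f$ (the signs $(-1)^e$ are units). For injectivity, I would compare dimensions. By Theorem \ref{blob diagram realization} (and Corollary \ref{cor dim}) we have $\dim_\F \mathbb{NB}_K = \binom{2K}{K}$. On the other hand, $\trunc$ has the graded cellular basis $\Basis(\blambda) = \{m_{\s\T}^\bmu \mid \s,\T \in \std(\bi^\blambda),\ \bmu = \shape(\s) = \shape(\T)\}$ from Lemma \ref{isacellularbasis}(b), whose cardinality is $\sum_{\bmu \in [P_\blambda](n)} |\std_\blambda(\bmu)|^2$, and in the singular case this equals $\binom{2K}{K}$ by Lemma \ref{lemma description singular}(b). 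Hence $\dim_\F \trunc = \binom{2K}{K} = \dim_\F \mathbb{NB}_K$, and a surjective linear map between finite-dimensional spaces of equal dimension is an isomorphism.

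The main obstacle I anticipate is the verification of the relation $U_1^\blambda\, \YL_1\, U_1^\blambda = 0$ (and to a lesser extent confirming $(\YL_1)^2 = 0$ cleanly). This is where one must actually manipulate the KLR-presentation rather than quote a black-box result: one needs to know precisely how a dot at the start of a path interval interacts with the diamond $\psi_{U_1}e(\bi^\blambda)$ on both sides, and this is a computation with the quadratic and dot-slide relations \eqref{punto-arriba}, \eqref{KLRquadratic}, \eqref{KLRquadraticA}, combined with the vanishing Lemma \ref{lemma muere y a la izquerda}. Fortunately the proof of Theorem \ref{bigtheorem} has already developed exactly the diagrammatic toolkit needed --- the computation $C_7 = \psi_{U_7}$ in \eqref{lastresolve}--\eqref{lastresolveA} together with the dot-absorption argument around \eqref{KLRmstX} --- so the argument here should be a streamlined special case of those manipulations rather than anything new in kind. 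A secondary, purely bookkeeping point is keeping track of the sign $(-1)^e$ consistently through the relations; the cleanest route is to observe (as the paper does via the $S_i^\blambda$ of the previous proof) that all the relevant relations are sign-robust under the substitution $\UU_i \mapsto (-1)^e U_i^\blambda$, and invoke the Remark after Definition \ref{definition blob} to note the sign in \eqref{eq one} is cosmetic anyway.
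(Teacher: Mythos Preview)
Your overall strategy matches the paper's proof exactly: verify the relations to get a well-defined homomorphism, invoke Theorem \ref{theorem generating set without superfluous generators singular } for surjectivity, and finish by the dimension count coming from Theorem \ref{blob diagram realization} and Lemma \ref{lemma description singular}(b). Two points of comparison are worth noting. First, you misidentify $\YL_1$: by definition \eqref{blocki} it is $y_k e(\bi^\blambda)$ for $k$ in the \emph{first full path interval} $B_1 = [e-m+1,\ldots,2e-m]$, not $y_1 e(\bi^\blambda)$; so neither the $r=1$ case of Lemma \ref{lemma muere y a la izquerda} nor a single application of \eqref{dot-al-inicio} gives $(\YL_1)^2=0$ directly. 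The paper's (terse) argument still uses \eqref{dot-al-inicio} together with \eqref{KLRquadratic}, but one must slide through the zero'th path interval first. Second, for the key relation $U_1^\blambda \YL_1 U_1^\blambda = 0$ the paper does not attack the sandwiched diagram head-on as you propose; instead it uses Lemma \ref{lemma second reduction of ys} together with \eqref{eqUno} to rewrite this as $(\YL_1+\YL_2)U_1^\blambda = 0$, and then computes $\YL_1 U_1^\blambda$ and $\YL_2 U_1^\blambda$ separately (equations \eqref{checkquadX} and \eqref{checkquadXX}) to see they cancel. Your direct approach would likely also succeed, but the paper's reduction is cleaner and reuses a lemma already in hand.
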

\begin{proof}
  In view of Theorem \ref{blob diagram realization} and the Pascal triangle description of the
  cellular basis for $ \trunc $, the two algebras have the same dimension.
  Hence, we only have to show that $ f $ is well defined since, by  Theorem \ref{theorem generating set without superfluous generators singular }, it will
  automatically be surjective.

  \medskip
  Let us therefore check that $  f( \UU_0) $ and the $  f(\UU_i ) $'s verify the relations for $ \mathbb{NB}_{K}$.
  The Temperley-Lieb relations \ref{eq one}, \ref{eq two} and \ref{eq three} are clearly satisfied by
  Theorem \ref{TemperleyLieb} whereas the relation $ (\YL_1)^2 = 0 $ follows from
  relation \ref{dot-al-inicio} and \ref{KLRquadratic}. Hence we are only left with checking relation
  \eqref{eq four}. It corresponds to $U_1^\blambda \YL_1  U_1^\blambda  = 0 $ which via
  Lemma \ref{lemma second reduction of ys} and \ref{eqUno} is equivalent to the relation
  \begin{equation}\label{checkrel}
  (\YL_1  + \YL_2) U_1^\blambda  = 0.
  \end{equation}
  For this we first write $ {\color{black}{(-1)^{e-1}}}U_1^\blambda $ in the following form
\begin{equation}\label{checkquad}
{\color{black}{(-1)^{e-1}}}U_1^\blambda= \raisebox{-.43\height}{\includegraphics[scale=0.7]{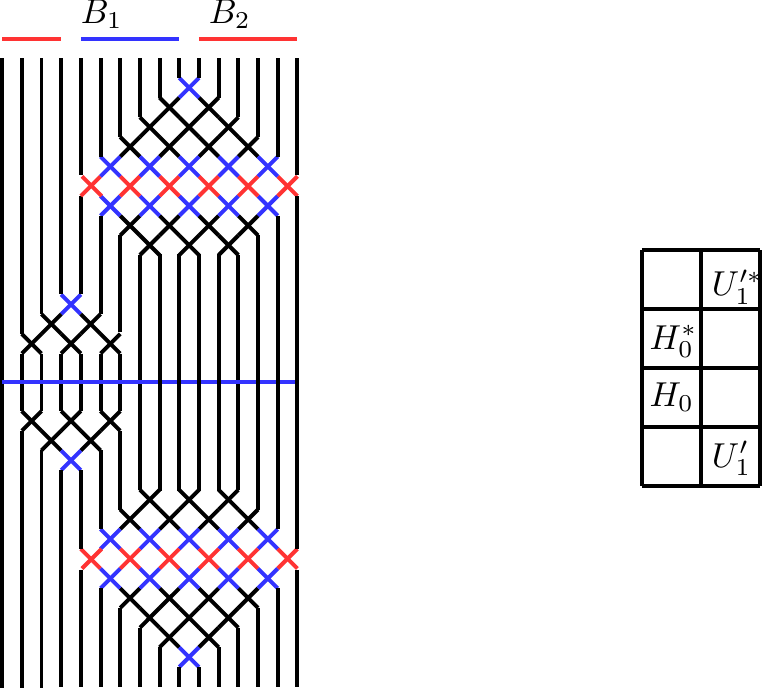}}
\end{equation}
We have here used $ e=6 $ as in the examples of the proof of Theorem \ref{bigtheorem}.
The middle blue horizontal line has the same meaning as in \ref{KLRmstX}; 
its residue sequence is $ \bi^{\bmu}$ {\color{black}{for the corresponding $ \bmu$}}.  Using this we get
\begin{equation}\label{checkquadX}
{\color{black}{(-1)^{e-1}}}\YL_1 U_1^\blambda \, \, \raisebox{-.43\height}{\includegraphics[scale=0.6]{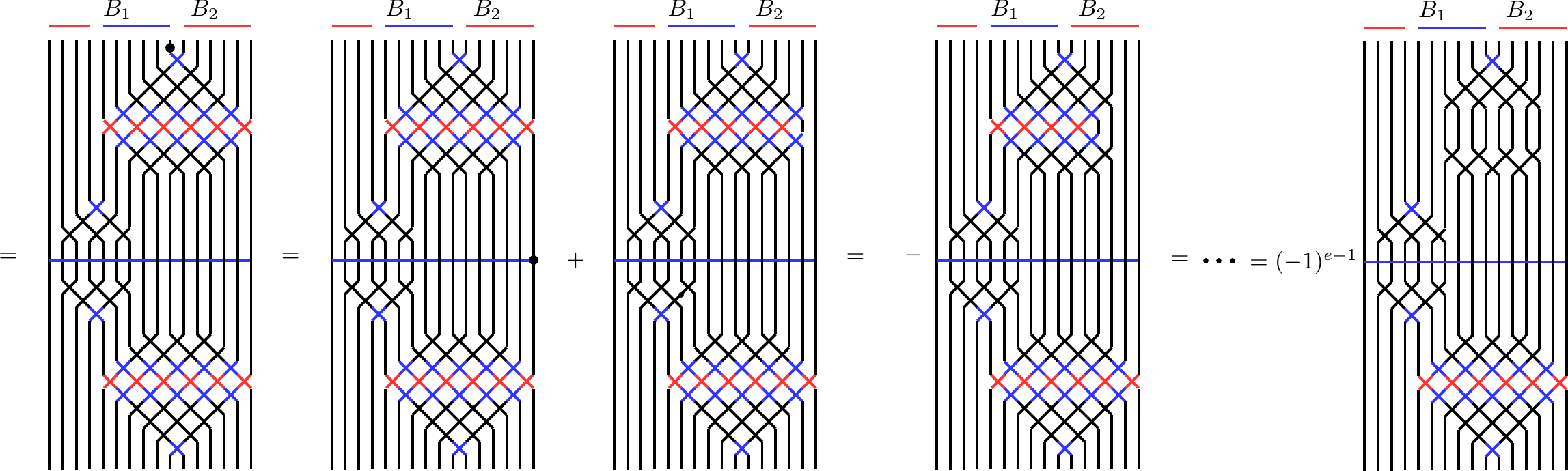}}
\end{equation}
where the first equality comes from relation \eqref{punto-arriba}, the second from
Lemma \ref{lemma muere y a la izquerda}
and the other equalities from \ref{trenzadesataA}. 
On the other hand, for ${\color{black}{(-1)^{e-1}}}\YL_2 U_1^\blambda $  we have almost the same expansion
with only a sign change coming from relation \ref{punto-arriba}: 
\begin{equation}\label{checkquadXX}
{\color{black}{(-1)^{e-1}}}\YL_2 U_1^\blambda   \, \,   \raisebox{-.43\height}{\includegraphics[scale=0.6]{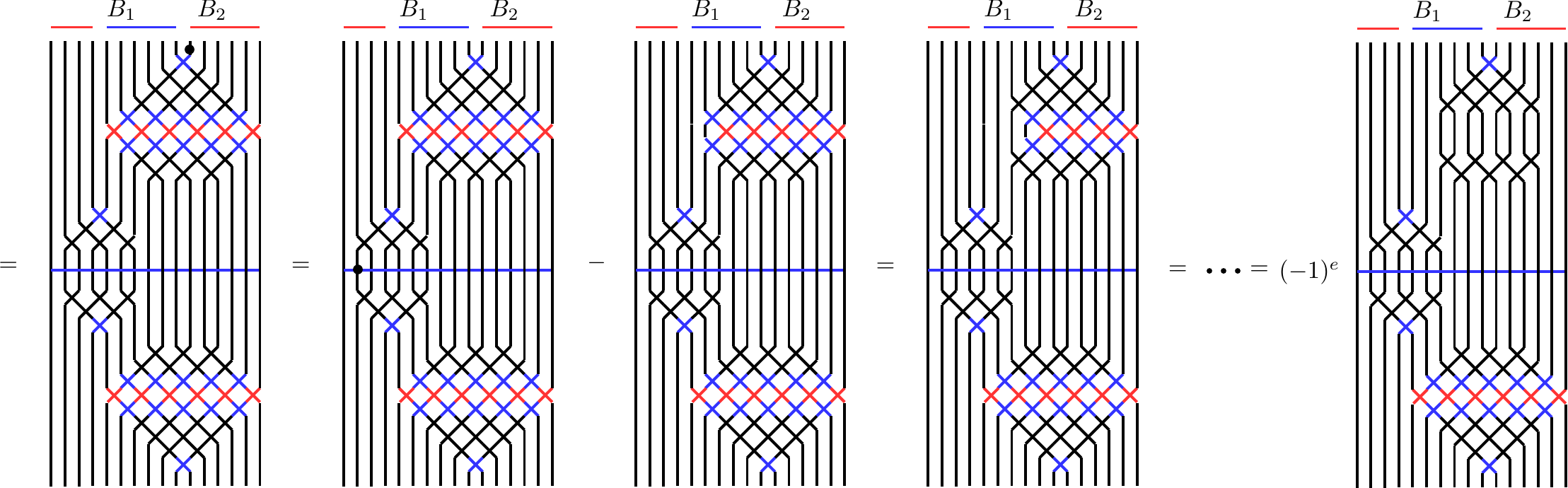}}
\end{equation}
Comparing \ref{checkquadX} and \ref{checkquadXX} we see that \eqref{checkrel} holds. The
Theorem is proved.
\end{proof}

\begin{remark}\label{remarkconcerningJM} \rm
  Using \eqref{jugada-clave-JM} and {\color{black}{\eqref{SiSquare}}} we extend $ (\YL_1)^2 = 0 $ 
  to 
  $ (\YL_i)^2 = 0 $ to all $ i$. Thus 
  the isomorphism $ \varphi:  \mathbb{NB}_n \cong A_w$ gives us a proof of
Lemma \ref{JMA}. The recursive formula for the $ \YY_i$'s is given by 
{\color{black}{\ref{jugada-clave-JM}}}.
  \end{remark} 


\section{A presentation for $\trunc$ for $\blambda$ regular}
In this section we consider the case where $\blambda $ is regular, in other words we assume that $R>0$,
see Definition \ref{definition epsilon}.  We define $ \trunc :=e( \bi^{\blambda } ) \B e( \bi^{\blambda}  ) $
just as in the singular case
but, as we shall see, the regular case is slightly more complicated than the  singular case since 
we need an extra generator. Recall first the function $ f=f_{n,m} $ from \ref{functionblocks}
which was used to define the full path interval in the singular case, see \ref{DEFfullBlock}.
Let $ K $ be as in Definition \ref{definition epsilon}.
Then in the regular case there is an extra \emph{non-full} path interval $ B_{last} $ defined as follows
\begin{equation}\label{lastBlock}
B_{last} := [f(K+1)+1, f(K+1)+2, \ldots, f(K+1)+R] =  [f(K+1)+1, f(K+1)+2, \ldots,n]. 
\end{equation}
For example in the situation described in Figure \ref{pascalAAA}, we have $ n=25, e=5, m=2 $
and so $ K=4, R =2$ and therefore
\begin{equation}\label{lastBlock}
B_1=[4,5,6,7,8], \, B_2=[9,10,11,12,13], \, B_3=[14,15,16,17,18], \, B_4=[19,20,21,22,23], B_{last} := [24,25].
\end{equation}

\begin{equation}\label{PASCALINOUT}
  \raisebox{-.43\height}{\includegraphics[scale=0.8]{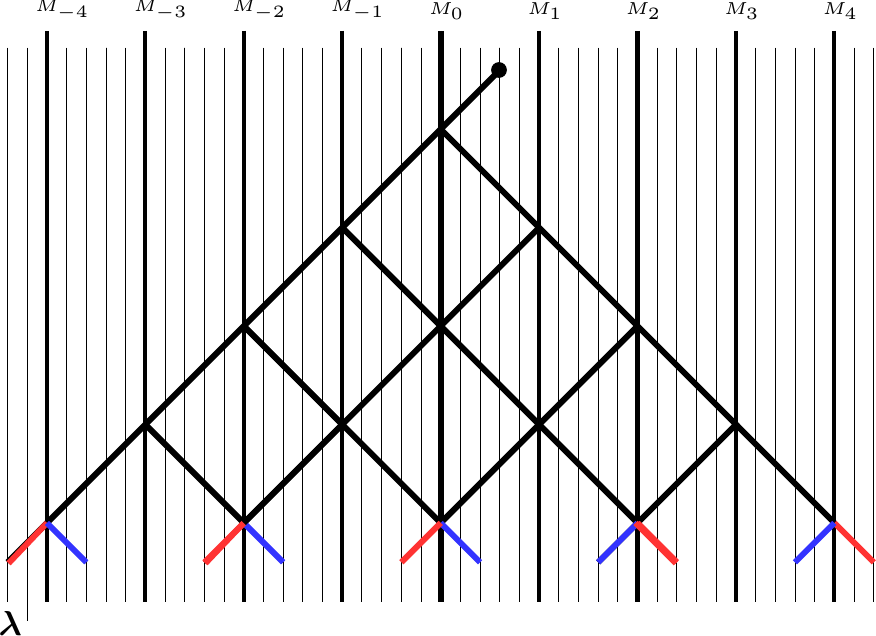}}
\end{equation}

Let $\overline{n}:= n - R $ and let $ \overline{\blambda} := ( 1^{ \overline{n}}, 1^0) \in \OneParSing$. We notice that 
\begin{equation} \label{nbar}
	\bar n = f(K+1). 
\end{equation}
It is clear from the definitions that  $ \overline{\blambda} $ is singular.
On the other hand, any $ \overline{\s} \in \std (\bi^{\bar \blambda }) $ gives rise to two tableaux
$ \overline{\s}(I) $ and $ \overline{\s}(O) $, in
$ \std (\bi^\blambda) $, as follows.
The tableau $ \overline{\s}(I) $ (resp. $ \overline{\s}(O) $) is defined as the unique
tableau $ \T \in \std (\bi^\blambda)$ whose path $ P_{\T } $ 
coincides with $  P_{ \overline{\s} } $ on the restriction to $ [1,2, \ldots, \overline{n}] $ and whose restriction to
$ B_{last} $ is a straight line that moves $ P_{\T } $ closer to (resp. further away from) the central vertical axis of the Pascal triangle.
We say that $ \T $ is an \emph{inner tableau} (resp. an \emph{outer tableau}) 
if it is of the form
$ \T =  \overline{ \s}(I)  $ (resp. $ \T =   \overline{\s}(O)$)  for some
$  \overline{\s} \in \std (\bi^{\bar \blambda}) $. 
It is easy to see that 
any tableau $ \T $ in $ \std (\bi^\blambda) $
is of the form $ \T =  \overline{\s}(I) $ or $ \T =  \overline{\s}(O) $ for a
unique $ \overline{ \s}\in \std ( \bi^{\bar \blambda} )$.

In \ref{PASCALINOUT} we have indicated with blue the restriction to
$ B_{last}$ of the paths corresponding to inner tableaux, 
and with red the restriction to $ B_{last} $ of the paths corresponding to outer tableaux.
Note that $ P_{\blambda} $ is always the path of an outer tableau.

\medskip

Let $ \bi^{last} \in \II^{ R } $ be the restriction to $ B_{last}$ of the residue sequence for $ \bi^{\blambda} $
and let $ e(\bi^{last}) $ be the corresponding idempotent diagram, consisting of $  R  $ vertical lines
with residue sequence $ \bi^{last}$.  
For $ x \in \mathbb{B}_{\bar n} $ we define the element $\iota (x):= x \wedge e( \bi^{last}) \in \B $ as the horizontal concatenation of $ x$ 
with $ e(\bi^{last}) $ on the right. We notice that   
\begin{equation} \label{iota property one}
	\iota (xy) = xy \wedge e( \bi^{last}) =( x \wedge e( \bi^{last}) )(y\wedge e( \bi^{last}) )= \iota (x) \iota (y), 
\end{equation}
for all $x,y\in   \mathbb{B}_{\bar n}$. Furthermore, 
\begin{equation} \label{iota property two}
	\iota (e(\bi^{\bar{\blambda}} ))= e(\bi^{\bar{\blambda}} ) \wedge e( \bi^{last})= e(\bi^\blambda). 
\end{equation}
    {\color{black}{We shall shortly prove that
$m_{ \s \T}^{\bmu} =  \iota( m_{ \overline{\s} \overline{\T}}^{\overline{\bmu}})$. Combining this with}}
    \eqref{iota property one} and \eqref{iota property two} we conclude that {\color{black}{there is an
    algebra inclusion}} 
\begin{equation}
	\iota ( \mathbb{B}_{\bar n} (\bar \blambda)) \subset  \trunc . 
\end{equation}
We define $ U_i^\blambda:=  \iota(U_i^{ \bar \blambda}) \in \trunc $ and $ \mathcal{Y}_j^{\blambda}:= \iota(\mathcal{Y}_j^{\bar \blambda}) \in \trunc$, for $1 \leq i <K$ and $1\leq j \leq K$.  


\medskip \noindent
It turns out that the outer tableaux are easier to handle than the inner tableaux.

\begin{lemma}\label{lemma outer elements}
  Let $\blambda $ be regular and suppose that $ \s =  \overline{\s}(O) $ and $ \T =  \overline{\T}(O) $
  are outer tableaux in $\std_{\blambda} (\bmu)$. Let $ \overline{\bmu}$ be the shape of $\overline{\s}$ and $\overline{\T }$.
  Then we have that 
\begin{equation}\label{thefirststatement}
m_{ \s \T}^{\bmu} =  \iota( m_{ \overline{\s} \overline{\T}}^{\overline{\bmu}}).
\end{equation}
Consequently, $ m_{ \s \T}^{\bmu} $ belongs to the subalgebra of $ \trunc $ generated by
$  \{ U_i^\blambda  \, |\, 1\leq i < K   \}$ and $  \mathcal{Y}_1^{\blambda} $.
\end{lemma}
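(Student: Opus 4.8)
The plan is to prove the identity $m_{\s\T}^{\bmu} = \iota(m_{\overline{\s}\,\overline{\T}}^{\overline{\bmu}})$ first, and then to deduce the membership statement as an easy corollary using the results already established in the singular case. For the first step, I would unwind the definition $m_{\s\T}^{\bmu} = \psi_{d(\s)}^{\ast}\, e(\bi^{\bmu})\, \psi_{d(\T)}$ and compare it term by term with $\iota(\psi_{d(\overline{\s})}^{\ast}\, e(\bi^{\overline{\bmu}})\, \psi_{d(\overline{\T})})$. The key combinatorial input is that for an outer tableau $\T = \overline{\T}(O)$, the path $P_\T$ agrees with $P_{\overline{\T}}$ on $[0,\overline{n}]$ and then continues in a straight line (moving away from the central axis) on $B_{last}$. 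In Algorithm \ref{algorithm paths} applied to $\T$, the straight-line portion of $P_\T$ on $B_{last}$ can be reached from $P_{\overline{\T}}$'s straight-line continuation with \emph{no} additional moves needed — that is, $d(\T) = d(\overline{\T})$ as elements of $\Si_n$ (where we regard $\Si_{\overline{n}} \subset \Si_n$), since the favorite reduced expression for $d(\overline{\T})$ already produces the correct behaviour on the last $R$ strands, which are simply through lines. Hence $\psi_{d(\T)} = \iota(\psi_{d(\overline{\T})})$ directly from the definitions, and likewise $\psi_{d(\s)}^{\ast} = \iota(\psi_{d(\overline{\s})}^{\ast})$ using that $\iota$ commutes with the anti-automorphism $\ast$ (both just append $R$ vertical strands, and $\ast$ is reflection through a horizontal axis). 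Finally $e(\bi^{\bmu}) = \iota(e(\bi^{\overline{\bmu}}))$ since $\bi^{\bmu}$ is obtained from $\bi^{\overline{\bmu}}$ by appending $\bi^{last}$ — this is exactly \eqref{iota property two} applied with $\overline{\bmu}$ in place of $\overline{\blambda}$, which holds by the same argument. Multiplying these three identities and using the multiplicativity \eqref{iota property one} of $\iota$ gives \eqref{thefirststatement}.

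For the consequence, I would argue as follows. By Theorem \ref{theorem generating set without superfluous generators singular} applied to the singular bipartition $\overline{\blambda}$, the algebra $\mathbb{B}_{\overline{n}}(\overline{\blambda})$ is generated by $G_1(\overline{\blambda}) = \{U_i^{\overline{\blambda}} \mid 1 \le i < K\} \cup \{\mathcal{Y}_1^{\overline{\blambda}}\}$. In particular the cellular basis element $m_{\overline{\s}\,\overline{\T}}^{\overline{\bmu}}$ lies in the subalgebra of $\mathbb{B}_{\overline{n}}(\overline{\blambda})$ generated by $G_1(\overline{\blambda})$. Since $\iota$ is an algebra homomorphism by \eqref{iota property one} and \eqref{iota property two}, applying $\iota$ and using \eqref{thefirststatement} shows that $m_{\s\T}^{\bmu} = \iota(m_{\overline{\s}\,\overline{\T}}^{\overline{\bmu}})$ lies in the subalgebra of $\trunc$ generated by $\iota(G_1(\overline{\blambda})) = \{\iota(U_i^{\overline{\blambda}})\} \cup \{\iota(\mathcal{Y}_1^{\overline{\blambda}})\} = \{U_i^{\blambda} \mid 1 \le i < K\} \cup \{\mathcal{Y}_1^{\blambda}\}$, where the last equality is precisely the definition of $U_i^{\blambda}$ and $\mathcal{Y}_j^{\blambda}$ given just above the Lemma. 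This is exactly the claimed statement.

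I expect the main obstacle to be the careful verification that $d(\T) = d(\overline{\T})$ — more precisely, that the favorite reduced expression for $d(\overline{\T})$, when read inside $\Si_n$, already produces $P_\T$ from $P_{\blambda_{\max}}$ with no extra transpositions acting on the last $R$ strands. This requires being precise about how $\blambda_{\max}$, $\overline{\blambda}_{\max}$ and the regions $h_i, u_i, u_i'$ behave when one passes from $n$ to $\overline{n} = n - R$; intuitively the extra $R$ strands are inert (they never lie on a wall in the relevant range and carry no crossings in the favorite expression), but this should be spelled out, perhaps by noting that the region decomposition of the Pascal triangle for $\bi^{\blambda}$ restricted to heights $\le \overline{n}$ coincides with that for $\bi^{\overline{\blambda}}$, and that no region of an outer tableau's path extends above height $\overline{n}$. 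Once this is granted, everything else is a routine bookkeeping exercise with the horizontal concatenation operation $\iota$ and the anti-automorphism $\ast$.
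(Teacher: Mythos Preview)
Your proposal is correct and follows essentially the same approach as the paper: the paper likewise reduces everything to the key fact $d(\s)=d(\overline{\s})$ and $d(\T)=d(\overline{\T})$ (stated as \eqref{outerfact} and justified only by pictorial examples), then uses $e(\bi^{\bmu})=\iota(e(\bi^{\overline{\bmu}}))$ and the multiplicativity of $\iota$ to conclude \eqref{thefirststatement}, and finally invokes Theorem~\ref{theorem generating set without superfluous generators singular } for the consequence. Your discussion of the potential obstacle is more careful than what the paper actually provides, but the argument is the same.
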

\begin{proof}
  Using Theorem \ref{theorem generating set without superfluous generators singular } we see that the second statement
  follows from the first statement \ref{thefirststatement}. In order to prove the first statement
  we note that since $ \s $ and $ \T $ are outer tableaux we have that
\begin{equation}\label{outerfact}
d(\s) = d(\overline{\s}) \, \, \, \mbox{and} \, \, \, d(\T) = d(\overline{\T}).
\end{equation}
Here are examples illustrating \ref{outerfact}
\begin{equation}\label{cutdiagramsREG}
\raisebox{-.5\height}{\includegraphics[scale=0.5]{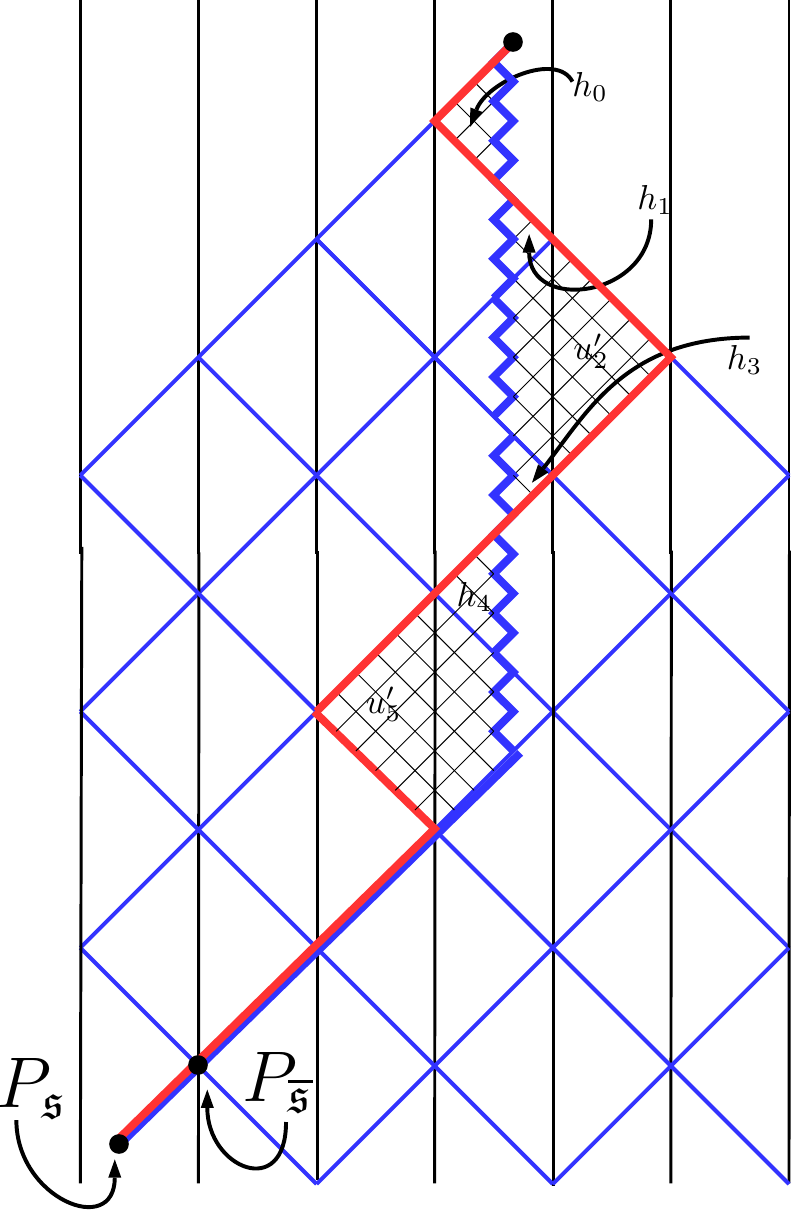}}
 \, \, \, \, \, \, \, \, \,  \, \, \, \, \, \, \, \, \,  \, \, \, \, \, \, \, \, \,  \, \, \, \, \, \, \, \, \, 
\raisebox{-.5\height}{\includegraphics[scale=0.5]{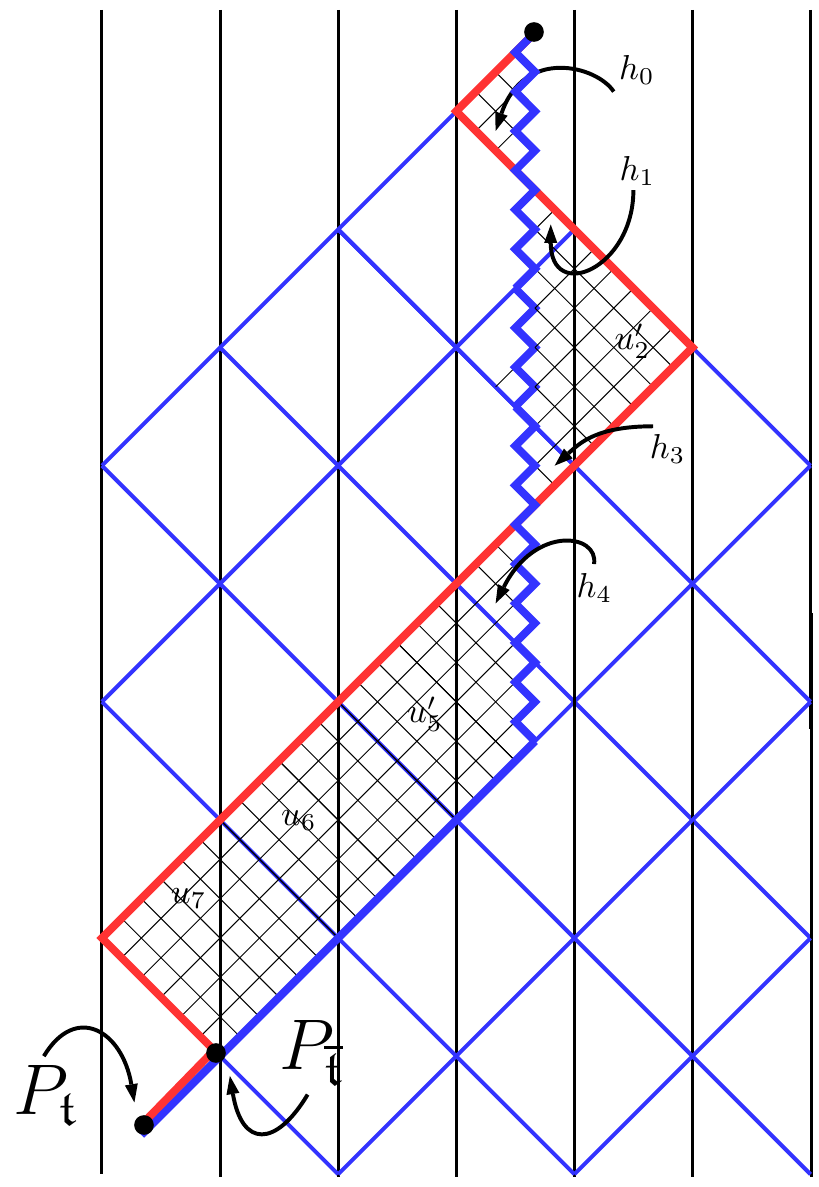}}
\end{equation}
On the other hand we have that $ e(\bi^\bmu) = \iota(e(\bi^{\overline{\bmu}}))$ and so
we obtain 
\begin{equation}
	\iota( m_{ \overline{\s} \overline{\T}}^{\overline{\bmu}}) = \iota ( \psi_{d(\bar \s )}^\ast e(\bi^{\bar \bmu } ) \psi_{d( \bar \T)} ) = \iota ( \psi_{d(\bar \s )}^\ast )  \iota (  e(\bi^{\bar \bmu } ) )  \iota ( \psi_{d( \bar \T)} )  = \psi_{d( \s )}^\ast e(\bi^{ \bmu }) \psi_{d(  \T)}  = m_{ \s \T}^{\bmu}.  
\end{equation}
\end{proof}

Suppose now that $ \s = \overline{\s}(I) \in \std_{\blambda} (\bmu) $ is an inner tableau.
Then $ d(\s ) $ and $ d(\overline{\s}) $ are different but still closely related.
Let $ a_{\s} $ be the region of the Pascal triangle bounded by $ P_{\s} $ and  $P_{\bmu}$ and
let  $ a_{\overline{\s}} $ be the region bounded by $ P_{\overline{\s}} $ and  $P_{\overline{\bmu}}$, where $\bar \bmu$ denotes the shape of $\bar \s$.
Then $ a_{\s} = a_{\overline{\s}} \, \cupdot \,s_{\bmu} $ where $ s_{\bmu} $ is the region bounded by $P_\bmu$ and $P_{ \T^{\bar \bmu  }(I)}$, see \ref{cutdiagramsREGSOUtER} for two examples in which we have
indicated $ s_{\bmu} $ with the color red.
Note that $ s_{\bmu} $ only depends on $ \bmu $ and not on $\s$, which is the reason for our notation.
When applying Algorithm \ref{algorithm paths}
there is an independence 
between the regions $ a_{\overline{\s}} $ and $ s_{\bmu}$. Indeed, 
let 
$  A_{\overline{\s}} \in \Si_n $ be the element obtained by filling in $  a_{\overline{\s}} $ as in
the algorithm, and let similarly
$  S_{\bmu} \in \Si_n $ be the element obtained by filling in $  s_{\bmu} $.
Then we have that
\begin{equation}\label{thenwehavethat}
  d(\s) = S_{\bmu}  A_{\overline{\s}}  .
\end{equation}

\begin{equation}\label{cutdiagramsREGSOUtER}
\raisebox{-.5\height}{\includegraphics[scale=0.5]{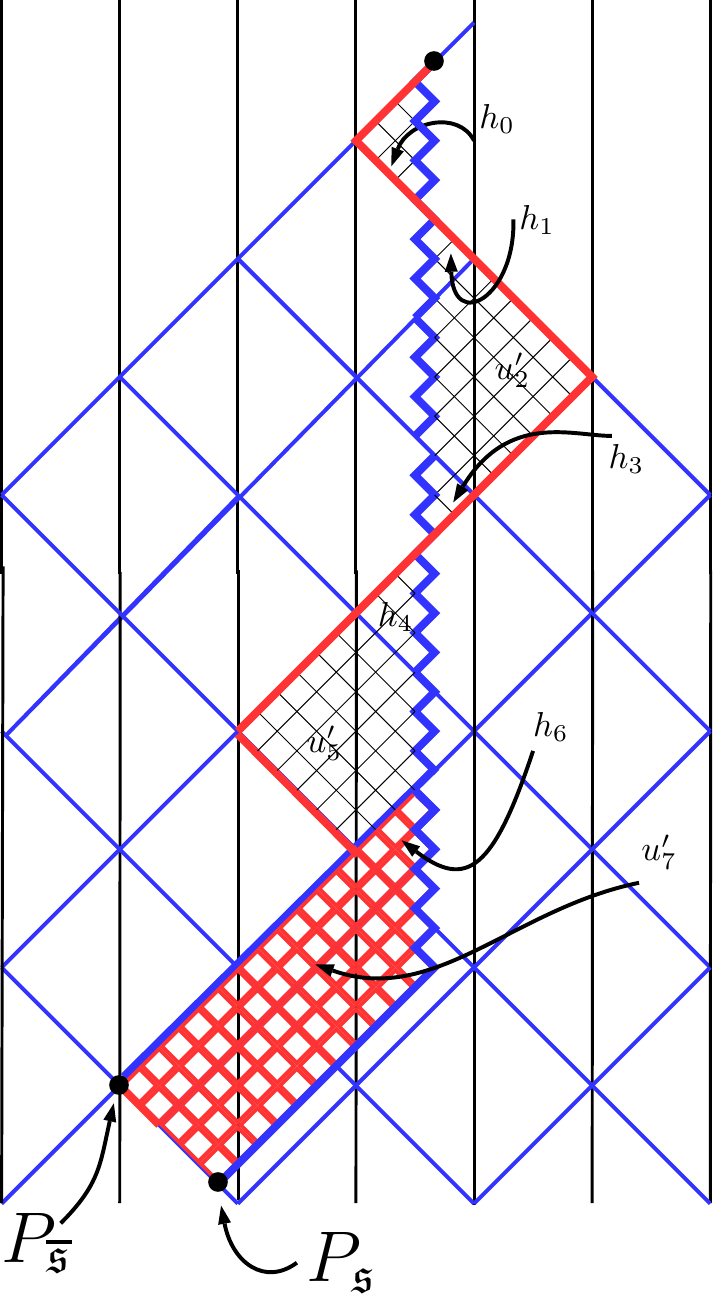}}
 \, \, \, \, \, \, \, \, \,  \, \, \, \, \, \, \, \, \,  \, \, \, \, \, \, \, \, \,  \, \, \, \, \, \, \, \, \, 
\raisebox{-.5\height}{\includegraphics[scale=0.5]{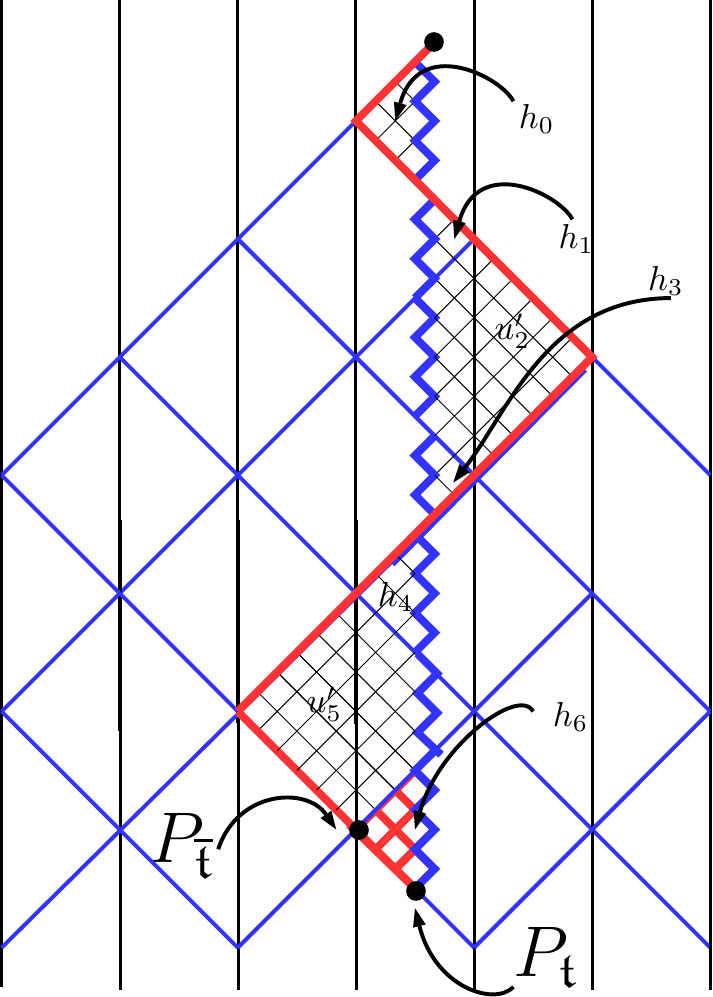}}
\end{equation}

\begin{definition}
Let $ \s = \overline{\s}(I) $ be an inner tableau. We say that   $\s$ is central if $\bar \s$ is central. 
\end{definition}

We can now prove the following Lemma.

\begin{lemma} \label{lemma outer elements IN}
  Let $ \s = \overline{\s}(I) $ and $ \T = \overline{\T}(I) $ be central inner tableaux in $\std_{\blambda}(\bmu)$.  Let  $\bar \bmu$ be the shape of $\bar \s$ and $\bar \T$. Then, we have
  \begin{equation}\label{lemmaouterelements In}
   m^{\bmu}_{ \s \T}  
    = \pm 
    \left\{  \begin{array}{cc}
      (y_{ \bar n +1} - y_{ \bar n}) \iota (m^{\overline{\bmu}}_{\overline{ \s} \overline{\T}})
=  \iota (m^{\overline{\bmu}}_{\overline{ \s} \overline{\T}}) (y_{ \bar n +1} - y_{ \bar n}),
      & \mbox{ if } \bmu \not\in {\cal A}^0; \\ & \\
 y_{ \bar n +1} \iota ( m^{\overline{\bmu}}_{\overline{ \s} \overline{\T}} )=
 \iota ( m^{\overline{\bmu}}_{\overline{ \s} \overline{\T}} ) y_{ \bar n +1}, 
& \mbox{ if } \bmu \in {\cal A}^0. 
\end{array} \right.
    \end{equation}  
\end{lemma}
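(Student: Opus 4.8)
The idea is to track carefully what happens when, in the favorite reduced expression for $d(\s)$, we add the extra region $s_{\bmu}$ on top of the region $a_{\overline{\s}}$ that computes $d(\overline{\s})$. By \ref{thenwehavethat} we have $d(\s) = S_{\bmu} A_{\overline{\s}}$ with the two filling-in processes independent, so $\psi_{d(\s)} = \psi_{S_{\bmu}} \psi_{A_{\overline{\s}}}$ for compatible reduced expressions, and hence
\begin{equation}
m^{\bmu}_{\s\T} = \psi_{d(\s)}^{\ast} e(\bi^{\bmu}) \psi_{d(\T)}
= \psi_{A_{\overline{\s}}}^{\ast}\, \psi_{S_{\bmu}}^{\ast}\, e(\bi^{\bmu})\, \psi_{S_{\bmu}}\, \psi_{A_{\overline{\T}}}.
\end{equation}
So the whole Lemma reduces to understanding the ``sandwich'' $\psi_{S_{\bmu}}^{\ast} e(\bi^{\bmu}) \psi_{S_{\bmu}}$. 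The region $s_{\bmu}$ is a thin strip living only over the last path interval $B_{last}$, which has width $R < e$, and $S_{\bmu}$ is the permutation that pushes the restriction of the path across $B_{last}$ from the outer to the inner side. I would first write $S_{\bmu}$ explicitly as a product of the $s_{[i,j]}$-type strings from \ref{sameparity} (exactly as in the diagrams \ref{cutdiagramsREGSOUtER}), observing that because $B_{last}$ is \emph{not} full, the corresponding $\psi$-diagram has $R$ crossing lines running into a ``wall'' rather than the full $e$, and that $\bi^{\mu}$ differs from $\bi^{\overline\bmu}\wedge\bi^{last}$ only in the $B_{last}$-part.

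The heart of the computation is then a local KLR calculation, identical in spirit to the one carried out in the proof of Theorem \ref{bigtheorem} for the configuration $C_i$ (see \ref{lastresolve}--\ref{lastresolveA}) and to Lemma \ref{lemma second reduction of ys}: using the quadratic relations \ref{KLRquadratic}, \ref{KLRquadraticA}, the dot-slide \ref{punto-arriba}, the braid relations \ref{trenzadesataA}, and crucially the ``dies on the left'' Lemma \ref{lemma muere y a la izquerda} (applicable because the zigzag/central part of the relevant path lies in ${\cal A}^0\setminus(M^0\cup M^1)$), one resolves all the crossings in $\psi_{S_{\bmu}}^{\ast} e(\bi^{\bmu}) \psi_{S_{\bmu}}$ down to a single dot. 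I expect the outcome to be exactly
\begin{equation}
\psi_{S_{\bmu}}^{\ast}\, e(\bi^{\bmu})\, \psi_{S_{\bmu}} = \pm\, (y_{\bar n+1} - y_{\bar n})\, e(\bi^{\bmu})
\qquad\text{or}\qquad
\psi_{S_{\bmu}}^{\ast}\, e(\bi^{\bmu})\, \psi_{S_{\bmu}} = \pm\, y_{\bar n+1}\, e(\bi^{\bmu}),
\end{equation}
according to whether $\bmu\notin{\cal A}^0$ or $\bmu\in{\cal A}^0$: in the first case the strip $s_{\bmu}$ crosses a wall and two terms survive after applying \ref{KLRquadratic}, in the second case the geometry forces the path to touch $M_0$ or $M_1$ from inside ${\cal A}^0$, so Lemma \ref{lemma muere y a la izquerda} kills one of the two terms (exactly the dichotomy one sees comparing \ref{checkquadX} with \ref{checkquadXX}). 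The sign $\pm$ accumulates from the quadratic relations and is harmless.

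Granting this, the proof finishes quickly. Since $y_{\bar n}$, $y_{\bar n+1}$ and $e(\bi^{\overline\bmu})$ act only on strands disjoint from (or at the boundary with) the $\overline{n}$-strand block, they commute with $\psi_{A_{\overline{\s}}}^{\ast}$ and $\psi_{A_{\overline{\T}}}$, which are supported on the first $\overline n$ strands; so we may pull $y_{\bar n+1}-y_{\bar n}$ (resp. $y_{\bar n+1}$) to the outside, on either side, giving both displayed equalities in \ref{lemmaouterelements In} simultaneously. What remains in the middle is $\psi_{A_{\overline\s}}^{\ast} e(\bi^{\mu}) \psi_{A_{\overline\T}}$, and using $e(\bi^{\mu}) = \iota(e(\bi^{\overline\mu}))$ together with the fact that $\psi_{A_{\overline\s}}$, $\psi_{A_{\overline\T}}$ are (horizontal extensions by $e(\bi^{last})$ of) $\psi_{d(\overline\s)}$, $\psi_{d(\overline\T)}$, the multiplicativity \ref{iota property one} and \ref{iota property two} of $\iota$ give $\psi_{A_{\overline\s}}^{\ast} e(\bi^{\mu}) \psi_{A_{\overline\T}} = \iota(m^{\overline\bmu}_{\overline\s\,\overline\T})$, exactly as in Lemma \ref{lemma outer elements}. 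Combining these identities yields \ref{lemmaouterelements In}.

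\textbf{Main obstacle.} The only real work is the local KLR computation identifying $\psi_{S_{\bmu}}^{\ast} e(\bi^{\bmu}) \psi_{S_{\bmu}}$ with a single $y$-term and getting the two cases ($\bmu\in{\cal A}^0$ versus $\bmu\notin{\cal A}^0$) right, in particular checking that the hypotheses of Lemma \ref{lemma muere y a la izquerda} are genuinely met by the central inner path over $B_{last}$, so that precisely one of the two terms dies in the $\bmu\in{\cal A}^0$ case. This is a variation of computations already done (proof of Theorem \ref{bigtheorem}, Lemma \ref{lemma second reduction of ys}, and especially \ref{checkquadX}--\ref{checkquadXX}), but one has to be careful with the non-full interval $B_{last}$ of width $R<e$: the $\psi$-string $S_{\bmu}$ is shorter than the full diamond string, so the resolution terminates earlier, and one must verify that no extra boundary relations \ref{mal-inicio}--\ref{dot-al-inicio} interfere. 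The commutation/centrality claims and the $\iota$-bookkeeping are routine once the strand supports are identified.
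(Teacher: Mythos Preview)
Your strategy matches the paper's: both resolve the $S_{\bmu}$-crosses using the KLR relations (quadratic, dot--slide, the braid relations \eqref{trenzadesataA}, and Lemma~\ref{lemma muere y a la izquerda} for the $\bmu\in{\cal A}^0$ case) to reduce $m^{\bmu}_{\s\T}$ to $\iota(m^{\overline\bmu}_{\overline\s\,\overline\T})$ times a $y$-factor. The paper does this holistically on the full diagram and arrives at $\iota(m^{\overline\bmu}_{\overline\s\,\overline\T})\,\psi_{\bar n}^{\,2}$, with the surviving $\psi_{\bar n}^{\,2}$ sitting at the bottom where the residue sequence is $\bi^{\blambda}$, so that the quadratic relation yields $\pm(y_{\bar n+1}-y_{\bar n})$.

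Your modular version has a genuine bookkeeping error. After resolving the sandwich $\psi_{S_{\bmu}}^{\ast}e(\bi^{\bmu})\psi_{S_{\bmu}}$, the boundary idempotent is \emph{not} $e(\bi^{\bmu})$ but $e(\bi^{\T'})$ with $\T'=\T^{\bmu}S_{\bmu}=\T^{\overline\bmu}(I)$; consequently the identity ``$e(\bi^{\bmu})=\iota(e(\bi^{\overline\bmu}))$'' you invoke is false here (Lemma~\ref{lemma outer elements} established it only for the \emph{outer} shape). The repair is that $e(\bi^{\T'})=\iota(e(\bi^{\overline\bmu}))$ does hold: the first $\bar n$ entries of $\bi^{\T'}$ are $\bi^{\overline\bmu}$ by construction, and a short residue check (using that $\overline\bmu$ lies on a wall in the $W$-orbit of $\overline\blambda$, so the relevant column length is $\equiv 0\bmod e$) shows the last $R$ entries equal $\bi^{last}$. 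You should also be more careful with the commutation step: $y_{\bar n}$ need not commute with $\psi_{A_{\overline\s}}^{\ast}$ since $\psi_{\bar n-1}$ may well appear there; the paper sidesteps this by keeping the unresolved $\psi_{\bar n}^{\,2}$ and letting it live at the bottom of the full diagram on $e(\bi^{\blambda})$, rather than converting to $y$'s in the middle.
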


\begin{proof}
  The proof is a calculation similar to the ones done in Lemma \ref{lemma second reduction of ys}
  and Theorem \ref{theoremsing}. Our general strategy is to first focus on the crosses that come from the region $s_{\bmu}$. Let us prove the first formula in \ref{lemmaouterelements In}.
  {\color{black}{Thus we assume that we are in the case where
$\bmu$ does not belong to the fundamental alcove}}. This case is a bit easier since, as we will see below, the crosses associated to the $s_{\bmu}$ region can be eliminated without altering the other parts of the diagram. 
  We illustrate the computation in the case where $ \s $ is given by  
  the first diagram of \ref{cutdiagramsREGSOUtER} and where $ \T = \s$. For these choices we calculate as
  follows, using the defining relations in 
  $ \B$ together with \ref{trenzadesataA}.
  \begin{equation}   m_{ \s \T} \,\, \mbox{\large$=$} \,\,
      \raisebox{-.42\height}{\includegraphics[scale=0.6]{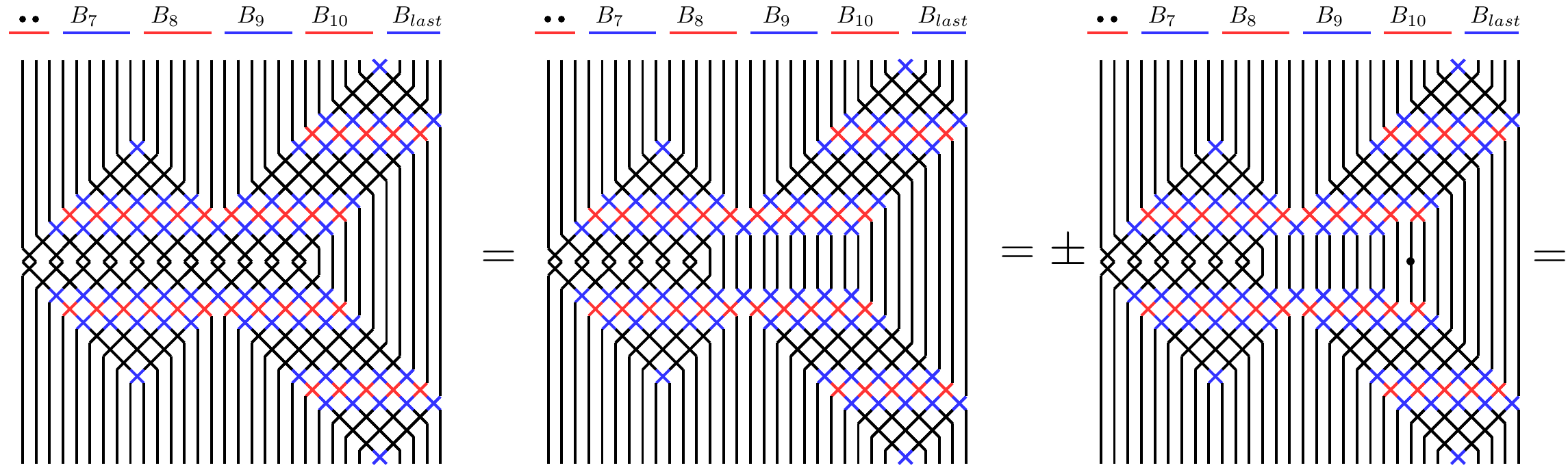}} 
  \end{equation}
    \begin{equation} 
      \raisebox{-.42\height}{\includegraphics[scale=0.6]{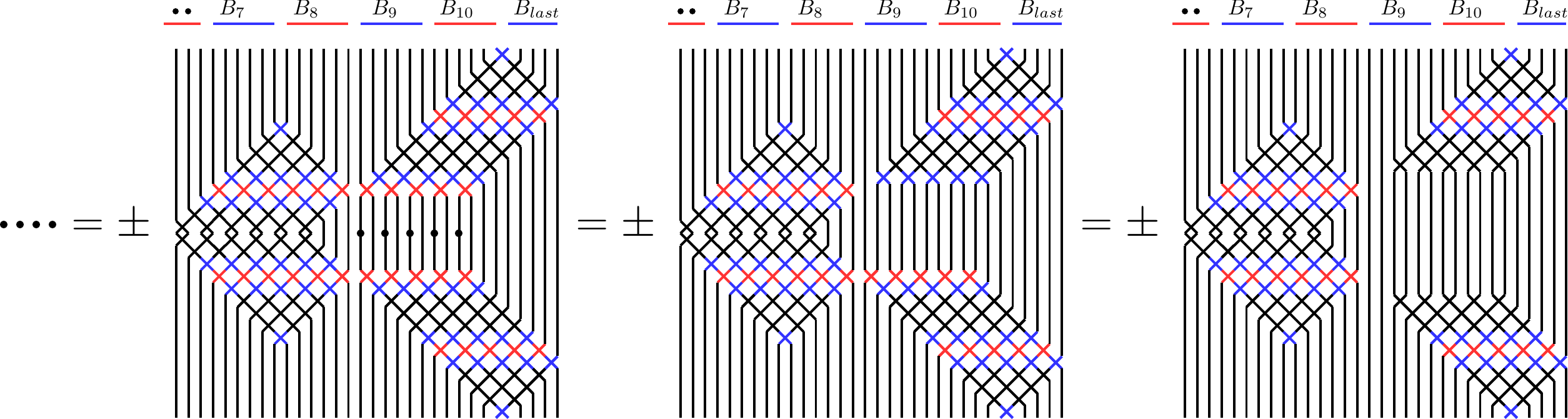}} 
  \end{equation}
    \begin{equation} 
      \raisebox{-.42\height}{\includegraphics[scale=0.6]{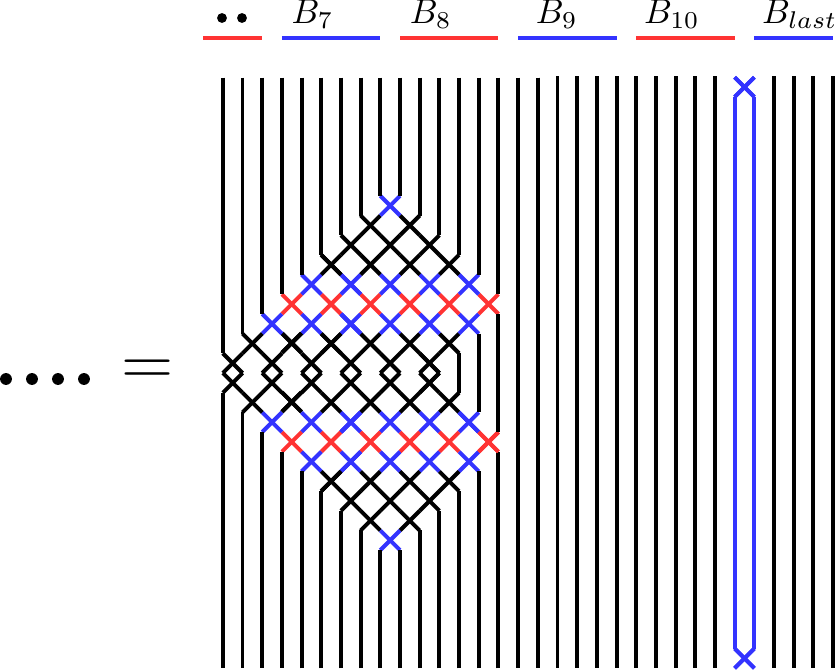}} \, \,  \mbox{\large  $=$ }
            \iota ( m^{\overline{\bmu}}_{\overline{ \s} \overline{\T}} ) \psi_{\bar n}^2 \mbox{\large  $=$ }
            (y_{\bar n+1} - y_{\bar n}) \iota ( m^{\overline{\bmu}}_{\overline{ \s} \overline{\T}} )
            \mbox{\large$=$} \,\, \iota ( m^{\overline{\bmu}}_{\overline{ \s} \overline{\T}} )
 (y_{\bar n +1} - y_{\bar n})
    \end{equation}
as claimed. The general case is done the same way.

\medskip
Let us now prove the second formula in \ref{lemmaouterelements In}, {\color{black}{corresponding to
  the case where $ \bmu $ belongs to the fundamental alcove.}}
In this case $ s_{ \bmu} $ is as small as possible, as for example in 
the second diagram of \ref{cutdiagramsREGSOUtER}.
The proof is essentially the same as the proof of the first formula with the only difference being
the vanishing of the factor $ y_{\bar n} $ which is due to Lemma \ref{lemma muere y a la izquerda}.
Let us do the calculation in the case where $ \s $ is given by the
second diagram of \ref{cutdiagramsREGSOUtER}, and $ \T = \s $.
We have then 

\begin{equation}\label{result}
 m_{ \s \T}^{\bmu}    =  \raisebox{-.42\height}{\includegraphics[scale=0.6]{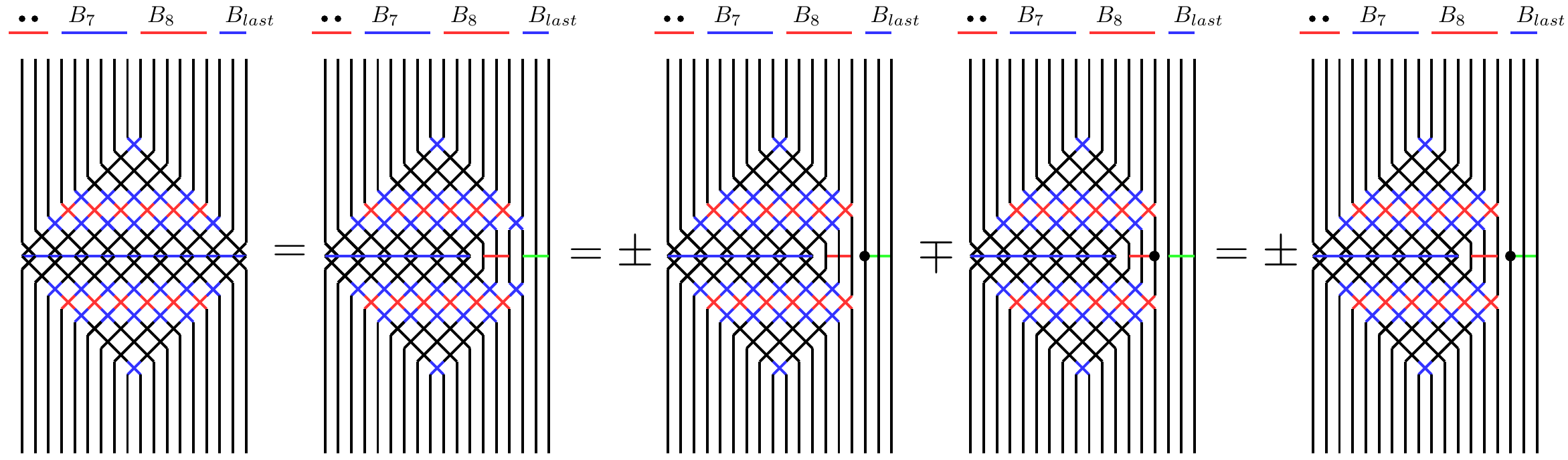}} 
\end{equation}  
where the blue horizontal, {\color{black}{red and green lines have the same meaning as in \ref{KLRmstX}.
    The fact that the fourth diagram of \ref{result} vanishes is
    shown using Lemma \ref{lemma muere y a la izquerda},
    arguing the same way as two paragraphs above \ref{KLRmstXBlock}, in the proof of Theorem \ref{bigtheorem}.}} This proves the Lemma.

\end{proof}

Suppose that $ i $ in any element of $ B_{last}$. Then we extend
the definition in \ref{blocki} by setting 
\begin{equation}
  \YL_{K+1}:= y_i e(\bi^\blambda) \in \trunc .
\end{equation}
We get from Lemma \ref{blockcancel} that $   \YL_{K+1}$ is independent of
the choice of $ i$.

\begin{corollary}\label{genA}
Let $ G_1(\blambda) $ be as in Theorem \ref{theorem generating set without superfluous generators singular }. Then the set 
  \begin{equation}
G_2(\blambda) := G_1(\blambda) \cup \{   \YL_{K+1} \} 
  \end{equation}    
generates $ \trunc$.
\end{corollary}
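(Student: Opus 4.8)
The plan is to show that every graded cellular basis element $m_{\s\T}^{\bmu}$ of $\trunc$ can be written in terms of the elements of $G_2(\blambda)$, just as in the proof of Theorem \ref{bigtheorem} for the singular case. Since $\Basis(\blambda)$ is a basis for $\trunc$ by Lemma \ref{isacellularbasis}, this suffices. By Lemma \ref{lemma reduction to central} — whose statement and proof carry over verbatim to the regular case, because the $u_i$-regions live entirely inside the alcove geometry above $B_{last}$ — we may reduce to the case where both $\s$ and $\T$ are central tableaux; the surrounding factors $\psi_{u(\s)},\psi_{u(\T)}$ are products of $U_i^{\blambda}$'s and hence already lie in the subalgebra generated by $G_2(\blambda)$.

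So suppose $\s,\T\in\std_{\blambda}(\bmu)$ are both central. First I would split into cases according to whether each of $\s,\T$ is an inner or an outer tableau. Recall that every $\T\in\std(\bi^{\blambda})$ is uniquely of the form $\overline{\s}(I)$ or $\overline{\s}(O)$ for a unique $\overline{\s}\in\std(\bi^{\overline{\blambda}})$, and $\overline{\blambda}$ is singular. If both are outer, Lemma \ref{lemma outer elements} gives $m_{\s\T}^{\bmu}=\iota(m_{\overline{\s}\,\overline{\T}}^{\overline{\bmu}})$, which by Theorem \ref{theoremsing} (applied to the singular $\overline{\blambda}$) lies in the subalgebra generated by $\iota$ of the singular generators, i.e.\ in the subalgebra generated by $\{U_i^{\blambda}\}\cup\{\YL_1\}=G_1(\blambda)\subseteq G_2(\blambda)$. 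If both are inner and central, Lemma \ref{lemma outer elements IN} expresses $m_{\s\T}^{\bmu}$ as $\pm\,(y_{\bar n+1}-y_{\bar n})\iota(m_{\overline{\s}\,\overline{\T}}^{\overline{\bmu}})$ or $\pm\,y_{\bar n+1}\iota(m_{\overline{\s}\,\overline{\T}}^{\overline{\bmu}})$; here $y_{\bar n+1}e(\bi^{\blambda})=\YL_{K+1}$ and, by Lemma \ref{blockcancel}, $y_{\bar n}e(\bi^{\blambda})=\YL_K$, which is already a product of $G_1(\blambda)$ elements by Theorem \ref{theorem generating set without superfluous generators singular }. Since $\iota(m_{\overline{\s}\,\overline{\T}}^{\overline{\bmu}})\in\langle G_1(\blambda)\rangle$ again by Theorem \ref{theoremsing}, the whole product lies in $\langle G_2(\blambda)\rangle$.

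The remaining mixed case — $\s$ inner and $\T$ outer, or vice versa — needs to be handled as well; here I would observe that by applying the anti-automorphism $\ast$ (which fixes the KLR generators and hence sends $G_2(\blambda)$ into itself, and satisfies $(m_{\s\T}^{\bmu})^{\ast}=m_{\T\s}^{\bmu}$) it is enough to treat one of the two sub-cases. In that sub-case one runs the computation of Lemma \ref{lemma outer elements IN} on only \emph{one} side of the diagram: the $d(\s)$ part contributes the extra region $s_{\bmu}$ while the $d(\T)$ part does not, so after resolving the crosses coming from $s_{\bmu}$ exactly as in the proof of Lemma \ref{lemma outer elements IN} (using \ref{trenzadesataA} and, when $\bmu\in\mathcal{A}^0$, the vanishing from Lemma \ref{lemma muere y a la izquerda}) one obtains $m_{\s\T}^{\bmu}=\pm\,(y_{\bar n+1}-y_{\bar n})\iota(m_{\overline{\s}\,\overline{\T}}^{\overline{\bmu}})$ or $\pm\,y_{\bar n+1}\iota(m_{\overline{\s}\,\overline{\T}}^{\overline{\bmu}})$, which is dealt with as above. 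Putting the cases together, every central $m_{\s\T}^{\bmu}$, hence every $m_{\s\T}^{\bmu}$, lies in $\langle G_2(\blambda)\rangle$, so $G_2(\blambda)$ generates $\trunc$.

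The main obstacle I anticipate is the mixed inner/outer case: one must check that the computation localizing the $s_{\bmu}$-crosses really does factor off as $\iota$ of a singular cellular basis element times a single $y$, even though the diagram is no longer horizontally symmetric. The $\ast$-symmetry trick reduces this to a single configuration, but one still has to verify the relevant crossings near $B_{last}$ interact with the rest of the diagram only through the polynomial relations \ref{punto-arriba} and \ref{trenzadesataA}, exactly as in Lemma \ref{lemma outer elements IN}; this is routine but is the one place where a genuinely new (if small) diagrammatic computation is required rather than a direct appeal to the singular results.
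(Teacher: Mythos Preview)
Your approach matches the paper's: reduce to central tableaux via Lemma \ref{lemma reduction to central}, then use Lemma \ref{lemma outer elements} for the outer case and Lemma \ref{lemma outer elements IN} for the inner case, appealing to the singular result (Theorem \ref{theorem generating set without superfluous generators singular }) through $\iota$.

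The one point of divergence is your treatment of a ``mixed'' inner/outer case. This case does not occur and the paper therefore omits it. If $\s,\T\in\std_\blambda(\bmu)$ have the same shape $\bmu$, then both are inner or both are outer, because the inner/outer dichotomy is determined by $\bmu$ alone. Indeed, the last $R$ steps of $P_\s$ form a straight segment from a wall point $P_\s(\bar n)$ to $\bmu$; the wall point lies at horizontal distance $R$ from $\bmu$, and there is a \emph{unique} such wall point among those reachable by paths in $[P_{\overline\blambda}]$. (If $2R\neq e$ only one of $x_{\bmu}\pm R$ lies on any wall; if $2R=e$ both do, but on adjacent walls $M_j,M_{j+1}$ of opposite index parity, while the reachable walls at level $\bar n$ all have index congruent to $K$ modulo $2$, since each full path interval moves the path to an adjacent wall.) Hence $P_\s(\bar n)=P_\T(\bar n)$ and the last segments of $P_\s$ and $P_\T$ coincide.

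So your argument is correct, but the mixed case you flag as ``the main obstacle'' is vacuous. This is fortunate, since your proposed one-sided version of the Lemma \ref{lemma outer elements IN} computation would not obviously go through: that computation genuinely uses that the $s_{\bmu}$-crosses appear in matching pairs, one from $\psi_{d(\s)}^{\ast}$ and one from $\psi_{d(\T)}$, which together resolve to the $y$-factor via the double-cross relations.
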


\begin{proof}
Let $\trunc '$ be the subalgebra of $\trunc$ generated by $G_2 (\blambda)$.
	Let $\s , \T \in \std_{\blambda} (\bmu) $. We need to show that $m_{\s\T}^\bmu \in \trunc '$. If $\s , \T$ are outer tableaux then the result follows by a combination of Theorem \ref{theorem generating set without superfluous generators singular } and Lemma \ref{lemma outer elements}. Suppose now that $\s$ and $\T$ are inner tableaux. If both tableaux are central then the result follows by combining Theorem \ref{theorem generating set without superfluous generators singular } and Lemma   \ref{lemma outer elements IN}. Otherwise,  the same argument given in the proof of Lemma \ref{lemma reduction to central}  allows us to conclude that there exist central standard tableaux $\s_1, \T_1 \in \std_{\blambda} (\bmu )$ and monomials $M_{\s}$ and $M_{\T}$ in the generators $\{ U_1^\blambda , \ldots , U_{K-1}^\blambda \}$ such that
	\begin{equation}
		m_{\s\T}^\bmu  = M_\s  m_{\s_1 \T_1}^\bmu M_\T, 
	\end{equation}
	and the result follows in this case as well. 
\end{proof}

\begin{corollary}\label{central}
$  \YL_{K+1} $ is a central element of $ \trunc$. 
\end{corollary}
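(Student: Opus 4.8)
The plan is to read the statement straight off Corollary~\ref{genA}. That corollary tells us that $\trunc$ is generated as an $\F$-algebra by $G_2(\blambda)=G_1(\blambda)\cup\{\YL_{K+1}\}$, where $G_1(\blambda)=\{U_i^\blambda\mid 1\le i<K\}\cup\{\mathcal{Y}_1^{\blambda}\}$. Hence it suffices to check that $\YL_{K+1}$ commutes with every element of $G_2(\blambda)$. Commutation of $\YL_{K+1}$ with itself is trivial, so the whole task reduces to showing that $\YL_{K+1}$ commutes with $\mathcal{Y}_1^{\blambda}$ and with each $U_i^\blambda$, $1\le i<K$.

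The key point I would isolate is that these elements are supported on complementary blocks of strands. By definition $U_i^\blambda=\iota(U_i^{\overline{\blambda}})$ and $\mathcal{Y}_1^{\blambda}=\iota(\mathcal{Y}_1^{\overline{\blambda}})$, so every element of $G_1(\blambda)$ lies in the subalgebra $\iota(\mathbb{B}_{\bar n}(\overline{\blambda}))=\{x\wedge e(\bi^{last})\mid x\in\mathbb{B}_{\bar n}(\overline{\blambda})\}$, whose elements are KLR-diagrams with no crossing and no dot on the last $R$ strands $B_{last}=[\bar n+1,\dots,n]$. On the other hand, recall that $\YL_{K+1}=y_k e(\bi^{\blambda})$ for $k\in B_{last}$; using \eqref{iota property two} this can be written as $\YL_{K+1}=e(\bi^{\overline{\blambda}})\wedge d$, where $d$ is the identity diagram on $R$ strands with residue sequence $\bi^{last}$ carrying a single dot on the strand in position $k-\bar n$. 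Since horizontal concatenation is multiplicative (this is the principle behind \eqref{iota property one}), for any $x\in\mathbb{B}_{\bar n}(\overline{\blambda})$ one computes
\[
\iota(x)\,\YL_{K+1}=(x\wedge e(\bi^{last}))\,(e(\bi^{\overline{\blambda}})\wedge d)=x\wedge d=(e(\bi^{\overline{\blambda}})\wedge d)\,(x\wedge e(\bi^{last}))=\YL_{K+1}\,\iota(x),
\]
because $e(\bi^{\overline{\blambda}})$ and $e(\bi^{last})$ are the identity elements of the two factors. Applying this with $x=U_i^{\overline{\blambda}}$ and with $x=\mathcal{Y}_1^{\overline{\blambda}}$ gives exactly the remaining commutations, and the corollary follows.

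Equivalently, the identity $\iota(x)\YL_{K+1}=\YL_{K+1}\iota(x)$ can be read off directly on diagrams: a dot sitting on a strand of $B_{last}$ slides freely past every crossing and every dot of a diagram that only touches the first $\bar n$ strands, by planar isotopy together with relation \eqref{punto-arriba} applied to a dot lying on neither of the two crossing strands. There is no genuine obstacle in this argument; the only point deserving a line of care is making precise this ``disjoint-support'' commutation, which is immediate once one observes that $\iota$ is an algebra homomorphism and that $\YL_{K+1}$ is the image of a dotted identity diagram on the complementary block of strands.
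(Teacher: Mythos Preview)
Your proof is correct and follows the same approach as the paper. The paper's proof simply says that the result follows from Corollary~\ref{genA} once one notices that $\YL_{K+1}$ commutes with all elements of $G_1(\blambda)$; you have supplied the explicit justification for that commutation via the disjoint-support argument, which the paper leaves to the reader.
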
  
\begin{proof}  
This follows from Corollary \ref{genA} once we notice that $\YL_{K+1}$ commutes with all the elements of $G_1(\blambda)$. 
\end{proof}  

\begin{lemma}\label{square}
We have that $  (\YL_{K+1})^2 =0  $.
\end{lemma}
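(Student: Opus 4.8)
The plan is to prove $(\YL_{K+1})^2 = 0$ by exhibiting $\YL_{K+1}$ concretely as a $y$-element living at the start of the non-full path interval $B_{last}$ and then invoking the two nilpotency-type facts already available in the excerpt: relation \ref{dot-al-inicio} (a dot at the bottom of an arc is zero when the residue is wrong) together with the quadratic KLR-relation \ref{KLRquadratic}, exactly as was done for $(\YL_1)^2 = 0$ in the proof of Theorem \ref{theoremsing}.

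More precisely, first I would recall that by definition $\YL_{K+1} = y_i e(\bi^\blambda)$ for $i$ any index in $B_{last}$, and by Lemma \ref{blockcancel} we may choose $i = f(K+1)+1 = \bar n + 1$, the leftmost index of $B_{last}$. The key geometric observation is that $ \bar n + 1$ is the first index \emph{after} the point where $P_\blambda$ meets the wall $M_j$ at level $\bar n = f(K+1)$; in other words the segment of $P_\blambda$ on $B_{last}$ starts on a wall and moves (monotonically) away from the central axis. Thus in the KLR-diagram of $e(\bi^\blambda)$ the arc in position $\bar n +1$ starts, reading from the bottom, at the residue attained just after a wall-crossing. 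I would then compute $\YL_{K+1}^2 = y_{\bar n+1}^2 e(\bi^\blambda)$ and use relation \ref{KLRquadratic} to resolve $y_{\bar n+1}^2$, i.e.\ write it via a pair of crossings involving position $\bar n$, which turns the double-dot into a diagram to which Lemma \ref{lemma muere y a la izquerda} applies (with $\T$ the tableau whose path agrees with $P_\blambda$ up to level $\bar n$ and then zig-zags; the hypothesis $P_\T([0,k-1])\subseteq \mathcal A^0\setminus(M^0\cup M^1)$ is met because $\bar\blambda$ is singular and the zig-zag path stays in the fundamental alcove). Hence $y_{\bar n}e(\bi^\T) = 0$, which forces the residue configuration making $y_{\bar n+1}^2 e(\bi^\blambda)$ vanish. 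This is precisely the mechanism used for $(\YL_1)^2 = 0$ in the singular case, and also the mechanism behind the vanishing of the fourth diagram in \ref{result}.

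An even cleaner route, which I would present as the main argument, uses the isomorphism $\iota$ and the compatibility $\YL_{K+1} = y_{\bar n+1}e(\bi^\blambda)$: one shows $y_{\bar n+1}^2 e(\bi^\blambda) = 0$ directly by relating it, via Lemma \ref{lemma outer elements IN} applied with $\s = \T$ an inner tableau sitting at the boundary (or via \ref{result}), to $\iota(\text{something})\cdot\psi_{\bar n}^2$ and then to $(y_{\bar n+1}-y_{\bar n})\iota(\cdots)$, and finally applying Lemma \ref{lemma muere y a la izquerda} to kill the $y_{\bar n}$ contribution while relation \ref{dot-al-inicio} or \ref{KLRquadratic} kills the remaining $y_{\bar n+1}$ part. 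Concretely: $\YL_{K+1}^2 = \iota(e(\bi^{\bar\blambda}))\, y_{\bar n+1}^2$, and resolving one copy of $y_{\bar n+1}$ using \ref{KLRquadratic} against position $\bar n$ produces a difference of diagrams each carrying a dot in position $\bar n$ on an arc that, by the singularity of $\bar\blambda$, is killed by Lemma \ref{lemma muere y a la izquerda}; the leftover term has its own dot forced to vanish by the same Lemma, so $\YL_{K+1}^2 = 0$.

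The main obstacle will be bookkeeping the residue sequence around position $\bar n$ precisely enough to see that relation \ref{KLRquadratic} (rather than \ref{KLRquadraticA}) applies and that the resulting dotted diagrams genuinely fall under the hypotheses of Lemma \ref{lemma muere y a la izquerda}; this is where one must use the precise location of the wall $M_j$ containing $P_\blambda(\bar n)$ and the fact that $\overline{\blambda}$ is singular so that $\bar n = f(K+1)$ is exactly a wall index with the zig-zag of $P_{\bar\blambda}$ remaining inside $\mathcal A^0$. None of this requires new ideas beyond those in the proofs of Theorem \ref{bigtheorem} and Theorem \ref{theoremsing}, so the write-up should be short: state $\YL_{K+1}=y_{\bar n+1}e(\bi^\blambda)$, apply \ref{KLRquadratic}, apply Lemma \ref{lemma muere y a la izquerda} twice, conclude.
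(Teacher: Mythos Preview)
Your approach has a genuine gap. You want to show $y_{\bar n}\,e(\bi^\blambda)=0$ via Lemma~\ref{lemma muere y a la izquerda}, but that lemma requires $P_\T([0,k-1])\subseteq \mathcal{A}^0\setminus(M_0\cup M_1)$. For any $\T\in\std(\bi^\blambda)$ the path $P_\T$ hits the wall $M_0$ at level $e-m=f(1)$, so the hypothesis fails as soon as $k-1\ge e-m$, i.e.\ for $k>e-m$. When $K\ge 1$ we have $\bar n=f(K+1)>e-m$, and in fact $\bar n$ is the last index of the full path interval $B_K$, so by definition $y_{\bar n}\,e(\bi^\blambda)=\YL_K$, which is \emph{nonzero}. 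Thus the reduction ``apply \ref{KLRquadratic} and then Lemma~\ref{lemma muere y a la izquerda} twice'' collapses: the $y_{\bar n}$ term does not vanish, and there is no direct KLR mechanism analogous to the $(\YL_1)^2=0$ argument, precisely because there is no diamond $U_K^\blambda$ available to conjugate $\YL_K$ into $\YL_{K+1}$ as in~\eqref{jugada-clave-JM}.

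The paper proceeds quite differently. It sets $\mathcal{L}^\blambda_i:=\YL_i-\YL_{i-1}$ and invokes Theorem~6.9 of \cite{Esp-Pl}, which shows that these elements satisfy the JM-type relations of Lemma~\ref{JMB} (in particular $(\mathcal{L}^\blambda_i)^2=-2\mathcal{L}^\blambda_i\sum_{j<i}\mathcal{L}^\blambda_j$). Since $\YL_{K+1}=\sum_{i=1}^{K+1}\mathcal{L}^\blambda_i$, the purely algebraic telescoping computation already carried out in~\eqref{JorgeDavidA} gives $(\YL_{K+1})^2=0$. So the missing idea is not a sharper KLR manipulation at position $\bar n$, but rather recognising $\YL_{K+1}$ as a sum of JM-elements and using their quadratic relations.
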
  
\begin{proof}  
  For $ i= 1, 2 \ldots, K+1$ we introduce the following elements of $ \trunc$
\begin{equation}
  {\mathcal L }^{\blambda}_i := \YL_{i} - \YL_{i-1} 
  \end{equation}
with the convention that $ \YL_{0} :=0$.
Then in Theorem 6.9 of \cite{Esp-Pl} it was shown that these elements $ {\mathcal L }^{\blambda}_i$ satisfy the
JM-relations of Lemma \ref{JMB}. On the other hand we have that
\begin{equation}
\YL_{K+1} =   {\mathcal L }^{\blambda}_{K+1}  +   {\mathcal L }^{\blambda}_K  + \ldots+   {\mathcal L }^{\blambda}_1
\end{equation}
and so the calculation done in \eqref{JorgeDavidA} shows that $ (\YL_{K+1})^2 = 0 $, as claimed.
The Lemma is proved.
\end{proof}  

We can now establish the connection between the extended nil-blob algebra and $\trunc$. 

\begin{theorem}  \label{main theorem regular}
  Suppose that $\blambda $ is regular. Then the
  assignment $ \mathbb{U}_0\mapsto \mathcal{Y}_1^\blambda$, $\mathbb{J}_{K}\mapsto \YL_{K+1} $
  and  $\mathbb{U}_i\mapsto (-1)^e U_i^\blambda$ for all $1\leq i < K$, induces an $\mathbb{F}$-algebra
    isomorphism between $\widetilde{ \mathbb{NB}}_{K}$ and $\trunc  $.
\end{theorem}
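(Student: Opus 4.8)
The plan is to mimic the three-step argument used for the singular case in Theorem~\ref{theoremsing}. First I would check that the prescribed assignment respects the defining relations of $\widetilde{\mathbb{NB}}_{K}$, so that it induces a well-defined $\F$-algebra homomorphism $\tilde f\colon\widetilde{\mathbb{NB}}_{K}\to\trunc$. Recall that by definition $U_i^\blambda=\iota(U_i^{\bar\blambda})$ and $\YL_1=\iota(\mathcal{Y}_1^{\bar\blambda})$, where $\bar\blambda=(1^{\bar n},1^0)$ is singular and $\iota$ is the algebra homomorphism of \eqref{iota property one}. Applying $\iota$ to the Temperley-Lieb relations of Theorem~\ref{TemperleyLieb}, which hold in $\truncSing$, shows that the elements $(-1)^e U_i^\blambda$ satisfy the Temperley-Lieb relations required of the $\UU_i$; applying $\iota$ to the relations $\UU_0^2=0$ and $\UU_1\UU_0\UU_1=0$ that were verified for $\truncSing$ inside the proof of Theorem~\ref{theoremsing} shows that $(\YL_1)^2=0$ and $U_1^\blambda\,\YL_1\,U_1^\blambda=0$. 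For the extra generator, the relation $\mathbb{J}_{K}^2=0$ is exactly Lemma~\ref{square}, and the centrality of $\mathbb{J}_{K}$, i.e.\ that $\YL_{K+1}$ commutes with every $U_i^\blambda$ and with $\YL_1$, is Corollary~\ref{central}. Hence $\tilde f$ is a well-defined homomorphism.

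Next I would prove that $\tilde f$ is surjective: by Corollary~\ref{genA} the set $G_2(\blambda)=\{U_i^\blambda\mid 1\le i<K\}\cup\{\YL_1,\YL_{K+1}\}$ generates $\trunc$, and every element of $G_2(\blambda)$ is the image under $\tilde f$ of a generator of $\widetilde{\mathbb{NB}}_{K}$, up to the unit $(-1)^e$. Finally I would compare dimensions. By Lemma~\ref{isacellularbasis} the algebra $\trunc$ has a graded cellular basis indexed by pairs of standard tableaux in $\std(\bi^\blambda)$ of equal shape, so $\dim_\F\trunc=\sum_{\bmu}|\std_\blambda(\bmu)|^2$; in the regular case this sum equals $2\binom{2K}{K}$ by Lemma~\ref{lemma description singular}(c). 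On the other side $\dim_\F\widetilde{\mathbb{NB}}_{K}=2\binom{2K}{K}$ by Corollary~\ref{cor dim}. A surjective $\F$-linear map between finite-dimensional spaces of equal dimension is bijective, so $\tilde f$ is an isomorphism.

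The genuinely hard content of this theorem has already been packaged into the results invoked above. Corollary~\ref{genA}, that $\trunc$ is generated by $G_1(\blambda)$ together with the single central element $\YL_{K+1}$, rests on the inner/outer tableau analysis of Lemmas~\ref{lemma outer elements} and~\ref{lemma outer elements IN}, while Lemma~\ref{square} and Corollary~\ref{central} encode the nilpotency and centrality of $\YL_{K+1}$. What remains, and is the only new step here, is the bookkeeping identifying the images under $\tilde f$ of the defining relations of $\widetilde{\mathbb{NB}}_{K}$ with relations that are already established, partly through the embedding $\iota$ and the singular case and partly through Lemma~\ref{square} and Corollary~\ref{central}. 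I expect this identification, together with confirming that the generating set $G_2(\blambda)$ really is contained in the image of $\tilde f$, to be the only place where any care is needed; once it is done the dimension equality closes the argument with no further computation.
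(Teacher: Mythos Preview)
Your proposal is correct and follows essentially the same approach as the paper's proof: verify well-definedness by transferring the nil-blob relations from the singular case via $\iota$ (the paper phrases this as ``Combining Theorem~\ref{theoremsing}''), then invoke Corollary~\ref{central} and Lemma~\ref{square} for the extra generator, establish surjectivity via Corollary~\ref{genA}, and conclude by equality of dimensions. Your write-up is more explicit about the role of $\iota$ than the paper's two-sentence proof, but the logical structure is identical.
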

\begin{proof}
  Combining Theorem \ref{theoremsing}, Corollary \ref{central} and Lemma \ref{square}
  we get that the assignment of the Theorem defines an algebra homomorphism, which is 
  surjective in view of Corollary \ref{genA}. The two algebras have the same dimension
  $ 2 \binom{2K}{K}  $, and hence the Theorem is proved.
\end{proof}  


\begin{theorem}
	Let $\blambda $ be a regular bipartition. Suppose that $\blambda$ is located in the alcove $\mathcal{A}_w$. Then, $\tilde{A}_w \cong \trunc $ as $\mathbb{F}$-algebras. 
\end{theorem}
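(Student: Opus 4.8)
The statement asserts that $\tilde A_w \cong \trunc$ whenever the regular bipartition $\blambda$ lies in the alcove $\mathcal A_w$. The plan is to obtain this as a composition of the two isomorphism theorems already established in the paper, using the extended nil-blob algebra $\widetilde{\mathbb{NB}}$ as the bridge. More precisely, on one side Corollary \ref{corollary short presentation}(b) gives an isomorphism $\tilde\varphi:\widetilde{\mathbb{NB}}_{n-1}\xrightarrow{\ \sim\ }\tilde A_w$ when $w=n_{\ese}$; on the other side Theorem \ref{main theorem regular} gives, for $\blambda$ regular, an isomorphism $\widetilde{\mathbb{NB}}_{K}\xrightarrow{\ \sim\ }\trunc$, where $K=K_{n,m}$ is the integer quotient from Definition \ref{definition epsilon}. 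So the whole matter reduces to a \emph{bookkeeping} step: matching the two index parameters. Concretely, one must check that if $\blambda$ lies in $\mathcal A_w$ with $w=N_\ese$ (or $w=N_\te$) for some $N$, then $K_{n,m} = N-1$, so that $\widetilde{\mathbb{NB}}_{K}=\widetilde{\mathbb{NB}}_{N-1}$ and the two isomorphisms can be composed to yield $\tilde A_w\cong \trunc$.

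\textbf{Key steps, in order.} First I would fix the alcove-geometry normalization: recall from the paragraph around \eqref{functionblocks} and Definition \ref{definition epsilon} that the walls are $M_j=\{((j-1)e+m,a)\}$, that $\blambda$ is identified with the point $(\lambda_2-\lambda_1,n)=(-n,n)$ for $\blambda=(1^n,1^0)$, and that $n-(e-m)=Ke+R$ with $0<R<e$ in the regular case. Second, I would translate the condition ``$\blambda$ lies in the alcove $\mathcal A_w$'' into an arithmetic condition on $n$: the alcoves are the connected components of $\mathbb R^2\setminus\bigcup_j M_j$, so $\blambda$ lies in the alcove $\mathcal A_w$ with $w$ at combinatorial distance $N-1$ from the fundamental alcove $\mathcal A^0$ precisely when $\bar n = f(K+1) = -m+(K+1)e$ and the residual part $B_{last}$ of length $R$ sits inside that alcove; chasing the definition of $w\in W$ as the reflection-group element indexing $\mathcal A^w$ (with $\ese,\te$ the reflections through $M_0,M_1$) yields $N=K+1$, i.e.\ $K = N-1$. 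Third, having identified the parameters, I would simply write $\tilde A_w \cong \widetilde{\mathbb{NB}}_{N-1} = \widetilde{\mathbb{NB}}_{K}\cong \trunc$, invoking Corollary \ref{corollary short presentation}(b) for the first isomorphism and Theorem \ref{main theorem regular} for the second. One should also remark (as the paper does in the final remark of Section 3 and in the remark after Lemma \ref{lemma super reduction}) that the cases $w=N_\ese$ versus $w=N_\te$, and $\blambda$ versus $\bmu=(1^0,1^n)$, are symmetric, so no generality is lost.

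\textbf{Main obstacle.} The genuinely delicate point is the second step: correctly matching ``$\blambda$ lies in $\mathcal A_w$'' with the integer-division data $n-(e-m)=Ke+R$ and with the length parameter $K$ of $\widetilde{\mathbb{NB}}_{K}$. This is purely combinatorial, but it requires being careful about (i) the off-by-one between ``number of walls crossed by $P_\blambda$'' and the subscript of $\widetilde{\mathbb{NB}}$, (ii) the convention $0_\ese=0_\te=e$ and the labelling of alcoves by $W$, and (iii) the fact that in Corollary \ref{corollary short presentation} the expression has $n$ letters so the nil-blob index is $n-1$. Once this dictionary is pinned down the theorem is immediate; there is no new algebra or diagram calculation to perform, since both $\tilde A_w$ and $\trunc$ have already been identified with the \emph{same} abstract algebra $\widetilde{\mathbb{NB}}_K$. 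A clean way to present the argument is to note that both algebras have dimension $2\binom{2K}{K}$ (by Theorem \ref{mainTheoremSection3}(c) and Theorem \ref{main theorem regular} respectively, after the parameter match), so that the composite of the two isomorphisms is an isomorphism of algebras of equal finite dimension, completing the proof.
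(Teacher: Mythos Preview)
Your proposal is correct and follows exactly the same approach as the paper's proof, which simply reads: ``This is an immediate consequence of Corollary \ref{corollary short presentation} and Theorem \ref{main theorem regular}.'' You compose the two isomorphisms through $\widetilde{\mathbb{NB}}_K$ just as the paper does; your additional care with the parameter-matching ($K = N-1$ when $w = N_{\ese}$, coming from the count of walls crossed by $P_{\blambda}$) is bookkeeping that the paper leaves entirely implicit, but it is the right thing to check and you have it correct.
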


\begin{proof}
	This is an immediate consequence of Corollary \ref{corollary short presentation} and Theorem \ref{main theorem regular}.
\end{proof}

\bibliographystyle{myalpha} 
\bibliography{mybibfile}

\begin{thebibliography}{X}

\bibitem{AlHar-Gon-Pl} S. Al Harbat, C. Gonz\'alez, D. Plaza, 
\textit{Type $\tilde{C}$ Temperley-Lieb algebra quotients and Catalan combinatorics}, arXiv:1904.08351v1.



\bibitem{BCH} C. Bowman, A. Cox, A. Hazi, \textit{
Path isomorphisms between quiver Hecke and diagrammatic Bott-Samelson endomorphism algebras}, arXiv:2005.02825. 



\bibitem{bcs} C. Bowman, A. Cox, L. Speyer,  \textit{A Family of Graded Decomposition Numbers for Diagrammatic
    Cherednik Algebras},
  IMRN {\bf 2017}(9) (2017), 2686-2734.





\bibitem{brundan-klesc} J. Brundan, A. Kleshchev,
\textit{Blocks of cyclotomic Hecke algebras and Khovanov-Lauda algebras}, Invent. Math. {\bf 178} (2009), 451-484.




\bibitem{blob positive} A. Cox, J. Graham, P. Martin (2003), \textit{The blob algebra in positive characteristic},
Journal of Algebra, {\bf 266}(2), 584-635.

\bibitem{elias-losev} B. Elias, I. Losev, \textit{Modular representation theory in type A via Soergel bimodules}, arXiv:1701.00560.

\bibitem{EW} B. Elias, G. Williamson, {\it Soergel calculus},  Representation Theory
{\bf 20} (2016), 295-374.

\bibitem{Esp-Pl} J. Espinoza, D. Plaza, 
\textit{Blob algebra and two-color Soergel calculus}, 
Journal of Pure and Applied Algebra {\bf 223}(11), (2019), 4708-4745.



\bibitem{GL} J. J. Graham, G. I. Lehrer,
  \textit{Cellular algebras}, Inventiones Mathematicae {\bf 123} (1996), 1-34.


\bibitem{GL1} J. J. Graham, G. I. Lehrer,  \textit{Cellular algebras and diagram algebras in representation theory. In Representation theory of algebraic groups and quantum groups},  Mathematical Society of Japan, {\bf 40} (2004), 141-173.


\bibitem{HMP} A. Hazi, P. Martin, A. Parker, \textit{Indecomposable tilting modules for the blob algebra},
arXiv:1809.10612.

\bibitem{hu-mathas} J. Hu, A. Mathas, \textit{Graded cellular bases for the cyclotomic Khovanov-Lauda-Rouquier
algebras of type $A$}, Adv. Math., {\bf 225} (2010), 598-642.

\bibitem{H1} J. E. Humphreys, {\it Reflection groups and Coxeter groups}, volume {\bf 29} of Cambridge Studies in
Advanced Mathematics. Cambridge University Press, Cambridge, 1990.



\bibitem{KhovanovLauda} M. Khovanov, A. Lauda, \textit{A diagrammatic approach to categorification of quantum groups I}, Represent. Theory {\bf 13}
(2009), 309-347.

  
\bibitem{LibedinskyGentle} N. Libedinsky, {\it 
  Gentle introduction to Soergel bimodules I: The basics}, {\color{black}{Sao Paulo Journal of
  Mathematical Sciences, {\bf 13}(2) (2019), 499-538.}}


  
\bibitem{LiPl} N. Libedinsky, D. Plaza, \textit{Blob algebra approach to modular representation theory},
{\color{black}{Proc. of the London Math. Soc. ({\bf 121})(3), (2020), 656-701.}}

  

\bibitem{Lobos-Ryom-Hansen} D. Lobos, S. Ryom-Hansen,
  \textit{Graded cellular basis and Jucys-Murphy elements
  for generalized blob algebras},
{\color{black}{Journal of Pure and Applied Algebra}, 
{\bf 224}(7), (2020), 106277, 1-40.}




\bibitem{Mat-Sal} P. P. Martin, H. Saleur, \textit{The blob algebra and the periodic Temperley-Lieb algebra},
Lett. Math. Phys. {\bf 30} (1994), 189-206.

\bibitem{MW} P. P. Martin,  D. Woodcock, \textit{Generalized blob algebras and alcove geometry}, LMS Journal
of Computation and Mathematics {\bf 6}, (2003), 249-296.


\bibitem{Mat-So} A. Mathas, \textit{Seminormal forms and Gram determinants for cellular algebras}, J.
Reine Angew. Math., {\bf 619} (2008), 141-173.  With an appendix by M. Soriano.

\bibitem{PlazaRyom}
D. Plaza,  S. Ryom-Hansen, \textit{Graded cellular bases for Temperley-Lieb algebras of type A and B},
Journal of Algebraic Combinatorics, {\bf 40}(1) (2014), 137-177.

\bibitem{RicheWill} S. Riche, G. Williamson, \textit{Tilting Modules and The $p$-Canonical Basis},
  Ast\'erisque {\bf 397} (2018), 1-184.
  

\bibitem{steen} S. Ryom-Hansen, \textit{Jucys-Murphy elements for Soergel bimodules}, Journal of Algebra, {\bf 551} (2020), 154-190.












  




  






















  











\end{thebibliography}

\sc
diego.lobos@pucv.cl, Universidad de Talca/Pontificia Universidad Cat\'olica de Valparaiso, Chile. \newline
dplaza@inst-mat.utalca.cl, Universidad de Talca, Chile. \newline
steen@inst-mat.utalca.cl, Universidad de Talca, Chile.


\end{document}